\documentclass{article}
\usepackage{tikz}
\usetikzlibrary{intersections, calc, arrows.meta}
\usepackage[utf8]{inputenc}
\usepackage{amsmath} 
\usepackage{amsmath}
\usepackage{amssymb} 
\usepackage{amsfonts} 
 \usepackage{amsthm}
 \usepackage{mathtools}
 \usepackage{latexsym}
 \usepackage{authblk}
 \usepackage[all]{xy}
\newtheorem{dfn}{Definition}[section]
 \newtheorem{them}[dfn]{Theorem}
 \newtheorem{lem}[dfn]{Lemma}
 \newtheorem{prp}[dfn]{Proposition}
\newtheorem{rem}[dfn]{Remark}
 \newtheorem{cla}[dfn]{Claim}
 \newtheorem{Que}[dfn]{Question}

 \usepackage{scalerel,stackengine}
\stackMath
\newcommand\reallywidecheck[1]{%
\savestack{\tmpbox}{\stretchto{%
  \scaleto{%
    \scalerel*[\widthof{\ensuremath{#1}}]{\kern-.6pt\bigwedge\kern-.6pt}%
    {\rule[-\textheight/2]{1ex}{\textheight}}
  }{\textheight}%
}{0.5ex}}%
\stackon[1pt]{#1}{\scalebox{-1}{\tmpbox}}%
}
\parskip 1ex

\title{Existence of a positive hyperbolic orbit in the presence of an elliptic orbit in three-dimensional Reeb flows}
\author{Taisuke SHIBATA\thanks{Research Institute for Mathematical Sciences, Kyoto University, Kyoto 606-8502,
JAPAN. E-mail address: shibata@kurims.kyoto-u.ac.jp}}
\date{\today}

\begin{document}

\maketitle

\begin{abstract}
    Nondegenerate periodic orbits in three-dimensional Reeb flows can be classified into three types, positive hyperbolic, negative hyperbolic and elliptic. As a problem which involves refining the three-dimensional Weinstein conjecture, D. Cristofaro-Gardiner, M. Hutchings and D. Pomerleano proposed whether every nondegenerate closed contact three manifold has at least one positive hyperbolic orbit except for lens spaces. In the same paper, they showed the existence of at least one simple hyperbolic orbit under $b_{1}>0$ by  the isomorphism between ECH and Seiberg-Witten Floer (co)homology, especially, using the result that if $b_{1}>0$, the odd part  of ECH which detects the existence of a positive hyperbolic orbit does not vanish. But in the case of $b_{1}=0$,  such a way doesn't work.  In the present paper, we prove the existence of a positive hyperbolic orbit in the presence of at least one elliptic orbit except for some well-known trivial cases under $b_{1}=0$. The key points in this paper are the volume property with respect to ECH spectrums and the compactification of the moduli spaces of certain $J$-holomorphic curves counted by the $U$-map.
\end{abstract}
\tableofcontents
\section{Introduction}

\subsection{Background and the statement of the main result}

Let $(Y,\lambda)$ be a closed contact three manifold. $\lambda$ determines a vector field $X_{\lambda}$ which satisfies $i_{X_{\lambda}}\lambda=1$, $d\lambda(X_{\lambda},\,\,)=0$ and it is called the Reeb vector field on $(Y,\lambda)$.  A smooth map $\gamma : \mathbb{R}/T\mathbb{Z} \to Y$ is called a Reeb orbit with periodic $T$ if $\dot{\gamma} =  X_{\lambda}(\gamma)$ and simple if $\gamma$ is  a embedding map. In this paper, two Reeb orbits are considered equivalent if they differ by reparametrization. If its return map $d\phi^{T}|_{\mathrm{Ker}\lambda=\xi} :\xi_{\gamma(0)} \to \xi_{\gamma(0)}$ has no eigenvalue 1, we call it a non-degenerate Reeb orbit and we call a contact manifold $(Y,\lambda)$ non-degenerate if all Reeb orbits are non-degenerate.

The three-dimensional Weinstein conjecture was shown by C. H. Taubes by using  Seiberg Witten Floer (co)homology \cite{T1}. On the other hand, there is an usuful tool to research the properties of contact three manifolds, called Embedded contact homology(ECH), which was introduced by M. Hutchings in several papers (for example, it is briefly explained in \cite{H2}). 
In \cite{CH},  D. Cristofaro-Gardiner and M. Hucthings showed that every contact closed three manifold $(Y,\lambda)$ has at least two simple periodic orbits by using the volume property with respect to ECH spectrums.
It will be summarized as follows.
\begin{them}
A contact (possibly degenerate) closed three manifold $(Y,\lambda)$ has at least two simple periodic orbit.
\end{them}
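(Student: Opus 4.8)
The plan is to argue by contradiction, using the asymptotics of the ECH spectrum. Suppose $(Y,\lambda)$ has at most one simple Reeb orbit. I would first reduce to the nondegenerate case: pick nondegenerate contact forms $\lambda_n=f_n\lambda$ with $f_n\to 1$ in $C^\infty$, run the argument below for each $\lambda_n$, and pass to a limit using that the ECH spectral invariants depend continuously on the contact form (the genuinely delicate point, discussed at the end, is controlling how the Reeb orbits can collide in the limit). So assume $\lambda$ is nondegenerate, and let $\gamma$ be its unique simple Reeb orbit — there is at least one by Taubes's theorem, i.e.\ the three-dimensional Weinstein conjecture — with symplectic action $T=\int_\gamma\lambda>0$.

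Next I would read off the ECH chain complex. If $\gamma$ were positive or negative hyperbolic, the only ECH generators would be the empty set and $\gamma$ itself, so $\mathrm{ECH}(Y,\lambda)$ would be at most two-dimensional; but by Taubes's isomorphism $\mathrm{ECH}(Y,\lambda)\cong\widehat{HM}(Y)$, which is infinitely generated — a contradiction. Hence $\gamma$ is elliptic, and since every iterate $\gamma^m$ must also be nondegenerate its rotation number is irrational, so the ECH generators are precisely the empty set and the iterates $\gamma^m$, $m\ge 1$, with action $\mathcal A(\gamma^m)=mT$. Consequently, for every $L>0$ the subspace of the ECH chain complex spanned by generators of action $\le L$ has dimension $\le\lfloor L/T\rfloor+1$; since the differential respects the action filtration, the set of classes $\sigma\in\mathrm{ECH}(Y,\lambda)$ with spectral number $c_\sigma\le L$ — the image of the filtered group $\mathrm{ECH}^L(Y,\lambda)$ — spans a subspace of dimension $\le\lfloor L/T\rfloor+1$ as well.

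Then I would bring in the $U$-map. Since $\mathrm{ECH}(Y,\lambda)\cong\widehat{HM}(Y)$ is infinitely generated, the structure of Seiberg--Witten Floer cohomology supplies an infinite sequence of nonzero classes $\sigma_0,\sigma_1,\sigma_2,\dots$ with $\sigma_0$ the unit and $U\sigma_{k+1}=\sigma_k$; as $U$ has degree $-2$, these lie in distinct gradings and so are linearly independent. Writing $c_k=c_{\sigma_k}$ for the ECH spectral invariants, the dimension bound above forces $\#\{k:c_k\le L\}\le\lfloor L/T\rfloor+1$ for all $L$; in particular $c_k\to\infty$, and unwinding this gives $c_k\ge(k-1)T$ for all large $k$, hence $c_k^2/k\to\infty$. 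This contradicts the volume property of the ECH spectrum, $\lim_{k\to\infty}c_k^2/k=2\,\mathrm{vol}(Y,\lambda)$, whose right-hand side is finite and positive; this settles the nondegenerate case.

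For the degenerate case I would use that the invariants $c_k(Y,\lambda)$ extend continuously to arbitrary contact forms, still obey the volume property, and that each $c_k(Y,\lambda)$ is realized as the symplectic action of some Reeb orbit set of $\lambda$ (via Gromov compactness applied to the curves counted for nearby nondegenerate forms); if $\lambda$ had a single simple orbit of action $T$ then every such action would lie in $T\mathbb Z_{\ge0}$, and one re-runs the counting argument against the volume property. The step I expect to be the main obstacle is making this limiting argument airtight: one must rule out a proliferation of Reeb orbits of the approximations $\lambda_n$ accumulating onto a single, possibly degenerate, orbit of $\lambda$ — equivalently, obtain a dimension/counting bound for the approximating complexes that is uniform in $n$ — so that the contradiction with the $\sqrt{k}$-growth of the ECH spectrum survives the limit.
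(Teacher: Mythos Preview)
The paper does not actually prove this theorem; it is stated as background, attributed to Cristofaro-Gardiner--Hutchings \cite{CH}. So there is no proof in the paper to compare your attempt against.

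That said, your nondegenerate argument is essentially the one in \cite{CH} and is correct: a single simple orbit must be elliptic (otherwise the ECH chain complex is finite-dimensional, contradicting Theorem~\ref{test}), the ECH generators in a fixed $\Gamma\in H_1(Y)$ then have actions in an arithmetic progression, linear independence of the $U$-tower classes against the dimension of $\mathrm{ECC}^L$ forces $c_k$ to grow at least linearly in $k$, and this contradicts the $\sqrt{k}$ growth from the volume property. One technical point you glossed over: Theorem~\ref{volume} requires $c_1(\xi)+2\mathrm{PD}(\Gamma)$ to be torsion, so you cannot simply work in $\Gamma=0$ without comment; one must choose $\Gamma$ accordingly (always possible, since $w_2(\xi)=w_2(TY)=0$ forces $c_1(\xi)\in 2H^2(Y;\mathbb{Z})$) and then invoke the $U$-tower in that component.

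For the degenerate case you have the right ingredients---continuity of the $c_k$, persistence of the volume asymptotics, and the fact that each $c_k(\lambda)$ lies in the action spectrum of $\lambda$---and you have correctly isolated the real difficulty: the dimension bound $\dim\mathrm{ECC}^L\le\lfloor L/T\rfloor+1$ comes from the chain complex and is unavailable for degenerate $\lambda$, while passing through nondegenerate approximations $\lambda_n$ loses control of the number of generators. The argument in \cite{CH} does not proceed via a uniform bound on the approximating complexes as you propose; one works directly with the limiting invariants. Your sketch does not supply this step, though you are right that it is the crux.
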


Non-degenerate periodic orbits are classified into three types according to their eigenvalues of their return maps. A periodic orbit  is called a negative hyperbolic if $d\phi^{T}|_{\xi}$ has eigenvalues $h,h^{-1} < 0$, a positive hyperbolic if $d\phi^{T}|_{\xi}$ has eigenvalues $h,h^{-1} > 0$ and an elliptic if $d\phi^{T}|_{\xi}$ has eigenvalues $e^{\pm i2\pi\theta}$ for some $\theta \in \mathbb{R}\backslash \mathbb{Q}$.
As a refinement of the Weinstein conjecture, we can raise  questions; Does $(Y,\lambda)$ have infinity many Reeb orbits? or if the number of Reeb orbits is finite, how many are there?

In association with this question, the following holds.

\begin{them}[\cite{HT3}]
 Let $(Y,\lambda)$ be a closed contact non-degenerate three manifold. Assume that there exists exactly two simple Reeb orbit, then both of them are elliptic and $Y$ is a lens space.
\end{them}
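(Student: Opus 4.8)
The plan is to run the argument through embedded contact homology $ECH_*(Y,\lambda,\Gamma)$ together with Taubes' isomorphism between $ECH$ and Seiberg--Witten Floer cohomology, extracting everything from the grading and the $U$-module structure. Write $e_1,e_2$ for the two simple orbits. Then every $ECH$ generator is a monomial $e_1^{m_1}e_2^{m_2}$ with $m_i\ge 0$ and $m_i\le 1$ whenever $e_i$ is hyperbolic, and the $\mathbb{Z}/2$ grading of a generator is controlled by its hyperbolic factors; in particular a monomial in elliptic orbits has even grading. The single standard fact I need about the target is that reduced monopole Floer homology is finitely generated, so for any $\Gamma$ with $c_1(\xi)+2\,\mathrm{PD}(\Gamma)$ torsion (at least one such $\Gamma$ exists), $ECH_*(Y,\lambda,\Gamma)$ is bounded below, agrees with the infinitely generated group $\overline{HM}$ in all sufficiently positive degrees, and is therefore nonzero in arbitrarily large degrees of some fixed parity. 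I will contradict this unless both orbits are elliptic.

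\emph{No hyperbolic orbits.} If $e_1$ and $e_2$ are both hyperbolic, the only $ECH$ generators at all are $\varnothing,e_1,e_2,e_1e_2$, so $ECH(Y,\lambda)$ is finitely generated, contradicting the above. Next suppose $e_1$ is elliptic with rotation number $\theta$ and $e_2$ is hyperbolic, so the generators are exactly $e_1^m$ and $e_1^m e_2$, $m\ge 0$. The key point is that the Conley--Zehnder index of the $k$-th iterate of an elliptic orbit grows linearly in $k$, so the $ECH$ index of $e_1^m$ (and of $e_1^m e_2$) grows quadratically in $m$, with positive leading coefficient since the grading is bounded below; hence, after fixing a class $\Gamma$, the $ECH$ indices realized by generators form a set with gaps tending to infinity. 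Since $ECH_*(Y,\lambda,\Gamma)$ is a subquotient of the free module on the grading-$*$ generators of class $\Gamma$, it vanishes along those gaps, again contradicting the nonvanishing above. Therefore both $e_1$ and $e_2$ are elliptic.

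\emph{Both elliptic implies a lens space.} When $e_1,e_2$ are both elliptic, every generator has even $\mathbb{Z}/2$ grading, so the $ECH$ differential, which lowers the grading by one, vanishes identically. Hence $ECH_*(Y,\lambda,\Gamma)$ is the free $\mathbb{Z}$-module on the monomials $e_1^ae_2^b$ of class $\Gamma$, torsion-free and supported in even degrees, while its (still nontrivial) $U$-action together with the index function $(a,b)\mapsto I(e_1^ae_2^b)$ is governed by the two rotation numbers $\theta(e_1),\theta(e_2)$ and the linking number $\ell(e_1,e_2)$ through a lattice-point count — exactly the combinatorial picture that computes $ECH$ of a lens space, equivalently of an irrational ellipsoid in $\mathbb{C}^2$ modulo a cyclic group. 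I would then pin the parameters down by playing the actions of the $e_1^ae_2^b$ against the volume asymptotics $c_k(Y,\lambda)\sim\sqrt{2\,\mathrm{vol}(Y,\lambda)\,k}$ of the $ECH$ spectrum, which forces $H_1(Y)$, the absolute gradings on the torsion summands, and the full graded monopole Floer homology of $Y$ to coincide with those of a lens space; invoking the classification of closed three-manifolds with such minimal monopole (equivalently Heegaard) Floer homology then identifies $Y$ with a lens space.

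I expect the last step to be the genuine obstacle. Ruling out hyperbolic orbits is a soft grading-and-finiteness argument, but recovering the \emph{diffeomorphism type} from the shape of $ECH$ is not formal: one must check that the lattice/$U$-module structure is really forced — in particular that a torsion $\Gamma$, or the case $b_1(Y)>0$, does not secretly contribute extra generators — and then rely on the rigidity of the Seiberg--Witten side to detect lens spaces. That passage from ``Floer homology of a lens space'' to ``$Y$ is a lens space'' is where the argument is least mechanical, and where I would expect the real work to lie.
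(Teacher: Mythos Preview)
The paper does not prove this theorem. It is quoted verbatim from \cite{HT3} as background (Theorem~1.2 in the introduction), with no argument supplied; the paper's own work begins only with Theorem~\ref{main} and the machinery in Sections~3--8. So there is no ``paper's own proof'' to compare against, and your proposal is really a sketch of the Hutchings--Taubes result rather than of anything in this paper.

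On the merits of your sketch: the first half --- ruling out any hyperbolic orbit by showing that the set of possible ECH gradings has gaps going to infinity, while Taubes' isomorphism forces $\mathrm{ECH}_*(Y,\lambda,\Gamma)$ to be nonzero in every sufficiently large grading of a fixed parity --- is the right mechanism and is essentially how \cite{HT3} argues. Two points to tighten: you should say explicitly why a $\Gamma$ with $c_1(\xi)+2\,\mathrm{PD}(\Gamma)$ torsion exists (this needs a word when $b_1(Y)>0$), and your claim about the leading coefficient being positive should be justified directly from the index formula rather than inferred from ``bounded below'', since the latter already uses the conclusion you want.

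The second half is where your proposal is genuinely incomplete, as you anticipate. The actual route in \cite{HT3} is \emph{not} to match the ECH $U$-module to that of a lens space and then invoke an $L$-space rigidity theorem; instead they use the $U$-map and $J_0$-index to produce a $J$-holomorphic curve of low topological complexity whose projection to $Y$ is a global surface of section with disk pages, yielding an open book with disk page and hence a lens space. Your proposed endgame --- ``monopole Floer homology of a lens space $\Rightarrow$ $Y$ is a lens space'' --- would require a classification of $L$-spaces with the correct graded $U$-module, which is substantially harder than (and not implied by) what is actually needed.
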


\begin{them}[\cite{HCP}]\label{a}
Let $(Y,\lambda)$ be a non-degenerate contact three manifold. Let $\mathrm{Ker}\lambda=\xi$.
 Then

\item[1.] if $c_{1}(\xi)$ is torsion, there exists infinity many periodic orbits, or there exists exactly two elliptic simple periodic orbits and $Y$ is diffeomorphic to a lens space.
\item[2.] if $c_{1}(\xi)$ is not torsion, there exists at least four periodic orbit.

\end{them}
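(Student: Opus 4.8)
The plan is to route the whole argument through embedded contact homology. Write $ECH_*(Y,\lambda,\Gamma)$, $\Gamma\in H_1(Y;\mathbb{Z})$, for the ECH of the nondegenerate form, with its degree $-2$ map $U$, its symplectic action filtration, and its relative $\mathbb{Z}$-grading by the ECH index $I$; recall Taubes' isomorphism $ECH_*(Y,\lambda,\Gamma)\cong\widehat{HM}^{-*}(Y,\mathfrak{s}_\xi+\mathrm{PD}(\Gamma))$ and, when $c_1(\xi)+2\mathrm{PD}(\Gamma)$ is torsion, the ECH spectrum $\{c_k(Y,\lambda)\}_{k\ge1}$ together with the volume property of Cristofaro-Gardiner--Hutchings--Ramos, $\lim_{k\to\infty}c_k(Y,\lambda)^2/k=2\mathrm{vol}(Y,\lambda)$, where $\mathrm{vol}(Y,\lambda)=\int_Y\lambda\wedge d\lambda$. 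I will also use the parity rule $(-1)^{I(\alpha)}=(-1)^{h_+(\alpha)}$, where $h_+(\alpha)$ is the number of positive hyperbolic simple orbits in the orbit set $\alpha$, so that any ECH class of odd degree is represented by a cycle containing a positive hyperbolic simple orbit. Both parts of the theorem reduce to a single rigidity statement. For Part~1: by the theorem that every closed contact three-manifold has at least two simple Reeb orbits (Theorem~1.1), and the theorem that exactly two simple orbits forces both elliptic and $Y$ a lens space (Theorem~1.2), it suffices to show that a nondegenerate $(Y,\lambda)$ with $c_1(\xi)$ torsion cannot have finitely many simple Reeb orbits, at least three of them. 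For Part~2: $c_1(\xi)$ non-torsion makes $H^2(Y;\mathbb{Z})$ have positive rank, so $b_1(Y)\ge1$; then exactly two simple orbits is impossible, since Theorem~1.2 would force $Y$ to be a lens space and hence $b_1(Y)=0$; so it suffices to rule out exactly three simple Reeb orbits when $b_1(Y)\ge1$.

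\textbf{The rigidity argument.} Suppose $(Y,\lambda)$ has exactly $N<\infty$ simple Reeb orbits $\gamma_1,\dots,\gamma_N$. Every ECH generator is an orbit set $\alpha=\prod_i\gamma_i^{m_i}$ with $m_i\ge0$, and $m_i\in\{0,1\}$ whenever $\gamma_i$ is hyperbolic; its symplectic action $\mathcal{A}(\alpha)=\langle a,m\rangle$, with $a=(\mathcal{A}(\gamma_i))_i$, is linear in the multiplicity vector $m=(m_i)_i$. In the decomposition $I(\alpha)=c_\tau+Q_\tau+\mathrm{CZ}^I_\tau$ relative to a fixed reference, $Q_\tau$ is a quadratic form in $m$ whose coefficients are the $\tau$-framed self-intersection and linking data of the $\gamma_i$, $c_\tau$ is linear, and $\mathrm{CZ}^I_\tau(\alpha)=\sum_i\sum_{k\le m_i}\mathrm{CZ}_\tau(\gamma_i^k)$ equals $\theta_i m_i^2+O(m_i)$ for each elliptic $\gamma_i$ with rotation number $\theta_i$ and is bounded for each hyperbolic $\gamma_i$. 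Hence $I(\alpha)=P(m)+O(\|m\|)$ for a fixed inhomogeneous quadratic $P$ with leading part $\widetilde P$; since $ECH$ has generators of arbitrarily high grading, $P$ must be unbounded above on the lattice of admissible $m$. Consequently $\lim_k c_k(Y,\lambda)^2/k=\lim_k\big(\min\{\langle a,m\rangle:m\text{ admissible},\,P(m)\ge 2k\}\big)^2/k$, and a Lagrange-multiplier evaluation of the right-hand side expresses $\mathrm{vol}(Y,\lambda)$ through $a$ and $\widetilde P$. Bringing in the complementary constraints from the one-orbit subsequences $\{\gamma_i^m\}_m$ and from the requirement that the generators reproduce the $\mathbb{Z}[U]$-module $\widehat{HM}$ --- which for torsion $\mathfrak{s}$ is, in high negative degree, the tower $\Lambda^\bullet H_1(Y;\mathbb{Q})\otimes\mathbb{Q}[U^{\pm1}]$, reducing to $\mathbb{Q}[U^{\pm1}]$ and hence one-dimensional gradings when $b_1(Y)=0$ --- one finds the resulting Diophantine system is solvable only when $N\le2$, both $\gamma_i$ elliptic, with the $\theta_i$ and $\mathcal{A}(\gamma_i)$ in the exact relation realized by an irrational ellipsoid. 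This contradicts $N\ge3$ and proves Part~1; ``$Y$ is a lens space'' is then Theorem~1.2, or follows directly since the computation pins down $\widehat{HM}(Y)$ as a $\mathbb{Z}[U]$-module.

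\textbf{Finishing Part~2.} With $b_1(Y)\ge1$, choose $\Gamma$ so that $\mathfrak{t}:=\mathfrak{s}_\xi+\mathrm{PD}(\Gamma)$ is torsion; then $ECH_*(Y,\lambda,\Gamma)\cong\widehat{HM}^{-*}(Y,\mathfrak{t})$ carries an absolute $\mathbb{Q}$-grading, is nonzero in all sufficiently positive gradings, has high-degree part the full tower $\Lambda^\bullet H_1(Y;\mathbb{Q})\otimes\mathbb{Q}[U^{\pm1}]$ whose odd part is nonzero because $b_1(Y)\ge1$, and satisfies the volume property in this $\Gamma$. The odd-degree non-vanishing and the parity rule give a simple positive hyperbolic orbit. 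Now suppose there were exactly three simple orbits $\gamma_1,\gamma_2,\gamma_3$. Since $ECH_*(Y,\lambda,\Gamma)$ is infinite-dimensional at least one is elliptic, and since one must be positive hyperbolic they are not all elliptic; the positive hyperbolic one, of multiplicity $\le1$, shifts grading and action only boundedly. Running the rigidity analysis with these three orbits in the class $\Gamma$, the quadratic-versus-linear profile of grading against action, together with $c_k(Y,\lambda)\sim\sqrt{2k\,\mathrm{vol}(Y,\lambda)}$ and with the requirement that the available orbit sets reproduce the rank-$2^{b_1(Y)}$, $U$-periodic tower $\Lambda^\bullet H_1(Y;\mathbb{Q})$, cannot all be met: the number and actions of orbit sets in each grading window are too rigid. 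This contradiction, with the floor of two orbits from Theorem~1.1 and the exclusion of $N=2$ above, yields at least four simple Reeb orbits.

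\textbf{Main obstacle.} The crux is the asymptotic and lattice-point analysis of $I(\alpha)$ for orbit sets built from few simple orbits, carried out precisely enough to be matched against the $o(\sqrt k)$ error in the volume property, and the verification that the resulting Diophantine system is as rigid as claimed. This rests on knowing exactly which multiple covers occur among ECH generators and in the image of $U$ --- the ``partition conditions'' --- which is a statement about the compactification and index count of the moduli spaces of $J$-holomorphic curves counted by the $U$-map; controlling how these curves break as the action grows is where the real work lies. A secondary obstacle, relevant only to the topological conclusion of Part~1, is extracting ``$Y$ is a lens space'' from the computed $\mathbb{Z}[U]$-module $\widehat{HM}(Y)$, for which one either invokes Theorem~1.2 or uses the known Seiberg--Witten Floer homology of lens spaces.
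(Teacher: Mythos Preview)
This theorem is not proved in the present paper; it is quoted from \cite{HCP}. The paper does, however, summarize the strategy of \cite{HCP} in Section~1.2, and that strategy is quite different from yours.

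For Part~1 (the torsion case), the argument in \cite{HCP} does not proceed by a direct Diophantine analysis of the ECH index as a quadratic form in the multiplicity vector. Instead it brings in the $J_0$ index: assuming finitely many simple orbits, the difference $I-J_0$ grows at most linearly along a $U$-sequence, while the volume property forces action to grow like $\sqrt{I}$. One then extracts either a $J_0\le 1$ curve counted by $U$, or arbitrarily long runs of consecutive $J_0=2$ curves, all of small $d\lambda$-energy. The point is geometric: such a curve (or one of the curves in the run) projects to a global surface of section for the Reeb flow, and a global surface of section forces infinitely many periodic orbits. This mechanism---producing a surface of section from a low-$J_0$, low-energy $U$-curve---is the heart of the argument and has no analogue in your sketch.

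For Part~2 (the non-torsion case), your opening is correct: $c_1(\xi)$ non-torsion forces $b_1(Y)\ge 1$, and the nonvanishing of odd-degree ECH yields a positive hyperbolic orbit. But the actual proof in \cite{HCP} then runs two separate $U$-sequences, one in $\mathrm{ECH}_{\mathrm{even}}$ and one in $\mathrm{ECH}_{\mathrm{odd}}$, and applies the volume property to both. Having two independent sequences with the same volume asymptotic is what pushes the count from two to four. Your replacement---``the rank-$2^{b_1}$ tower cannot be reproduced by three orbits''---is not made precise.

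The genuine gap in your proposal is the ``rigidity argument.'' You assert that matching the quadratic form $\widetilde P$ against the volume asymptotic via Lagrange multipliers yields a Diophantine system solvable only for $N\le 2$, but you do not carry this out, and there is no reason to expect the action/index asymptotics alone to be that rigid. Nothing in the volume identity by itself rules out, say, three elliptic orbits whose rotation numbers and actions conspire to produce the right growth; what \cite{HCP} actually uses to get the contradiction is the existence of a genuine embedded $J$-holomorphic curve of controlled topology. Your ``Main obstacle'' paragraph correctly identifies where the work lies, but the compactness and partition-condition input you allude to is precisely what feeds into the $J_0$/surface-of-section machinery, not into a Lagrange-multiplier computation.
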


In the above theorem, the ways of the proof are completely different if  $c_{1}(\xi)$ is torsion or not.
If $c_{1}(\xi)$ is torsion, some indexes defined in ECH are well-controlled and this property plays an important role in the proof. But if $c_{1}(\xi)$ is not torsion, such a property does not hold. In \cite{HCP}, they proved this by splitting ECH into two parts $\mathrm{ECH}_{\mathrm{even}}$ and $\mathrm{ECH}_{\mathrm{odd}}$. In particular, if $b_{1}(Y)>0$, both elements called $U$-sequence does not vanish and hence by combining with the volume property of ECH spectrums, the proof was completed. 

On the other hand, the above $\mathrm{ECH}_{\mathrm{odd}}$ is the part which detects the existence of a positive hyperbolic orbit and thus they also obtained the following theorem.
\begin{them}[\cite{HCP}]\label{0}
If $b_{1}(Y)>0$, there exists at least one positive hyperbolic orbit.
\end{them}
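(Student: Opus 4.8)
The plan is to deduce the theorem from Taubes' isomorphism between embedded contact homology and Seiberg--Witten Floer cohomology, using that the summand of ECH recording positive hyperbolic orbits is forced to be nonzero once $b_1(Y)>0$; note that this argument leaves the world of $J$--holomorphic curves entirely, which is precisely why it says nothing when $b_1=0$. I would assume $\lambda$ is nondegenerate, so that ``positive hyperbolic'' makes sense, and work with $\mathbb{Z}/2$ coefficients throughout (this already suffices to detect a positive hyperbolic orbit). Recall that ECH carries a canonical $\mathbb{Z}/2$--grading for which an admissible orbit set $\alpha=\{(\alpha_i,m_i)\}$ has parity $\#\{\,i : \alpha_i \text{ positive hyperbolic}\,\}\bmod 2$; write the resulting splitting as $\mathrm{ECH}(Y,\lambda)=\mathrm{ECH}_{\mathrm{even}}\oplus\mathrm{ECH}_{\mathrm{odd}}$. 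If $(Y,\lambda)$ had no positive hyperbolic orbit, every generator would be even and so $\mathrm{ECH}_{\mathrm{odd}}(Y,\lambda)=0$; hence it suffices to prove $\mathrm{ECH}_{\mathrm{odd}}(Y,\lambda)\neq 0$ when $b_1(Y)>0$.

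Next I would pass to a torsion $\mathrm{Spin}^c$ summand. Since a closed oriented $3$--manifold is spin, $c_1(\mathfrak{s})$ is divisible by $2$ in $H^2(Y;\mathbb{Z})$ for every $\mathrm{Spin}^c$ structure $\mathfrak{s}$, so some $\mathrm{Spin}^c$ structure has vanishing first Chern class; as the $\mathrm{Spin}^c$ structures are exactly $\mathfrak{s}_\xi+\mathrm{PD}(\Gamma)$ for $\Gamma\in H_1(Y;\mathbb{Z})$, we may fix $\Gamma_0$ with $\mathfrak{s}_0:=\mathfrak{s}_\xi+\mathrm{PD}(\Gamma_0)$ torsion. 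Taubes' isomorphism $\mathrm{ECH}_*(Y,\lambda,\Gamma_0)\cong\widehat{HM}^{-*}(Y,\mathfrak{s}_0)$ intertwines the $U$--maps and respects the canonical $\mathbb{Z}/2$--gradings, so it is now enough to show that $\widehat{HM}^{\bullet}(Y,\mathfrak{s}_0)$ is nonzero in both parities.

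Then I would invoke the structure of monopole Floer homology for $b_1>0$: for a torsion $\mathrm{Spin}^c$ structure the ``bar'' flavour $\overline{HM}$ is built from the reducible locus, which fibers over the Jacobian torus $H^1(Y;\mathbb{R})/H^1(Y;\mathbb{Z})$ of dimension $b_1$, and with $\mathbb{Z}/2$ coefficients one obtains $\overline{HM}^{\bullet}(Y,\mathfrak{s}_0)\cong\Lambda^{\bullet}H^1(Y;\mathbb{Z}/2)\otimes\mathbb{Z}/2[U,U^{-1}]$. Since $b_1>0$ this is $U$--periodic with even and odd parts of equal dimension $2^{b_1-1}$ over $\mathbb{Z}/2[U,U^{-1}]$; in particular it contains an infinite ``$U$--sequence'' of nonzero classes $U\sigma_{k+1}=\sigma_k$ sitting in odd degree. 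By the Kronheimer--Mrowka exact triangle relating $\widehat{HM}$, $\check{HM}$ and $\overline{HM}$, the comparison map is an isomorphism $\widehat{HM}^{\bullet}\cong\overline{HM}^{\bullet}$ outside a bounded range of degrees, so this odd $U$--sequence persists in $\widehat{HM}^{\bullet}(Y,\mathfrak{s}_0)$. Transporting it back through Taubes' isomorphism yields a nonzero class in $\mathrm{ECH}_{\mathrm{odd}}(Y,\lambda,\Gamma_0)$, and the first step converts this into a positive hyperbolic orbit.

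The crux is the third step: the odd ``$U$--sequence'' in $\overline{HM}$ exists precisely because the Jacobian torus has positive dimension, i.e. because $b_1>0$. When $b_1=0$ this torus collapses to a point, $\overline{HM}$ is concentrated in a single parity, and nothing forces $\mathrm{ECH}_{\mathrm{odd}}$ to be nonzero --- which is exactly the obstruction that motivates the different approach of the present paper (ECH volume asymptotics together with compactness of the $U$--map moduli spaces). A minor technical point, settled by inspecting the construction of Taubes' isomorphism and Hutchings' mod~$2$ index formula, is the compatibility of the two canonical $\mathbb{Z}/2$--gradings used above.
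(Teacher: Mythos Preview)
Your proposal is correct and follows precisely the approach the paper attributes to \cite{HCP}: split ECH by the canonical $\mathbb{Z}/2$--grading counting positive hyperbolic orbits, transport through Taubes' isomorphism, and use that for $b_1(Y)>0$ and a torsion $\mathrm{Spin}^c$ structure the bar flavour $\overline{HM}$ (hence $\widehat{HM}$ in high degree) is nonzero in odd parity because the reducible torus has positive dimension. Note that the paper does not itself prove Theorem~\ref{0}; it only cites \cite{HCP} and sketches exactly this idea in the abstract and introduction, so there is nothing further to compare.
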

In general, they asked the following question.

\begin{Que}[\cite{HCP}]\label{ques}
Let $Y$ be a closed connected three-manifold which is not $S^3$ or a lens space, and let $\lambda$ be a nondegenerate contact form on $Y$. Does $\lambda$ have a positive hyperbolic simple Reeb orbit?
\end{Que}
This is a kind of natural refining the Weinstein conjecture in the sense of looking for Reeb orbits of particular types.
On contrary to the case $b_{1}(Y)>0$, if $b_{1}(Y)=0$, then $\mathrm{ECH}_{\mathrm{odd}}$ may vanish. So to answer this question, we have to use something different way.

The next theorem is the main theorem in this paper.
\begin{them}\label{main}
Let $(Y,\lambda)$ be a nondegenerate contact three manifold with $b_{1}(Y)=0$. Suppose that $(Y,\lambda)$  has infinity many simple periodic orbits (that is, $(Y,\lambda)$ is not a lens space with exactly two simple Reeb orbits) and has at least one  elliptic orbit. Then, there exists at least one simple positive hyperbolic orbit.
\end{them}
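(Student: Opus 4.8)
The plan is to argue by contradiction. Suppose $(Y,\lambda)$ has no simple positive hyperbolic Reeb orbit; I will derive a contradiction with the three standing hypotheses: $b_{1}(Y)=0$, infinitely many simple Reeb orbits, and the existence of an elliptic orbit $e$ (with period $T$).

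\emph{Step 1: the ECH differential and the $U$-map become transparent.} With the standard normalization in which the empty orbit set $\emptyset$ lies in grading $0$, the $\mathbb{Z}/2$-grading of an ECH generator $\{(\alpha_{i},m_{i})\}$ is the parity of the number of positive hyperbolic orbits among the $\alpha_{i}$; this is exactly the mechanism by which $\mathrm{ECH}_{\mathrm{odd}}$ detects positive hyperbolic orbits in \cite{HCP}. Under our assumption every generator has even grading, so the degree $-1$ differential vanishes, and hence $\mathrm{ECH}(Y,\lambda,\Gamma)\cong \mathrm{ECC}(Y,\lambda,\Gamma)$ for every $\Gamma\in H_{1}(Y)$, graded by the ECH index. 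Since $b_{1}(Y)=0$ every $\Gamma$ is torsion, so by the isomorphism between ECH and Seiberg--Witten Floer homology and the structure of the latter in a torsion $\mathrm{Spin}^{c}$ structure, $\mathrm{ECH}_{d}(Y,\lambda,\Gamma)$ has rank $1$ in all sufficiently large gradings of one parity and rank $0$ in the other, with $U$ an isomorphism there. Combined with the vanishing of $\partial$ this yields: in each sector there is, for every sufficiently large even $d$, a \emph{unique} ECH generator $g_{d}$ in grading $d$ (none in large odd gradings), and $Ug_{d}=\pm g_{d-2}$, each such $U$ being witnessed by a $J$-holomorphic current of ECH index $2$ passing through a fixed generic point $p=(0,x_{0})\in\mathbb{R}\times Y$.

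\emph{Step 2: inputs from the volume property and the elliptic orbit.} For $m\gg 0$ the iterate $e^{(m)}:=\{(e,m)\}$ is the unique generator in its grading, so $e^{(m)}=g_{d(m)}$ for some $d(m)$, with $\mathrm{Action}(g_{d(m)})=mT$. The volume property gives $\mathrm{Action}(g_{d})^{2}/d$ converging to a positive multiple of $\mathrm{vol}(Y,\lambda)$; applied to the $g_{d(m)}$ it forces $d(m)$ to grow quadratically in $m$, so consecutive iterates of $e$ are separated by on the order of $m$ in grading, and the generators strictly between $g_{d(m)}$ and $g_{d(m+1)}$ in the $U$-tower are \emph{not} iterates of $e$. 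Iterating $U$ along this tower produces, for each $m$, a chain of index-$2$ currents through $p$ running from $e^{(m+1)}$ down to $e^{(m)}$ whose total $d\lambda$-area is exactly $T$; hence the average $d\lambda$-area per current tends to $0$, and we may extract currents $C_{m}$ through $p$ whose $d\lambda$-area tends to $0$.

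\emph{Step 3: compactness and the contradiction.} The currents $C_{m}$ have vanishing $d\lambda$-area in the limit but a priori unbounded total action, so naive Gromov--SFT compactness does not apply directly. The point at which the remaining hypotheses enter is precisely here: using the presence of infinitely many simple orbits together with the rotation normal form for the flow near the elliptic orbit $e$, one must produce a subsequence of the $C_{m}$ whose asymptotic orbits have uniformly bounded period — equivalently, bounded energy — or else rescale near $e$ and compactify the resulting moduli spaces of $U$-curves. Granting this, a limiting holomorphic building has vanishing $d\lambda$-energy, so each of its nontrivial components is a branched cover of a trivial cylinder over a closed Reeb orbit, and in particular the image of the building in $Y$ is a finite union of closed Reeb orbits. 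But each $C_{m}$, and hence the limit building, passes through $x_{0}$, which would force $x_{0}$ to lie on a closed Reeb orbit, contradicting the genericity of $x_{0}$. This shows the no-positive-hyperbolic assumption is incompatible with $(Y,\lambda)$ having infinitely many simple orbits (which, via Theorem~\ref{a} for the torsion case, is anyway the only alternative to the excluded lens-space picture), completing the contradiction.

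I expect the main obstacle to be the compactness analysis in Step 3: the volume property controls only the leading asymptotics of the ECH spectrum, so extracting a genuinely convergent sequence of $U$-curves with bounded energy and a nontrivial limit — and controlling the behavior of the lower levels of the limit building near and away from $e$ — is delicate, and it is exactly this point that forces one to use both the elliptic orbit (for the local normal form and the controlled family $e^{(m)}$) and the hypothesis of infinitely many simple orbits. This compactification of the relevant moduli spaces is the technical core of the argument.
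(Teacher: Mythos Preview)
Your Steps 1 and 2 are correct and indeed parallel the paper's setup: the vanishing of $\partial$, the identification of $\mathrm{ECH}$ with a $U$-tower of unique generators in high grading (Lemma~\ref{isomero} and (\ref{importantiso})), and the use of the volume property to locate $U$-curves with arbitrarily small $d\lambda$-area between consecutive iterates of the elliptic orbit all appear in the paper (compare Claim~\ref{asdg}).

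Step 3, however, is not a proof but a hope, and you yourself flag this. The curves $C_{m}$ have $d\lambda$-area tending to zero, but their \emph{Hofer energy} (which is what governs SFT compactness) is controlled by the actions of their asymptotic orbits, and these are a priori unbounded: the generators along the $U$-tower between $e^{(m)}$ and $e^{(m+1)}$ have action roughly $mT$. You assert that ``one must produce a subsequence \ldots\ with uniformly bounded period'' but give no mechanism for this; neither the existence of infinitely many simple orbits nor the normal form near $e$ supplies such a bound. Without it, no limiting building exists, and the ``trivial cylinder through a generic point'' contradiction never materializes.

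The paper's actual proof is entirely different and never attempts a direct compactness argument of this kind. It splits into two cases. If there are at least two simple elliptic orbits (Theorem~\ref{el2}), a counting argument shows $|\Lambda(M,0)|$ grows faster than $M^{2}$ times any constant, contradicting the volume formula (\ref{volp}). If there is exactly one simple elliptic orbit (Theorem~\ref{mainmainmain}), the paper controls the \emph{topological complexity} of the $U$-curves via the $J_{0}$ index: Proposition~\ref{main index 2} produces arbitrarily long chains of $J_{0}=2$ curves with small $d\lambda$-area between generators satisfying carefully chosen numerical conditions ($E(\alpha_{k})\notin S_{\theta}\cup S_{-\theta}$, etc.); Proposition~\ref{nagai} then classifies each such curve into one of six types, each forcing specific approximate action relations among the orbits in $\hat\alpha$, $\delta_{1}$, $\delta_{2}$; finally Lemma~\ref{las} shows these types cannot be concatenated more than twice, contradicting the arbitrary length. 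The splitting arguments the paper does use (moving the base point $z$ toward a fixed orbit $\eta$ or $\gamma$) are applied to a \emph{single} $U$-curve with bounded action, not to a sequence with action going to infinity, and the contradictions come from the partition conditions at the elliptic orbit (Definition~\ref{part}, Proposition~\ref{s}, Claim~\ref{fre}) rather than from a point constraint on a degenerate limit.
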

We note that the existence of infinity many simple periodic orbits is a generic condition (see \cite{Ir}).

Properties of vector fields and maps with elliptic orbits have been classically studied for a long time related to classical mechanics and area preserving maps on surfaces and it has been known that such objects have many rich properties under some generic conditions (for example, see \cite{Ze}, \cite{Ne}, \cite{AMa} and their references).

In addition to this, the existence of an elliptic orbit also have been studied. For example, the conjecture that the Reeb flow of every convex hypersurface in $\mathbb{R}^{2n}$ carries an elliptic periodic orbit is in general open and has been researched under some additional hypothesises (for example, see \cite{AM1}, \cite{AM2}, \cite{DDE}, \cite{HrS}, \cite{Lo} and their references).  Here, it is worth to note that we can apply Theorem \ref{main} to such already known cases.

Under Theorem \ref{main}, to complete the answer of Question \ref{ques}, it is sufficient to consider the next question.

\begin{Que}
Let $(Y,\lambda)$ be a non-degenerate contact three manifold with $b_{1}(Y) =0$. If all periodic orbits are hyperbolic, does there exist at least one simple positive
hyperbolic orbit?
\end{Que}

For example, many nondegenerate contact hyperbolic three manifolds whose all periodic orbits are hyperbolic were constructed in \cite{FHa} as contact Anosov flows. In associated with this, the next question occurs.

\begin{Que}
What kind of three manifold admits a contact form whose all periodic orbits are hyperbolic?
\end{Que}

In the proof of this paper, we will exclude a case such that exactly one simple orbit is elliptic and all the others are negative hyperbolic orbits. So the next question also occurs. 

\begin{Que}
Are there any contact three manifolds such that exactly one simple orbit is elliptic and all the others are hyperbolic?
\end{Que}

Here, we introduce some results  which will be proved in the forthcoming papers \cite{S1} and \cite{S2} by using methods and results in this paper. 

\begin{enumerate}
    \item[\bf 1.] Let $L(p,q)$ $(p\neq \pm 1)$ be a lens space and $\lambda$ be a non-degenerate contact form on it with infinity many simple periodic orbits. If $p$ is odd, then there is a simple positive hyperbolic orbit (\cite{S1}).
    \item[\bf 2.] Let $(S^{3},\lambda)$ be a non-degenerate contact three sphere with infinity many simple orbits. If the Conley Zehnder index of a minimal periodic orbit with respect to the trivialization induced by a bounding disc  is more than or equal to three, then there is a simple positive hyperbolic orbit. In particular, generic compact strictly convex energy surface in $\mathbb{R}^4$ carries a simple positive hyperbolic orbit and more generally, every non-degenerate dynamically convex contact three sphere with infinity many simple orbits has a positive hyperbolic orbit (\cite{S2}).
\end{enumerate}
The above notion of dynamically convex was introduced in \cite{HWZ}.

The behaviors of ECH are completely different whether elliptic orbits exist or not. In particular,  the way in the present paper does not work if there is no elliptic orbit. But as results mentioned above, the author expects that the observations in this paper also furthermore help to study these kinds of question.

\subsection{Idea of the proof of the main theorem and the structure of this paper.}

First of all, in Section 2, we will review facts of Embedded contact homology that we will need in this paper.

To prove the main theorem, we divide the problem into two cases. One case is that $(Y,\lambda)$ has at least two simple elliptic orbits and the other is that $(Y,\lambda)$ has only one simple elliptic orbit. In Section 3, we will prove the former case by contradiction. Suppose that there is no simple positive hyperbolic orbit. Then the boundary operator $\partial$ used to define ECH always vanishes because of the property of ECH index (\ref{mod2}). Under the assumption, from $\partial=0$ we will introduce some notations and  use them to cause a contradiction with the volume property (Theorem \ref{volume}). 

In Section 4 and beyond, we will prove the main theorem under the case that $(Y,\lambda)$ has only one simple elliptic orbit. This is the most difficult part in this paper. The strategy of this part is inspired by the method in \cite{HCP} for proving the existence of infinity many Reeb orbits if $c_{1}(\mathrm{Ker}(\lambda)=\xi)$ is torsion (Theorem \ref{a}). Let us recall that way. Suppose that $c_{1}(\xi)$ is torsion and there are only finitely Reeb orbits. Then the increasing rate of the difference of  ECH index $I$ and $J_{0}$ index  are at most linear. By combining with volume property (Theorem \ref{volume}), we have either $J_{0}\leq 1$ holomorphic curve counted by the $U$-map  or sufficiently large consecutive $J_{0}=2$ holomorphic curves counted by the $U$-map. Moreover we can pick up such curves so that their energies are sufficiently small. If there is a $J_{0}\leq 1$ holomorphic curve, such a curve becomes a global section of the Reeb vector field and this implies there are infinity many Reeb orbits. If there are such sufficiently large consecutive $J_{0}=2$ holomorphic curves, then  at least one curve in them becomes a global section of the Reeb vector field and in the same as the other case, this implies there are infinity many Reeb orbits. As a result, we can conclude that $(Y,\lambda)$ has infinity many Reeb orbits. With this in mind, we now explain the strategy of the proof of the rest of the main theorem.

Suppose that there is no positive hyperbolic orbit. Note that $\partial=0$. At first, in the first half of Section 4, we will show Proposition \ref{asympdense} which asserts that the density of elements which does not have  good properties in a sense tend to 0 at the limit.  In this proof, we will also introduce some notations and technically use them combining with the volume property. In the latter half of this section, we will show Proposition \ref{main index 2} under Lemma \ref{mainlemma} by combining Proposition \ref{asympdense}. Proposition \ref{main index 2} asserts that there are  sufficiently large consecutive $J_{0}=2$ holomorphic curves counted by the $U$-map between two elements whose behaviors are  good in a sense especially whose energies are sufficiently small. Here the term “good” means that the $J$-holomorphic curves counted by the $U$-map are well-controlled for futher arguments. As the former part in this section, the volume property will also play an important role.

Next, in Section 5 we will prove Lemma \ref{mainlemma}. Lemma \ref{mainlemma} asserts that there is no $J_{0}\leq 1$ holomorphic curve counted by the $U$-map between two elements whose behaviors are  good. We will prove this by contradiction. Suppose that such a curve exists. To derive a contradiction, we will consider the compactification of the moduli spaces of the holomorphic curves counted by the $U$-map. Here we note that the $J$-holomorphic curves counted by the $U$-map through a fixed generic point $z\in Y$ and their topological types are controlled by $J_{0}$ index. By moving this $z$ and considering the properties of ECH index, we have some certain splitting curves and also approximate relations of actions in some orbits. This contradicts some properties of ECH index, especially the partition conditions of ends (Definition \ref{part}) and the properties of $S_{\pm\theta}$.

In Section 6 and Section 7, we will state and prove Proposition \ref{nagai}. Proposition \ref{nagai} asserts that such $J_{0}=2$ holomorphic curves obtained in Proposition \ref{main index 2} can be classified into six types. In particular, each type has some approximate  relations of actions in some orbits. In order to determine approximate relations, we will also observe the splitting behaviors of the $J$-holomorphic curves with small energy. More specifically, in this step,  we will list all possibilities of their splitting ways and solve dozens of simultaneous approximate equations to determine the approximate relations.

In Section 8, we will derive a contradiction from Proposition \ref{nagai} and Proposition \ref{main index 2}. More precisely, the approximate relations of actions in some orbits obtained in Proposition \ref{nagai} restrict the consecutiveness of $J_{0}=2$ holomorphic curves counted by the $U$-map. This  contradicts the result obtained in Proposition \ref{main index 2} that there are  sufficiently large consecutive $J_{0}=2$ holomorphic curves counted by the $U$-map. As a result, we will complete the proof of the main theorem.
\subsection*{Acknowledgement}
The author would like to thank his advisor Professor Kaoru Ono
for his discussion and checking the preliminary version of this paper in detail, and Suguru Ishikawa for a series of discussion. He also  thanks Masayuki Asaoka and Kei Irie for some comments.
This work was supported by JSPS KAKENHI Grant Number JP21J20300.

\section{Preliminaries}
\subsection{The definitions and properties of Embedded contact homology}
 
Let $(Y,\lambda)$ be a non-degenerate contact three manifold. For $\Gamma \in H_{1}(Y;\mathbb{Z})$, Embedded contact homology $\mathrm{ECH}(Y,\lambda,\Gamma)$ is defined. At first, we define the chain complex $(\mathrm{ECC}(Y,\lambda,\Gamma),\partial)$. In this paper, we consider ECH over $\mathbb{Z}/2\mathbb{Z}=\mathbb{F}$.

\begin{dfn} [{\cite[Definition 1.1]{H1}}]\label{qdef}
An orbit set $\alpha=\{(\alpha_{i},m_{i})\}$ is a finite pair of distinct simple periodic orbit $\alpha_{i}$ with positive integer $m_{i}$.
If $m_{i}=1$ whenever $\alpha_{i}$ is hyperboric orbit, then $\alpha=\{(\alpha_{i},m_{i})\}$ is called an admissible orbit set.
\end{dfn}
Set $[\alpha]=\sum m_{i}[\alpha_{i}] \in H_{1}(Y)$. For two orbit sets $\alpha=\{(\alpha_{i},m_{i})\}$ and $\beta=\{(\beta_{j},n_{j})\}$ with $[\alpha]=[\beta]$, we define  $H_{2}(Y,\alpha,\beta)$ to be the set of relative homology classes of
2-chains $Z$ in $Y$ with $\partial Z =\sum_{i}m_{i} \alpha_{i}-\sum_{j}m_{j}\beta_{j}$ . This is an affine space over $H_{2}(Y)$. From now on. we fix a trivialization of contact plane $\xi$ over each simple orbit and write it by $\tau$. 

\begin{dfn}[{\cite[{\S}8.2]{H1}}]
Let $\alpha_{1}$, $\beta_{1}$, $\alpha_{2}$ and $\beta_{2}$ be orbit sets with $[\alpha_{1}]=[\beta_{1}]$ and $[\alpha_{2}]=[\beta_{2}]$. For a trivialization $\tau$, we can define
\begin{equation}
    Q_{\tau}:H_{2}(Y;\alpha_{1},\beta_{1}) \times H_{2}(Y;\alpha_{2},\beta_{2}) \to \mathbb{Z}
\end{equation}
This is well-defined. Moreover if $Z_{1}\in H_{2}(Y;\alpha_{1},\beta_{1})$ and $Z_{2} \in H_{2}(Y;\alpha_{2},\beta_{2})$, then
\begin{equation}
    Q_{\tau}(Z_{1}+Z_{2},Z_{1}+Z_{2})=Q_{\tau}(Z_{1},Z_{1})+2Q_{\tau}(Z_{1},Z_{2})+Q_{\tau}(Z_{2},Z_{2}).
\end{equation}
See {\cite[{\S}8.2]{H1}} for more detail definitions.
\end{dfn}

\begin{dfn}[{\cite[Definition 1.5]{H1}}]
For $Z\in H_{2}(Y,\alpha,\beta)$, we define
\begin{equation}
    I(\alpha,\beta,Z):=c_{1}(\xi|_{Z},\tau)+Q_{\tau}(Z)+\sum_{i}\sum_{k=1}^{m_{i}}\mu_{\tau}(\alpha_{i}^{k})-\sum_{j}\sum_{k=1}^{n_{j}}\mu_{\tau}(\beta_{j}^{k}).
\end{equation}
We call $I(\alpha,\beta,Z)$ an ECH index. Here,  $\mu_{\tau}$ is the Conely Zhender index with respect to $\tau$ and $c_{1}(\xi|_{Z},\tau)$ is a reative Chern number  and $Q_{\tau}(Z)=Q_{\tau}(Z,Z)$. Moreover this is independent of $\tau$ (see  \cite{H1} for more details).
\end{dfn}
\begin{prp}[{\cite[Proposition 1.6]{H1}}]
 The ECH index $I$ has the following properties.
  \item[1.] For orbit sets $\alpha, \beta, \gamma$ with $[\alpha]=[\beta]=[\gamma]=\Gamma\in H_{1}(Y)$ and $Z\in H_{2}(Y,\alpha,\beta)$, $Z'\in H_{2}(Y,\beta,\gamma)$,
  \begin{equation}\label{adtiv}
  I(\alpha,\beta,Z)+I(\beta,\gamma,Z')=I(\alpha,\gamma,Z+Z').
  \end{equation}
  \item[2.] For $Z, Z'\in H_{2}(Y,\alpha,\beta)$,
  \begin{equation}\label{homimi}
      I(\alpha,\beta,Z)-I(\alpha,\beta,Z')=<c_{1}(\xi)+2\mathrm{PD}(\Gamma),Z-Z'>.
  \end{equation}
  \item[3.] If $\alpha$ and $\beta$ are admissible orbit sets,
  \begin{equation}\label{mod2}
      I(\alpha,\beta,Z)=\epsilon(\alpha)-\epsilon(\beta) \,\,\,\mathrm{mod}\,\,2.
  \end{equation}
  Here, $\epsilon(\alpha)$, $\epsilon(\beta)$ are the numbers of positive hyperbolic orbits in $\alpha$, $\beta$ respectively.

\end{prp}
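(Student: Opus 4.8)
All three assertions follow from unwinding the definition
$I(\alpha,\beta,Z)=c_1(\xi|_Z,\tau)+Q_\tau(Z)+\sum_i\sum_{k=1}^{m_i}\mu_\tau(\alpha_i^k)-\sum_j\sum_{k=1}^{n_j}\mu_\tau(\beta_j^k)$
and analysing its three pieces --- the relative Chern number, the relative self-intersection number, and the Conley--Zehnder sum --- separately. I would establish the properties in the order $(1),(2),(3)$, since the modulo $2$ statement uses the $Z$-independence that comes out of $(2)$.

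For $(1)$ (additivity), the Conley--Zehnder sums telescope at once: the $\beta$-terms of $I(\alpha,\beta,Z)$ cancel the $\beta$-terms of $I(\beta,\gamma,Z')$, leaving exactly the orbit sums occurring in $I(\alpha,\gamma,Z+Z')$. The relative first Chern number is additive by construction: representing $Z+Z'$ by a $2$-chain obtained by concatenating chains for $Z$ and $Z'$ along $\beta$, and counting zeros of a section of $\xi$ that is nonvanishing and $\tau$-constant near $\alpha\cup\beta\cup\gamma$, the zero count over the concatenation is the sum of the two zero counts. The one delicate point is $Q_\tau(Z+Z')=Q_\tau(Z)+Q_\tau(Z')$ for this concatenation --- which is not the bilinear identity displayed above, that one governing sums inside a single $H_2(Y;\alpha,\beta)$. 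Here I would use the description of $Q_\tau$ as an intersection number of surfaces in $\mathbb{R}\times Y$ whose ends are made disjoint using $\tau$, pick representatives for $Z$ and $Z'$ glued along a neighbourhood of $\{0\}\times\beta$, and check that the $\tau$-linking corrections at the shared $\beta$-ends cancel --- they enter with opposite signs, $\beta$ being a negative end for $Z$ and a positive end for $Z'$ --- in a way matching the intersection behaviour of the concatenated surfaces near $\{0\}\times\beta$.

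For $(2)$ (dependence on $Z$), the Conley--Zehnder sums do not involve $Z$, so it suffices to see how $c_1(\xi|_Z,\tau)$ and $Q_\tau(Z)$ change when $Z$ is replaced by $Z+W$ for a closed class $W=Z-Z'\in H_2(Y)$. Gluing the chains for $Z$ and for a surface representing $W$ produces a configuration on which the relative Chern number acquires the honest evaluation, so $c_1(\xi|_{Z+W},\tau)-c_1(\xi|_Z,\tau)=\langle c_1(\xi),W\rangle$. For $Q_\tau$, I would again realize it as an intersection pairing in $\mathbb{R}\times Y$ and represent $W$ by a closed surface sitting in a time slice $\{t_0\}\times Y$: pushing the slice off to large $|t_0|$ gives $Q_\tau(W,W)=0$, while for $t_0\gg 0$ a representative of $Z$ looks near time $t_0$ like the cylinders over $\alpha$, so $Q_\tau(Z,W)$ equals the intersection in $Y$ of the $1$-cycle $\sum_i m_i\alpha_i$ with the $2$-cycle representing $W$, namely $\langle\mathrm{PD}(W),\Gamma\rangle$. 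Hence, by the bilinear identity, $Q_\tau(Z+W)-Q_\tau(Z)=2\langle\mathrm{PD}(W),\Gamma\rangle=\langle 2\,\mathrm{PD}(\Gamma),W\rangle$ by Poincar\'e duality, and adding the two contributions gives $\langle c_1(\xi)+2\,\mathrm{PD}(\Gamma),Z-Z'\rangle$.

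For $(3)$ (modulo $2$), I would first note that $(2)$ makes $I(\alpha,\beta,Z)\bmod 2$ independent of $Z$: the class $2\,\mathrm{PD}(\Gamma)$ is even, and $c_1(\xi)$ reduces modulo $2$ to $w_2(TY)$, which vanishes since an oriented $3$-manifold is parallelizable. It then remains to compute $I$ modulo $2$ for one convenient $Z$. Using that $\mu_\tau(\gamma^k)$ is even for all $k$ when $\gamma$ is positive hyperbolic, odd for all $k$ when $\gamma$ is elliptic, and equal to $kn$ with $n$ odd when $\gamma$ is negative hyperbolic, the total Conley--Zehnder contribution $\sum_i\sum_{k=1}^{m_i}\mu_\tau(\alpha_i^k)$ of an admissible orbit set $\alpha$ is congruent modulo $2$ to $\#\{\text{negative hyperbolic }\alpha_i\}+\sum_{\text{elliptic }\alpha_i}m_i$. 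The remaining input is a linking computation showing that $c_1(\xi|_Z,\tau)+Q_\tau(Z)$ is congruent modulo $2$ to $\bigl(\#\{\text{hyperbolic }\alpha_i\}+\sum_{\text{elliptic }\alpha_i}m_i\bigr)$ minus the same expression for $\beta$; this I would extract by writing $c_1(\xi|_Z,\tau)+Q_\tau(Z)$ in terms of the self-linking numbers, relative to $\tau$, of the braids cut out near $\alpha$ and near $\beta$ by a representative of $Z$, and reducing modulo $2$. Combining the two congruences, the elliptic multiplicities cancel in pairs and $\#\{\text{hyperbolic}\}+\#\{\text{negative hyperbolic}\}\equiv\#\{\text{positive hyperbolic}\}$, so one is left with $\epsilon(\alpha)-\epsilon(\beta)$. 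I expect this last modulo $2$ identity for $c_1(\xi|_Z,\tau)+Q_\tau(Z)$ --- together with its close relative, the $\beta$-end cancellation in the $Q_\tau$-additivity of $(1)$ --- to be the main obstacle: both amount to tracking the $\tau$-dependent linking data of the ends precisely enough that the arbitrary choice of trivialization washes out of the final formula.
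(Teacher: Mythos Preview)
The paper does not supply a proof of this proposition; it is quoted from \cite[Proposition~1.6]{H1} and used as a black box. So there is nothing in the present paper to compare against --- only Hutchings' original argument.

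Your plan for parts (1) and (2) is the standard one and is correct: the Conley--Zehnder sums telescope, $c_1(\xi|_{-},\tau)$ is additive under concatenation and shifts by $\langle c_1(\xi),W\rangle$ under $Z\mapsto Z+W$, and the bilinear identity for $Q_\tau$ together with $Q_\tau(W,W)=0$ and $Q_\tau(Z,W)=[\alpha]\cdot W$ yields the $2\,\mathrm{PD}(\Gamma)$ term. Your remark that $c_1(\xi)\equiv w_2(\xi)=w_2(TY)=0\pmod 2$, so that $I\bmod 2$ is $Z$-independent, is also right.

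For part (3) the strategy matches Hutchings', but two points deserve sharpening. First, your target formula for $c_1+Q_\tau\bmod 2$ is really $c_1+Q_\tau\equiv\sum_i m_i-\sum_j n_j\pmod 2$ (total multiplicities): for admissible orbit sets $\#\{\text{hyperbolic }\alpha_i\}+\sum_{\text{elliptic}}m_i=\sum_i m_i$, and phrasing it via orbit types obscures that $c_1$ and $Q_\tau$ are purely topological and cannot see hyperbolic vs.\ elliptic. Second, the step you flag as the main obstacle is genuinely the crux, and ``writing $c_1+Q_\tau$ in terms of self-linking numbers of braids'' is not by itself enough --- $c_1+Q_\tau$ also carries interior information about $Z$. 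What makes the reduction work is that $c_1+Q_\tau\bmod 2$ is $\tau$-independent (the change-of-trivialization contributes $(m-n)(m+n-1)$, always even) and $Z$-independent (by part (2) and evenness of $c_1(\xi)$), hence is a function of $(\alpha,\beta)$ alone; additivity then forces it to have the form $f(\alpha)-f(\beta)$, and a single local model pins down $f(\alpha)\equiv\sum_i m_i$. In \cite{H1} this is executed via the explicit change-of-trivialization and adjunction formulas developed there; your sketch names the right ingredients but has not yet assembled them into a proof.
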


For $\Gamma \in H_{1}(Y)$, we define $\mathrm{ECC}(Y,\lambda,\Gamma)$ as freely generated module over $\mathbb{Z}/2$ by admissible orbit sets $\alpha$ such that $[\alpha]=\Gamma$. That is
\begin{equation}
    \mathrm{ECC}(Y,\lambda,\Gamma):= \bigoplus_{\alpha:\mathrm{admissibe\,\,orbit\,\,set\,\,with\,\,}{[\alpha]=\Gamma}}\mathbb{Z}_{2}\langle \alpha \rangle.
\end{equation}
To define the differential $\partial:\mathrm{ECC}(Y,\lambda,\Gamma)\to \mathrm{ECC}(Y,\lambda,\Gamma) $, we pick a generic almost complex structure $J$  on $\mathbb{R}\times Y$ which satisfies

\begin{enumerate}
    \item $\mathbb{R}$-invariant
    \item $J(\frac{d}{ds})=X_{\lambda}$
    \item $J\xi=\xi$
\end{enumerate}

We consider $J$-holomorphic curves  $u:(\Sigma,j)\to (\mathbb{R}\times Y,J)$ where
the domain $(\Sigma, j)$ is a punctured compact Riemann surface. Here the domain $\Sigma$ is
not necessarily connected.  Let $\gamma$ be a (not necessarily simple) Reeb orbit.  If a puncture
of $u$ is asymptotic to $\mathbb{R}\times \gamma$ as $s\to \infty$, we call it a positive end of $u$ at $\gamma$ and if a puncture of $u$ is asymptotic to $\mathbb{R}\times \gamma$ as $s\to -\infty$, we call it a negative end of $u$ at $\gamma$ ( see \cite{H1} for more details ).

Let $u:(\Sigma,j)\to (\mathbb{R}\times Y,J)$ and $u':(\Sigma',j')\to (\mathbb{R}\times Y,J)$ be two $J$-holomorphic curves. If there is a biholomorphic map $\phi:(\Sigma,j)\to (\Sigma',j')$ with $u'\circ \phi= u$, we regard $u$ and $u'$ as equivalent.

 Let $\alpha=\{(\alpha_{i},m_{i})\}$ and $\beta=\{(\beta_{i},n_{i})\}$ be orbit sets. Let $\mathcal{M}^{J}(\alpha,\beta)$ denote the set of  $J$-holomorphic curves with positive ends
at covers of $\alpha_{i}$ with total covering multiplicity $m_{i}$, negative ends at covers of $\beta_{j}$
with total covering multiplicity $n_{j}$, and no other punctures. Moreover, in $\mathcal{M}^{J}(\alpha,\beta)$, we consider two
$J$-holomorphic curves   to be equivalent if they represent the same current in $\mathbb{R}\times Y$.
 
For $u \in \mathcal{M}^{J}(\alpha,\beta)$, we naturally have $[u]\in H_{2}(Y;\alpha,\beta)$ and set $I(u)=I(\alpha,\beta,[u])$. Moreover we define
\begin{equation}
     \mathcal{M}_{k}^{J}(\alpha,\beta):=\{\,u\in  \mathcal{M}^{J}(\alpha,\beta)\,|\,I(u)=k\,\,\}
\end{equation}

Under this notations, we define $\partial_{J}:\mathrm{ECC}(Y,\lambda,\Gamma)\to \mathrm{ECC}(Y,\lambda,\Gamma)$ as follows.

For admissible orbit set $\alpha$ with $[\alpha]=\Gamma$, we define

\begin{equation}
    \partial_{J} \langle \alpha \rangle=\sum_{\beta:\mathrm{admissible\,\,orbit\,\,set\,\,with\,\,}[\beta]=\Gamma} \# (\mathcal{M}_{1}^{J}(\alpha,\beta)/\mathbb{R})\cdot \langle \beta \rangle.
\end{equation}

Note that the above counting is well-defined and $\partial_{J} \circ \partial_{J}$. We can see the reason of the former in Proposition \ref{ind} and the later was proved in \cite{HT1} and \cite{HT2}. Moreover, the homology defined by $\partial_{J}$ does not depend on $J$ (see Theorem \ref{test}, or see \cite{T1}).

Before we get to the next subsection, recall (Fredholm) index.

For $u\in \mathcal{M}^{J}(\alpha,\beta)$, the  its (Fredholm) index is defined by
\begin{equation}
    \mathrm{ind}(u):=-\chi(u)+2c_{1}(\xi|_{[u]},\tau)+\sum_{k}\mu_{\tau}(\gamma_{k}^{+})-\sum_{l}\mu_{\tau}(\gamma_{l}^{-}).
\end{equation}
Here $\{\gamma_{k}^{+}\}$ is the set consisting of (not necessarilly simple) all positive ends of $u$ and $\{\gamma_{l}^{-}\}$ is that one of all negative ends.  Note that for generic $J$, if $u$ is connected and somewhere injective, then the moduli space of $J$-holomorphic
curves near $u$ is a manifold of dimension $\mathrm{ind}(u)$ (see \cite[Definition 1.3]{HT1}).

\subsection{$J$-holomorphic curves with small ECH index and partition conditions of multiplicities}
For $\theta\in \mathbb{R}\backslash \mathbb{Q}$, we define $S_{\theta}$ to be the set of positive integers $q$ such that $\frac{\lceil q\theta \rceil}{q}< \frac{\lceil q'\theta \rceil}{q'}$ for all $q'\in \{1,\,\,2,...,\,\,q-1\}$ and write $S_{\theta}=\{q_{0}=1,\,\,q_{1},\,\,q_{2},\,\,q_{3},...\}$ in increasing order. Also $S_{-\theta}=\{p_{0}=1,\,\,p_{1},\,\,p_{2},\,\,p_{3},...\}$.

\begin{prp}[{\cite[Proof of Lemma 3.3]{HT3}}, and {\cite[Proof of Remark 4.4]{H1}}]\label{s}
For $\theta \in \mathbb{R}\backslash \mathbb{Q}$,
    \item[1.] $q_{i+1}-q_{i}$ (resp. $p_{i+1}-p_{i}$) are nondecreasing with respect to $i$ and some elements of $S_{-\theta}$ (resp. $S_{\theta}$).
    \item[2.] $S_{\theta}\cap{S_{-\theta}}=\{1\}$,
    \item[3.] $q_{i+1}-q_{i}\to \infty$ (resp. $p_{i+1}-p_{i}\to \infty$) as $i\to \infty$.
\end{prp}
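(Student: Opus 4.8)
The plan is to reduce the three assertions to the classical combinatorics of one-sided best rational approximations of $\theta$, which I set up through Farey mediants so as to keep the argument self-contained. Since $\lceil q(\theta+n)\rceil=\lceil q\theta\rceil+qn$ for $n\in\mathbb{Z}$, the set $S_{\theta}$ depends only on $\theta$ modulo $1$, so I may assume $\theta\in(0,1)$; and each assertion about $S_{-\theta}$ (and about the $p_{i}$) follows from the one about $S_{\theta}$ by replacing $\theta$ with $-\theta$ and using $S_{-(-\theta)}=S_{\theta}$. Writing $g(q)=\lceil q\theta\rceil/q$ and $h(q)=\lfloor q\theta\rfloor/q$, one has $q\in S_{\theta}$ precisely when $g(q)<g(q')$ for every $q'<q$ (a ``record low'' of $g$), and a direct rewriting of the definition gives $q\in S_{-\theta}$ precisely when $h(q)>h(q')$ for every $q'<q$ (a ``record high'' of $h$). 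Both $g$ and $h$ converge to $\theta$ from the appropriate side, with $0<g(q)-\theta<1/q$ and $0<\theta-h(q)<1/q$, so $S_{\theta}$ and $S_{-\theta}$ are infinite.

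The core of the proof is the following lemma, which I would establish first. \emph{Let $q'\in S_{\theta}$ with $q'\geq 2$, put $p'=\lceil q'\theta\rceil$, and let $p''/q_{*}$ be the lower Farey neighbour of $p'/q'$, that is, the unique fraction with $0<q_{*}<q'$ and $p'q_{*}-p''q'=1$. Then $p'/q'$ is in lowest terms, $p''/q_{*}<\theta$, $q_{*}\in S_{-\theta}$, $q_{*}$ is the largest element of $S_{-\theta}$ that is $<q'$, and $q'-q_{*}$ is the predecessor of $q'$ in $S_{\theta}$.} The first two conclusions are immediate from the record property: if $\gcd(p',q')>1$, or if $\theta<p''/q_{*}$, one produces a denominator $q''<q'$ with $g(q'')\leq g(q')$, contradicting $q'\in S_{\theta}$. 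The remaining conclusions rest on the elementary fact that a fraction lying strictly between two Farey neighbours $a/b<c/d$ (with $bc-ad=1$) has denominator $\geq b+d$: any element of $S_{-\theta}$ strictly between $q_{*}$ and $q'$, or any competitor to the record-high property at $q_{*}$, would be a fraction strictly between $p''/q_{*}$ and $p'/q'$ of denominator $<q_{*}+q'$; and the ``reverse mediant'' $(p'-p'')/(q'-q_{*})$, which is the Farey neighbour of $p'/q'$ on the opposite side and hence an upper approximation of $\theta$ of denominator $q'-q_{*}<q'$, is forced by the same denominator bound to equal $g(q'-q_{*})$, so that $q'-q_{*}\in S_{\theta}$ with no element of $S_{\theta}$ strictly between it and $q'$.

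Granting the lemma, the three parts are quick. Part~(2): if some $q\geq 2$ lay in $S_{\theta}\cap S_{-\theta}$, apply the lemma with $q'=q$; since $p''/q_{*}<\theta$ we get $h(q_{*})\geq p''/q_{*}$, and the identity $p'q_{*}-p''q=1$ with $q_{*}\geq 1$ gives $p''/q_{*}\geq(p'-1)/q=h(q)$, so $h(q_{*})\geq h(q)$ with $q_{*}<q$, contradicting that $q$ is a record high of $h$; hence $S_{\theta}\cap S_{-\theta}=\{1\}$. Part~(1): by the lemma the consecutive difference $q_{i+1}-q_{i}$ equals the integer $q_{*}$ attached to $q'=q_{i+1}$, which lies in $S_{-\theta}$ (the ``some elements of $S_{-\theta}$'' clause) and, being the largest element of $S_{-\theta}$ that is $<q_{i+1}$, is nondecreasing in $i$. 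Part~(3): since $q_{i+1}\to\infty$ and $S_{-\theta}$ is infinite, the largest element of $S_{-\theta}$ below $q_{i+1}$ tends to $\infty$, so $q_{i+1}-q_{i}\to\infty$. The statements for the $p_{i}$ are the mirror images under $\theta\mapsto-\theta$.

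I expect essentially all the work to be in the lemma, and within it in the ``reverse mediant'' step pinning $q'-q_{*}$ down as the predecessor of $q'$; the rest is a one-line consequence of the record property and the Farey denominator estimate. An equivalent but perhaps cleaner organization --- the one implicit in \cite[Lemma 3.3]{HT3} and \cite[Remark 4.4]{H1} --- runs the Stern--Brocot descent toward $\theta$, maintaining $a/b<\theta<c/d$ with $bc-ad=1$ and passing to the half of the mediant containing $\theta$: the successive upper bounds are the best approximations from above, their denominators enumerate $S_{\theta}$, the lower denominators enumerate $S_{-\theta}$, each upper update raises the denominator by the current lower denominator (an element of $S_{-\theta}$, nondecreasing and tending to $\infty$), which yields (1) and (3); and since an upper denominator first reaches any value $q\geq 2$ as a mediant $q=b+d$ with $b<q$, the nondecreasing lower denominators step over $q$ without taking it, which is (2).
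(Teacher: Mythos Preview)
The paper does not prove this proposition; it merely cites \cite[Proof of Lemma~3.3]{HT3} and \cite[Proof of Remark~4.4]{H1} and uses the result as a black box. Your argument via Farey neighbours and the Stern--Brocot descent is correct and is precisely the content of those cited proofs: the denominators of the successive best upper (resp.\ lower) rational approximations to $\theta$ enumerate $S_{\theta}$ (resp.\ $S_{-\theta}$), each upper update increments the denominator by the current lower denominator, and the disjointness in part~(2) follows because a mediant denominator $b+d$ is skipped by the other side. Your verification of part~(2) directly from the record-low/record-high characterisation, bypassing the full Stern--Brocot picture, is a nice alternative and is slightly more self-contained than the references. So your proof is correct and supplies what the paper omits; there is nothing substantive to compare.
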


\begin{dfn}[{\cite[Definition 7.1]{HT1}}, or {\cite[{\S}4]{H1}}]
For non negative integer $M$, we inductively define the incoming partition $P_{\theta}^{\mathrm{in}}(M)$ as follows.

For $M=0$, $P_{\theta}^{\mathrm{in}}(0)=\emptyset$ and for $M>0$,
\begin{equation}
    P_{\theta}^{\mathrm{in}}(M):=P_{\theta}^{\mathrm{in}}(M-a)\cup{(a)}
\end{equation}

where $a:=\mathrm{max}(S_{\theta}\cap{\{1,\,\,2,...,\,\,M\}})$.
 Define outgoing partition
 \begin{equation}
      P_{\theta}^{\mathrm{out}}(M):= P_{-\theta}^{\mathrm{in}}(M).
 \end{equation}
\end{dfn}

\begin{dfn}[{\cite[Definition 7.11]{HT1}}, or cf.{\cite[{\S}4]{H1}}]\label{part}
For a simple Reeb orbit $\gamma$ and positive integer $M$, define two partitions of $M$, the incoming partition $P_{\gamma}^{\mathrm{in}}(M)$ and the outgoing partition $P_{\gamma}^{\mathrm{out}}(M)$ as follows.

    \item[1.] If $\gamma$ is positive hyperbolic, then
    \begin{equation}
        P_{\gamma}^{\mathrm{in}}(M)=P_{\gamma}^{\mathrm{out}}:=(1,...,\,1)
    \end{equation}
    \item[2.] If $\gamma$ is negative hyperbolic, then
    \begin{equation}
   P_{\gamma}^{\mathrm{in}}(M)=P_{\gamma}^{\mathrm{out}}(M):=
  \begin{cases}
    (2,...,\,2) & \mathrm{if}\,\,M\,\,\mathrm{is\,\,even}, \\
     (2,...,\,2,\,1)                & \mathrm{if}\,\,M\,\,\mathrm{is\,\,odd},
  \end{cases}
\end{equation}
\item[3.] If $\gamma$ is elliptic, then

\begin{equation}
    P_{\gamma}^{\mathrm{in}}(M):=P_{\theta}^{\mathrm{in}}(M),\,\,\,\,P_{\gamma}^{\mathrm{out}}(M):=P_{\theta}^{\mathrm{out}}(M).
\end{equation}
where $\theta$ is the rotation number of $\gamma$ up to $\mathbb{Z}$.

The standard ordering convention for $P_{\gamma}^{\mathrm{in}}(M)$ or $P_{\gamma}^{\mathrm{out}}(M)$ is to list the entries
in``nonincreasing'' order.

\end{dfn}

 Let $\alpha=\{(\alpha_{i},m_{i})\}$ and $\beta=\{(\beta_{i},n_{i})\}$. For  $u\in \mathcal{M}^{J}(\alpha,\beta)$, it can be uniquely
written as $u=u_{0}\cup{u_{1}}$ where $u_{0}$ are unions of all components which maps to $\mathbb{R}$-invariant cylinders in $u$ and $u_{1}$ is the rest of $u$.

For $u=u_{0}\cup{u_{1}}\in \mathcal{M}^{J}(\alpha,\beta)$, let $P_{\alpha_{i}}^{+}$ denote the set consisting of the multiplicities of the positive ends of $u_{1}$ at covers of $\alpha_{i}$. Define $P_{\beta_{j}}^{-}$ analogously for the negative end.

\begin{dfn}[{\cite[Definition 7.13]{HT1}}]\label{adm}
$u=u_{0}\cup{u_{1}}\in \mathcal{M}^{J}(\alpha,\beta)$ is admissible if
    \item[1.] $u_{1}$ is embedded and does not intersect $u_{0}$
    \item[2.] For each simple Reeb orbit $\alpha_{i}$ in $\alpha$ (resp. $\beta_{j}$ in $\beta$), under the standerd ordering convention, $P_{\alpha_{i}}^{+}$(resp. $P_{\beta_{j}}^{-}$) is an initial segment of $P_{\alpha_{i}}^{\mathrm{out}}(m_{i})$(resp. $P_{\beta_{j}}^{\mathrm{in}}(n_{j})$).
\end{dfn}

\begin{prp}[{\cite[Proposition 7.15]{HT1}}]\label{ind}
Suppose that $J$ is generic and $u=u_{0}\cup{u_{1}}\in \mathcal{M}^{J}(\alpha,\beta)$. Then
    \item[1.] $I(u)\geq 0$
    \item[2.] If $I(u)=0$, then $u_{1}=\emptyset$
    \item[3.] If $I(u)=1$, then $u$ is admissible and $\mathrm{ind}(u_{1})=1$.
    \item[4.] If $I(u)=2$ and $\alpha$ and $\beta$ are admissible, then u is admissible and $\mathrm{ind}(u_{1})=2$.
\end{prp}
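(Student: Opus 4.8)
The plan is to derive all four assertions from Hutchings's \emph{index inequality}, which bounds $I(u)$ below by the Fredholm index of the non-trivial part of $u$ and pins down the equality case. Write $u=u_{0}\cup u_{1}$ as in the statement. The branched covers of $\mathbb{R}$-invariant cylinders making up $u_{0}$ contribute nothing to the ECH index: their relative homology class and $Q_{\tau}$ vanish and the Conley--Zehnder terms of their ends cancel in pairs, so $I(u)$ is determined by $u_{1}$ together with how the end multiplicities of $u_{1}$ and of $u_{0}$ split the total multiplicities $m_{i},n_{j}$. I would then decompose $u_{1}$ into distinct irreducible somewhere injective curves with multiplicities, and fold the self-intersections of any multiply covered component and the mutual intersections of distinct components into a count $\delta(u_{1})\ge 0$ that vanishes precisely when $u_{1}$ is embedded with simple, pairwise disjoint components. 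Since $J$ is generic, each non-$\mathbb{R}$-invariant somewhere injective curve $C$ has $\mathrm{ind}(C)\ge 1$, because the quotient $\mathcal{M}^{J}(\alpha,\beta)/\mathbb{R}$ near $C$ is a manifold of dimension $\mathrm{ind}(C)-1\ge 0$; hence $u_{1}\neq\emptyset$ forces $\mathrm{ind}(u_{1})\ge 1$.

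The three analytic inputs are: (i) the relative adjunction formula, which for a somewhere injective curve $C$ and a trivialization $\tau$ reads $c_{1}(\xi|_{[C]},\tau)=\chi(C)+Q_{\tau}([C])+w_{\tau}(C)-2\delta(C)$, with $w_{\tau}$ the total asymptotic writhe of the ends of $C$ measured in $\tau$; (ii) the writhe bound, obtained from the exponential (Siefring-type) asymptotics of $J$-holomorphic ends, which bounds the writhe of the braid formed by the ends of $u_{1}$ at a cover of a simple orbit $\gamma$ above by an explicit sum of Conley--Zehnder indices of iterates of $\gamma$, the combinatorics being exactly that of the sets $S_{\pm\theta}$ and the partitions $P_{\gamma}^{\mathrm{in}}(M)$, $P_{\gamma}^{\mathrm{out}}(M)$ of Definition~\ref{part}, with equality iff the end multiplicities of $u_{1}$ at $\gamma$ form an initial segment of the appropriate partition, i.e. iff the admissibility condition of Definition~\ref{adm} holds at $\gamma$; and (iii) a companion linking estimate comparing $Q_{\tau}([u])$ with $Q_{\tau}([u_{1}])$ (the remaining multiplicities being carried by $u_{0}$), governed by the same $S_{\pm\theta}$ combinatorics and vanishing under the same condition.

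Feeding (i) into the definition of $\mathrm{ind}(u_{1})$ and comparing with $I(u)$ using (ii), (iii) and the standard formulas for $\mu_{\tau}$ of iterates of elliptic and hyperbolic orbits yields
\[
  I(u)=\mathrm{ind}(u_{1})+2\delta(u_{1})+W(u),
\]
where $W(u)\ge 0$ collects the writhe and linking defects and $W(u)=0$ exactly when $u$ is admissible in the sense of Definition~\ref{adm}. Thus $I(u)\ge \mathrm{ind}(u_{1})+2\delta(u_{1})\ge 0$, which is claim~1. If $u_{1}\neq\emptyset$ the right side is $\ge 1$, so $I(u)=0$ forces $u_{1}=\emptyset$ --- claim~2. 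If $I(u)=1$ then $u_{1}\neq\emptyset$ (else $I(u)=0$), and $1=\mathrm{ind}(u_{1})+2\delta(u_{1})+W(u)\ge 1$ forces $\mathrm{ind}(u_{1})=1$, $\delta(u_{1})=0$, $W(u)=0$, so $u_{1}$ is embedded, disjoint from $u_{0}$, and $u$ is admissible --- claim~3. For claim~4, with $I(u)=2$ and $\alpha,\beta$ admissible, one checks $I(u)\equiv\mathrm{ind}(u_{1})\pmod 2$ --- this parity computation is exactly where the hypothesis that hyperbolic orbits in $\alpha,\beta$ have multiplicity one enters --- so $\mathrm{ind}(u_{1})$ is even; since $u_{1}\neq\emptyset$ gives $\mathrm{ind}(u_{1})\ge 1$, we get $\mathrm{ind}(u_{1})=2$, whence $\delta(u_{1})=W(u)=0$ and $u$ is admissible.

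The step I expect to be the real obstacle is input (ii): the asymptotic writhe bound and, above all, the sharpness of its equality case. This requires the precise eigenvalue and rotation-number asymptotics of $J$-holomorphic ends, followed by a combinatorial optimization showing that, among all partitions of the multiplicity at an end, the writhe is maximized by exactly one partition --- the one built from $S_{\pm\theta}$ --- so that the partition conditions of Definition~\ref{adm} are genuinely rigid; the monotonicity of the gaps in $S_{\pm\theta}$ and the identity $S_{\theta}\cap S_{-\theta}=\{1\}$ recorded in Proposition~\ref{s} are precisely what make this optimum unique. Input (iii) has the same character and is resolved by the same combinatorics, and once (i)--(iii) are in place the remaining deductions, including the parity observation for claim~4, are routine bookkeeping.
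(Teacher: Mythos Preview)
The paper does not prove this proposition at all: it is quoted verbatim from \cite[Proposition~7.15]{HT1} in the preliminaries section and used as a black box throughout. There is therefore no ``paper's own proof'' to compare your proposal against.

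That said, your outline is essentially the argument one finds in \cite{HT1} and the earlier \cite{H1}: the relative adjunction formula together with the asymptotic writhe bounds yields the ECH index inequality $I(u)\ge \mathrm{ind}(u_{1})+2\delta(u_{1})$, with equality forcing the partition conditions of Definition~\ref{adm}; genericity of $J$ then gives $\mathrm{ind}(u_{1})\ge 1$ whenever $u_{1}\neq\emptyset$, and the four claims follow. One small caution: your assertion that the $u_{0}$ part ``contributes nothing to the ECH index'' is slightly glib --- the cross-terms in $Q_{\tau}$ between $u_{0}$ and $u_{1}$ are exactly what your input (iii) is accounting for, so $I(u)$ is not simply $I(u_{1})$ --- but you have the right bookkeeping in place to handle this. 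Your identification of the writhe bound and its sharp equality case as the analytic crux is accurate.
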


\subsection{The gradings and the $U$-map}

By (\ref{homo}) and (\ref{adi}), we can see that  if $c_{1}(\xi)+2\mathrm{PD(\Gamma)}$ is divisible by $d$, admissible orbits sets have relative $\mathbb{Z}/d$-grading. So we can decompose $\mathrm{ECC}(Y,\lambda,\Gamma)$ as direct sum by $\mathbb{Z}/d$-grading.
\begin{equation}\label{direc}
    \mathrm{ECC}(Y,\lambda,\Gamma)= \bigoplus_{*:\,\,\mathbb{Z}/d\,\,\mathrm{grading}}\mathrm{ECC}_{*}(Y,\lambda,\Gamma).
\end{equation}
Note that if $c_{1}(\xi)+2\mathrm{PD}(\Gamma)$ is torsion, there exists the relative $\mathbb{Z}$-grading.
The same as (\ref{direc}), we can see that
 \begin{equation}
    \mathrm{ECH}(Y,\lambda,\Gamma):= \bigoplus_{*:\,\,\mathbb{Z}\,\,\mathrm{grading}}\mathrm{ECH}_{*}(Y,\lambda,\Gamma).
\end{equation}

Let $Y$ be connected.
Then there is degree$-2$ map $U$.
\begin{equation}\label{Umap}
    U:\mathrm{ECH}_{*}(Y,\lambda,\Gamma) \to \mathrm{ECH}_{*-2}(Y,\lambda,\Gamma).
\end{equation}

To define this, choose a base point $z\in Y$ which is not on the image of any Reeb orbit and let $J$ be generic.
Then define a map 

\begin{equation}
     U_{J,z}:\mathrm{ECC}_{*}(Y,\lambda,\Gamma) \to \mathrm{ECC}_{*-2}(Y,\lambda,\Gamma).
\end{equation}

by
\begin{equation}
    U_{J,z} \langle \alpha \rangle=\sum_{\beta:\mathrm{admissible\,\,orbit\,\,set\,\,with\,\,}[\beta]=\Gamma} \# \{\,u\in \mathcal{M}_{2}^{J}(\alpha,\beta)/\mathbb{R})\,|\,(0,z)\in u\,\}\cdot \langle \beta \rangle.
\end{equation}

The above map $U_{J,z}$ is a chain map, and we define the $U$ map
as the induced map on homology.  Under the assumption, this map is independent on $z$ (for a generic $J$). See \cite[{\S}2.5]{HT3} for more details. Moreover, in the same reason as $\partial$, $U_{J,z}$ does not depend on $J$ (see Theorem \ref{test}, and see \cite{T1}).

In this paper, we choose a suitable generic $J$ as necessary (Specifically, we choose a generic $J$ so that $U_{J,z}$ is well-defined for some countable sequences $z$ appearing in the future discussions).

The next isomorphism is important.

\begin{them}[\cite{T1}]\label{test}
For each $\Gamma\in H_{1}(Y)$, there is an isomorphism

\begin{equation}
\mathrm{ECH}_{*}(Y,\lambda,\Gamma) \cong \reallywidecheck{HM}_{*}(-Y,\mathfrak{s}(\xi)+2\mathrm{PD}(\Gamma))
\end{equation}
of relatively $\mathbb{Z}/d\mathbb{Z}$-graded abelian groups. Here $d$ is the divisibility of $\mathfrak{s}(\xi)+2\mathrm{PD}(\Gamma)$ in $H_{1}(Y)$ mod torsion and $\mathfrak{s}(\xi)$ is the
spin-c structure associated to the oriented 2–plane field  as in \cite{KM}.

Moerover, the above isomorphism interchanges
the map $U$ in (\ref{Umap}) with the map
\begin{equation}
   U_{\dag}: \reallywidecheck{HM}_{*}(-Y,\mathfrak{s}(\xi)+2\mathrm{PD}(\Gamma)) \longrightarrow \reallywidecheck{HM}_{*-2}(-Y,\mathfrak{s}(\xi)+2\mathrm{PD}(\Gamma))
\end{equation}
defined in \cite{KM}.

\end{them}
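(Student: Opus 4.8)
The plan is to follow Taubes's program relating the three-dimensional Seiberg--Witten equations on $Y$ to the Reeb dynamics of $\lambda$, deforming both the generators and the differential of $\mathrm{ECC}(Y,\lambda,\Gamma)$ onto the monopole Floer side. First I would set up the perturbed Seiberg--Witten equations on $(Y,\lambda)$ with a large real parameter $r$: using the contact form to build the perturbing term $r\,d\lambda$ (together with an auxiliary $1$-form needed for transversality), one studies solutions $(A,\Psi)$ of the $r$-dependent equations $*F_A=r(\langle\Psi\Psi^{*}\rangle-\text{const})+\ldots$. The central analytic input is that as $r\to\infty$ the spinor $\Psi$ of an irreducible solution is close to a nonvanishing section away from a link, and the components of that link converge, with multiplicities, to a Reeb orbit set $\alpha$ with $[\alpha]=\Gamma$ satisfying the admissibility/nondegeneracy condition; conversely, every admissible orbit set produces, for $r$ large, exactly one gauge equivalence class of solution. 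This yields a bijection, for all sufficiently large $r$ and in the appropriate energy range, between the ECH generators of homology class $\Gamma$ and the generators of the monopole Floer complex $\mathrm{CM}^{*}(-Y,\mathfrak{s}(\xi)+2\mathrm{PD}(\Gamma))$.

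Next I would match the differentials. The monopole Floer differential counts, after perturbation, rigid solutions of the four-dimensional Seiberg--Witten equations on $\mathbb{R}\times Y$ asymptotic at $\pm\infty$ to the solutions above; the claim is that for $r$ large these instantons are in bijection (over $\mathbb{F}=\mathbb{Z}/2$ this is just a count of points) with the elements of $\mathcal{M}^{J}_{1}(\alpha,\beta)/\mathbb{R}$ defining $\partial_{J}$. This is the reverse of Taubes's ``$\mathrm{SW}\Rightarrow\mathrm{Gromov}$'' analysis: one shows the zero locus of the spinor of an instanton converges as $r\to\infty$ to a broken $J$-holomorphic building, that the limiting building has ECH index $1$ and hence by Proposition \ref{ind} is an honest index $1$ curve, and conversely that each such curve glues back to a unique instanton for $r$ large. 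The grading statement — that the relative $\mathbb{Z}/d$-grading by the ECH index $I$ goes over to the monopole Floer grading — follows by comparing $I$ with the spectral flow of the associated family of Dirac operators, both being governed by $c_{1}(\xi)+2\mathrm{PD}(\Gamma)$ and the Conley--Zehnder data of the orbits. One then verifies that the chain-level identification intertwines the two differentials, so it descends to the asserted isomorphism on homology, and that the result is independent of $r$, of $J$, and of the perturbations by the usual continuation and chain-homotopy arguments.

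Finally, for the $U$-map: in monopole Floer homology $U_{\dag}$ is a version of cap product with the degree-$2$ class dual to a point, realized at the chain level by counting instantons constrained to pass through a fixed point of $\mathbb{R}\times Y$. Tracing this point constraint through the large-$r$ correspondence of the previous paragraph identifies it with the count $\#\{u\in\mathcal{M}^{J}_{2}(\alpha,\beta)/\mathbb{R}\mid(0,z)\in u\}$, i.e.\ with $U_{J,z}$; hence the isomorphism intertwines $U$ with $U_{\dag}$, and in particular $U$ is independent of $z$ for generic $J$.

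I expect the main obstacle to be the analytic core of the second step: establishing, uniformly in $r$, the compactness of the Seiberg--Witten instanton moduli spaces and their convergence to $J$-holomorphic buildings, together with the matching gluing theorem. Ruling out degenerations into multiply covered components or into components of negative ECH index, handling the reducible solutions and the exact triangle relating the flavors of monopole Floer homology, and producing the chain homotopies that make the correspondence canonical are the delicate points — this is precisely why Taubes's proof occupies several long papers, and I would only sketch it here.
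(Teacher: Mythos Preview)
The paper does not prove this theorem at all: it is stated as a citation of Taubes's result \cite{T1} (with the full proof spread over the series including \cite{T2}) and used as a black box. There is therefore no ``paper's own proof'' to compare your proposal against.

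That said, your sketch is a faithful high-level outline of Taubes's actual program --- large-$r$ deformation of the Seiberg--Witten equations by $r\,d\lambda$, the correspondence between irreducible solutions and admissible orbit sets, the instanton/holomorphic-curve correspondence for the differential, and the point-constraint interpretation of $U_{\dag}$. Your closing caveat is exactly right: the analytic core (uniform compactness, convergence to buildings, gluing, exclusion of bad degenerations) is the substance of several hundred pages and cannot be reduced to a sketch; so what you have written is an accurate roadmap rather than a proof. For the purposes of this paper nothing more is expected --- the theorem is imported, not reproved.
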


Here $\reallywidecheck{HM}_{*}(-Y,\mathfrak{s}(\xi)+2\mathrm{PD}(\Gamma))$ is a version of Seiberg-Witten Floer homology with
$\mathbb{Z}/2\mathbb{Z}$ coefficients defined by Kronheimer-Mrowka \cite{KM}.

\subsection{$J_{0}$ index and topological complexity of $J$-holomorphic curve}

In this subsection, we recall the $J_{0}$ index.

\begin{dfn}[{\cite[{\S}3.3]{HT3}}]
Let $\alpha=\{(\alpha_{i},m_{i})\}$ and $\beta=\{(\beta_{j},n_{j})\}$ be orbit sets with $[\alpha]=[\beta]$.
For $Z\in H_{2}(Y,\alpha,\beta)$, we define
\begin{equation}
    J_{0}(\alpha,\beta,Z):=-c_{1}(\xi|_{Z},\tau)+Q_{\tau}(Z)+\sum_{i}\sum_{k=1}^{m_{i}-1}\mu_{\tau}(\alpha_{i}^{k})-\sum_{j}\sum_{k=1}^{n_{j}-1}\mu_{\tau}(\beta_{j}^{k}).
\end{equation}
\end{dfn}

\begin{prp}[{\cite[{\S}3.3]{HT3}} {\cite[{\S}2.6]{CHR}}]
 The index $J_{0}$ has the following properties.

  \item[1.] For orbit sets $\alpha, \beta, \gamma$ with $[\alpha]=[\beta]=[\gamma]=\Gamma\in H_{1}(Y)$ and $Z\in H_{2}(Y,\alpha,\beta)$, $Z'\in H_{2}(Y,\beta,\gamma)$,
  \begin{equation}\label{adi}
  J_{0}(\alpha,\beta,Z)+J_{0}(\beta,\gamma,Z')=J_{0}(\alpha,\gamma,Z+Z').
  \end{equation}
  \item[2.] For $Z, Z'\in H_{2}(Y,\alpha,\beta)$,
  \begin{equation}\label{homo}
      J_{0}(\alpha,\beta,Z)-J_{0}(\alpha,\beta,Z')=<-c_{1}(\xi)+2\mathrm{PD}(\Gamma),Z-Z'>.
  \end{equation}

\end{prp}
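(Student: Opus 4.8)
The plan is to reduce both assertions to the corresponding, already established properties of the ECH index $I$ recorded in \cite[Proposition 1.6]{H1}, since $J_{0}$ and $I$ differ only by the sign in front of the relative Chern number and by the omission of the top Conley--Zehnder term over each orbit. Comparing the two definitions term by term gives the bookkeeping identity
\[
J_{0}(\alpha,\beta,Z)\;=\;I(\alpha,\beta,Z)\;-\;2\,c_{1}(\xi|_{Z},\tau)\;-\;\sum_{i}\mu_{\tau}(\alpha_{i}^{m_{i}})\;+\;\sum_{j}\mu_{\tau}(\beta_{j}^{n_{j}}),
\]
which I would establish first and then use repeatedly.

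For the additivity statement (\ref{adi}), I would substitute this identity into $J_{0}(\alpha,\beta,Z)+J_{0}(\beta,\gamma,Z')$. The terms $\sum_{j}\mu_{\tau}(\beta_{j}^{n_{j}})$ occur once with a $+$ sign (from the $\beta$-ends of the first $J_{0}$) and once with a $-$ sign (from the $\beta$-ends of the second $J_{0}$), so they cancel; what remains is $I(\alpha,\beta,Z)+I(\beta,\gamma,Z')-2\big(c_{1}(\xi|_{Z},\tau)+c_{1}(\xi|_{Z'},\tau)\big)-\sum_{i}\mu_{\tau}(\alpha_{i}^{m_{i}})+\sum_{l}\mu_{\tau}(\gamma_{l}^{p_{l}})$. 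Now I would invoke the additivity of $I$ from (\ref{adtiv}) together with the additivity of the relative first Chern number, $c_{1}(\xi|_{Z+Z'},\tau)=c_{1}(\xi|_{Z},\tau)+c_{1}(\xi|_{Z'},\tau)$ — the latter being immediate from the definition of $c_{1}(\xi|_{Z},\tau)$ as the signed count of zeros of a section of $\xi$ over $Z$ that is $\tau$-horizontal near $\partial Z$, two such sections over $Z$ and $Z'$ being glued along the common orbit set $[\beta]$. Comparing the result with the same identity applied to $J_{0}(\alpha,\gamma,Z+Z')$ finishes this part.

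For the change-of-class statement (\ref{homo}), I would use that for $Z,Z'\in H_{2}(Y,\alpha,\beta)$ the difference $Z-Z'$ lies in the absolute group $H_{2}(Y)$ and that every Conley--Zehnder term in $J_{0}$ is independent of the chosen relative class, so that $J_{0}(\alpha,\beta,Z)-J_{0}(\alpha,\beta,Z')=-\big(c_{1}(\xi|_{Z},\tau)-c_{1}(\xi|_{Z'},\tau)\big)+\big(Q_{\tau}(Z)-Q_{\tau}(Z')\big)$. The first difference is the purely topological relation $c_{1}(\xi|_{Z},\tau)-c_{1}(\xi|_{Z'},\tau)=\langle c_{1}(\xi),Z-Z'\rangle$, and subtracting it from property (\ref{homimi}) of $I$ (whose Conley--Zehnder terms likewise do not depend on the relative class) isolates $Q_{\tau}(Z)-Q_{\tau}(Z')=\langle 2\,\mathrm{PD}(\Gamma),Z-Z'\rangle$. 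Substituting both into the displayed difference gives $J_{0}(\alpha,\beta,Z)-J_{0}(\alpha,\beta,Z')=\langle -c_{1}(\xi)+2\,\mathrm{PD}(\Gamma),Z-Z'\rangle$, as claimed.

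I do not expect a genuine obstacle here: the substantive content — the behavior of the relative intersection pairing $Q_{\tau}$ under concatenation and under a change of relative class, including the linking-number corrections at the shared orbit set — is already absorbed into the cited properties of $I$ in \cite{H1}, and the remaining manipulations are purely algebraic. The one step deserving an explicit sentence is the additivity of the relative Chern number under gluing along $[\beta]$; once that is stated, both identities drop out by the substitutions above.
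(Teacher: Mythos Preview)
Your proof is correct. The paper does not give its own proof of this proposition; it merely cites \cite[\S3.3]{HT3} and \cite[\S2.6]{CHR}, so there is nothing to compare against beyond noting that your reduction to the already established properties of $I$ via the bookkeeping identity $J_{0}=I-2c_{1}(\xi|_{Z},\tau)-\sum_{i}\mu_{\tau}(\alpha_{i}^{m_{i}})+\sum_{j}\mu_{\tau}(\beta_{j}^{n_{j}})$ is exactly the standard argument and goes through without obstruction.
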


\begin{dfn}[{\cite[just befor Proposition 5.8]{H3}}]
Let $u=u_{0}\cup{u_{1}}\in \mathcal{M}^{J}(\alpha,\beta)$. Suppose that $u_{1}$ is somewhere injective. Let $n_{i}^{+}$
be the number of positive ends of $u_{1}$ which are asymptotic to $\alpha_{i}$, plus 1 if $u_{0}$ includes the trivial cylinder $\mathbb{R}\times \alpha_{i}$ with some multiplicity. Likewise, let $n_{j}^{-}$ be the number of negative ends of $u_{1}$ which are asymptotic to $\beta_{j}$, plus 1 if $u_{0}$ includes the trivial cylinder $\mathbb{R}\times \beta_{j}$ with some multiplicity. 
\end{dfn}
Write $J_{0}(u)=J_{0}(\alpha,\beta,[u])$.
\begin{prp}[{\cite[Lemma 3.5]{HT3}} {\cite[Proposition 5.8]{H3}}]
Let $\alpha=\{(\alpha_{i},m_{i})\}$ and $\beta=\{(\beta_{j},n_{j})\}$ be admissible orbit sets, and let $u=u_{0}\cup{u_{1}}\in \mathcal{M}^{J}(\alpha,\beta)$. Then
\begin{equation}
    -\chi(u_{1})+\sum_{i}(n_{i}^{+}-1)+\sum_{j}(n_{j}^{-}-1)\leq J_{0}(u)
\end{equation}
If $u$ is counted by the ECH differential or the $U$-map, then the above equality holds. Note that $J_{0}(u)\geq -1$ in any case.

\end{prp}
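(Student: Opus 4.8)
The plan is to rerun, with the ECH index $I$ replaced throughout by $J_{0}$, the proof of the ECH index inequality from \cite{HT1}; the only genuinely new ingredient is the bookkeeping for the trivial cylinders in $u_{0}$. Throughout, $u_{1}$ is somewhere injective, as in the definition of the $n_{i}^{\pm}$ preceding the statement. Write $[u_{1}]=Z_{1}$ and $[u_{0}]=Z_{0}$, so $[u]=Z_{0}+Z_{1}$, and let $\alpha_{1}=\{(\alpha_{i},M_{i}^{+})\}$, $\beta_{1}=\{(\beta_{j},M_{j}^{-})\}$ denote the orbit sets cut out by the ends of $u_{1}$ alone, so that $M_{i}^{+}\le m_{i}$ and $n_{i}^{+}$ equals the number of positive ends of $u_{1}$ at $\alpha_{i}$, raised by $1$ precisely when $u_{0}$ contains the trivial cylinder $\mathbb{R}\times\alpha_{i}$. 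First I would record the relative adjunction formula for the somewhere injective curve $u_{1}$,
\[
c_{1}(\xi|_{Z_{1}},\tau)=\chi(u_{1})+Q_{\tau}(Z_{1})+w_{\tau}(u_{1})-2\delta(u_{1}),
\]
where $w_{\tau}(u_{1})$ is the total writhe of the braids that $u_{1}$ traces on the slices $\{s=\pm R\}\times Y$ for $R\gg 0$ and $\delta(u_{1})\ge 0$ counts the singular points of $u_{1}$ with multiplicity; I would also use bilinearity of $Q_{\tau}$ to expand $Q_{\tau}([u])=Q_{\tau}(Z_{1})+2Q_{\tau}(Z_{0},Z_{1})+Q_{\tau}(Z_{0})$ and to evaluate the two terms containing $Z_{0}$ explicitly from the multiplicities of the trivial cylinders in $u_{0}$ and the relevant linking numbers.

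Next I would substitute the relative adjunction formula into the definition of $J_{0}(u)=J_{0}(\alpha,\beta,[u])$. After the $Q_{\tau}$ terms cancel, one is left with an identity of the shape
\[
J_{0}(u)=-\chi(u_{1})-w_{\tau}(u_{1})+2\delta(u_{1})+\bigl(\text{Conley--Zehnder terms}+\text{explicit }u_{0}\text{ terms}\bigr),
\]
so, using $\delta(u_{1})\ge 0$, it remains to bound the parenthesised quantity minus $w_{\tau}(u_{1})$ below by $\sum_{i}(n_{i}^{+}-1)+\sum_{j}(n_{j}^{-}-1)$. For this I would invoke the writhe bound of \cite{HT1}, \cite{H1}: the writhe of each end of $u_{1}$ at a simple orbit $\gamma$, whose local multiplicities partition the total multiplicity of $u_{1}$ at $\gamma$, is bounded above by an explicit expression in the indices $\mu_{\tau}(\gamma^{q})$ of the parts and in the sets $S_{\pm\theta}$ of Proposition \ref{s}, with equality precisely when that partition is the outgoing partition $P_{\gamma}^{\mathrm{out}}$ for a positive end, respectively the incoming partition $P_{\gamma}^{\mathrm{in}}$ for a negative end (Definition \ref{part}). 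Feeding this in and grouping the resulting expression one simple orbit at a time, the required bound collapses to an elementary inequality for sums of $\lceil q\theta\rceil$'s (respectively $\lfloor q\theta\rfloor$'s), which is exactly the combinatorial computation carried out in \cite[\S 7]{HT1}; the constants $-1$ there arise as ``(number of parts)$-1$'', and the ``$+1$ if $u_{0}$ contains $\mathbb{R}\times\gamma$'' in the definition of $n^{\pm}$ is produced by the $Q_{\tau}(Z_{0},Z_{1})$ and $Q_{\tau}(Z_{0})$ contributions recorded above. This combinatorial step, performed in the presence of the trivial cylinders, is the part I expect to be the main obstacle, since it is where the full strength of Proposition \ref{s} is needed and where the bookkeeping is heaviest.

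Finally, for the equality statement: if $u$ is counted by the ECH differential or the $U$-map, then $I(u)\in\{1,2\}$, so by Proposition \ref{ind} the curve $u$ is admissible --- $u_{1}$ is embedded, whence $\delta(u_{1})=0$, and the partitions at the ends of $u_{1}$ are, after accounting for the trivial cylinders, exactly the outgoing/incoming partitions at which the writhe bound and the inequality above are sharp. Hence every inequality used becomes an equality and the asserted formula for $J_{0}(u)$ follows. The remaining assertion $J_{0}(u)\ge -1$ is then obtained by inspecting the right-hand side: it equals $0$ when $u_{1}=\emptyset$, and when $u_{1}\ne\emptyset$ a short count, using that each component of $u_{1}$ carries a positive end (by positivity of $\int u_{1}^{*}d\lambda$) and that $n_{i}^{\pm}\ge 1$, shows it is at least $-1$.
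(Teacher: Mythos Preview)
The paper does not supply its own proof of this proposition; it simply imports the result from \cite[Lemma 3.5]{HT3} and \cite[Proposition 5.8]{H3}. Your sketch is essentially the argument carried out in those references --- relative adjunction for $u_{1}$, bilinear expansion of $Q_{\tau}$ to handle the trivial part $u_{0}$, the asymptotic writhe bounds, and the partition combinatorics from \cite[\S 7]{HT1} --- with Proposition \ref{ind} supplying the embeddedness and partition conditions needed to turn all inequalities into equalities when $I(u)\in\{1,2\}$. So there is nothing in the present paper to compare against, and your outline is faithful to the cited proofs.

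One point to tighten: the clause ``$J_{0}(u)\ge -1$ in any case'' is meant (and is only used in the paper) for curves counted by the differential or the $U$-map, where $u_{1}$ is \emph{connected} with at least one positive end; then $-\chi(u_{1})=2g-2+k\ge -1$ and the sums $\sum(n_{i}^{\pm}-1)$ are nonnegative. Your final paragraph implicitly assumes this connectedness --- for an arbitrary $u$ with many plane components the right-hand side of the inequality can be arbitrarily negative --- so you should state that scope explicitly rather than ``in any case''.
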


\subsection{ECH spectrum and the volume property}

The action of an orbit set $\alpha=\{(\alpha_{i},m_{i})\}$ is defined by 
\begin{equation}
A(\alpha)=\sum m_{i}A(\alpha_{i})=\sum m_{i}\int_{\alpha_{i}}\lambda. 
\end{equation}

For any $L>0$,  $\mathrm{ECC}^{L}(Y,\lambda,\Gamma)$ denotes the subspace of  $\mathrm{ECC}(Y,\lambda,\Gamma)$ which is generated by admissible orbit sets whose actions are less than $L$. In the same way, $(\mathrm{ECC}^{L}(Y,\lambda,\Gamma),\partial)$ becomes a chain complex and the homology group $\mathrm{ECH}^{L}(Y,\lambda,\Gamma)$ is obtained. Here, we use the fact that if two admissible orbit sets $\alpha=\{(\alpha_{i},m_{i})\}$ and $\beta=\{(\beta_{i},n_{i})\}$ have $A(\alpha)\leq A(\beta)$, then the coefficient of $\beta$ in $\partial \alpha$  is $0$ because of the positivity of $J$ holomorphic curves over $d\lambda$ and the fact that $A(\alpha)-A(\beta)$ is equivalent to the integral value of $d\lambda$ over $J$-holomorphic punctured curves which is asymptotic to $\alpha$ at $+\infty$, $\beta$ at $-\infty$.

By construction, there exists a natural homomorphism $i_{L}:\mathrm{ECH}^{L}(Y,\lambda,\Gamma) \to \mathrm{ECH}(Y,\lambda,\Gamma)$.
\begin{dfn}[{\cite[Definition 4.1]{H2}}]
Let $Y$ be a closed oriented three manifold with a nondegenerate contact form $\lambda$ and $\Gamma \in H_{1}(Y,\mathbb{Z})$. If $0\neq \sigma \in \mathrm{ECH}(Y,\lambda,\Gamma)$, define
\begin{equation}\label{spect}
    c_{\sigma}(Y,\lambda)=\inf\{L>0 |\, \sigma \in \mathrm{Im}(i_{L}:\mathrm{ECH}^{L}(Y,\lambda,\Gamma) \to \mathrm{ECH}(Y,\lambda,\Gamma))\, \}
\end{equation}

\end{dfn}

Note that we can define $c_{\sigma}$ if $\lambda$ is degenerate but in this paper, this is not necessary.(for more details, see \cite{H2})

In the case that $c_{1}(\xi)+2\mathrm{PD}(\Gamma)$ is torsion, the above spectrum recover the volume of $\mathrm{Vol}(Y,\lambda)=\int_{Y}\lambda \wedge d\lambda$.

\begin{them}[{\cite[Theorem 1.3]{CHR}}]\label{volume}

Let $Y$ be a closed connected three-manifold with
a contact form $\lambda$, let $\Gamma \in H_{1}(Y)$ with $c_{1}(\xi)+2\mathrm{PD}(\gamma)$ torsion, and let $I$ be any
refinement of the relative $\mathbb{Z}$-grading on $\mathrm{ECH}(Y,\lambda,\Gamma)$ to an absolute $\mathbb{Z}$-grading.
Then for any sequence of nonzero homogeneous classes $\{\sigma_{k} \}_{1\geq k}$ in $\mathrm{ECH}(Y,\lambda,\Gamma)$
with $\lim_{k\to \infty}I(\sigma)=+\infty$, we have

\begin{equation}\label{vol}
    \lim_{k \to \infty} \frac{c_{\sigma_{k}}(Y,\lambda)^{2}}{I(\sigma_{k})}=\mathrm{Vol}(Y,\lambda).
\end{equation}
\end{them}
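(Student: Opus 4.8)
The plan is to follow the argument of Cristofaro-Gardiner, Hutchings and Ramos, reducing \eqref{vol} to a model computation together with two monotonicity estimates. \emph{First}, I would reduce to the case that $\lambda$ is nondegenerate: the volume is obviously $C^{0}$-continuous in $\lambda$, and the spectral numbers $c_{\sigma}$ are too, since rescaling $\lambda\mapsto(1+\delta)\lambda$ scales every action by $1+\delta$ and the ECH cobordism maps relate the complexes for $C^{0}$-close contact forms, so $c_{\sigma}$ changes by $O(\delta)$; the additive ambiguity in the absolute grading $I$ is irrelevant because $I(\sigma_{k})\to\infty$. \emph{Second}, since $U$ shifts the absolute grading by $-2$ and satisfies $c_{U\sigma}\le c_{\sigma}$ (the $U$-map counts curves of positive $d\lambda$-energy, hence strictly decreases action), it suffices to prove $c_{\sigma_{k}}^{2}/I(\sigma_{k})\to\mathrm{Vol}(Y,\lambda)$ along one \emph{$U$-sequence}, i.e. a sequence with $U\sigma_{k}=\sigma_{k-1}$ and $I(\sigma_{k})=I(\sigma_{0})+2k\to\infty$; such a sequence exists because, by Theorem \ref{test} and the structure of Seiberg--Witten Floer homology for a torsion class, $\mathrm{ECH}(Y,\lambda,\Gamma)$ is nonzero in arbitrarily high gradings with $U$ eventually surjective, and a short interpolation argument then reduces the general statement to this case.

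\emph{Model case and lower bound.} The model is the boundary of an irrational ellipsoid $E(a,b)$, $a/b\notin\mathbb{Q}$: its Reeb flow has exactly two simple orbits, $\mathrm{ECH}$ is explicitly generated, and the spectral numbers along the $U$-sequence are the elements of $\{ma+nb:m,n\in\mathbb{Z}_{\ge 0}\}$ listed in nondecreasing order; a lattice-point count shows their squares grow like $2ab\cdot k$, which gives $c_{\sigma_{k}}^{2}/I(\sigma_{k})\to\mathrm{Vol}(S^{3},\lambda_{E(a,b)})$. For general $(Y,\lambda)$ and any $\epsilon>0$, a Darboux-packing argument embeds a disjoint union $\coprod_{j}Z_{j}$ of such ellipsoidal pieces into a collar of $Y$ in the symplectization $((-\infty,0]\times Y,\,d(e^{s}\lambda))$ with $\sum_{j}\mathrm{vol}(Z_{j})>\mathrm{Vol}(Y,\lambda)-\epsilon$; the complement of this packing in the collar is an exact symplectic cobordism from $(Y,\lambda)$ to $\coprod_{j}\partial Z_{j}$, and monotonicity of ECH spectral numbers under exact cobordisms, applied to the images of the $U$-sequence (which remain nonzero in the relevant range and carry the additive ellipsoid asymptotics), yields $\liminf_{k}c_{\sigma_{k}}^{2}/I(\sigma_{k})\ge\sum_{j}\mathrm{vol}(Z_{j})>\mathrm{Vol}(Y,\lambda)-\epsilon$, hence $\ge\mathrm{Vol}(Y,\lambda)$.

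\emph{Upper bound --- the main obstacle.} The reverse inequality $\limsup_{k}c_{\sigma_{k}}^{2}/I(\sigma_{k})\le\mathrm{Vol}(Y,\lambda)$ is the hard part. One must show that for large $k$ the class $\sigma_{k}$ already lies in the image of $i_{L}$ with $L=\sqrt{(\mathrm{Vol}(Y,\lambda)+\epsilon)\,I(\sigma_{k})}$, i.e. that $\mathrm{ECH}^{L}\to\mathrm{ECH}$ is onto in grading $I(\sigma_{k})$. The route I would take, following CHR, combines two ingredients: on the homological side, in the torsion case $\mathrm{ECH}(Y,\lambda,\Gamma)$ has uniformly bounded rank in each sufficiently high grading (again via Theorem \ref{test}), so the total homology carried by any bounded band of gradings is small; on the geometric side, the number of admissible orbit sets of action $\le L$ is governed by a four-dimensional volume, and --- the key technical input --- for such an orbit set $\alpha$ the ECH index obeys a quantitative comparison $I(\alpha)\lesssim A(\alpha)^{2}/\mathrm{Vol}(Y,\lambda)$, extracted from the asymptotics of the Conley--Zehnder terms $\mu_{\tau}(\gamma^{k})$ and the quadratic form $Q_{\tau}$. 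Feeding these into a pigeonhole/continuity argument produces, in grading $I(\sigma_{k})$, a cycle of action at most $L$; alternatively one can run the estimate on the Seiberg--Witten side of Theorem \ref{test}, where $c_{\sigma_{k}}$ is the energy of a solution of the $r\lambda$-perturbed equations and $I(\sigma_{k})$ is a spectral flow satisfying a Weyl law that forces the same quadratic bound. I expect the point requiring the most care to be this index/action comparison --- in particular making it uniform over the combinatorics of multiply covered orbits and over the choice of $\Gamma$ --- which is where the real work of the theorem lies.
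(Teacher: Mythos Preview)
The paper does not prove this theorem: Theorem~\ref{volume} is quoted verbatim from \cite[Theorem~1.3]{CHR} and used as a black box throughout (for instance in the proofs of Lemma~\ref{asympp} and Claim~\ref{asdg}); there is no argument in the present paper to compare your proposal against.

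As a sketch of the actual proof in \cite{CHR}, your outline is broadly on target for the lower bound (ellipsoid model, ball packing, monotonicity of spectral numbers under exact cobordisms) and for the reductions (continuity in $\lambda$, passage to a $U$-sequence). For the upper bound, however, the first route you describe --- a combinatorial index/action comparison $I(\alpha)\lesssim A(\alpha)^{2}/\mathrm{Vol}(Y,\lambda)$ on the ECH chain level --- is not how \cite{CHR} proceeds and would be difficult to make uniform over all orbit set combinatorics; the argument in \cite{CHR} is entirely on the Seiberg--Witten side of Theorem~\ref{test}, using Taubes's energy/spectral-flow estimates for the perturbed monopole equations (your ``alternatively'' clause). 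So if you want to turn this into an honest proof, the Seiberg--Witten route is the one to flesh out, and the ECH-side pigeonhole suggestion should be dropped.
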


\section{The case that the number of simple elliptic orbits is at least two.}
Suppose that $b_{1}(Y)=0$. In this situation,  for any orbit sets $\alpha$ and $\beta$ with $[\alpha]=[\beta]$, $H_{2}(Y,\alpha,\beta)$ consists of only one component since $H_{2}(Y)=0$. So we may omit the homology component from the notation of ECH index $I$ and $J_{0}$, that is, they are just written by $I(\alpha,\beta)$ and $J_{0}(\alpha,\beta)$ respectively. 
Moreover, for any orbit sets $\alpha$ with $[\alpha]=0\in H_{1}(Y)$, we set $I(\alpha,[\emptyset]):=I(\alpha)$ and also $J_{0}(\alpha):=J_{0}(\alpha,[\emptyset])$. This $I(\alpha)$ defines an absolute $\mathbb{Z}$ grading in $\mathrm{ECH}(Y,\lambda,0)$. From now on, we suppose that $\mathrm{ECH}(Y,\lambda,0)$ is graded in this way.

The aim of this section is to prove the next proposition.

\begin{them}\label{el2}
Let $(Y,\lambda)$ be a connected non-degenerate closed contact three manifold with $b_{1}(Y)=0$. Assume that the number of simple elliptic orbits is at least two and the number of all simple orbit is infinity. then there exists at least one positive hyperbolic orbit.
\end{them}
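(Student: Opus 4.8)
The plan is to argue by contradiction: assume $(Y,\lambda)$ has infinitely many simple orbits, at least two of which are elliptic, and that there is \emph{no} simple positive hyperbolic orbit. Since there are no positive hyperbolic orbits, every admissible orbit set has $\epsilon(\alpha)=0$, so by the mod-2 index formula (\ref{mod2}) every ECH index $I(\alpha,\beta)$ is even; consequently the differential $\partial$ counts curves of odd index and must vanish, so $\mathrm{ECC}(Y,\lambda,0)=\mathrm{ECH}(Y,\lambda,0)$ as graded groups. In particular, \emph{every} admissible orbit set $\alpha$ with $[\alpha]=0$ represents a nonzero ECH class, so the ECH spectrum $c_\alpha = A(\alpha)$ for each such generator, and the volume formula (\ref{vol}) applies to any sequence of admissible orbit sets with $I\to\infty$.

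The core of the argument is then a counting/density contradiction with the volume asymptotics, exactly in the spirit of \cite{CH}. First I would fix the two simple elliptic orbits $e_1,e_2$ with irrational rotation numbers $\theta_1,\theta_2$ and actions $A_1,A_2$. Using multiples $e_1^{m}$ and products $e_1^{m}e_2^{n}$ (adjusting by other orbits if necessary to land in the class $0\in H_1(Y)$, which is possible since every orbit has finite order as $b_1=0$), I would produce many admissible generators with controlled action and controlled ECH index. The ECH index of an iterate $e^m$ grows like $c_1$-type terms plus $\sum_{k=1}^m \mu_\tau(e^k)$, and since $\mu_\tau(e^k) = 2\lfloor k\theta\rfloor + 1$ for elliptic orbits, $I(e_1^m)$ grows quadratically in $m$ while $A(e_1^m) = mA_1$ grows linearly — this is the standard mechanism forcing $c_\sigma^2/I(\sigma)\to \mathrm{Vol}$. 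The point of having \emph{two} elliptic orbits is that one gets a genuinely two-parameter family $\{e_1^m e_2^n\}$ of generators; comparing the action $mA_1+nA_2$ against the ECH index (a quadratic form in $(m,n)$ whose coefficients involve $\theta_1,\theta_2$ and the linking number / $Q_\tau$ term) along well-chosen rays, the volume identity (\ref{vol}) forces an arithmetic relation between $A_1,A_2,\theta_1,\theta_2,\mathrm{Vol}$ that cannot hold — or, more robustly, one shows the densities of achievable $(A,I)$ pairs are incompatible with a single limiting ratio $\mathrm{Vol}$.

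Concretely, the key steps in order: (1) observe $\partial=0$ and hence every admissible orbit set is a nonzero homogeneous ECH class with $c_\sigma = A(\sigma)$; (2) record the index formulas $\mu_\tau(e_i^k)=2\lfloor k\theta_i\rfloor+1$ and compute $I$ and $A$ for the families $e_1^m$, $e_2^n$, $e_1^m e_2^n$, obtaining $I$ as an explicit quadratic and $A$ as an explicit linear function of the exponents; (3) apply Theorem \ref{volume} to several independent sequences drawn from these families; (4) extract from the resulting limits an over-determined system on $(A_1,A_2,\theta_1,\theta_2,\mathrm{Vol})$ and derive a contradiction (for instance, comparing the sequence $e_1^m$ alone, which already gives $\mathrm{Vol} = \lim mA_1^2/I(e_1^m)$, a quantity determined by $\theta_1$, against the analogous computation for $e_2^n$ and for the mixed family, one of these ratios must disagree).

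The main obstacle I anticipate is step (4): making the contradiction robust against the freedom in the relative-grading refinement, the $c_1(\xi|_Z,\tau)$ and $Q_\tau$ cross-terms, and the precise distribution of $\lfloor k\theta\rfloor$ (which is only equidistributed, not exact). The honest way around this — and I expect this is what the paper does — is not to solve exact equations but to set up a density estimate: count how many admissible generators have action below $L$ versus ECH-index below $N$, show via the volume property that both counts must match the count coming from a single quadratic growth law, and show the two elliptic orbits force strictly more generators (or generators at the wrong indices) than that law permits. Controlling error terms in $\sum_{k\le m}\lfloor k\theta\rfloor = \tfrac{\theta}{2}m^2 + O(m)$ uniformly, and handling the finitely many correction orbits needed to reach the class $0$, will be the technical heart of the proof.
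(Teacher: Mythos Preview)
Your setup (steps (1) and (2)) is right and matches the paper: with no positive hyperbolic orbit one has $\partial=0$, every admissible orbit set is a nonzero ECH class, and $c_\alpha=A(\alpha)$. But the contradiction you propose in steps (3)--(4) does not go through. Applying the volume formula to the sequences $e_1^m$, $e_2^n$, $e_1^m e_2^n$ does \emph{not} yield an over-determined system: Theorem~\ref{volume} asserts that \emph{all} such limits equal the same number $\mathrm{Vol}(Y,\lambda)$, so every sequence simply confirms that value rather than constraining it further. Concretely, lens spaces with exactly two elliptic orbits and no hyperbolic orbits exist, and the two-parameter family $\{e_1^m e_2^n\}$ there satisfies the volume asymptotics perfectly well. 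A two-dimensional lattice of generators has $|\{A<M\}|\sim M^2/(2R_1R_2)$, which is entirely compatible with $|\Lambda(M,0)|\sim M^2/2\mathrm{Vol}$.

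What you are missing is the role of the hypothesis \emph{infinitely many simple orbits}; you use it nowhere. The paper's mechanism is this: since $|H_1(Y)|<\infty$, one can pick an infinite sequence of distinct simple orbits $\delta_i$ in a fixed homology class (disjoint from $\gamma_1,\gamma_2$), and group them into admissible orbit sets $\epsilon_n$ with $[\epsilon_n]=0$ and increasing action $T_n$. For each $n$ the family $(\gamma_1, t_1 s_1)\cup(\gamma_2, t_2 s_2)\cup\epsilon_n$ is a genuinely new two-parameter lattice of admissible orbit sets with action $t_1R_1+t_2R_2+T_n$. Counting lattice points for $n=1,\dots,N$ gives
\[
|\Lambda(M,0)|\;\ge\;\sum_{n=1}^N \frac{(M-T_n)^2}{2R_1R_2}+O(M)\;=\;\frac{NM^2}{2R_1R_2}+O(M),
\]
so combined with Lemma~\ref{asympp} (which the paper proves first: $\lim M^2/|\Lambda(M,0)|=2\mathrm{Vol}$) one gets $\mathrm{Vol}\le R_1R_2/N$ for every $N$, hence $\mathrm{Vol}=0$, the desired contradiction. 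Your density paragraph gestures toward this, but the key idea---stacking the two-elliptic lattice over infinitely many independent basepoints $\epsilon_n$---is absent, and without it no contradiction arises.
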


We prove this by contradiction. At first, we show the following lemma.

\begin{lem}\label{isomero}
Let $(Y,\lambda)$ be a connected non-degenerate closed contact three manifold with $b_{1}(Y)=0$. Then for $\Gamma\in H_{1}(Y)$, there is some finite generated vector space $E_{\Gamma}$ such that
\begin{equation}\label{iso}
 \mathrm{ECH}(Y,\lambda,\Gamma)  \cong \mathbb{F}[U_{\dag}^{-1},U_{\dag}]/U_{\dag}\mathbb{F}[U_{\dag}]\bigoplus E_{\Gamma}.
\end{equation}
Morover, the above isomorphism interchanges
the map $U$ in (\ref{Umap}) with the action of the product by $U_{\dag}$ on $\mathbb{F}[U_{\dag}^{-1},U_{\dag}]/U_{\dag}\mathbb{F}[U_{\dag}]$.
\end{lem}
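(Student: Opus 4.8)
The plan is to deduce this structural statement directly from the Seiberg--Witten side via the isomorphism of Theorem~\ref{test}, together with standard facts about $\reallywidecheck{HM}$ and the $U_{\dag}$-action. First I would recall that for any closed oriented three-manifold $-Y$ and any spin-c structure $\mathfrak{s}$ with $c_{1}(\mathfrak{s})$ torsion, the monopole Floer homology $\reallywidecheck{HM}_{*}(-Y,\mathfrak{s})$ fits into the long exact sequence relating $\widehat{HM}$, $\widecheck{HM}$ and $\overline{HM}$ (the ``bar'' version), where $\overline{HM}_{*}(-Y,\mathfrak{s})$ is the Tate-type homology computed purely from the reducible locus and is, as a module over $\mathbb{F}[U_{\dag}]$, isomorphic to $\mathbb{F}[U_{\dag}^{-1},U_{\dag}]$. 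The key input is that $\overline{HM}$ is ``eventually periodic'': in each sufficiently negative (or sufficiently positive, depending on orientation conventions) degree the maps $\widecheck{HM}\to\overline{HM}$ and $\overline{HM}\to\widehat{HM}$ become isomorphisms, so $\widecheck{HM}_{*}(-Y,\mathfrak{s})$ agrees with $\overline{HM}_{*}(-Y,\mathfrak{s})\cong \mathbb{F}[U_{\dag}^{-1},U_{\dag}]/U_{\dag}\mathbb{F}[U_{\dag}]$ outside a finite range of degrees, and the $U_{\dag}$-action matches multiplication by $U_{\dag}$ there.

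Concretely, the steps are: (1) invoke Theorem~\ref{test} to replace $\mathrm{ECH}(Y,\lambda,\Gamma)$ by $\reallywidecheck{HM}_{*}(-Y,\mathfrak{s}(\xi)+2\mathrm{PD}(\Gamma))$, noting that since $b_{1}(Y)=0$ every class $c_{1}(\xi)+2\mathrm{PD}(\Gamma)$ is automatically torsion, so there is an absolute (or at least a well-behaved relative $\mathbb{Z}$) grading and the divisibility $d$ is finite; (2) use the exact triangle together with the explicit computation of $\overline{HM}$ for a torsion spin-c structure to identify $\widecheck{HM}$ with $\mathbb{F}[U_{\dag}^{-1},U_{\dag}]/U_{\dag}\mathbb{F}[U_{\dag}]$ in all but finitely many gradings; (3) collect the finitely many remaining graded pieces into $E_{\Gamma}$, which is finite-dimensional because $\widecheck{HM}$ is finitely generated in each degree and only finitely many degrees contribute; (4) observe that the splitting $\widecheck{HM}\cong \overline{HM}_{\mathrm{trunc}}\oplus E_{\Gamma}$ can be arranged to be $U_{\dag}$-equivariant, by choosing the ``tower'' part to be the image of $\overline{HM}$ in the relevant degrees (or a $U_{\dag}$-stable complement), so that $U_{\dag}$ acts as claimed on the first summand; (5) transport this back through the ECH isomorphism, using that Theorem~\ref{test} intertwines $U$ with $U_{\dag}$, to obtain (\ref{iso}) with the asserted description of the $U$-action.

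I would be careful about two bookkeeping points. The splitting in (4) is not canonical, and one must make sure the direct-sum decomposition is as $\mathbb{F}[U_{\dag}]$-modules, not merely as graded vector spaces; the cleanest way is to note that $\mathbb{F}[U_{\dag}^{-1},U_{\dag}]/U_{\dag}\mathbb{F}[U_{\dag}]$ is an injective (divisible) $\mathbb{F}[U_{\dag}]$-module, so any injection of it as a submodule of $\widecheck{HM}$ splits off. The other point is grading conventions: one must match the relative $\mathbb{Z}/d\mathbb{Z}$-grading on the ECH side with the grading on $\widecheck{HM}$ so that ``eventually periodic'' is stated in the correct direction (ECH is bounded below in action but the grading goes to $+\infty$, while the tower in $\widecheck{HM}$ is the part surviving in one fixed direction).

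The main obstacle is really just assembling the correct statement about $\reallywidecheck{HM}$ for torsion spin-c structures over $b_{1}=0$ manifolds and checking $U_{\dag}$-equivariance of the decomposition; the underlying monopole Floer computation is standard (Kronheimer--Mrowka), so the work is in citing it precisely and in the grading bookkeeping rather than in any genuinely new argument. I expect no serious analytic difficulty here, as all the hard analysis is hidden inside Theorem~\ref{test} and the construction of $\reallywidecheck{HM}$.
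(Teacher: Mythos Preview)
Your proposal is correct and follows essentially the same route as the paper: invoke the ECH/Seiberg--Witten isomorphism (Theorem~\ref{test}), use the long exact sequence relating $\reallywidecheck{HM}$, $\widehat{HM}$ and $\overline{HM}$ together with $\overline{HM}\cong\mathbb{F}[U_{\dag}^{-1},U_{\dag}]$ for torsion spin-c structures, and extract a finite-rank remainder. The paper's proof cites the specific Kronheimer--Mrowka results (vanishing of $\reallywidecheck{HM}$ in low gradings and finite rank of the image $\reallywidecheck{HM}\to\widehat{HM}$) but is otherwise terser; your remark that the tower splits off as an $\mathbb{F}[U_{\dag}]$-module because it is injective is a detail the paper does not spell out.
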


\begin{proof}[\bf Proof of Lemma \ref{isomero}]
There are three type homologies $\reallywidecheck{HM}_{*}(-Y,\mathfrak{s})$,  $\widehat{HM}_{*}(-Y,\mathfrak{s})$ and  $\overline{HM}_{*}(-Y,\mathfrak{s})$ in Seiberg-Witten Floer homologies and there exist an exact sequence.
\begin{equation}
... \longrightarrow \reallywidecheck{HM}_{*}(-Y,\mathfrak{s})\longrightarrow \widehat{HM}_{*}(-Y,\mathfrak{s}) \longrightarrow \overline{HM}_{*}(-Y,\mathfrak{s}) \longrightarrow \reallywidecheck{HM}_{*-1}(-Y,\mathfrak{s}) \longrightarrow ...
\end{equation}
As \cite[Proposition 35.3.1]{KM},  
\begin{equation}
\bigoplus_{*}\overline{HM}_{*}(-Y,\mathfrak{s}) \cong \mathbb{F}[U^{-1}_{\dag},U_{\dag}]
\end{equation}
By construction \cite[Definition 14.5.2, Subsection 22.1 and Subsection 22.3]{KM}, $\reallywidecheck{HM}_{*}(-Y,\mathfrak{s})$ vanishes if its grading is sufficiently low. Moreover the image of
\begin{equation}
    \bigoplus_{*}\reallywidecheck{HM}_{*}(-Y,\mathfrak{s})\longrightarrow \bigoplus_{*}\widehat{HM}_{*}(-Y,\mathfrak{s})
\end{equation}
is finite rank \cite[Proposition 22.2.3]{KM}. This finishes the proof of the lemma.
\end{proof}

\textbf{From now on, in this section we suppose that there is no positive hyperbolic orbit.}

For $M>0$ and $\Gamma\in H_{1}(Y)$, we set
\begin{equation}
    \Lambda(M,\Gamma):=\{\,\,\alpha\,\,|\,\,\alpha\,\,\mathrm{is\,\,admissible\,\,orbit\,\,set\,\,such\,\,that\,\,}[\alpha]=\Gamma \mathrm{\,\,and}\,\,A(\alpha)<M\,\,\}
\end{equation}

\begin{lem}\label{asympp}
For every  $\Gamma\in H_{1}(Y)$,
\begin{equation}\label{volp}
    \lim_{M \to \infty} \frac{M^{2}}{|\Lambda(M,\Gamma)|}=2\mathrm{Vol}(Y,\lambda).
\end{equation}
\end{lem}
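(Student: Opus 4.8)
The plan is to count admissible orbit sets grading by grading and then feed the count into the volume property (Theorem~\ref{volume}). The point is that under the standing hypothesis the ECH differential vanishes, so the action filtration loses no homology and the spectral invariant of a generator is literally its action.

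First I would record the consequences of the hypothesis. Since there is no positive hyperbolic orbit, $\epsilon(\alpha)=0$ for every admissible orbit set, so by (\ref{mod2}) the index $I(\alpha,\beta)$ of any two admissible orbit sets with $[\alpha]=[\beta]$ is even; hence $\mathcal{M}^{J}_{1}(\alpha,\beta)=\emptyset$, $\partial=0$, and $\mathrm{ECH}(Y,\lambda,\Gamma)=\mathrm{ECC}(Y,\lambda,\Gamma)$, and likewise $\mathrm{ECH}^{L}(Y,\lambda,\Gamma)=\mathrm{ECC}^{L}(Y,\lambda,\Gamma)$ with $i_{L}$ equal to the inclusion $\mathrm{ECC}^{L}(Y,\lambda,\Gamma)\hookrightarrow\mathrm{ECC}(Y,\lambda,\Gamma)$. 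In particular, from (\ref{spect}) one reads off $c_{[\alpha]}(Y,\lambda)=A(\alpha)$ for every admissible orbit set $\alpha$ with $[\alpha]=\Gamma$. I would then fix $\Gamma$ and an absolute $\mathbb{Z}$-grading $I$ on $\mathrm{ECH}(Y,\lambda,\Gamma)$; by (\ref{mod2}) all generators lie in gradings of a single parity. By Lemma~\ref{isomero}, $\mathrm{ECH}_{*}(Y,\lambda,\Gamma)$ is finite dimensional in every degree, and there is $*_{0}$ such that $\mathrm{ECH}_{*}(Y,\lambda,\Gamma)$ is one-dimensional for every $*\ge *_{0}$ of the correct parity and is $0$ for every $*$ of the other parity; since $\partial=0$, the same holds for $\mathrm{ECC}_{*}$, so there is a unique admissible orbit set $\alpha_{*}$ with $[\alpha_{*}]=\Gamma$ and $I(\alpha_{*})=*$ for each such $*$, and the set $F:=\{\alpha:[\alpha]=\Gamma,\ I(\alpha)<*_{0}\}$ is finite.

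Next I would write $\Lambda(M,\Gamma)=\{\alpha_{*}:*\ge *_{0},\ A(\alpha_{*})<M\}\sqcup\big(F\cap\Lambda(M,\Gamma)\big)$, where the second set has at most $|F|$ elements for every $M$. Applying Theorem~\ref{volume} to the nonzero homogeneous classes $\sigma_{*}=[\alpha_{*}]$, for which $I(\sigma_{*})=*\to\infty$, gives $A(\alpha_{*})^{2}/*\to\mathrm{Vol}(Y,\lambda)$ as $*\to\infty$ along the relevant progression. Thus for each $\varepsilon\in(0,\mathrm{Vol}(Y,\lambda))$ and all large $*$ one has $(\mathrm{Vol}(Y,\lambda)-\varepsilon)*\le A(\alpha_{*})^{2}\le(\mathrm{Vol}(Y,\lambda)+\varepsilon)*$, and counting the admissible gradings $*$ (which step by $2$) with $A(\alpha_{*})<M$ yields $\tfrac{M^{2}}{2(\mathrm{Vol}(Y,\lambda)+\varepsilon)}+O(1)\le|\Lambda(M,\Gamma)|\le\tfrac{M^{2}}{2(\mathrm{Vol}(Y,\lambda)-\varepsilon)}+O(1)$. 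Hence $2(\mathrm{Vol}(Y,\lambda)-\varepsilon)\le\liminf_{M}\tfrac{M^{2}}{|\Lambda(M,\Gamma)|}\le\limsup_{M}\tfrac{M^{2}}{|\Lambda(M,\Gamma)|}\le 2(\mathrm{Vol}(Y,\lambda)+\varepsilon)$, and letting $\varepsilon\to0$ proves (\ref{volp}).

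The step I expect to be most delicate is not the asymptotic count but the two structural inputs underneath it: the identity $c_{[\alpha]}=A(\alpha)$ (which needs $\partial=0$, so that passing to the action-filtered complex loses no homology and $i_{L}$ is just an inclusion) and the finiteness of $F$ (which needs the degreewise finite dimensionality of $\mathrm{ECH}_{*}$ from Lemma~\ref{isomero}, again combined with $\partial=0$, to guarantee exactly one generator per high grading and only finitely many below the stable range). Once these are secured, the volume property does the rest.
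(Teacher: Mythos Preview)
Your argument is correct and follows essentially the same route as the paper: both use $\partial=0$ (from the absence of positive hyperbolic orbits) to identify $\mathrm{ECH}=\mathrm{ECC}$ and $c_{[\alpha]}=A(\alpha)$, invoke Lemma~\ref{isomero} to see that above some grading there is exactly one generator per even step, and then feed this into Theorem~\ref{volume}. The only cosmetic difference is that the paper enumerates the stable range via the $U$-chain $\alpha_{0},\alpha_{1},\dots$ (using $U\langle\alpha_{i+1}\rangle=\langle\alpha_{i}\rangle$ to get action monotonicity and then sandwiching a general $M$ between consecutive $A(\alpha_{k})$), whereas you index directly by grading and run an $\varepsilon$-sandwich on $A(\alpha_{*})^{2}/*$; your version avoids appealing to monotonicity of the action along the $U$-chain but is otherwise the same computation.
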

\begin{proof}[\bf Proof of Lemma \ref{asympp}]
Fix  $\Gamma\in H_{1}(Y)$. By $\partial=0$ and (\ref{iso}), there are admissible orbit sets $\{\alpha_{i}^{\Gamma}\}_{0 \leq i }$ and $\{\beta_{j}^{\Gamma}\}_{0\leq j \leq m_{\Gamma}}$ with $[\alpha_{i}^{\Gamma}]=[\beta_{j}^{\Gamma}]=\Gamma$ satisfy

\begin{equation}\label{importantiso}
    \mathrm{ECH}(Y,\lambda,\Gamma)  \cong \bigoplus_{i=0}^{\infty}\mathbb{F}\langle \alpha_{i}^{\Gamma} \rangle \bigoplus \bigoplus_{j=1}^{m_{\Gamma}}\mathbb{F}\langle \beta _{j}^{\Gamma}\rangle
\end{equation}
and $U\langle \alpha_{i}^{\Gamma} \rangle=\langle \alpha_{i-1}^{\Gamma} \rangle$ for $i\geq 1$. Moreover since $\partial=0$, each admissible orbit set $\alpha$ with $[\alpha]=\Gamma$ is equal to either $\beta_{j}^{\Gamma}$ for some $1\leq j \leq m_{\Gamma}$ or $\alpha_{k}^{\Gamma}$ for some $0\leq k$.

Note that 
\begin{equation}\label{la}
    A(\alpha_{l}^{\Gamma} )-A(\alpha_{k}^{\Gamma} )>0\,\,\,\,\,\mathrm{if}\,\,\,\,l>k, 
\end{equation}
\begin{equation}\label{2}
    I(\alpha_{l}^{\Gamma})-I( \alpha_{k}^{\Gamma})=2(l-k).
\end{equation}

Since $A(\alpha_{k}^{\Gamma} )\to \infty$ as $k\to \infty$,  there is $k_{0}>0$ such that for every $k>k_{0}$  $A(\alpha_{k}^{\Gamma} )> A(\beta_{j}^{\Gamma})$ for $1\leq j \leq m_{\Gamma}$.
By (\ref{la}) and (\ref{2}),  we have
$I(\alpha_{k}^{\Gamma})=2k+I(\alpha_{0}^{\Gamma})$ and for sufficiently large $k$,  $|\Lambda(A(\alpha_{k}^{\Gamma}),\Gamma)|=k+m$. So for sufficiently large $k$, we have
\begin{equation}
    |2|\Lambda(A(\alpha_{k}^{\Gamma}),\Gamma)|-I(\alpha_{k}^{\Gamma})|\leq 2m+I(\alpha_{0}^{\Gamma})
\end{equation}

By the above we have,
\begin{equation}
    \lim_{k\to \infty}\frac{2|\Lambda(A(\alpha_{k}^{\Gamma}),\Gamma)|}{I( \alpha_{k}^{\Gamma})}=1.
\end{equation}

Since $\partial=0$ and the definition of the spectrum (\ref{spect}), $c_{\langle \alpha \rangle}(Y,\lambda)=A(\alpha)$.
So by (\ref{vol}),
\begin{equation}
    \lim_{k\to \infty}\frac{c_{\langle \alpha_{k} \rangle}(Y,\lambda)^{2}}{I(\alpha_{k}^{\Gamma})}=\lim_{k\to \infty}\frac{A(\alpha_{k}^{\Gamma})^{2}}{2|\Lambda(A(\alpha_{k}^{\Gamma}),\Gamma)|}=\mathrm{Vol}(Y,\lambda).
\end{equation}

Note that for large $M>0$, there is a large $k>0$ such that $A(\alpha_{k}^{\Gamma})\geq M \geq A(\alpha_{k-1}^{\Gamma})$.

Therefore we complete the proof of Lemma \ref{asympp}.
\end{proof}

\begin{proof}[\bf Proof of Theorem \ref{el2}]
We pick up two simple elliptic orbits $\gamma_{1}$, $\gamma_{2}$. Let $s_{1}$ and $s_{2}$ denote the orders of $[\gamma_{1}]$ and $[\gamma_{2}]$ in $H_{1}(Y)$ respectively.

Since $|H_{1}(Y)|<\infty$, we can choose infinity sequence of simple orbit $\{\delta_{i}\}_{i>0}$ satisfying

\begin{enumerate}
    \item their homology classes $[\delta_{i}]$ are in a same one.
    \item $A(\delta_{i})<A(\delta_{j})$ if $i<j$.
    \item Any $\delta_{i}$ is not equivalent to $\gamma_{1}$ and $\gamma_{2}$
\end{enumerate}

Let $r$ be the order of $[\delta_{i}]$ in $H_{1}(Y)$. Then we define a sequence of admissible orbit sets $\{\epsilon_{n}\}_{n>0}$ by $\epsilon_{n}:=\{(\delta_{r(n-1)+i},1)\}_{1\leq i \leq r}$.
By construction, $[\epsilon_{n}]=0$ and $A(\epsilon_{n})<A(\epsilon_{n+1})$.

For $t_{1},\,t_{2}\in \mathbb{Z}_{\geq 0}$, we set
\begin{equation}
    \epsilon_{(t_{1},t_{2},n)}:=(\gamma_{1},t_{1}s_{1})\cup{(\gamma_{2},t_{2}s_{2})}\cup{\epsilon_{n}}
\end{equation}
Note that $\epsilon_{(t_{1},t_{2},n)}$ is  admissible  with $[\epsilon_{(t_{1},t_{2},n)}]=0$ and morover if $(t_{1},t_{2},n)\neq(t_{1}',t_{2}',n')$ then $\epsilon_{(t_{1},t_{2},n)}\neq \epsilon_{(t_{1}',t_{2}',n')}$.

Let $T_{n}:=A(\epsilon_{n})$ and $R_{i}=s_{i}A(\delta_{i})$ for $i=1,2$, then 
\begin{equation}
A(\epsilon_{(t_{1},t_{2},n)})=t_{1}R_{1}+t_{2}R_{2}+T_{n}
\end{equation}
So for $n$,
\begin{equation}\label{eq}
     \{\,\,(t_{1},t_{2})\,\,|\,\,A(\epsilon_{(t_{1},t_{2},n)})<M\,\,\}=\{\,\,(t_{1},t_{2})\in \mathbb{Z}_{\geq 0}\times\mathbb{Z}_{\geq 0}\,\,|\,\,t_{1}R_{1}+t_{2}R_{2}<M-T_{n}\,\,\}
\end{equation}
In general, for any $S_{1},\,\,S_{2},\,\,T>0$, the number of
\begin{equation}
    \{\,\,(t_{1},t_{2})\in \mathbb{Z}_{\geq 0}\times\mathbb{Z}_{\geq 0}\,\,|\,\,t_{1}S_{1}+t_{2}S_{2}<T\,\,\}
\end{equation}
is $\frac{(T)^{2}}{2S_{1}S_{2}}+O(T)$ (for example, we can see the same argument in \cite{H2}).
So the right hand side of (\ref{eq}) is equal to $\frac{(M-T_{n})^{2}}{2R_{1}R_{2}}+O(M-T_{n})$.

For any $N\in \mathbb{Z}_{>0}$, we pick a sufficiently large $M>0$ satisfying $M>T_{N}$. Then
\begin{equation}
    \begin{split}
        |\Lambda(M,0)|>&|\{\,\,(t_{1},t_{2},n)\,\,|\,\,A(\epsilon_{(t_{1},t_{2},n)})<M\,\,,\,\,1\leq n\leq N\,\,\}|\\
        =&\sum_{k=1}^{N}|\{\,\,(t_{1},t_{2})\,\,|\,\,t_{1}R_{1}+t_{2}R_{2}<M-T_{k}\,\,\}|\\
        =&\sum_{k=1}^{N}\frac{(M-T_{k})^{2}}{2R_{1}R_{2}}+O(M-T_{k})\\
         =&\frac{(M)^{2}N}{2R_{1}R_{2}}+O(M)
    \end{split}
\end{equation}
So
\begin{equation}
    \lim_{M \to \infty} \frac{M^{2}}{|\Lambda(M,0)|}<\frac{2R_{1}R_{2}}{N}.
\end{equation}
Since we choose $N$ arbitrarily, by (\ref{volp}) we can see that $\mathrm{Vol}(Y,\lambda)=0$. This is a contradiction. We complete the proof of Theorem \ref{el2}
\end{proof}

\section{The case that the number of simple elliptic orbits is exactly one.}

We use the rest of this paper to prove the next theorem.

\begin{them}\label{mainmainmain}
Let $Y$ be a closed connected three manifold with $b_{1}(Y)=0$. Then,  $Y$ does not admit a non-degenerate contact form $\lambda$ such that  exactly one simple orbit is elliptic orbit and all the others are negative hyperbolic.
\end{them}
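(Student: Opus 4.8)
The plan is to argue by contradiction, amplifying the strategy that \cite{HCP} used for the torsion case of Theorem \ref{a} so as to exploit the presence of exactly one elliptic orbit. Suppose such a $\lambda$ exists; write $e$ for the unique simple elliptic orbit and let $\theta$ be its rotation number. Since $b_{1}(Y)=0$ we have $H_{2}(Y)=0$, so relative homology classes are unique and may be dropped from the notation, and $c_{1}(\xi)$ is torsion, so $\mathrm{ECH}(Y,\lambda,0)$ carries the absolute grading $I(\cdot)$ of Section 3 and Theorem \ref{volume} applies. By the torsion case of Theorem \ref{a} we may also assume there are infinitely many simple orbits, since otherwise $Y$ is a lens space with exactly two elliptic simple orbits, contradicting that only $e$ is elliptic. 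The first observation is that $\partial=0$: a differential contribution would come from a curve of ECH index $1$, but (\ref{mod2}) would then force $\epsilon(\alpha)-\epsilon(\beta)$ to be odd, impossible since there are no positive hyperbolic orbits and hence every $\epsilon(\cdot)=0$. By Lemma \ref{isomero} and Theorem \ref{test}, $\mathrm{ECH}(Y,\lambda,0)$ contains an infinite $U$-tower, so there are admissible orbit sets $\alpha_{0},\alpha_{1},\alpha_{2},\dots$ with $U\langle\alpha_{i}\rangle=\langle\alpha_{i-1}\rangle$ and $I(\alpha_{i})=I(\alpha_{0})+2i$; because $\partial=0$ we get $c_{\langle\alpha_{i}\rangle}(Y,\lambda)=A(\alpha_{i})$, so Theorem \ref{volume} gives $A(\alpha_{i})^{2}/I(\alpha_{i})\to\mathrm{Vol}(Y,\lambda)$ and in particular $A(\alpha_{i})\sim\sqrt{2\,\mathrm{Vol}(Y,\lambda)\,i}$.

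Next I would study, for a generic base point $z$, the index-$2$ curves $u_{i}$ with $(0,z)\in u_{i}$ realizing $U\langle\alpha_{i}\rangle=\langle\alpha_{i-1}\rangle$, controlling their topology through the equality $J_{0}(u_{i})=-\chi((u_{i})_{1})+\sum(n^{+}_{k}-1)+\sum(n^{-}_{l}-1)$. Since $c_{1}(\xi)$ is torsion and only finitely many orbits lie below any action bound, $\sum_{i\le n}J_{0}(u_{i})$ grows at most linearly in $n$, whereas $I(\alpha_{n})$ grows linearly and $A(\alpha_{n})$ only like $\sqrt{n}$, so the $d\lambda$-energies $A(\alpha_{i})-A(\alpha_{i-1})$ tend to $0$ on average. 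Combining this with the volume counting, as in Propositions \ref{asympdense} and \ref{main index 2}, I would extract for every $N$ and every $\varepsilon>0$ a run of $N$ consecutive tower elements joined by $U$-curves that are all ``good'' (topological type and multiplicities of ends pinned down by the $J_{0}$ bound), have $J_{0}=2$, and have $d\lambda$-energy less than $\varepsilon$; the alternative of a $J_{0}\le 1$ $U$-curve between good elements is ruled out by Lemma \ref{mainlemma}. To prove Lemma \ref{mainlemma} I would argue by contradiction: a $J_{0}\le 1$ $U$-curve through a point has very low complexity — essentially a plane or cylinder, with one or two ends after deleting trivial cylinders — and varying $z$ over $Y$ while taking SFT limits as the energy shrinks produces broken configurations whose asymptotic orbits obey approximate action identities incompatible with the partition conditions of Definition \ref{part} and with Proposition \ref{s} for $S_{\pm\theta}$.

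The last part is to analyze the run of $N$ good index-$2$, $J_{0}=2$ $U$-curves of tiny energy. Taking the SFT compactification and enumerating all admissible ways these curves can break as $\varepsilon\to0$ (a finite but large list, each reducing to a small linear system in the actions), I would prove Proposition \ref{nagai}: each such curve belongs to one of six combinatorial types, and each type carries an approximate linear relation among the action of $e$, the actions of the low-action negative hyperbolic orbits, and the actions of the orbits in $\alpha_{i}$. The endgame, Section 8, then combines these relations: since consecutive $U$-curves in the run share an orbit set, the relations force the multiplicity with which $e$ occurs to advance along $S_{\theta}$ or $S_{-\theta}$ in bounded steps, and since by Proposition \ref{s} the gaps $q_{i+1}-q_{i}$ and $p_{i+1}-p_{i}$ tend to infinity, this caps the length of the run independently of $N$ — contradicting the run of length $N$ produced above. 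The main obstacle, and the technical heart of Sections 5--8, is precisely this compactness bookkeeping: showing exactly which broken configurations can occur in each limit (using $I\ge 0$, the $J_{0}$ complexity bound, admissibility of ends, and genericity of $J$) and verifying case by case that the resulting approximate action arithmetic is genuinely over-determined.
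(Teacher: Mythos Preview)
Your proposal follows the paper's architecture faithfully: the reduction to $\partial=0$ via (\ref{mod2}), the $U$-tower with volume asymptotics from Lemma \ref{isomero} and Theorem \ref{volume}, the extraction of arbitrarily long runs of ``good'' $J_0=2$ small-energy $U$-curves (Proposition \ref{main index 2}) after excluding $J_0\le 1$ by Lemma \ref{mainlemma}, and the six-type classification of Proposition \ref{nagai} via SFT limits and partition conditions. This is exactly the paper's route.

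One point where your sketch diverges from the paper is the mechanism of the final contradiction. You write that the divergence of the gaps $q_{i+1}-q_i$, $p_{i+1}-p_i\to\infty$ is what caps the run length. In the paper that divergence is used earlier, inside Proposition \ref{asympdense} (to show the density of orbit sets with $E(\alpha)\in S_{\pm\theta}$ tends to zero, which feeds into Proposition \ref{main index 2}), not in Section 8. The actual endgame is a finite combinatorial check (Lemma \ref{las}): each of the six types imposes specific approximate values in $\tfrac{1}{12}R\mathbb{Z}$ on the actions of the hyperbolic orbits in $\alpha_k$, and comparing these for the $36$ possible consecutive pairs $(\text{type}(\alpha_{k-1},\alpha_k),\text{type}(\alpha_k,\alpha_{k+1}))$ shows $24$ of them are incompatible on the shared $\alpha_k$. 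The remaining $12$ allowed transitions cannot be chained to length $\ge 3$, contradicting Proposition \ref{main index 2} directly. So the final step is action-arithmetic matching on $\alpha_k$, not an asymptotic argument about $S_{\pm\theta}$.
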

We prove this by contradiction.

\textbf{From now on, we assume that $(Y,\lambda)$ is non-degenerate contact three manifold such that exactly one simple orbit is elliptic and all the others are negative hyperbolic.}

Let $\gamma$ be the simple elliptic orbit. Moreover, let $A(\gamma)=R$ and $\theta$ be the rotation number with respect to some fixed trivialization $\tau$ over $\gamma$. This means $e^{\pm 2\pi \theta}$ are eigenvalues of $d\phi^{R}|_{\xi}$ and for every $k\in \mathbb{Z}$, $\mu_{\tau}(\gamma^{k})=2\lfloor k\theta \rfloor +1$.

\subsection{Density of orbit sets with some properties}

For admissible orbit set $\alpha$, Let $E(\alpha)$, $H(\alpha)$ be the multiplicity at $\gamma$ in $\alpha$ and the number of hyperbolic orbits in $\alpha$, respectively.

Recall that for $M\in \mathbb{R}$ and $\Gamma \in H_{1}(M)$, 
\begin{equation}
 \Lambda(M,\Gamma)=\{\,\alpha\,|\,\alpha\,\,\mathrm{is\,\,an\,\,admissible\,\,orbit\,\,set\,\,such\,\,that\,\,}[\alpha]=\Gamma \mathrm{\,\,and}\,\,A(\alpha)<M\,\}.
\end{equation}
In addition to this, we introduce some notations as follows.
\begin{equation}
    \Lambda_{(n,m)}(M,\Gamma):=\{\,\,\alpha \in \Lambda(M,\Gamma)\,|\,(E(\alpha),H(\alpha))=(n,m)\,\}
\end{equation}
\begin{equation}
    \Lambda_{(n,\infty)}(M,\Gamma):=\bigcup_{m=0}^{\infty}\Lambda_{(n,m)}(M,\Gamma)
\end{equation}
\begin{equation}
    \Lambda_{(\infty,m)}(M,\Gamma):=\bigcup_{n=0}^{\infty}\Lambda_{(n,m)}(M,\Gamma)
\end{equation}
\begin{equation}
     \Lambda(M):=\bigcup_{\Gamma\in H_{1}(M)}\Lambda(M,\Gamma)
\end{equation}
Note that if $M\leq 0$, the above sets become empty.
\begin{prp}\label{asympdense}
For every $\Gamma \in H_{1}(Y)$,
    \item[1.] For every positive integer $n$,
    \begin{equation}
        \lim_{M\to \infty}\frac{|\Lambda_{(n,\infty)}(M,\Gamma)|}{|\Lambda(M,\Gamma)|}=0
    \end{equation}
    \item[2.] For every positive integer $m$,
    \begin{equation}
        \lim_{M\to \infty}\frac{|\Lambda_{(\infty,m)}(M,\Gamma)|}{|\Lambda(M,\Gamma)|}=0
    \end{equation}
    \item [3.]
    \begin{equation}
        \lim_{M\to \infty}\frac{|\bigcup_{p_{i}\in S_{\theta} } \Lambda_{(p_{i},\infty)}(M,\Gamma)|}{|\Lambda(M,\Gamma)|}=0
    \end{equation}

\end{prp}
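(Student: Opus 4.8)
The plan is to reduce all three assertions to one linear counting estimate, obtained from the volume asymptotics. Lemma \ref{asympp} gives $|\Lambda(M,\Gamma)|\sim M^{2}/2\mathrm{Vol}(Y,\lambda)$ for every $\Gamma$, hence $|\Lambda(M)|=O(M^{2})$ and $|\Lambda(M,\Gamma)|\ge cM^{2}$ for large $M$; this is available here because the proof of that lemma uses only $b_{1}(Y)=0$ together with $\partial=0$, and $\partial=0$ holds under our standing assumption that there is no positive hyperbolic orbit. Write $\Lambda_{(0,\infty)}(L):=\bigcup_{\Gamma}\Lambda_{(0,\infty)}(L,\Gamma)$ for the admissible orbit sets of action $<L$ that do not involve $\gamma$. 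The key claim is that $g(L):=|\Lambda_{(0,\infty)}(L)|=O(L)$. To see this, note that for $\alpha\in\Lambda_{(0,\infty)}(L)$ and $k\in\{0,1,\dots,\lfloor L/R\rfloor\}$ the orbit set $(\gamma,k)\cup\alpha$ is admissible with action $<2L$, and $(k,\alpha)\mapsto(\gamma,k)\cup\alpha$ is injective since $k$ is recovered as the multiplicity at $\gamma$; hence
\[
(\lfloor L/R\rfloor+1)\,g(L)\le|\Lambda(2L)|=O(L^{2}),
\]
so $g(L)=O(L)$.

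Given this, item 1 is immediate: $\alpha\mapsto\alpha\setminus(\gamma,n)$ is a bijection $\Lambda_{(n,\infty)}(M,\Gamma)\to\Lambda_{(0,\infty)}(M-nR,\Gamma-n[\gamma])$, so $|\Lambda_{(n,\infty)}(M,\Gamma)|\le g(M-nR)=O(M)$, and dividing by $|\Lambda(M,\Gamma)|\ge cM^{2}$ gives a ratio $O(1/M)\to 0$. Item 3 runs the same computation over the sparse index set: the union being disjoint (distinct multiplicities at $\gamma$),
\[
\Big|\bigcup_{q\in S_{\theta}}\Lambda_{(q,\infty)}(M,\Gamma)\Big|=\sum_{q\in S_{\theta},\,qR<M}\big|\Lambda_{(0,\infty)}(M-qR,\Gamma-q[\gamma])\big|\le C_{0}M\cdot\big|S_{\theta}\cap[1,M/R]\big|,
\]
and by part 3 of Proposition \ref{s} the gaps of $S_{\theta}$ tend to infinity, which forces $|S_{\theta}\cap[1,N]|=o(N)$; hence the left side is $o(M^{2})$ and the ratio tends to $0$ (the argument for $S_{-\theta}$ is identical).

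For item 2 I would first extract from $g(L)=O(L)$ a sub-polynomial bound on the number $\nu(L)$ of simple hyperbolic orbits of action $<L$: for each fixed $j\ge 1$, every $j$-element subset of the $\nu(L/j)$ simple hyperbolic orbits of action $<L/j$ is an admissible orbit set of action $<L$ avoiding $\gamma$, so $\binom{\nu(L/j)}{j}\le g(L)=O(L)$ and therefore $\nu(L)=O(L^{1/j})$. Since an element of $\Lambda_{(\infty,m)}(M,\Gamma)$ has the form $(\gamma,k)\cup\beta$ with $\beta$ an $m$-element set of simple hyperbolic orbits,
\[
|\Lambda_{(\infty,m)}(M,\Gamma)|\le\sum_{k=0}^{\lfloor M/R\rfloor}\binom{\nu(M-kR)}{m}\le(\lfloor M/R\rfloor+1)\binom{\nu(M)}{m}.
\]
Taking $j=m+1$ makes $\binom{\nu(M)}{m}=O(M^{m/(m+1)})$, so $|\Lambda_{(\infty,m)}(M,\Gamma)|=O(M^{1+m/(m+1)})=o(M^{2})$, and dividing by $|\Lambda(M,\Gamma)|\ge cM^{2}$ finishes item 2.

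I expect the only genuine content to be the linear estimate $g(L)=O(L)$; the remaining steps are elementary counting combined with Lemma \ref{asympp} and the sparsity in Proposition \ref{s}. The points needing care are checking that Lemma \ref{asympp} is indeed usable in the present setting (its proof does not invoke the presence of two elliptic orbits) and, for item 2, the bookkeeping that turns $\binom{\nu}{m}\le\nu^{m}$ with $\nu(L)=O(L^{1/(m+1)})$ into a genuinely sub-quadratic bound on $|\Lambda_{(\infty,m)}(M,\Gamma)|$.
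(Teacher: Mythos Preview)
Your argument is correct, and it takes a genuinely different route from the paper. The paper treats the three items by three separate tricks: for item~1 it writes $|\Lambda(M,\Gamma)|=\sum_{n\ge 0}|\Lambda_{(0,\infty)}(M-nR,\Gamma-n[\gamma])|$ and uses $|\Lambda(M-R,\Gamma-[\gamma])|/|\Lambda(M,\Gamma)|\to 1$ to force the $n=0$ term to have density zero; for item~2 it builds a map $\Lambda_{(\infty,m)}(M/2,\Gamma)^{2}\to\Lambda(M)$ by union (collapsing common hyperbolics) with fibers of size $O(M)$, yielding $|\Lambda_{(\infty,m)}(M/2,\Gamma)|^{2}\le C_{m}M\,|\Lambda(M)|$; for item~3 it uses the monotonicity $|\Lambda_{(n,\infty)}(M,\Gamma)|\le|\Lambda_{(n',\infty)}(M,\Gamma-(n-n')[\gamma])|$ for $n>n'$ together with the increasing gaps in $S_{\theta}$ to bound the density by $1/N$ for arbitrary $N$. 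By contrast, you isolate a single linear estimate $g(L)=|\Lambda_{(0,\infty)}(L)|=O(L)$ via the injection $(k,\alpha)\mapsto(\gamma,k)\cup\alpha$ into $\Lambda(2L)$, and then derive all three items from it; your treatment of item~2, extracting $\nu(L)=O(L^{1/j})$ from $\binom{\nu(L/j)}{j}\le g(L)$, is a pleasant corollary that the paper does not obtain. Your approach is more unified and more quantitative (explicit $o(M^{2})$ rates), while the paper's arguments are softer and avoid ever bounding $g(L)$ or $\nu(L)$ directly. Your remark that Lemma~\ref{asympp} is available here is correct: its proof uses only $b_{1}=0$ and $\partial=0$, both of which hold under the standing hypothesis of Section~4.
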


Before we try to prove Proposition \ref{asympdense}, we show the next almost trivial claim but this makes the proof of Proposition \ref{asympdense} easier.

\begin{cla}\label{easier}

For every $\Gamma\in H_{1}(Y)$, we have

\begin{equation}
    \lim_{M\to \infty}\frac{|\Lambda(M)|}{|\Lambda(M,\Gamma)|}=|H_{1}(Y)|
\end{equation}

\end{cla}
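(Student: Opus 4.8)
\textbf{Proof proposal for Claim \ref{easier}.}

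The plan is to compare the counting function $|\Lambda(M,\Gamma)|$ across the finitely many classes $\Gamma\in H_1(Y)$ and show that all of them have the same leading asymptotics, so that $|\Lambda(M)|=\sum_{\Gamma}|\Lambda(M,\Gamma)|$ forces the stated ratio. First I would fix a reference class, say $0\in H_1(Y)$, and establish a two-sided asymptotic comparison: for any $\Gamma,\Gamma'\in H_1(Y)$,
\begin{equation}
    \lim_{M\to\infty}\frac{|\Lambda(M,\Gamma)|}{|\Lambda(M,\Gamma')|}=1.
\end{equation}
To get the upper bound on $|\Lambda(M,\Gamma)|$ in terms of $|\Lambda(M,0)|$, I would pick an auxiliary admissible orbit set $\eta$ with $[\eta]=\Gamma$ (built, as in the proof of Theorem \ref{el2}, from a simple orbit or a union of simple orbits whose homology class has the right order), disjoint from the orbits one wants freedom to vary; then $\alpha\mapsto \alpha\cup\eta$ — more carefully, $\alpha\mapsto$ (the admissible orbit set obtained by adding the orbits of $\eta$ with appropriate multiplicities to $\alpha$) — injects a large subfamily of $\Lambda(M-A(\eta),0)$ into $\Lambda(M,\Gamma)$. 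Since $|\Lambda(M-A(\eta),0)| = |\Lambda(M,0)| + O(M)$ by the same quadratic-growth estimate used in Lemma \ref{asympp} (and $|\Lambda(M,0)|\sim M^2/2\mathrm{Vol}(Y,\lambda)$), this yields $\liminf |\Lambda(M,\Gamma)|/|\Lambda(M,0)|\ge 1$ up to a lower-order correction; symmetrically, swapping the roles of $0$ and $\Gamma$ (using a set $\eta'$ with $[\eta']=-\Gamma$) gives the reverse inequality.

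Granting this, the claim follows immediately: since $H_1(Y)$ is finite and $\Lambda(M)=\bigsqcup_{\Gamma\in H_1(Y)}\Lambda(M,\Gamma)$ is a disjoint union, we have
\begin{equation}
    \frac{|\Lambda(M)|}{|\Lambda(M,\Gamma)|}=\sum_{\Gamma'\in H_1(Y)}\frac{|\Lambda(M,\Gamma')|}{|\Lambda(M,\Gamma)|}\longrightarrow \sum_{\Gamma'\in H_1(Y)}1=|H_1(Y)|
\end{equation}
as $M\to\infty$. Note that one needs $|\Lambda(M,\Gamma)|\to\infty$ (so the ratio is eventually well-defined), which is guaranteed by Lemma \ref{asympp} since $\mathrm{Vol}(Y,\lambda)>0$.

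The main obstacle is the bookkeeping in the injection $\alpha\mapsto\alpha\cup\eta$: one must choose the auxiliary orbits in $\eta$ to avoid the orbits appearing in the $\alpha$'s one is counting (or, more precisely, restrict to the cofinite subfamily of $\alpha\in\Lambda(M-A(\eta),0)$ not involving those finitely many orbits, which changes the count only by $O(M)$), and one must check that adding $\eta$'s multiplicities keeps the orbit set admissible — automatic here since every orbit other than $\gamma$ is negative hyperbolic and $\eta$ can be taken built from such orbits with multiplicity one, or from $\gamma$ with multiplicity a multiple of its order, exactly as in the construction of $\epsilon_{(t_1,t_2,n)}$. Everything else is the same quadratic lattice-point estimate already invoked twice above, so the argument is genuinely routine once the comparison of the two counting functions is set up.
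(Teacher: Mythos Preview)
Your injection argument has a real gap: the claim that restricting to those $\alpha\in\Lambda(M-A(\eta),0)$ avoiding the finitely many orbits of $\eta$ only costs $O(M)$ is false. If $\delta$ is a fixed negative hyperbolic orbit, the map ``remove $\delta$'' is a bijection from $\{\alpha\in\Lambda(M,0):\delta\in\alpha\}$ onto a subset of $\Lambda(M-A(\delta),-[\delta])$, and the latter has size $\Theta(M^2)$, not $O(M)$. So a positive proportion of orbit sets may contain $\delta$, and the injection as stated does not give the needed lower bound $\liminf|\Lambda(M,\Gamma)|/|\Lambda(M,0)|\ge 1$.

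More to the point, you are working far too hard. You already invoke Lemma~\ref{asympp}, which says $\lim_{M\to\infty} M^2/|\Lambda(M,\Gamma)|=2\mathrm{Vol}(Y,\lambda)$ for \emph{every} $\Gamma$. That is exactly the statement that all the $|\Lambda(M,\Gamma)|$ have the same leading asymptotic $M^2/(2\mathrm{Vol}(Y,\lambda))$, so your desired comparison $|\Lambda(M,\Gamma)|/|\Lambda(M,\Gamma')|\to 1$ is an immediate consequence and the injection construction is entirely superfluous. The paper's proof is just this: sum Lemma~\ref{asympp} over the finitely many $\Gamma\in H_1(Y)$ to get $|\Lambda(M)|/M^2\to |H_1(Y)|/(2\mathrm{Vol}(Y,\lambda))$, then divide by $|\Lambda(M,\Gamma)|/M^2\to 1/(2\mathrm{Vol}(Y,\lambda))$. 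Three lines.
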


\begin{proof}[\bf Proof of Claim \ref{easier}]
By the definition, we have
\begin{equation}
    |\Lambda(M)|=\sum_{\Gamma\in H_{1}(Y)}|\Lambda(M,\Gamma)|
\end{equation}

And since Lemma \ref{asympp},

\begin{equation}
    \lim_{M\to \infty}\frac{|\Lambda(M)|}{M^{2}}= \lim_{M\to \infty}\sum_{\Gamma\in H_{1}(Y)}\frac{|\Lambda(M,\Gamma)|}{M^2}= \frac{|H_{1}(Y)|}{2\mathrm{Vol}(Y,\lambda)}.
\end{equation}

Hence
\begin{equation}
     \lim_{M\to \infty}\frac{|\Lambda(M)|}{|\Lambda(M,\Gamma)|}= \lim_{M\to \infty}\frac{|\Lambda(M)|}{M^{2}}\frac{M^{2}}{|\Lambda(M,\Gamma)|}=|H_{1}(Y)|.
\end{equation}
\end{proof}

\begin{proof}[\bf Proof of Proposition \ref{asympdense}]

Note that $|\Lambda_{(n,\infty)}(M,\Gamma)|=|\Lambda_{(n-k,\infty)}(M-kR,\Gamma-k[\gamma])|$. This is because the correspondence by adding $(\gamma,k)$ from $\Lambda_{(n-k,\infty)}(M-kR,\Gamma-k[\gamma])$ to $\Lambda_{(n,\infty)}(M,\Gamma)$ is bijective. Hence
\begin{equation}
    |\Lambda(M,\Gamma)|=\sum_{n=0}^{\infty}|\Lambda_{(n,\infty)}(M,\Gamma)|=\sum_{n=0}^{\infty}|\Lambda_{(0,\infty)}(M-nR,\Gamma-n[\gamma])|.
\end{equation}
Since $\lim_{M\to\infty}\frac{|\Lambda(M-R,\Gamma-[\gamma])|}{|\Lambda(M,\Gamma)|}=\lim_{M\to\infty}\frac{(M-R)^2}{M^2}=1$, we have
\begin{equation}
\begin{split}
    \lim_{M\to\infty}\frac{|\Lambda_{(0,\infty)}(M,\Gamma)|}{|\Lambda(M,\Gamma)|}
    =&1-\lim_{M\to \infty}\frac{\sum_{n=0}^{\infty}|\Lambda_{(0,\infty)}(M-(n+1)R,\Gamma-(n+1)[\gamma])|}{\sum_{n=0}^{\infty}|\Lambda_{(0,\infty)}(M-nR,\Gamma-n[\gamma])|}\\
    =&1-\lim_{M\to\infty}\frac{|\Lambda(M-R,\Gamma-[\gamma])|}{|\Lambda(M,\Gamma)|}\\
    =&0.
    \end{split}
\end{equation}
And hence for $n>0$,
\begin{equation}
\begin{split}
     \lim_{M\to\infty}\frac{|\Lambda_{(n,\infty)}(M,\Gamma)|}{|\Lambda(M,\Gamma)|}
    =& \lim_{M\to\infty}\frac{|\Lambda_{(0,\infty)}(M-nR,\Gamma-n[\gamma])|}{|\Lambda(M-nR,\Gamma-n[\gamma])|}\frac{|\Lambda(M-nR,\Gamma-n[\gamma])|}{|\Lambda(M,\Gamma)|}\\
    =&\lim_{M\to\infty}\frac{|\Lambda_{(0,\infty)}(M-nR,\Gamma-n[\gamma])|}{|\Lambda(M-nR,\Gamma-n[\gamma])|}\frac{(M-nR)^{2}}{M^{2}}=0.
\end{split}
\end{equation}
This completes the proof of the first statement.

To prove the second statement, we change the denominator in the statement to $|\Lambda(M)|$. By Claim \ref{easier}, it is sufficient to prove this version.
We define the map
\begin{equation}
    f:\Lambda_{(\infty,m)}(\frac{M}{2},\Gamma)\times \Lambda_{(\infty,m)}(\frac{M}{2},\Gamma) \to \Lambda(M)
\end{equation}
as follows.  
Let $\alpha$, $\beta \in I_{(\infty,m)}(\frac{M}{2})$. If $\alpha$ and $\beta$ have no common negative hyperbolic orbit, we define $f(\alpha,\beta)=\alpha\cup{\beta}$.  Otherwise, that is if $\alpha$ and $\beta$ have some common negative hyperbolic orbit, we define $f(\alpha,\beta)$ by changing all multiplicities of negative hyperbolic orbits in $\alpha\cup{\beta}$ to one. This definition is well-defined.

Let $\delta$ be an element in the image of $f$. Then the number of multiplicity at $\gamma$ in $\delta$ is at most $\frac{M}{R}$. So
by combinatorial arguments, we can find that for every $m$, there is $C_{m}$ such that $\frac{C_{m}M}{R}>|f^{-1}(\delta)|$ for any $\delta \in \Lambda(M)$. So,
\begin{equation}
    |\Lambda(M)|>\frac{|\Lambda_{(\infty,m)}(\frac{M}{2},\Gamma)|^2}{\frac{C_{m}M}{R}}
\end{equation}
thus
\begin{equation}
    \frac{\frac{C_{m}M}{R}}{|\Lambda_{(\infty,m)}(\frac{M}{2},\Gamma)|}>\frac{|\Lambda_{(\infty,m)}(\frac{M}{2},\Gamma)|}{|\Lambda(M)|}.
\end{equation}

Suppose that there are $\epsilon>0$ and $M_{k}\to \infty$ such that $\frac{|\Lambda_{(\infty,m)}(\frac{M_{k}}{2},\Gamma)|}{|\Lambda(M_{k})|}>\epsilon$ then,
\begin{equation}
\begin{split}
\lim_{k\to \infty}\frac{|\Lambda_{(\infty,m)}(\frac{M_{k}}{2},\Gamma)|}{|\Lambda(M_{k})|} \leq&
    \lim_{k\to \infty} \frac{C_{m}M_{k}}{R|\Lambda_{(\infty,m)}(\frac{M_{k}}{2},\Gamma)|}\\
    =& \lim_{k\to \infty} \frac{C_{m}M_{k}}{R|\Lambda(M_{k})|}\frac{|\Lambda(M_{k})|}{|\Lambda_{(\infty,m)}(\frac{M_{k}}{2})|}\\
    \leq&\lim_{k\to \infty} \frac{C_{m}M_{k}^{2}}{\epsilon R|\Lambda(M_{k})|}\cdot \frac{1}{M_{k}}\\
    =&\frac{2C_{m}\mathrm{Vol}(Y,\lambda)}{\epsilon R|H_{1}(Y)|}\lim_{k\to \infty}\frac{1}{M_{k}}=0.
    \end{split}
\end{equation}
This contradicts $\frac{|\Lambda_{(\infty,m)}(\frac{M_{k}}{2},\Gamma)|}{|\Lambda(M_{k})|}>\epsilon$. Hence $\lim_{M\to \infty}\frac{|\Lambda_{(\infty,m)}(\frac{M}{2},\Gamma)|}{|\Lambda(M)|}=0$ and so
\begin{equation}
\begin{split}
    \lim_{M\to \infty}\frac{|\Lambda_{(\infty,m)}(\frac{M}{2},\Gamma)|}{|\Lambda(\frac{M}{2})|}=&\lim_{M\to \infty}\frac{|\Lambda_{(\infty,m)}(\frac{M}{2})|}{|\Lambda(M)|}\frac{|\Lambda(M)|}{|\Lambda(\frac{M}{2})|}\\
    =&\lim_{M\to \infty}\frac{|\Lambda_{(\infty,m)}(\frac{M}{2},\Gamma)|}{|\Lambda(M)|}\frac{M^2}{(\frac{M}{2})^2}=0
\end{split}
\end{equation}
This completes the proof of the second statement.

Finally, we prove the third statement.
Note that for $p_{i}\in S_{\theta}$, the sequence of  $p_{i+1}-p_{i}$ is monotone increasing with respect to $i$ and diverges to infinity as $i\to \infty$ (Proposition \ref{s}). Let $s\in \mathbb{Z}_{>0}$ be the order of $[\gamma]$ in $H_{1}(Y)$. Then for every $N\in \mathbb{Z}_{>0}$, there is $l\in \mathbb{Z}_{>0}$ such that $p_{i+1}-p_{i}>sN$ for every $i>l$.
By the first statement,
\begin{equation}\label{to0}
     \lim_{M\to \infty}\frac{|\bigcup_{p_{i}\in S_{\theta},\,\,p_{i}<p_{l} } \Lambda_{(p_{i},\infty)}(M,\Gamma)|}{|\Lambda(M,\Gamma)|}=0.
\end{equation}
Since $|\Lambda_{(n,\infty)}(M,\Gamma)|\leq |\Lambda_{(n',\infty)}(M,\Gamma-(n-n')[\gamma])|$ for $n>n'$, if $p_{i}\geq p_{l}$,
\begin{equation}
    \sum_{n=p_{i}+1}^{p_{i+1}}|\Lambda_{(n,\infty)}(M,\Gamma)|\geq N|\Lambda_{(p_{i+1},\infty)}(M,\Gamma)|
\end{equation}
and then
\begin{equation}
    |\Lambda(M,\Gamma)|\geq N|\bigcup_{p_{i}\in S_{\theta},\,\,p_{l}\leq p_{i} } \Lambda_{(p_{i},\infty)}(M,\Gamma)|.
\end{equation}
By combining with (\ref{to0}), we have
\begin{equation}
    \frac{1}{N}\geq  \lim_{M\to \infty}\frac{|\bigcup_{p_{i}\in S_{\theta} } \Lambda_{(p_{i},\infty)}(M,\Gamma)|}{|\Lambda(M,\Gamma)|}.
\end{equation}
Since we can pick up $N$ arbitrary large, we complete the proof of the third statement.
\end{proof}
\subsection{Proof of Proposition \ref{main index 2} under Lemma \ref{mainlemma}}
Recall that since Lemma \ref{isomero} there is an isomorphism
\begin{equation}\label{isom}
     \mathrm{ECH}(Y,\lambda,0)  \cong \bigoplus_{k=0}^{\infty}\mathbb{F}\langle  \alpha_{k} \rangle \bigoplus \bigoplus_{j=1}^{m}\mathbb{F}\langle \beta_{j}\rangle
\end{equation}
with $U\langle  \alpha_{k} \rangle=\langle  \alpha_{k-1} \rangle$ for $k\geq 1$ (note that $U\langle  \alpha_{0} \rangle$ is not necessarily $0$).  Moreover all but finite admissible orbit sets are in $\{\alpha_{k}\}_{k\in \mathbb{Z}_{\geq0}}$. Note that $A(\alpha_{k})>A(\alpha_{l})$ if and only if $k>l$. Here we omit some notations from (\ref{importantiso}) and from now on, we do under this notations unless there is confusion.

The aim of this subsection is the proof of the next proposition under Lemmma \ref{mainlemma}.

\begin{prp}\label{main index 2}
For every $\epsilon>0$ and positive integer $l$, there is $k\in \mathbb{Z}_{>0}$ which satisfies the following condition.

The $l+1$ consecutive orbit sets $\alpha_{k},\,\,\alpha_{k+1},....\,\,\alpha_{k+l}$ satisfies for every $0\leq i \leq l$, 
\begin{enumerate}
    \item $J(\alpha_{k+i+1},\alpha_{k+i})=2$
    \item $A(\alpha_{k+i+1})-A(\alpha_{k+i})<\epsilon$ 
    \item $E(\alpha_{k+i})\notin S_{\theta}\cup{S_{-\theta}}$
    \item $E(\alpha_{k+i})>p_{1},\,\,q_{1}$
    \item $H(\alpha_{k+i})>4$
     \item In the notation of (\ref{importantiso}), for any $\Gamma \in H_{1}(Y)$ and $1\leq j \leq m_{\Gamma}$, $A(\alpha_{k+i})>A(\gamma)+A(\beta^{\Gamma}_{j})$ .
\end{enumerate}
\end{prp}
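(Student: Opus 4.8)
\textbf{Proof plan for Proposition \ref{main index 2}.}

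The plan is to combine the three asymptotic density statements of Proposition \ref{asympdense} with Lemma \ref{mainlemma} and the linear growth of the ECH index along the $U$-sequence, and then argue by a pigeonhole/counting argument that among the $\alpha_k$ with action below a large threshold $M$, the proportion of indices $k$ for which one of the six conditions fails is asymptotically zero; hence long blocks of consecutive good $\alpha_k$ must exist. First I would fix $\epsilon>0$ and $l$, and for each large $M$ set $N(M):=|\Lambda(M,0)|$, which by Lemma \ref{asympp} grows like $M^2/(2\mathrm{Vol})$. Since $\partial=0$, all but finitely many admissible orbit sets in homology class $0$ are the $\alpha_k$, and $A(\alpha_k)$ is strictly increasing with $I(\alpha_k)=I(\alpha_0)+2k$; so the number of $\alpha_k$ with $A(\alpha_k)<M$ is $N(M)$ up to an additive constant, and consecutive $\alpha_k,\alpha_{k+1}$ correspond to consecutive action values.

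The core of the argument is to bound, within the index window $\{k : A(\alpha_k)<M\}$, the number of $k$ violating each of the six conditions, and show each count is $o(N(M))$. Conditions (3), (4), (5) are handled directly by Proposition \ref{asympdense}: the set of $\alpha_k$ with $E(\alpha_k)\in S_\theta\cup S_{-\theta}$, or $E(\alpha_k)\le p_1,q_1$, or $H(\alpha_k)\le 4$, is a finite union of sets of the form $\Lambda_{(n,\infty)}(M,0)$, $\Lambda_{(\infty,m)}(M,0)$, or $\bigcup_{p_i\in S_\theta}\Lambda_{(p_i,\infty)}(M,0)$ (together with the $S_{-\theta}$ analogue, which follows by the same proof as statement 3 applied to $S_{-\theta}$ in place of $S_\theta$), so by parts 1, 2, 3 of Proposition \ref{asympdense} each has density tending to $0$. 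Condition (6) excludes only finitely many $\alpha_k$ (those with action at most $\max_{\Gamma,j}(A(\gamma)+A(\beta^\Gamma_j))$), hence is negligible. Condition (2) is controlled by the volume property: $\sum_{k:\,A(\alpha_k)<M}\big(A(\alpha_{k+1})-A(\alpha_k)\big)$ telescopes to roughly $M$, so the number of $k$ with $A(\alpha_{k+1})-A(\alpha_k)\ge\epsilon$ is at most $M/\epsilon=O(M)=o(N(M))$. For condition (1), note that for each $k$ the $U$-map sends $\langle\alpha_{k+1}\rangle$ to $\langle\alpha_k\rangle$, so there is a $J$-holomorphic curve counted by $U$ with $I=2$ from $\alpha_{k+1}$ to $\alpha_k$; by the $J_0$ estimate $J_0(\alpha_{k+1},\alpha_k)\ge -1$, and the only way to have $J_0\le 1$ is ruled out on good pairs by Lemma \ref{mainlemma}, so once both $\alpha_k$ and $\alpha_{k+1}$ satisfy (3)--(6), automatically $J_0(\alpha_{k+1},\alpha_k)=2$, giving (1). (One must check $J_0=2$ rather than some larger even value; here I would invoke the parity/bounds on $J_0$ for $U$-counted curves together with the index relation, or absorb this into the definition of ``good'' used in Lemma \ref{mainlemma}.)

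Putting this together: the density of $k\le$ (the window) that fail at least one of conditions (2)--(6) tends to $0$, so for $M$ large the good $k$ have density arbitrarily close to $1$; in particular among any $(l+1)$-long block the expected number of bad indices is less than $1$, forcing the existence of a run $\alpha_k,\dots,\alpha_{k+l}$ of consecutive indices all satisfying (2)--(6), and hence also (1) by the previous paragraph. The main obstacle I anticipate is bookkeeping the interaction between the two different normalizations of density --- Proposition \ref{asympdense} is phrased with denominator $|\Lambda(M,\Gamma)|$ for a fixed $\Gamma$, while the $\alpha_k$ run through homology class $0$ only, yet the relevant $\Lambda_{(n,\infty)}$ sets involve shifted classes $\Gamma-n[\gamma]$ --- and making sure that the ``bad set has density $0$ $\Rightarrow$ long good runs exist'' implication is applied to the correct indexing (action-ordered vs. index-ordered, which coincide here up to a bounded shift). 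The rest is routine counting once the density estimates and Lemma \ref{mainlemma} are in hand.
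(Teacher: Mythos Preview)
Your overall density/pigeonhole strategy matches the paper's approach for conditions (2)--(6), and those parts are fine. The real gap is in your treatment of condition (1). Lemma \ref{mainlemma} only rules out $J_{0}(\alpha_{k+1},\alpha_{k})\leq 1$ on good pairs; it gives no upper bound, so you cannot conclude $J_{0}=2$ from it alone. There is no parity constraint on $J_{0}$ (that is for $I$), and there is no a priori bound on $J_{0}$ of an individual $U$-counted curve, so neither of your parenthetical suggestions works.

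What the paper actually does is a global counting argument using the additivity of $J_{0}$. One chooses the special subsequence $\alpha_{k_{n}}=\{(\gamma,sn)\}$ and computes directly that
\[
I(\alpha_{k_{n}})-J_{0}(\alpha_{k_{n}})=2nc_{s\gamma}+2\lfloor n\theta\rfloor+1=O(n)=O(\sqrt{k_{n}}),
\]
hence $\sum_{i=0}^{k_{n}-1}J_{0}(\alpha_{i+1},\alpha_{i})=J_{0}(\alpha_{k_{n}})\leq 2k_{n}+C\sqrt{k_{n}}$. Since good pairs have density $1$ and on them $J_{0}\geq 2$, while bad pairs have density $0$ and $J_{0}\geq -1$, this sum bound forces the set $\{k:\text{good pair with }J_{0}>2\}$ to have density $0$ as well (the paper phrases this as the pair of inequalities (\ref{ikkaime}) and (\ref{nikaime})). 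Only then does one get $\lim |\hat I_{=2,<\epsilon}(k_{n})|/k_{n}=1$ and hence long runs with $J_{0}=2$. You need to insert this step; without the endpoint computation of $I-J_{0}$ your argument does not close.
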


The next lemma plays an important role in the proof of Proposition \ref{main index 2}.

\begin{lem}\label{mainlemma}
For any sufficiently small $\epsilon>0$,  there is no positive integer $k$ which
satisfies
\begin{enumerate}
    \item $J(\alpha_{k+1},\alpha_{k})\leq 1$
    \item $A(\alpha_{k+1})-A(\alpha_{k})<\epsilon$ 
    \item $E(\alpha_{k+1}),\,\,E(\alpha_{k})>p_{1},\,\,q_{1}$
    \item $E(\alpha_{k+1}),\,\, E(\alpha_{k})\notin S_{\theta}\cup{ S_{-\theta}}$
    \item $H(\alpha_{k+1}),\,\,H(\alpha_{k})>4$
    \item  In the notation of (\ref{importantiso}), for any $\Gamma \in H_{1}(Y)$ and $1\leq j \leq m_{\Gamma}$, $A(\alpha_{k})>A(\gamma)+A(\beta^{\Gamma}_{j})$.
\end{enumerate}

Here we note that the number of $k$ which does not satisfy the  sixth condition is finite.
\end{lem}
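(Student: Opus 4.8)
The plan is to argue by contradiction. Fix $\epsilon>0$ small (smaller than a finite collection of action gaps that the argument will single out) and suppose there is a $k$ satisfying the six conditions. Since $\partial=0$, the identification (\ref{isom}) holds with $U\langle\alpha_{k+1}\rangle=\langle\alpha_k\rangle$, so for every generic base point $z\in Y$ the coefficient of $\langle\alpha_k\rangle$ in $U_{J,z}\langle\alpha_{k+1}\rangle$ is $1$; hence there is a curve $u=u_0\cup u_1\in\mathcal M_2^J(\alpha_{k+1},\alpha_k)$ with $(0,z)\in u$, and since $z$ lies on no Reeb orbit we have $(0,z)\notin u_0$, so $(0,z)\in u_1$ and $u_1\neq\emptyset$. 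Because $\alpha_{k+1},\alpha_k$ are admissible and $I(u)=2$, Proposition \ref{ind}(4) gives that $u$ is admissible, $u_1$ is embedded and disjoint from $u_0$, and $\mathrm{ind}(u_1)=2$, and the $J_0$--topology identity holds: $-\chi(u_1)+\sum_i(n_i^+-1)+\sum_j(n_j^--1)=J_0(u)\le 1$. Every hyperbolic orbit of $\alpha_{k+1}$ or $\alpha_k$ has multiplicity $1$, so it contributes $n^{\pm}-1=0$, and the identity collapses to $-\chi(u_1)+(n_\gamma^+-1)+(n_\gamma^--1)\le 1$. This bounds, uniformly in $k$, the genus and number of ends of $u_1$; in particular $u_1$ has only finitely many ends, those at hyperbolic orbits all being over simple covers.

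Next I would extract an approximate action relation. Since $u_0$ is a union of trivial cylinders and these carry no $d\lambda$--area, $\int_{u_1}d\lambda=A(\alpha_{k+1})-A(\alpha_k)<\epsilon$, so the total action of the positive ends of $u_1$ is within $\epsilon$ of that of its negative ends. Writing $M^{\pm}$ for the total multiplicity at $\gamma$ of the positive/negative ends of $u_1$ and $S^{\pm}$ for the (bounded-size) sets of hyperbolic orbits occurring as its ends, this says that $(M^+-M^-)R+\bigl(\sum_{h\in S^+}A(h)-\sum_{h\in S^-}A(h)\bigr)=O(\epsilon)$. Moreover the partition conditions of Definition \ref{part} force the multiplicities of the ends of $u_1$ at $\gamma$ to be an initial segment of the outgoing partition $P_{-\theta}^{\mathrm{in}}(E(\alpha_{k+1}))$ on the positive side and of $P_{\theta}^{\mathrm{in}}(E(\alpha_k))$ on the negative side; by conditions (3)--(4) (so that $E(\alpha_{k\pm1\ \mathrm{or}\ k})\notin S_\theta\cup S_{-\theta}$ and exceed $p_1,q_1$) each of these partitions has at least two parts, so $u_1$ genuinely has several ends at $\gamma$.

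Then I would move the base point. As $z$ ranges over the dense open set of generic points, the curves through $(0,z)$ fill out the compactified moduli space, and the $z$--independence of $U_{J,z}$ forces its codimension-one boundary strata to cancel in pairs. Running the SFT compactness theorem along a path of base points produces holomorphic buildings whose levels have ECH indices summing to $2$ (hence each level has $I\in\{0,1,2\}$) and whose $d\lambda$--areas are again $<\epsilon$; combined with the index and partition constraints this pins down the intermediate orbit sets and yields explicit ``splitting curves'' — low-energy curves of small ECH index between covers of $\gamma$ and simple hyperbolic orbits — together with approximate relations of the form $pR\approx p'R$ (forcing $p=p'$ for $\epsilon$ small) among the parts of the partitions at $\gamma$, and relations among the $A(h)$'s. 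Feeding these back into the combinatorics gives the contradiction: by Proposition \ref{s} the gaps $q_{i+1}-q_i$ and $p_{i+1}-p_i$ are nondecreasing and tend to $\infty$ and $S_\theta\cap S_{-\theta}=\{1\}$, while conditions (3)--(5) keep $E(\alpha_{k\pm})$ beyond all the small parts; matching the forced partition structure at $\gamma$ against the topological type of $u_1$ dictated by $J_0(u)\le 1$ and against the approximate action relations is then impossible. Condition (6) is only used so that $\alpha_{k+1},\alpha_k$ sit in the $U$-tower and (\ref{isom}) applies without the finitely many exceptional classes interfering.

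I expect the main obstacle to be the compactness bookkeeping of the third step: determining exactly which holomorphic buildings can appear as the base point varies — that the intermediate orbit sets are admissible (or otherwise harmless), that no ``bad'' splitting into two $I=1$ curves persists, and that the surviving splitting curves have precisely the ends prescribed by the partition conditions — and then organizing the ensuing finite-but-large case analysis of partitions, actions, and indices so that Proposition \ref{s} together with conditions (3)--(5) rules out every case.
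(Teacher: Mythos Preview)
Your outline has the right shape --- contradiction, classify $u_1$ by its $(g,k,l)$-type via the $J_0$ identity, then degenerate the base point --- but it contains a key misreading and misses the central mechanism in the hardest case.

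The misreading is in your second paragraph: from conditions (3)--(4) you conclude that ``$u_1$ genuinely has several ends at $\gamma$''. This is backwards. Definition~\ref{adm} only requires the end-multiplicities of $u_1$ at $\gamma$ to form an \emph{initial segment} of the partition; the remaining parts are carried by trivial cylinders in $u_0$. When $l=0$ every $n_i^\pm=1$, and since $E(\alpha_{k+1})\notin S_{-\theta}$ the first part of $P_\gamma^{\mathrm{out}}(E(\alpha_{k+1}))$ is strictly smaller than $E(\alpha_{k+1})$; a single $u_1$-end at $\gamma$ would then force a trivial cylinder as well, giving $n_\gamma^+\ge 2$, a contradiction. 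Hence for $l=0$ the curve $u_1$ has \emph{no} end at $\gamma$, and for $l=1$ exactly one. The paper's case split rests on this: in the $l=0$ cases $u_1$ lives entirely among hyperbolic orbits, one sends $z\to\gamma$, and the resulting two-level building meets $\gamma$ with multiplicity $1$ on each side (because $S_\theta\cap S_{-\theta}=\{1\}$), producing an admissible intermediate orbit set whose existence contradicts the parity~(\ref{mod2}).

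The substantive gap is the case $(g,k,l)=(0,2,1)$, where $u_1\in\mathcal M^J((\gamma,p_i),(\delta,1))$ (or its mirror). Degenerating $z$ toward $\gamma$ is useless here, and degenerating toward an $\eta\in\hat\alpha$ alone yields only $|2A(\eta)-p_iR|<\epsilon$, which is consistent with everything else. The paper instead performs an \emph{index shift} (Claims~\ref{mi1}--\ref{el0}): replacing the multiplicity $M$ at $\gamma$ by any $p_i\le N<p_{i+1}$ keeps $I=2$, but at $N=p_{i+1}$ the index jumps to $4$, so an intermediate admissible $\zeta$ with $E(\zeta)=0$ appears, together with two \emph{new} $U$-curves. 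Degenerating these toward $\eta\in\hat\alpha$ (condition~(5) guarantees enough such $\eta$) yields further relations --- e.g.\ $|2A(\eta)-p_{i+1}R|<\epsilon$ or $|2A(\eta)-(p_{i+1}-p_i)R|<\epsilon$ --- which combined with $|2A(\eta)-p_iR|<\epsilon$ force $p_i=p_{i+1}$ or $p_i=p_{i+1}-p_i$, contradicting Claim~\ref{fre}. None of this --- the index-shift construction of $\zeta$, the auxiliary $U$-curves, the degeneration toward a hyperbolic $\eta$ rather than $\gamma$ --- is present in your plan, and your ``codimension-one cancellation'' language suggests a cobordism count that is not what the argument does.
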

Before proving Lemma \ref{mainlemma},  we will give the proof of Proposition \ref{main index 2} under Lemma \ref{mainlemma}. We will prove Lemma \ref{mainlemma} in the next section.

To prove Proposition \ref{main index 2}, we introduce some notations as follows.

For positive integer $k$ and $\epsilon>0$, we set
\begin{equation}
    \hat{I}_{(n,m)}(k):=\{\,\,\alpha_{k'}\,\,|\,\, k' \leq k, \,\, (E(\alpha_{k'}),H(\alpha_{k'}))=(n,m)\,\,\}
\end{equation}
\begin{equation}
    \hat{I}_{(n,\infty)}(k):=\bigcup_{m=0}^{\infty}\hat{I}_{(n,m)}(k)
\end{equation}
\begin{equation}
    \hat{I}_{(\infty,m)}(k):=\bigcup_{n=0}^{\infty}\hat{I}_{(n,m)}(k)
\end{equation}
\begin{equation}
    \hat{I}_{\geq\epsilon}(k):=\{\,\,\alpha_{k'}\,\,|\,\, k' \leq k, \,\, A(\alpha_{k'+1})-A(\alpha_{k'})\geq \epsilon\,\,\}
\end{equation}
\begin{equation}
    \hat{I}_{=2,<\epsilon}(k):=\{\,\,\alpha_{k'}\,\,|\,\, k' \leq k, \,\, J(\alpha_{k'+1},\alpha_{k'})=2,\,\,\,\alpha_{k'+1},\,\,\alpha_{k'}\,\,\mathrm{satisfy\,\,2,3,4,5,6\,\, in\,\,Lemma\,\, \ref{mainlemma}}\}
\end{equation}
\begin{equation}
    \hat{I}_{>2,<\epsilon}(k):=\{\,\,\alpha_{k'}\,\,|\,\, k' \leq k, \,\, J(\alpha_{k'+1},\alpha_{k'})>2,\,\,\,\alpha_{k'+1},\,\,\alpha_{k'}\,\,\mathrm{satisfy\,\,2,3,4,5,6\,\, in\,\,Lemma\,\, \ref{mainlemma}}\}
\end{equation}

\begin{proof}[\bf Proof of Proposition \ref{main index 2} under Lemma \ref{mainlemma}]

For large $n$, there is a $k_{n}\in \mathbb{Z}$ such that $\alpha_{k_{n}}=\{(\gamma,sn)\}$ where $s\in \mathbb{Z}_{>0}$ is the order of $[\gamma]$ in $H_{1}(Y)$. Here we use $[(\gamma,sn)]=0$ and (\ref{isom}).

\begin{cla}\label{asdg}
\begin{enumerate}
    \item For every positive integer $n$,
    $
    \lim_{k\to \infty}\frac{|\hat{I}_{(n,\infty)}(k)|}{k}=0
    $
    \item For every positive integer $m$,
    $
        \lim_{k\to \infty}\frac{|\hat{I}_{(\infty,m)}(k)|}{k}=0
    $
    \item 
   $
        \lim_{k\to \infty}\frac{|\bigcup_{n\in S_{\theta} } \hat{I}_{(n,\infty)}(k)|}{k}=0
    $
    \item $
    \lim_{n\to \infty}\frac{|\hat{I}_{\geq\epsilon}(k_{n})|}{k_{n}}=0
    $
\end{enumerate}
\end{cla}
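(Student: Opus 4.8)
The plan is to prove the four density statements in Claim \ref{asdg} by transferring the asymptotic counts already established for the sets $\Lambda_{(\bullet,\bullet)}(M,\Gamma)$ in Proposition \ref{asympdense} and Lemma \ref{asympp} over to the counts for the $\hat{I}_{(\bullet,\bullet)}(k)$. The bridge between the two is the observation that, because $\partial=0$, all but finitely many admissible orbit sets with trivial homology class appear in the $U$-sequence $\{\alpha_k\}$, and $A(\alpha_k)$ is strictly increasing in $k$. Hence for $M=A(\alpha_k)$ we have $|\Lambda(A(\alpha_k),0)|=k+O(1)$, and more precisely the elements of $\Lambda(M,0)$ counted up to action $A(\alpha_k)$ are exactly $\{\alpha_0,\dots,\alpha_{k-1}\}$ up to a bounded correction coming from the finitely many $\beta_j$'s. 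So $|\hat I_{(n,m)}(k)|$ differs from $|\Lambda_{(n,m)}(A(\alpha_k),0)|$ by $O(1)$, and similarly for the unions over $m$ or over $n$.

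First I would make this correspondence precise: fix the finite exceptional set $\mathcal{E}$ of admissible orbit sets with $[\alpha]=0$ not lying in $\{\alpha_k\}$, note $|\mathcal{E}|\le m$, and observe that for any $k$, $\hat I_{(n,\infty)}(k)$ and $\Lambda_{(n,\infty)}(A(\alpha_k),0)$ agree up to adding/removing elements of $\mathcal{E}$. Then parts 1, 2, 3 of Claim \ref{asdg} follow by dividing: writing $M_k=A(\alpha_k)$, we have
\begin{equation}
\frac{|\hat I_{(n,\infty)}(k)|}{k}=\frac{|\Lambda_{(n,\infty)}(M_k,0)|+O(1)}{|\Lambda(M_k,0)|+O(1)}\longrightarrow 0
\end{equation}
by part 1 of Proposition \ref{asympdense} (with $\Gamma=0$); and identically for parts 2 and 3 using parts 2 and 3 of Proposition \ref{asympdense}. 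One only needs $M_k\to\infty$ as $k\to\infty$, which holds since $A(\alpha_k)\to\infty$.

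For part 4 the situation is slightly different because $\hat I_{\ge\epsilon}(k)$ is defined by an action-gap condition rather than by $(E,H)$, and we must look along the specific subsequence $k_n$ with $\alpha_{k_n}=\{(\gamma,sn)\}$, so $A(\alpha_{k_n})=snR$. The key point is that
\begin{equation}
\sum_{k'\le k_n-1}\bigl(A(\alpha_{k'+1})-A(\alpha_{k'})\bigr)=A(\alpha_{k_n})-A(\alpha_0)=snR-A(\alpha_0),
\end{equation}
so the number of indices $k'\le k_n$ with gap $\ge\epsilon$ is at most $(snR-A(\alpha_0))/\epsilon=O(n)$. On the other hand $k_n\sim|\Lambda(snR,0)|\sim (snR)^2/(2\mathrm{Vol}(Y,\lambda))$ by Lemma \ref{asympp}, which grows quadratically in $n$. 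Dividing gives $|\hat I_{\ge\epsilon}(k_n)|/k_n = O(n)/O(n^2)\to 0$.

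I do not expect a serious obstacle here; the one point requiring a little care is bookkeeping the $O(1)$ corrections from the finitely many $\beta_j$ and from $\alpha_0$ uniformly, and making sure the ``all but finitely many admissible orbit sets lie in $\{\alpha_k\}$'' statement is invoked correctly (it is exactly what $\partial=0$ together with the structure \eqref{isom} gives). The mild subtlety in part 4 is that the bound on $|\hat I_{\ge\epsilon}(k_n)|$ is genuinely an \emph{a priori} telescoping bound that does not require knowing anything about which gaps are large — it is purely that the total action is fixed — and then Lemma \ref{asympp} supplies the quadratic lower bound on $k_n$ that beats it.
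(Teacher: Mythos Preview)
Your proposal is correct and follows essentially the same approach as the paper. For parts 1--3, the paper simply says they are ``just restatements of Proposition \ref{asympdense}''; you have spelled out the $O(1)$ bookkeeping (via the finite exceptional set $\{\beta_j\}$ and the bijection between $\{\alpha_0,\dots,\alpha_{k-1}\}$ and $\Lambda(A(\alpha_k),0)$ up to that set) that justifies this restatement. For part 4, your telescoping bound $|\hat I_{\ge\epsilon}(k_n)|\le (snR-A(\alpha_0))/\epsilon=O(n)$ combined with the quadratic growth $k_n\sim (snR)^2/(2\,\mathrm{Vol})$ from Lemma \ref{asympp} is exactly the paper's argument, phrased as $O(n)/O(n^2)\to 0$ rather than as $n<C\sqrt{k_n}$.
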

\begin{proof}[\bf Proof of Claim \ref{asdg}]
The first three statement is just restatements of Proposition \ref{asympdense}. So we  have only to prove the forth statement. 

Since $I(\alpha_{k_{n}})=2k_{n}+I(\alpha_{0})$, $\lim_{n \to \infty}\frac{(snR)^2}{2k_{n}}=\mathrm{Vol}(Y,\lambda)$ and so $n<C\sqrt{k_{n}}$ for some $C>0$. So by the definition, for some $C>0$,

\begin{equation}
    \epsilon |\hat{I}_{\geq\epsilon}(k_{n})|<A(\alpha_{k_{n}})-A(\alpha_{0})=snR-A(\alpha_{0})<C\sqrt{k_{n}}
\end{equation}

Thus
\begin{equation}
   0= \lim_{n \to \infty}\frac{C\sqrt{k_{n}}}{\epsilon k_{n}}\geq  \lim_{n\to \infty}\frac{|\hat{I}_{\geq\epsilon}(k_{n})|}{k_{n}}.
\end{equation}

This finishes the proof of Claim \ref{asdg}.
\end{proof}

By definition,
\begin{equation}
    I(\alpha_{k_{n}})-J_{0}(\alpha_{k_{n}})=2nc_{s\gamma}+2\lfloor n\theta \rfloor +1
\end{equation}
where $c_{s\gamma}=c_{1}(\xi|_{*},\tau)$ with $\{*\}=H_{2}(Y;(\gamma,s),\emptyset)$ and $J_{0}(\alpha_{k_{n}})=J_{0}(\alpha_{k_{n}},\emptyset)$. So there is $C>0$ such that $|I(\alpha_{k_{n}})-J_{0}(\alpha_{k_{n}})|<Cn$ and then
\begin{equation}
    J_{0}(\alpha_{k_{n}})=\sum_{i=0}^{k_{n}-1}J_{0}(\alpha_{i+1},\alpha_{i})<2k_{n}+C\sqrt{k_{n}}.
\end{equation}
for some $C>0$. By considering $J_{0}(\alpha_{i+1},\alpha_{i})\geq -1$ and Lemma \ref{mainlemma}, for some $C>0$ we have

\begin{equation}
    \begin{split}
        2|\hat{I}_{=2,<\epsilon}(k_{n})|+3|\hat{I}_{>2,<\epsilon}(k_{n})|\leq & 2k_{n}+C\sqrt{k_{n}}+|\bigcup_{p_{i}\in S_{\theta}}\hat{I}_{(p_{i},\infty)}(k_{n}) |\\
        &+|\bigcup_{q_{i}\in S_{-\theta}}\hat{I}_{(q_{i},\infty)}(k_{n}) |
        +\sum_{i=0}^{\mathrm{max}(p_{1},q_{1})}|\hat{I}_{(i,\infty)}(k_{n})|\\
        &+\sum_{i=0}^{4}|\hat{I}_{(\infty,i)}(k_{n})|+|\hat{I}_{\geq\epsilon}(k_{n})|
    \end{split}
\end{equation}

Since the right hand side over $k_{n}$ converges to 2 as $n\to \infty$, we have
\begin{equation}\label{ikkaime}
    \lim_{n\to \infty}\frac{ 2|\hat{I}_{=2,<\epsilon}(k_{n})|+3|\hat{I}_{>2,<\epsilon}(k_{n})|}{k_{n}}\leq 2.
\end{equation}

On the other hand, we have
\begin{equation}
    \begin{split}
        k_{n}\geq |\hat{I}_{=2,<\epsilon}(k_{n})|+|\hat{I}_{>2,<\epsilon}(k_{n})|\geq & k_{n}-C\sqrt{k_{n}}-|\bigcup_{p_{i}\in S_{\theta}}\hat{I}_{(p_{i},\infty)}(k_{n}) |\\
        &-|\bigcup_{q_{i}\in S_{-\theta}}\hat{I}_{(q_{i},\infty)}(k_{n}) |
        -\sum_{i=0}^{\mathrm{max}(p_{1},q_{1})}|\hat{I}_{(i,\infty)}(k_{n})|\\
        &-\sum_{i=0}^{4}|\hat{I}_{(\infty,i)}(k_{n})|-|\hat{I}_{\geq\epsilon}(k_{n})|.
    \end{split}
\end{equation}

And so we have
\begin{equation}\label{nikaime}
    \lim_{n\to \infty}\frac{ |\hat{I}_{=2,<\epsilon}(k_{n})|+|\hat{I}_{>2,<\epsilon}(k_{n})|}{k_{n}}=1.
\end{equation}

From (\ref{ikkaime}) and (\ref{nikaime}), we have
\begin{equation}
    \lim_{n\to \infty}\frac{ |\hat{I}_{=2,<\epsilon}(k_{n})|}{k_{n}}=1.
\end{equation}
This implies that for every positive integer $l$, if $n$ is sufficiently large, we can pick up $l+1$ consecutive orbit sets $\alpha_{k}$, $\alpha_{k+1}$,...$\alpha_{k+l}$ which satisfy 2, 3, 4, 5 in Lemma \ref{mainlemma} and $J(\alpha_{k+i+1},\alpha_{k+i})=2$ for $0\leq i \leq l-1$. We complete the proof of Proposition \ref{main index 2}.
\end{proof}

\section{Proof of Lemma \ref{mainlemma}}
For our purpose, we choose $\epsilon>0$ such that $\epsilon < \frac{1}{10^{5}}\mathrm{min}\{A(\alpha)\,|\alpha\,\,\mathrm{is\,\,a\,\,Reeb\,\,orbit}\}$ and make it smaller as needed.

Since $U\langle \alpha_{k+1} \rangle= \langle \alpha_{k} \rangle$, there is at least one $J$-holomorphic curve whose ECH index is equal to 2, and we write $u=u_{0}\cup{u_{1}}\in \mathcal{M}^{J}(\alpha_{k+1},\alpha_{k})$. Note that $u_{1}$ is through a fixed generic point $z$.

To prove Lemma \ref{mainlemma}, we prepare some notations as follows. Let $g$, $k$ and $l$ be the genus of $u_{1}$, the number of punctures of $u_{1}$ and $\sum_{i}(n_{i}^{+}-1)+\sum_{j}(n_{j}^{-}-1)$ respectively. In this notation, $J_{0}(\alpha_{k+1},\alpha_{k})=-2+2g+k+l$. Note that $k$ is definitely positive and $u_{1}$ has at least one positive end because of the maximum principle. In the proof of Lemma \ref{mainlemma}, we have only to consider the cases $J_{0}=-1$, $0$, $1$.  To make the proof easier to understand, we make a list of their topological types as follows. 
\begin{itemize}
    \item[\bf Case] $J_{0}=-1$
    
    In this case,  $(g,k,l)=(0,1,0)$ may appear as  $J$-holomorphic curves counted by $U$-map.
    \item $(g,k,l)=(0,1,0)$
    
\begin{tikzpicture}
 
  \draw (7,-2.5)--(12,-2.5);
  \draw (7,-3.5)--(12,-3.5);
 \draw (7,-2.5) arc [start angle=90,end angle=270,x radius=0.25,y radius=0.5];
 \draw (7,-3.5)[dashed] arc [start angle=270,end angle=450,x radius=0.25,y radius=0.5];
 \draw (12,-2.5) arc [start angle=90,end angle=270,x radius=0.25,y radius=0.5];
 \draw (12,-3.5) arc [start angle=270,end angle=450,x radius=0.25,y radius=0.5];

 \draw (12,-4) arc [start angle=90,end angle=270,x radius=0.25,y radius=0.5];
 \draw (12,-5) arc [start angle=270,end angle=450,x radius=0.25,y radius=0.5];
 \draw (9,-4.5) to [out=0,in=180] (12,-4);
 \draw (9,-5.5) to [out=0,in=180] (12,-5);
 \draw (9,-4.5) to [out=180,in=90] (8,-5);
 \draw (8,-5) to [out=270,in=180] (9,-5.5);

  \draw[densely dotted,  thick] (7,-2)--(7,-6) ;
   \draw[densely dotted,  thick] (12,-2)--(12,-6) ;
 
\draw (7,-6.3) node{$-\infty$};  
\draw (12,-6.3) node{$+\infty$}; 

\draw (10,-5) node{$u_{1}$};

\end{tikzpicture}

    \item[\bf Case] $J_{0}=0$
    
    In this case,  $(g,k,l)=(0,1,1)$, $(0,2,0)$ may appear as $J$- holomorphic curves counted by the $U$-map.

    \item $(g,k,l)=(0,1,1)$
    
     This case has only one type as follows.

    \begin{tikzpicture}
 
  \draw (7,-2.5)--(12,-2.5);
  \draw (7,-3.5)--(12,-3.5);
 \draw (7,-2.5) arc [start angle=90,end angle=270,x radius=0.25,y radius=0.5];
 \draw (7,-3.5)[dashed] arc [start angle=270,end angle=450,x radius=0.25,y radius=0.5];
 \draw (12,-2.5) arc [start angle=90,end angle=270,x radius=0.25,y radius=0.5];
 \draw (12,-3.5) arc [start angle=270,end angle=450,x radius=0.25,y radius=0.5];
 
  \draw (7,-4) arc [start angle=90,end angle=270,x radius=0.25,y radius=0.5];
 \draw (7,-5)[dashed] arc [start angle=270,end angle=450,x radius=0.25,y radius=0.5];
 \draw (12,-4) arc [start angle=90,end angle=270,x radius=0.25,y radius=0.5];
 \draw (12,-5) arc [start angle=270,end angle=450,x radius=0.25,y radius=0.5];
  \draw (7,-4)--(12,-4);
  \draw (7,-5)--(12,-5);

 \draw (12,-4) arc [start angle=90,end angle=270,x radius=0.25,y radius=0.5];
 \draw (12,-5) arc [start angle=270,end angle=450,x radius=0.25,y radius=0.5];
 \draw (9,-4.5) to [out=0,in=180] (12,-4);
 \draw (9,-5.5) to [out=0,in=180] (12,-5);
 \draw (9,-4.5) to [out=180,in=90] (8,-5);
 \draw (8,-5) to [out=270,in=180] (9,-5.5);

  \draw[densely dotted,  thick] (7,-2)--(7,-6) ;
   \draw[densely dotted,  thick] (12,-2)--(12,-6) ;
 
\draw (7,-6.3) node{$-\infty$};  
\draw (12,-6.3) node{$+\infty$}; 

\draw (9.8,-5.2) node{$u_{1}$};

\end{tikzpicture}

\item $(g,k,l)=(0,2,0)$

 This case has two types as follows.

\begin{tikzpicture}
  \draw (7,-2.5)--(12,-2.5);
  \draw (7,-3.5)--(12,-3.5);
 \draw (7,-2.5) arc [start angle=90,end angle=270,x radius=0.25,y radius=0.5];
 \draw (7,-3.5)[dashed] arc [start angle=270,end angle=450,x radius=0.25,y radius=0.5];
 \draw (12,-2.5) arc [start angle=90,end angle=270,x radius=0.25,y radius=0.5];
 \draw (12,-3.5) arc [start angle=270,end angle=450,x radius=0.25,y radius=0.5];

 \draw (7,-4.5) arc [start angle=90,end angle=270,x radius=0.25,y radius=0.5];
 \draw (7,-5.5) arc [start angle=270,end angle=450,x radius=0.25,y radius=0.5];
 \draw (12,-4) arc [start angle=90,end angle=270,x radius=0.25,y radius=0.5];
 \draw (12,-5) arc [start angle=270,end angle=450,x radius=0.25,y radius=0.5];
 \draw (7,-4.5) to [out=0,in=180] (12,-4);
 \draw (7,-5.5) to [out=0,in=180] (12,-5);

  \draw[densely dotted,  thick] (7,-2)--(7,-6) ;
   \draw[densely dotted,  thick] (12,-2)--(12,-6) ;
 
\draw (7,-6.3) node{$-\infty$};  
\draw (12,-6.3) node{$+\infty$};

\draw (10,-4.8) node{$u_{1}$};

\end{tikzpicture}
\begin{tikzpicture}

\draw (19,-1.8) arc [start angle=90,end angle=270,x radius=0.25,y radius=0.5];
 \draw (19,-2.8) arc [start angle=270,end angle=450,x radius=0.25,y radius=0.5];
 \draw (19,-1.8)--(14,-1.8) ;
  \draw (19,-2.8)--(14,-2.8) ;
  \draw (14,-1.8) arc [start angle=90,end angle=270,x radius=0.25,y radius=0.5];
 \draw (14,-2.8)[dashed] arc [start angle=270,end angle=450,x radius=0.25,y radius=0.5];

 \draw (19,-3.5) arc [start angle=90,end angle=270,x radius=0.25,y radius=0.5];
 \draw (19,-4.5) arc [start angle=270,end angle=450,x radius=0.25,y radius=0.5];

 \draw (19,-5.8) arc [start angle=90,end angle=270,x radius=0.25,y radius=0.5];
 \draw (19,-6.8) arc [start angle=270,end angle=450,x radius=0.25,y radius=0.5];
 
 \draw (15.5,-5) to [out=90,in=180] (19,-3.5);
 \draw (15.5,-5) to [out=270,in=180] (19,-6.8);
 \draw (19,-5.8) to [out=180,in=270] (17,-5);
 \draw (17,-5) to [out=90,in=180] (19,-4.5);

 \draw[densely dotted,  thick] (14,-1.6)--(14,-7);
  \draw[densely dotted,  thick] (19,-1.6)--(19,-7) ;
  
  \draw (14,-7.5) node{$-\infty$};
  \draw (19,-7.5) node{$+\infty$};
  
  \draw (16.5,-4.9) node{$u_{1}$};

\end{tikzpicture}

    \item[\bf Case] $J_{0}=1$
    
    In this case,  $(g,k,l)=(0,3,0)$, $(0,2,1)$, $(1,1,0)$ may appear as $J$-holomorphic curves counted by the $U$-map. Note that we can see from the definitions of $g$, $k$, $l$ and geometric observation that the case  $(g,k,l)=(0,1,2)$ satisfies the equation $J_{0}=-2+2g+k+l=1$ but this can not appear as a $J$-holomorphic curve.

    \item $(g,k,l)=(0,3,0)$
    
     This case has three types as follows.

\begin{tikzpicture}

\draw (11,-1.8) arc [start angle=90,end angle=270,x radius=0.25,y radius=0.5];
 \draw (11,-2.8) arc [start angle=270,end angle=450,x radius=0.25,y radius=0.5];
 
 \draw (11,-1.8)--(6,-1.8) ;
  \draw (11,-2.8)--(6,-2.8) ;
  \draw (6,-1.8) arc [start angle=90,end angle=270,x radius=0.25,y radius=0.5];
 \draw (6,-2.8)[dashed] arc [start angle=270,end angle=450,x radius=0.25,y radius=0.5];

 \draw (6,-4) arc [start angle=90,end angle=270,x radius=0.25,y radius=0.5];
 \draw (6,-5)[dashed] arc [start angle=270,end angle=450,x radius=0.25,y radius=0.5];
 \draw (11,-3.5) arc [start angle=90,end angle=270,x radius=0.25,y radius=0.5];
 \draw (11,-4.5) arc [start angle=270,end angle=450,x radius=0.25,y radius=0.5];

 \draw (11,-5.8) arc [start angle=90,end angle=270,x radius=0.25,y radius=0.5];
 \draw (11,-6.8) arc [start angle=270,end angle=450,x radius=0.25,y radius=0.5];

 \draw (6,-4) to [out=0,in=180] (11,-3.5);
 \draw (6,-5) to [out=0,in=180] (11,-6.8);
 \draw (11,-5.8) to [out=180,in=270] (9,-5);
 \draw (9,-5) to [out=90,in=180] (11,-4.5);

 \draw[densely dotted,  thick] (6,-1.6)--(6,-7);
  \draw[densely dotted,  thick] (11,-1.6)--(11,-7) ;
  
  \draw (6,-7.5) node{$-\infty$};
  \draw (11,-7.5) node{$+\infty$};
  
  \draw (8,-4.7) node{$u_{1}$};
    
  \end{tikzpicture}
  \begin{tikzpicture}

\draw (18,-1.8) arc [start angle=90,end angle=270,x radius=0.25,y radius=0.5];
 \draw (18,-2.8) arc [start angle=270,end angle=450,x radius=0.25,y radius=0.5];
 
 \draw (18,-1.8)--(13,-1.8) ;
  \draw (18,-2.8)--(13,-2.8) ;
  \draw (13,-1.8) arc [start angle=90,end angle=270,x radius=0.25,y radius=0.5];
 \draw (13,-2.8)[dashed] arc [start angle=270,end angle=450,x radius=0.25,y radius=0.5];

 \draw (13,-3.3) arc [start angle=90,end angle=270,x radius=0.25,y radius=0.5];
 \draw (13,-4.3)[dashed] arc [start angle=270,end angle=450,x radius=0.25,y radius=0.5];

 \draw (18,-4) arc [start angle=90,end angle=270,x radius=0.25,y radius=0.5];
 \draw (18,-5) arc [start angle=270,end angle=450,x radius=0.25,y radius=0.5];

 \draw (18,-5) to [out=180,in=0] (13,-6.8);
 \draw (13,-5.8) to [out=0,in=270] (15,-4.8);
 \draw (15,-4.8) to [out=90,in=0] (13,-4.3);
 \draw (18,-4) to [out=180,in=0] (13,-3.3);

  \draw (13,-5.8) arc [start angle=90,end angle=270,x radius=0.25,y radius=0.5];
 \draw (13,-6.8)[dashed] arc [start angle=270,end angle=450,x radius=0.25,y radius=0.5];
 
 \draw[densely dotted,  thick] (13,-1.6)--(13,-7);
  \draw[densely dotted,  thick] (18,-1.6)--(18,-7) ;
  
  \draw (13,-7.5) node{$-\infty$};
  \draw (18,-7.5) node{$+\infty$};
  
  \draw (16,-4.7) node{$u_{1}$};

\end{tikzpicture}

\begin{tikzpicture}

\draw (11,-1.8) arc [start angle=90,end angle=270,x radius=0.25,y radius=0.5];
 \draw (11,-2.8) arc [start angle=270,end angle=450,x radius=0.25,y radius=0.5];
 
 \draw (11,-1.8)--(6,-1.8) ;
  \draw (11,-2.8)--(6,-2.8) ;
  \draw (6,-1.8) arc [start angle=90,end angle=270,x radius=0.25,y radius=0.5];
 \draw (6,-2.8)[dashed] arc [start angle=270,end angle=450,x radius=0.25,y radius=0.5];

 \draw (11,-3) arc [start angle=90,end angle=270,x radius=0.25,y radius=0.5];
 \draw (11,-4) arc [start angle=270,end angle=450,x radius=0.25,y radius=0.5];
 
 \draw (11,-4.4) arc [start angle=90,end angle=270,x radius=0.25,y radius=0.5];
 \draw (11,-5.4) arc [start angle=270,end angle=450,x radius=0.25,y radius=0.5];

 \draw (11,-5.8) arc [start angle=90,end angle=270,x radius=0.25,y radius=0.5];
 \draw (11,-6.8) arc [start angle=270,end angle=450,x radius=0.25,y radius=0.5];

 \draw (11,-6.8) to [out=180,in=270] (8,-5);
 \draw (8,-5) to [out=90,in=180] (11,-3);

 \draw[densely dotted,  thick] (6,-1.6)--(6,-7);
  \draw[densely dotted,  thick] (11,-1.6)--(11,-7) ;
  
  \draw (6,-7.5) node{$-\infty$};
  \draw (11,-7.5) node{$+\infty$};
  
  \draw (9,-4.9) node{$u_{1}$};
  
   \draw (11,-4) to [out=180,in=90] (10.5,-4.2);
   \draw (10.5,-4.2) to [out=270,in=180] (11,-4.4);

  \draw (11,-5.4) to [out=180,in=90] (10.5,-5.6);
    \draw (10.5,-5.6) to [out=270,in=180] (11,-5.8);

\end{tikzpicture}

\item $(g,k,l)=(0,2,1)$

 This case has three types as follows.

\begin{tikzpicture}
 
 \draw (2,-2.5)--(7,-2.5);
  \draw (2,-3.5)--(7,-3.5);
 \draw (2,-2.5) arc [start angle=90,end angle=270,x radius=0.25,y radius=0.5];
 \draw (2,-3.5)[dashed] arc [start angle=270,end angle=450,x radius=0.25,y radius=0.5];
 \draw (7,-2.5) arc [start angle=90,end angle=270,x radius=0.25,y radius=0.5];
 \draw (7,-3.5) arc [start angle=270,end angle=450,x radius=0.25,y radius=0.5];

\draw (2,-4)--(7,-4);
  \draw (2,-5)--(7,-5);
 \draw (2,-4) arc [start angle=90,end angle=270,x radius=0.25,y radius=0.5];
 \draw (2,-5)[dashed] arc [start angle=270,end angle=450,x radius=0.25,y radius=0.5];
 \draw (7,-4) arc [start angle=90,end angle=270,x radius=0.25,y radius=0.5];
 \draw (7,-5) arc [start angle=270,end angle=450,x radius=0.25,y radius=0.5];
 
  \draw (2,-6) arc [start angle=90,end angle=270,x radius=0.25,y radius=0.5];
 \draw (2,-7)[dashed] arc [start angle=270,end angle=450,x radius=0.25,y radius=0.5];
  \draw (2,-6) to [out=0,in=180] (7,-4);
 \draw (2,-7) to [out=0,in=180] (7,-5);
 
 \draw[densely dotted,  thick] (2,-2.2)--(2,-7.3);
  \draw[densely dotted,  thick] (7,-2.2)--(7,-7.3) ;

 \draw (4.5,-5.5) node{$u_{1}$};
 
 \draw (7,-7.5) node{$+\infty$};
  \draw (2,-7.5) node{$-\infty$};
 
\end{tikzpicture}
\begin{tikzpicture}
 
 \draw (2,-2.5)--(7,-2.5);
  \draw (2,-3.5)--(7,-3.5);
 \draw (2,-2.5) arc [start angle=90,end angle=270,x radius=0.25,y radius=0.5];
 \draw (2,-3.5)[dashed] arc [start angle=270,end angle=450,x radius=0.25,y radius=0.5];
 \draw (7,-2.5) arc [start angle=90,end angle=270,x radius=0.25,y radius=0.5];
 \draw (7,-3.5) arc [start angle=270,end angle=450,x radius=0.25,y radius=0.5];

\draw (2,-4)--(7,-4);
  \draw (2,-5)--(7,-5);
  \draw (2,-4) arc [start angle=90,end angle=270,x radius=0.25,y radius=0.5];
 \draw (2,-5)[dashed] arc [start angle=270,end angle=450,x radius=0.25,y radius=0.5];
 \draw (7,-4) arc [start angle=90,end angle=270,x radius=0.25,y radius=0.5];
 \draw (7,-5) arc [start angle=270,end angle=450,x radius=0.25,y radius=0.5];

  \draw (7,-6) arc [start angle=90,end angle=270,x radius=0.25,y radius=0.5];
 \draw (7,-7) arc [start angle=270,end angle=450,x radius=0.25,y radius=0.5];

  \draw (2,-4) to [out=0,in=180] (7,-6);
 \draw (2,-5) to [out=0,in=180] (7,-7);
 
 \draw[densely dotted,  thick] (2,-2.2)--(2,-7.3);
  \draw[densely dotted,  thick] (7,-2.2)--(7,-7.3) ;

 \draw (4.5,-5.5) node{$u_{1}$};
 
 \draw (7,-7.5) node{$+\infty$};
  \draw (2,-7.5) node{$-\infty$};
 
\end{tikzpicture}
\begin{tikzpicture}

\draw (19,-1.8) arc [start angle=90,end angle=270,x radius=0.25,y radius=0.5];
 \draw (19,-2.8) arc [start angle=270,end angle=450,x radius=0.25,y radius=0.5];
 \draw (19,-1.8)--(14,-1.8) ;
  \draw (19,-2.8)--(14,-2.8) ;
  \draw (14,-1.8) arc [start angle=90,end angle=270,x radius=0.25,y radius=0.5];
 \draw (14,-2.8)[dashed] arc [start angle=270,end angle=450,x radius=0.25,y radius=0.5];

 \draw (19,-3.5) arc [start angle=90,end angle=270,x radius=0.25,y radius=0.5];
 \draw (19,-4.5) arc [start angle=270,end angle=450,x radius=0.25,y radius=0.5];

 \draw (19,-5.8) arc [start angle=90,end angle=270,x radius=0.25,y radius=0.5];
 \draw (19,-6.8) arc [start angle=270,end angle=450,x radius=0.25,y radius=0.5];
 
 \draw (15.5,-5) to [out=90,in=180] (19,-3.5);
 \draw (15.5,-5) to [out=270,in=180] (19,-6.8);
 \draw (19,-5.8) to [out=180,in=270] (17,-5);
 \draw (17,-5) to [out=90,in=180] (19,-4.5);

 \draw[densely dotted,  thick] (14,-1.6)--(14,-7);
  \draw[densely dotted,  thick] (19,-1.6)--(19,-7) ;
  
  \draw (14,-7.5) node{$-\infty$};
  \draw (19,-7.5) node{$+\infty$};
  
  \draw (16.5,-4.9) node{$u_{1}$};
  
  \draw (14,-3.5) arc [start angle=90,end angle=270,x radius=0.25,y radius=0.5];
 \draw (14,-4.5)[dashed] arc [start angle=270,end angle=450,x radius=0.25,y radius=0.5];
  \draw (19,-3.5)--(14,-3.5) ;
  \draw (19,-4.5)--(14,-4.5) ;

\end{tikzpicture}

\item $(g,k,l)=(1,1,0)$

 This case has only one type as follows.

\begin{tikzpicture}

\draw (11,-1.8) arc [start angle=90,end angle=270,x radius=0.25,y radius=0.5];
 \draw (11,-2.8) arc [start angle=270,end angle=450,x radius=0.25,y radius=0.5];
 
 \draw (11,-1.8)--(6,-1.8) ;
  \draw (11,-2.8)--(6,-2.8) ;
  \draw (6,-1.8) arc [start angle=90,end angle=270,x radius=0.25,y radius=0.5];
 \draw (6,-2.8)[dashed] arc [start angle=270,end angle=450,x radius=0.25,y radius=0.5];

 \draw (11,-4.4) arc [start angle=90,end angle=270,x radius=0.25,y radius=0.5];
 \draw (11,-5.4) arc [start angle=270,end angle=450,x radius=0.25,y radius=0.5];

 \draw (11,-4.4) to [out=180,in=0] (8.5,-3.5);
  \draw (11,-5.4) to [out=180,in=0] (8.5,-6.3);
  
  \draw (8.5,-3.5) to [out=180,in=90] (7,-4.7);
\draw (7,-4.7) to [out=270,in=180] (8.5,-6.3);

 \draw[densely dotted,  thick] (6,-1.6)--(6,-7);
  \draw[densely dotted,  thick] (11,-1.6)--(11,-7) ;
  
  \draw (8,-4.8) to [out=30,in=150] (9.5,-4.8);
  \draw (8.2,-4.7) to [out=350,in=200] (9.3,-4.7);
  
  \draw (6,-7.5) node{$-\infty$};
  \draw (11,-7.5) node{$+\infty$};
  
  \draw (8.4,-5.5) node{$u_{1}$};

\end{tikzpicture}

\end{itemize}

\begin{proof}[\bf Proof of Lemma \ref{mainlemma}]
Since $J_{0}\geq -1$, we prove Lemma \ref{mainlemma} by dividing into three cases $J_{0}(\alpha_{k+1},\alpha_{k})=-1,\,\,0,\,\,1$.

\item[\bf Case1.] $J_{0}(\alpha_{k+1},\alpha_{k})=-1$

In this case, only $(g,k,l)=(0,1,0)$ satisfies this equation. The integral value of this $J$-holomorphic curve over $d\lambda$ is equal to $A(\alpha_{k+1})-A(\alpha_{k})$ by Stoke's theorem and so moreover equal to some action of a Reeb orbit. This contradicts  $A(\alpha_{k+1})-A(\alpha_{k})<\epsilon < \frac{1}{10^5}\mathrm{max}\{A(\alpha)\,\,|\alpha\,\,\mathrm{is\,\,a\,\,Reeb\,\,orbit}\,\,\}$.

\item[\bf Case2.] $J_{0}(\alpha_{k+1},\alpha_{k})=0$

In this case, $(g,k,l)=(0,2,0)$, $(0,1,1)$. For the same reason as Case1, we  have only to consider the case $(g,k,l)=(0,2,0)$ and $u_{1}$ has both positive and negative ends and their two orbits are different each other. Since $E(\alpha_{k+1}),\,\, E(\alpha_{k})\notin S_{\theta}\cup{S_{-\theta}}$, $l=0$ and the partition conditions of the ends of admissible curves (Definition \ref{part}, Definition \ref{adm} and Proposition \ref{ind}), $u_{1}$ has no end asymptotic to $\gamma$. Moreover since $E(\alpha_{k+1}),\,\, E(\alpha_{k})>p_{1},\,\,q_{1}>1$, $u_{0}$ contains some covering of  $\mathbb{R}\times \gamma$ and so we have $E(\alpha_{k+1})=E(\alpha_{k})$. Let $\delta_{1}$ and $\delta_{2}$ be the Reeb orbits where the positive and negative end of $u_{1}$ are  asymptotic respectively. We set $E(\alpha_{k+1})=E(\alpha_{k})=M$. Then we can describe $\alpha_{k+1}$, $\alpha_{k}$ as $\alpha_{k+1}=\hat{\alpha}\cup{(\gamma,M)\cup{(\delta_{1},1)}}$ and $\alpha_{k}=\hat{\alpha}\cup{(\gamma,M)\cup{(\delta_{2},1)}}$ respectively where $\hat{\alpha}$ is an admissible orbit set consisting of some negative hyperbolic orbits which do not contain $\delta_{1}$, $\delta_{2}$.

By the above argument, we can see that for any generic $z\in Y$, non trivial parts of  $J$-holomorhic curves counted by $U\langle \alpha_{k+1} \rangle=\langle \alpha_{k} \rangle$ through $z$ are in $\mathcal{M}^{J}(\delta_{1},\delta_{2})$ and of genus zero.

Now, we  consider the behaviors of such $J$-holomorphic curves as $z\to \gamma$. By compactness argument (in this situation, for example see \cite[$\S 9$]{H1}), its some subsequence have a limiting $J$-holomorphic curve $u_{1}^{\infty}$ up to $\mathbb{R}$-action which may be splitting into two floors.

Suppose that $u_{1}^{\infty}\in \mathcal{M}^{J}(\delta_{1},\delta_{2})$, then by construction, it intersects with $\mathbb{R}\times \gamma$ but this contradicts the admissibility of curves of ECH index 2. So we may assume that the limiting curve has two floors. By construction, both have ends asymptotic to $\gamma$ and same multiplicity. 

we set $u_{1}^{\infty}=(u_{-}^{\infty},u_{+}^{\infty})$ where $u_{\pm}^{\infty}$ are top and bottom curves up to $\mathbb{R}$-action respectively (see the below figure). The additivity and non negativeness of ECH index, we have $I(u_{-}^{\infty}\cup{u_{0}})=I(u_{+}^{\infty}\cup{u_{0}})=1$ and thus the multiplicity of positive or negative ends of $u_{\pm}^{\infty}$ are one since $S_{\theta}\cap{S_{-\theta}}=\{1\}$. Then  we have $u_{-}^{\infty}\cup{u_{0}}\in \mathcal{M}^{J}(\hat{\alpha}\cup{(\gamma,M+1)},\alpha_{k})$ and $u_{+}^{\infty}\cup{u_{0}}\in \mathcal{M}^{J}(\alpha_{k+1},\hat{\alpha}\cup{(\gamma,M+1)})$. 

By definition assumption, $\hat{\alpha}\cup{(\gamma,M+1)}$ is admissible and have no positive hyperbolic orbit. So by (\ref{mod2}), $I(u_{-}^{\infty}\cup{u_{0}})=I(u_{+}^{\infty}\cup{u_{0}})=0\,\,\mathrm{mod}\,2$. This is a contradiction.

Here, we introduce another way to derive a contradiction from $u_{-}^{\infty}\cup{u_{0}}\in \mathcal{M}^{J}(\hat{\alpha}\cup{(\gamma,M+1)},\alpha_{k})$, $u_{+}^{\infty}\cup{u_{0}}\in \mathcal{M}^{J}(\alpha_{k+1},\hat{\alpha}\cup{(\gamma,M+1)})$ and $I(u_{-}^{\infty}\cup{u_{0}})=I(u_{+}^{\infty}\cup{u_{0}})=1$. From the partition condition of admissible $J$-holomorphic curve at $\gamma$, we have $1=\mathrm{max}(S_{-\theta}\cap{\{1,\,\,2,...,\,\,M+1\}})=\mathrm{max}(S_{\theta}\cap{\{1,\,\,2,...,\,\,M+1\}})$. But  by the assumption $M=E(\alpha_{k+1})=E(\alpha_{k})>p_{1},\,\,q_{1}$, we have $\mathrm{max}(S_{-\theta}\cap{\{1,\,\,2,...,\,\,M+1\}})$, $\mathrm{max}(S_{\theta}\cap{\{1,\,\,2,...,\,\,M+1\}}) >p_{1},\,\,q_{1}>1$. This is a contradiction.

\begin{tikzpicture}\label{fig;1}
 
\draw (2,-4)--(7,-4);
  \draw (2,-5)--(7,-5);
 \draw (2,-4) arc [start angle=90,end angle=270,x radius=0.25,y radius=0.5];
 \draw (2,-5)[dashed] arc [start angle=270,end angle=450,x radius=0.25,y radius=0.5];

  \draw (2,-6) arc [start angle=90,end angle=270,x radius=0.25,y radius=0.5];
 \draw (2,-7)[dashed] arc [start angle=270,end angle=450,x radius=0.25,y radius=0.5];
  \draw (2,-6) to [out=0,in=180] (7,-4);
 \draw (2,-7) to [out=0,in=180] (7,-5);
 
 \draw[densely dotted,  thick] (2,-3.7)--(2,-7.3);
  \draw[densely dotted,  thick] (7,-3.7)--(7,-7.3) ;

  \draw (7,-1.3) node{$u_{1}$};
   \draw (4.5,-2.5) node{$\alpha_{k}$};
   \draw (9.5,-2.5) node{$\alpha_{k+1}$};
   \draw (10.7,0.5) node{$(\gamma,M)$};
   \draw (10.5,-1) node{$(\delta_{1},1)$};
   \draw (3.5,-1.5) node{$(\delta_{2},1)$};
   
   \draw (4.5,-5.6) node{$u_{-}^{\infty}$};
   \draw (7,-7.5) node{$\hat{\alpha}\cup{(\gamma,M+1)}$};
   \draw (9.7,-5.5) node{$u_{+}^{\infty}$};
    \draw (2,-7.5) node{$\alpha_{k}$};
   \draw (12,-7.5) node{$\alpha_{k+1}$};
   \draw (13.2,-4.5) node{$(\gamma,M)$};
   \draw (13,-6) node{$(\delta_{1},1)$};
   \draw (1.2,-6.5) node{$(\delta_{2},1)$};

\draw (4.5,1)--(9.5,1);
  \draw (4.5,0)--(9.5,0);
 \draw (4.5,1) arc [start angle=90,end angle=270,x radius=0.25,y radius=0.5];
 \draw (4.5,0)[dashed] arc [start angle=270,end angle=450,x radius=0.25,y radius=0.5];

  \draw (4.5,-1) arc [start angle=90,end angle=270,x radius=0.25,y radius=0.5];
 \draw (4.5,-2)[dashed] arc [start angle=270,end angle=450,x radius=0.25,y radius=0.5];

  \draw (4.5,-1) to [out=0,in=180] (9.5,-0.5);
 \draw (4.5,-2) to [out=0,in=180] (9.5,-1.5);
 
 \draw[densely dotted,  thick] (4.5,1.3)--(4.5,-2.3);
  \draw[densely dotted,  thick] (9.5,1.3)--(9.5,-2.3) ;

 \draw (9.5,1) arc [start angle=90,end angle=270,x radius=0.25,y radius=0.5];
 \draw (9.5,0) arc [start angle=270,end angle=450,x radius=0.25,y radius=0.5];
 
 \draw (9.5,-0.5) arc [start angle=90,end angle=270,x radius=0.25,y radius=0.5];
 \draw (9.5,-1.5) arc [start angle=270,end angle=450,x radius=0.25,y radius=0.5];

  \draw (7,-3)node{$\Downarrow$};

\draw (7,-4)--(12,-4);
  \draw (7,-5)--(12,-5);
 \draw (7,-4) arc [start angle=90,end angle=270,x radius=0.25,y radius=0.5];
 \draw (7,-5)[dashed] arc [start angle=270,end angle=450,x radius=0.25,y radius=0.5];
 \draw (12,-4) arc [start angle=90,end angle=270,x radius=0.25,y radius=0.5];
 \draw (12,-5) arc [start angle=270,end angle=450,x radius=0.25,y radius=0.5];
 
  \draw (12,-5.5) arc [start angle=90,end angle=270,x radius=0.25,y radius=0.5];
 \draw (12,-6.5) arc [start angle=270,end angle=450,x radius=0.25,y radius=0.5];

  \draw (7,-4) to [out=0,in=180] (12,-5.5);
 \draw (7,-5) to [out=0,in=180] (12,-6.5);
 
  \draw[densely dotted,  thick] (12,-3.7)--(12,-7.3) ;

\end{tikzpicture}

\item[\bf Case3.] $J_{0}(\alpha_{k+1},\alpha_{k})=1$

In this case, $(g,k,l)=(0,3,0)$, $(0,2,1)$, $(1,1,0)$. 

We can exclude the case $(g,k,l)=(1,1,0)$  in the same way as Case1. If $(g,k,l)=(0,3,0)$, we have $E(\alpha_{k+1})=E(\alpha_{k})$ just like the way explained in Case2. On the other hand, If $(g,k,l)=(0,2,1)$, the image of $u_{0}$ contains $\mathbb{R}\times \gamma$ and also one positive end or negative end of $u_{1}$ is asymptotic to $\gamma$ and thus  $E(\alpha_{k+1})\neq E(\alpha_{k})$. This implies that the pair $\alpha_{k+1}$, $\alpha_{k}$ which $(g,k,l)=(0,3,0)$ or $(g,k,l)=(0,2,1)$ types $J$-holomorphic curves by the $U$-map can occur are mutually exclusive.

\item[\bf 1.] If $(g,k,l)=(0,3,0)$.

Let $u_{0}\cup{u_{1}}\in \mathcal{M}^{J}(\alpha_{k+1},\alpha_{k})$ be a $J$-holomorphic curve counted the by $U$-map. Since $A(\alpha_{k+1})-A(\alpha_{k})<\epsilon$, $u_{1}$ has either two  positive ends and one negative end or one positive end and two negative ends. Without loss of generality, we may assume that $u_{1}$ has two positive ends asymptotic to $\delta_{1}$, $\delta_{2}$ and one negative end asymptotic to $\delta_{3}$. Note that $\delta_{1}$, $\delta_{2}$ and $\delta_{3}$ are mutually different because of the smallness of $A(\alpha_{k+1})-A(\alpha_{k})$.

In this notation. we have $u_{1}\in \mathcal{M}^{J}((\delta_{1},1)\cup{(\delta_{2},1)},(\delta_{3},1))$ for any non-trivial parts of $J$-holomorphic curve counted by $U\langle \alpha_{k+1} \rangle = \langle \alpha_{k} \rangle$ and also we write $\alpha_{k+1}=\hat{\alpha}\cup{(\delta_{2},1)\cup{(\delta_{1},1)\cup{(\gamma,M)}}}$, $\alpha_{k}=\hat{\alpha}\cup{(\delta_{3},1)\cup{(\gamma,M)}}$.

As $z\to \gamma$, we have three possibilities of splitting of $u_{1}$(see the below figure). In any case, this is a contradiction in the same reason as Case2.

\begin{tikzpicture}

\draw (13.5,-1.0) arc [start angle=90,end angle=270,x radius=0.25,y radius=0.5];
 \draw (13.5,-2.0) arc [start angle=270,end angle=450,x radius=0.25,y radius=0.5];
 
 \draw (13.5,-1)--(8.5,-1) ;
  \draw (13.5,-2)--(8.5,-2) ;
  \draw (8.5,-1) arc [start angle=90,end angle=270,x radius=0.25,y radius=0.5];
 \draw (8.5,-2)[dashed] arc [start angle=270,end angle=450,x radius=0.25,y radius=0.5];
 
 \draw (13.5,-6.5) node{$\alpha_{k+1}$};
 \draw (8.5,-6.5) node{$\alpha_{k}$};
 \draw (14.5,-1.5) node{$(\gamma,M)$};
  \draw (14.5,-3.3) node{$(\delta_{1},1)$};
  \draw (14.5,-5.5) node{$(\delta_{2},1)$};
\draw (7.5,-3.8) node{$(\delta_{3},1)$};

\draw (16,-13.5) node{$\alpha_{k+1}$};
\draw (6,-13.5) node{$\alpha_{k}$};

\draw (11,-13.5) node{$\hat{\alpha}\cup{(\gamma,M+1)\cup{(\delta_{1},1)}}$};

\draw (11.2,-14) node{$(\mathrm{resp.}\,\,\hat{\alpha}\cup{(\gamma,M+1)\cup{(\delta_{2},1)}})$};

 \draw (17,-8.5) node{$(\gamma,M)$};
  \draw (17,-10.3) node{$(\delta_{1},1)$};
  \draw (17,-12.5) node{$(\delta_{2},1)$};
  
   \draw (17,-15.5) node{$(\gamma,M)$};
  \draw (17,-17.3) node{$(\delta_{1},1)$};
  \draw (17,-19.5) node{$(\delta_{2},1)$};
  
   \draw (17.5,-10.8) node{$(\mathrm{resp.}\,\,(\delta_{2},1))$};
  \draw (17.5,-13) node{$(\mathrm{resp.}\,\,(\delta_{1},1))$};
  
  \draw (16,-20.5) node{$\alpha_{k+1}$};
\draw (6,-20.5) node{$\alpha_{k}$};

\draw (11,-20.5) node{$\hat{\alpha}\cup{(\gamma,M+1)}$};

\draw (5,-10.8) node{$(\delta_{3},1)$};

\draw (5,-17.8) node{$(\delta_{3},1)$};

 \draw (8.5,-3.2) arc [start angle=90,end angle=270,x radius=0.25,y radius=0.5];
 \draw (8.5,-4.2)[dashed] arc [start angle=270,end angle=450,x radius=0.25,y radius=0.5];
 \draw (13.5,-2.7) arc [start angle=90,end angle=270,x radius=0.25,y radius=0.5];
 \draw (13.5,-3.7) arc [start angle=270,end angle=450,x radius=0.25,y radius=0.5];

 \draw (13.5,-5) arc [start angle=90,end angle=270,x radius=0.25,y radius=0.5];
 \draw (13.5,-6) arc [start angle=270,end angle=450,x radius=0.25,y radius=0.5];

 \draw (8.5,-3.2) to [out=0,in=180] (13.5,-2.7);
 \draw (8.5,-4.2) to [out=0,in=180] (13.5,-6);
 \draw (13.5,-5) to [out=180,in=270] (11.5,-4.2);
 \draw (11.5,-4.2) to [out=90,in=180] (13.5,-3.7);

 \draw[densely dotted,  thick] (8.5,-0.7)--(8.5,-6.2);
  \draw[densely dotted,  thick] (13.5,-0.7)--(13.5,-6.2) ;

  \draw (10.5,-3.9) node{$u_{1}$};

 \draw (11,-7)node{$\Downarrow$};

\draw (11,-8) arc [start angle=90,end angle=270,x radius=0.25,y radius=0.5];
 \draw (11,-9)[dashed] arc [start angle=270,end angle=450,x radius=0.25,y radius=0.5];
 
 \draw (11,-8)--(6,-8) ;
  \draw (11,-9)--(6,-9) ;
  \draw (6,-8) arc [start angle=90,end angle=270,x radius=0.25,y radius=0.5];
 \draw (6,-9)[dashed] arc [start angle=270,end angle=450,x radius=0.25,y radius=0.5];

 \draw (6,-10.2) arc [start angle=90,end angle=270,x radius=0.25,y radius=0.5];
 \draw (6,-11.2)[dashed] arc [start angle=270,end angle=450,x radius=0.25,y radius=0.5];

 \draw (11,-12) arc [start angle=90,end angle=270,x radius=0.25,y radius=0.5];
 \draw (11,-13)[dashed] arc [start angle=270,end angle=450,x radius=0.25,y radius=0.5];

 \draw (6,-10.2) to [out=0,in=180] (11,-8);
 \draw (6,-11.2) to [out=0,in=180] (11,-13);
 \draw (11,-9) to [out=180,in=90] (9,-10.5);
 \draw (9,-10.5) to [out=270,in=180] (11,-12);
 
 \draw (16,-9.7) arc [start angle=90,end angle=270,x radius=0.25,y radius=0.5];
 \draw (16,-10.7) arc [start angle=270,end angle=450,x radius=0.25,y radius=0.5];

  \draw (11,-8) to [out=0,in=180] (16,-9.7);
 \draw (11,-9) to [out=0,in=180] (16,-10.7);

 \draw[densely dotted,  thick] (6,-7.7)--(6,-13.2);
  \draw[densely dotted,  thick] (11,-7.7)--(11,-13.2) ;
  
   \draw[densely dotted,  thick] (16,-7.7)--(16,-13.2) ;

  \draw (16,-12) arc [start angle=90,end angle=270,x radius=0.25,y radius=0.5];
 \draw (16,-13) arc [start angle=270,end angle=450,x radius=0.25,y radius=0.5];
 
 \draw (16,-12)--(11,-12) ;
  \draw (16,-13)--(11,-13) ;

   \draw (16,-8) arc [start angle=90,end angle=270,x radius=0.25,y radius=0.5];
 \draw (16,-9) arc [start angle=270,end angle=450,x radius=0.25,y radius=0.5];
 
 \draw (16,-8)--(11,-8) ;
  \draw (16,-9)--(11,-9) ;

\draw (16,-15) arc [start angle=90,end angle=270,x radius=0.25,y radius=0.5];
 \draw (16,-16) arc [start angle=270,end angle=450,x radius=0.25,y radius=0.5];
 
 \draw (16,-15)--(6,-15) ;
  \draw (16,-16)--(6,-16) ;
  \draw (11,-15) arc [start angle=90,end angle=270,x radius=0.25,y radius=0.5];
 \draw (11,-16)[dashed] arc [start angle=270,end angle=450,x radius=0.25,y radius=0.5];
 
  \draw (6,-15) arc [start angle=90,end angle=270,x radius=0.25,y radius=0.5];
 \draw (6,-16)[dashed] arc [start angle=270,end angle=450,x radius=0.25,y radius=0.5];

 \draw (6,-17.2) arc [start angle=90,end angle=270,x radius=0.25,y radius=0.5];
 \draw (6,-18.2)[dashed] arc [start angle=270,end angle=450,x radius=0.25,y radius=0.5];
 \draw (16,-16.7) arc [start angle=90,end angle=270,x radius=0.25,y radius=0.5];
 \draw (16,-17.7) arc [start angle=270,end angle=450,x radius=0.25,y radius=0.5];
 
\draw (11,-15) to [out=180,in=0] (6,-17.2);
 \draw (11,-16) to [out=180,in=0] (6,-18.2);
 
 \draw (16,-19) arc [start angle=90,end angle=270,x radius=0.25,y radius=0.5];
 \draw (16,-20) arc [start angle=270,end angle=450,x radius=0.25,y radius=0.5];

 \draw (11,-15) to [out=0,in=180] (16,-16.7);
 \draw (11,-16) to [out=0,in=180] (16,-20);
 \draw (16,-19) to [out=180,in=270] (14,-17.3);
 \draw (14,-17.3) to [out=90,in=180] (16,-17.7);

  \draw[densely dotted,  thick] (6,-14.7)--(6,-20.2) ;
 \draw[densely dotted,  thick] (11,-14.7)--(11,-20.2);
  \draw[densely dotted,  thick] (16,-14.7)--(16,-20.2) ;

\end{tikzpicture}

\item[\bf 2.]If $(g,k,l)=(0,2,1)$.

From now on, we consider $(g,k,l)=(0,2,1)$. This case is more complicated but the way in this case also play an important role in Section 6 and beyond.

Since $A(\alpha_{k+1})-A(\alpha_{k})<\epsilon$, $u_{1}$ has both positive and negative end. Moreover by definition, $u_{0}$ contains some covering of $\mathbb{R}\times \gamma$ and either positive or negative end of $u_{1}$ asymptotic to $\gamma$. Because symmetry allows the same argument, here we consider only the case that the positive end of $u_{1}$ is asymptotic to $\gamma$  (see the below figure). Let $E(\alpha_{k+1})=M$. Then, By the admissibility of $u$, the multiplicity of positive end of $u_{1}$ at $\gamma$ is $p_{i}:=\mathrm{max}(S_{-\theta}\cap{\{1,\,\,2,...,\,\,M\}})$ and so $E(\alpha_{k})=M-p_{i}$. Let $\delta$ be the orbit where the negative end of $u_{1}$ is asymptotic . By the discussion so far, we can see that for any generic point $z\in Y$, any non trivial parts of $J$-holomorphic curves through $z$ counted by the $U$-map are in $\mathcal{M}^{J}((\gamma,p_{i}),(\delta,1))$.

We set $\alpha_{k+1}=\hat{\alpha}\cup{(\gamma,M)}$ and then $\alpha_{k}=\hat{\alpha}\cup{(\delta,1)\cup{(\gamma,M-p_{i})}}$ where $\hat{\alpha}$ only contains  negative hyperbolic orbits.

\begin{tikzpicture}
 
 \draw (2,-2.5)--(7,-2.5);
  \draw (2,-3.5)--(7,-3.5);
 \draw (2,-2.5) arc [start angle=90,end angle=270,x radius=0.25,y radius=0.5];
 \draw (2,-3.5)[dashed] arc [start angle=270,end angle=450,x radius=0.25,y radius=0.5];
 \draw (7,-2.5) arc [start angle=90,end angle=270,x radius=0.25,y radius=0.5];
 \draw (7,-3.5) arc [start angle=270,end angle=450,x radius=0.25,y radius=0.5];
 \draw (4.5,-3.8) node{...};
 
 \draw (8,-3) node{$\hat{\alpha}$};
 
\draw (2,-4)--(7,-4);
  \draw (2,-5)--(7,-5);
 \draw (2,-4) arc [start angle=90,end angle=270,x radius=0.25,y radius=0.5];
 \draw (2,-5)[dashed] arc [start angle=270,end angle=450,x radius=0.25,y radius=0.5];
 \draw (7,-4) arc [start angle=90,end angle=270,x radius=0.25,y radius=0.5];
 \draw (7,-5) arc [start angle=270,end angle=450,x radius=0.25,y radius=0.5];
 
  \draw (2,-6) arc [start angle=90,end angle=270,x radius=0.25,y radius=0.5];
 \draw (2,-7)[dashed] arc [start angle=270,end angle=450,x radius=0.25,y radius=0.5];
  \draw (2,-6) to [out=0,in=180] (7,-4);
 \draw (2,-7) to [out=0,in=180] (7,-5);
 
 \draw[densely dotted,  thick] (2,-2.2)--(2,-7.3);
  \draw[densely dotted,  thick] (7,-2.2)--(7,-7.3) ;

 \draw (8,-4.5) node{$(\gamma,M)$};
 
 \draw (0.8,-4.5) node{$(\gamma,M-p_{i})$};
 \draw (1,-6.5) node{$(\delta,1)$};
 \draw (4.5,-5.5) node{$u_{1}$};
 
 \draw (7,-7.5) node{$\alpha_{k+1}$};
  \draw (2,-7.5) node{$\alpha_{k}$};
 
\end{tikzpicture}

\begin{cla}\label{mi1} In the above notation,
\begin{equation}
I(\hat{\alpha}\cup{(\gamma,M-1)},\hat{\alpha}\cup{(\delta,1)}\cup{(\gamma,M-p_{i}-1)})=2
\end{equation}
\end{cla}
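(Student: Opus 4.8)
The plan is to compute $I(\alpha',\beta')$ directly, where $\alpha':=\hat\alpha\cup(\gamma,M-1)$ and $\beta':=\hat\alpha\cup(\delta,1)\cup(\gamma,M-p_i-1)$, by comparing it with $I(\alpha_{k+1},\alpha_k)$. The latter equals $2$: since $U\langle\alpha_{k+1}\rangle=\langle\alpha_k\rangle$ and the $U$--map (\ref{Umap}) lowers the grading by $2$, the absolute grading on $\mathrm{ECH}(Y,\lambda,0)$ given by $I(\cdot)=I(\cdot,\emptyset)$ satisfies $I(\alpha_{k+1})-I(\alpha_k)=2$, and additivity (\ref{adtiv}) then gives $I(\alpha_{k+1},\alpha_k)=2$.

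\textbf{Main step.} First I would observe that $\alpha'$ (resp.\ $\beta'$) is obtained from $\alpha_{k+1}$ (resp.\ $\alpha_k$) by decreasing the multiplicity of $\gamma$ by one, so that the boundary $1$--chain of any relative $2$--chain is $p_i\gamma-\delta$ for both pairs (the $\hat\alpha$--terms cancel, and all but one copy of $\gamma$ cancels). Since $b_1(Y)=0$ forces $H_2(Y)=0$, each of $H_2(Y;\alpha_{k+1},\alpha_k)$ and $H_2(Y;\alpha',\beta')$ is a single point, and one and the same $2$--chain $Z$ represents both; hence $c_1(\xi|_Z,\tau)$ and $Q_\tau(Z)$ contribute equally to the two index formulas. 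Writing out the definition of the ECH index and cancelling the common terms ($c_1$, $Q_\tau$, the $\hat\alpha$--contributions, $\mu_\tau(\delta)$, and all but one of the $\mu_\tau(\gamma^k)$), one is left with
\begin{equation}
I(\alpha_{k+1},\alpha_k)-I(\alpha',\beta')=\mu_\tau(\gamma^M)-\mu_\tau(\gamma^{M-p_i}).
\end{equation}
Thus the claim is equivalent to the purely arithmetic identity $\mu_\tau(\gamma^M)=\mu_\tau(\gamma^{M-p_i})$, i.e.\ (using $\mu_\tau(\gamma^k)=2\lfloor k\theta\rfloor+1$) to $\lfloor M\theta\rfloor=\lfloor(M-p_i)\theta\rfloor$.

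\textbf{The hard part.} I expect the remaining difficulty, and the place where the hypotheses of Lemma \ref{mainlemma} are genuinely used, to be precisely this identity. The relevant input is that $p_i=\max(S_{-\theta}\cap\{1,\dots,M\})$ (which is what forced, via admissibility, the positive end of $u_1$ at $\gamma$ to have multiplicity exactly $p_i$, and hence $E(\alpha_k)=M-p_i$), together with the facts that $M,\,M-p_i\notin S_\theta\cup S_{-\theta}$ and $M-p_i>p_1$. I would deduce the identity from the recursive description $P^{\mathrm{out}}_\gamma(M)=P_{-\theta}^{\mathrm{in}}(M)=(p_i)\cup P_{-\theta}^{\mathrm{in}}(M-p_i)$ of the outgoing partition, combined with the monotonicity of the gaps of $S_{-\theta}$ from Proposition \ref{s} and the bound $M-p_i<p_{i+1}-p_i$; concretely, an induction on $i$ mirroring the recursion defining $S_{-\theta}$ should pin down $\lfloor M\theta\rfloor-\lfloor(M-p_i)\theta\rfloor$. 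Once this combinatorial identity is established, $I(\alpha',\beta')=I(\alpha_{k+1},\alpha_k)=2$, which is Claim \ref{mi1}.
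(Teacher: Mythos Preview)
Your direct computation has a genuine error in the step where you assert that ``$c_1(\xi|_Z,\tau)$ and $Q_\tau(Z)$ contribute equally to the two index formulas.'' The relative Chern term does agree, but $Q_\tau$ does \emph{not}: the relative intersection form $Q_\tau$ in the ECH index depends on the orbit sets $\alpha,\beta$ (through the multiplicities at each orbit), not merely on the underlying $2$--chain. Concretely, writing the class for $(\alpha_{k+1},\alpha_k)$ as $Z+(M-p_i)Z_\gamma$ with $Z\in H_2(Y;\hat\alpha\cup(\gamma,p_i),\hat\alpha\cup(\delta,1))$ and $Z_\gamma\in H_2(Y;\gamma,\gamma)$, and the class for $(\alpha',\beta')$ as $Z+(M-p_i-1)Z_\gamma$, bilinearity of $Q_\tau$ together with $Q_\tau(Z_\gamma)=0$ gives a discrepancy of $2Q_\tau(Z,Z_\gamma)$ between the two $Q_\tau$--terms. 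The full difference is therefore
\[
I(\alpha_{k+1},\alpha_k)-I(\alpha',\beta')
 = 2Q_\tau(Z,Z_\gamma)+2\big(\lfloor M\theta\rfloor-\lfloor(M-p_i)\theta\rfloor\big)
 = 2Q_\tau(Z,Z_\gamma)+2\lfloor p_i\theta\rfloor,
\]
using \cite[Lemma 4.5]{H1} (this is exactly the computation carried out in Claim~\ref{index2to4}). In particular your ``arithmetic identity'' $\lfloor M\theta\rfloor=\lfloor(M-p_i)\theta\rfloor$ is false in general: the right--hand difference equals $\lfloor p_i\theta\rfloor$, which is typically nonzero. What is true is the cancellation $Q_\tau(Z,Z_\gamma)+\lfloor p_i\theta\rfloor=0$ (equation~(\ref{espe})), but in the paper this identity is \emph{deduced from} Claim~\ref{mi1}, not used to prove it.

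This is why the paper argues indirectly: it shows that if $I(\alpha',\beta')>2$ then the $U$--sequence structure (Lemma~\ref{isomero}, the sixth hypothesis of Lemma~\ref{mainlemma}) produces an intermediate admissible orbit set $\zeta$ with $A(\alpha')>A(\zeta)>A(\beta')$, and then $\zeta\cup(\gamma,1)$ has action strictly between $A(\alpha_{k+1})$ and $A(\alpha_k)$, contradicting $I(\alpha_{k+1},\alpha_k)=2$. As noted in the Remark following the paper's proof, a purely topological argument is possible, but it requires establishing (\ref{espe}) directly (see \cite[Proof of Proposition~7.1]{H1}); your proposal does not supply that ingredient.
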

\begin{proof}[\bf Proof of Claim \ref{mi1}]

Suppose that this claim is false. Since $A(\alpha_{k+1})-A(\alpha_{k})=A(\hat{\alpha}\cup{(\gamma,M-1)})-A(\hat{\alpha}\cup{(\delta,1)}\cup{(\gamma,M-p_{i}-1)})>0$ and (\ref{isom}), we have
\begin{equation}
I(\hat{\alpha}\cup{(\gamma,M-1)},\hat{\alpha}\cup{(\delta,1)}\cup{(\gamma,M-p_{i}-1)})>2.
\end{equation}
By considering the sixth condition in Lemma \ref{mainlemma}, there is an admissible orbit set $\zeta$ with $[\zeta]=[\hat{\alpha}\cup{(\gamma,M-1)}]=[\hat{\alpha}\cup{(\delta,1)}\cup{(\gamma,M-p_{i}-1)}]$ such that $U\langle \hat{\alpha}\cup{(\gamma,M-1)} \rangle=\langle \zeta \rangle$ and $A(\hat{\alpha}\cup{(\gamma,M-1)})>A(\zeta)>A(\hat{\alpha}\cup{(\delta,1)}\cup{(\gamma,M-p_{i}-1)})$.  This implies that $A(\alpha_{k+1})>A(\zeta\cup{(\gamma,1)})>A(\alpha_{k})$.  This contradicts (\ref{importantiso}).

\end{proof}

\begin{rem}
In essence, Claim \ref{mi1} and Claim \ref{index2to4} come from only their topological conditions and the properties of ECH index (in particular the equation (\ref{espe}); For example see \cite[Proof of Proposition 7.1]{H1}). But to understand the proof only with the facts in this paper as much as possible, we prove them by using some special conditions.
\end{rem}

\begin{cla}\label{index2to4}
In the above notation, for any $p_{i}\leq N<p_{i+1}$ 
\begin{equation}
    I(\hat{\alpha}\cup{(\gamma,N)},\hat{\alpha}\cup{(\delta,1)\cup{(\gamma,N-p_{i})}})=2.
\end{equation}
Moreover
\begin{equation}
    I(\hat{\alpha}\cup{(\gamma,p_{i+1})},\hat{\alpha}\cup{(\delta,1)\cup{(\gamma,p_{i+1}-p_{i})}})=4.
\end{equation}
And for any $p_{i}<N\leq p_{i+1}$,
\begin{equation}
     J_{0}(\hat{\alpha}\cup{(\gamma,N)},\hat{\alpha}\cup{(\delta,1)\cup{(\gamma,N-p_{i})}})=1,
\end{equation}
\end{cla}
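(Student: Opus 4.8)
The plan is as follows. For $N\ge p_i$ set $\alpha(N):=\hat\alpha\cup(\gamma,N)$, $\beta(N):=\hat\alpha\cup(\delta,1)\cup(\gamma,N-p_i)$, and $f(N):=I(\alpha(N),\beta(N))$, $j(N):=J_0(\alpha(N),\beta(N))$ (the relative homology class is unique because $H_2(Y)=0$). From $[\alpha_{k+1}]=[\alpha_k]$ one reads off $[\delta]=p_i[\gamma]$, so $[\alpha(N)]=[\beta(N)]$ for all $N$, and $A(\alpha(N))-A(\beta(N))=p_iR-A(\delta)=A(\alpha_{k+1})-A(\alpha_k)\in(0,\epsilon)$ is independent of $N$. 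Recall that the nontrivial part $u_1$ of $u\in\mathcal M^J(\alpha_{k+1},\alpha_k)$ lies in $\mathcal M^J((\gamma,p_i),(\delta,1))$; its union with $N-p_i$ trivial cylinders over $\gamma$ and one over each orbit of $\hat\alpha$ yields an element of $\mathcal M^J(\alpha(N),\beta(N))$ with nonempty nontrivial part, so $f(N)>0$ by Proposition~\ref{ind}, and $f(N)$ is even by (\ref{mod2}) since there is no positive hyperbolic orbit; hence $f(N)\ge 2$ for all $N\ge p_i$. We also have the base values $f(M)=I(u)=2$, $f(M-1)=2$ (Claim~\ref{mi1}) and $j(M)=J_0(u)=1$.

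The core step is to compute the increment $f(N+1)-f(N)$ from the ECH index formula. Passing from $\alpha(N),\beta(N)$ to $\alpha(N+1),\beta(N+1)$ raises the multiplicity of $\gamma$ by one on each side, so in the Conley--Zehnder sum everything cancels except one ``top'' term per side, contributing $\mu_\tau(\gamma^{N+1})-\mu_\tau(\gamma^{N+1-p_i})=2\bigl(\lfloor(N+1)\theta\rfloor-\lfloor(N+1-p_i)\theta\rfloor\bigr)$. Because $H_2(Y)=0$, the class $Z(N+1)$ is $Z(N)$ glued to one trivial cylinder over $\gamma$; the relative Chern number is additive under this gluing and the new piece represents the zero class, so $c_1(\xi|_{Z(N)},\tau)$ is a constant $\mathfrak c$ in $N$. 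The remaining contribution is the change in $Q_\tau$, which is to be evaluated in the standard neighbourhood model of the elliptic orbit $\gamma$, exactly as in the index computations of \cite{H1} (equation~(\ref{espe})); the plan is to show that after combining all three contributions one obtains
\[
f(N+1)-f(N)=2\bigl(h(N+1)-h(N)\bigr),\qquad h(N):=\lfloor N\theta\rfloor-\lfloor(N-p_i)\theta\rfloor ,
\]
so that $f(N)=2\bigl(h(N)-h(M)\bigr)+2$ for all $N\ge p_i$ (the value $f(M)=2$ fixing the constant, consistently with $f(M-1)=2$ and with $f(N)\ge 2$).

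It then remains to evaluate $h$, which is pure arithmetic of $S_{-\theta}=\{p_0=1<p_1<p_2<\dots\}$. Using the defining record property of $S_{-\theta}$ together with Proposition~\ref{s} (the estimates being of the same type as in \cite[Proof of Lemma 3.3]{HT3}), one checks that $h(N)=\lfloor p_i\theta\rfloor$ for $p_i\le N<p_{i+1}$, while $h(p_{i+1})=\lfloor p_i\theta\rfloor+1$. Since $M\in[p_i,p_{i+1})$ gives $h(M)=\lfloor p_i\theta\rfloor$, we conclude $f(N)=2$ for $p_i\le N<p_{i+1}$ and $f(p_{i+1})=4$, the first two assertions. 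For the third, subtracting the defining formulas for $I$ and $J_0$ yields $f(N)-j(N)=2\mathfrak c+\mu_\tau(\gamma^N)-\mu_\tau(\gamma^{N-p_i})-\mu_\tau(\delta)=2\mathfrak c-\mu_\tau(\delta)+2h(N)$; using $f(M)=2$, $j(M)=1$ to eliminate $2\mathfrak c-\mu_\tau(\delta)$ gives $j(N)=f(N)-1+2\bigl(h(M)-h(N)\bigr)$, and inserting the values of $f$ and $h$ just found gives $j(N)=1$ for every $p_i<N\le p_{i+1}$.

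The main obstacle is the middle step: pinning down the $Q_\tau$-increment caused by inserting a trivial cylinder over the elliptic orbit $\gamma$ and checking that it combines with the Conley--Zehnder term so precisely that the increment collapses to $2(h(N+1)-h(N))$; here the admissibility and partition conditions at $\gamma$ (Definitions~\ref{part} and~\ref{adm}, Proposition~\ref{ind}) are exactly what single out the multiplicity $p_i$. The accompanying number-theoretic fact that $h$ is flat on $[p_i,p_{i+1})$ and jumps by exactly $1$ at $p_{i+1}$ is elementary but must be handled with care. Everything else is bookkeeping with the additivity relations (\ref{adtiv}), (\ref{adi}) and the parity (\ref{mod2}).
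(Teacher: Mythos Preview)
Your approach is essentially the paper's: compute the increment of $I$ as $N$ goes up by one, identify the arithmetic fact that $h(N)=\lfloor N\theta\rfloor-\lfloor(N-p_i)\theta\rfloor$ equals $\lfloor p_i\theta\rfloor$ on $[p_i,p_{i+1})$ and jumps by one at $p_{i+1}$, and use the two base values $f(M)=f(M-1)=2$ from Claim~\ref{mi1}. The $J_0$ part is also handled the same way.

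One point to tighten, precisely at the place you flagged as the obstacle. The paper does not evaluate $Q_\tau$ in a local model; it uses the bilinearity in Definition~\ref{qdef} directly. Writing the unique class as $Z+(N-p_i)Z_\gamma$ with $Z\in H_2(Y;\hat\alpha\cup(\gamma,p_i),\hat\alpha\cup(\delta,1))$ and $Z_\gamma\in H_2(Y;\gamma,\gamma)$, one has $Q_\tau(Z_\gamma)=0$ and hence $Q_\tau(Z+(N-p_i)Z_\gamma)=Q_\tau(Z)+2(N-p_i)Q_\tau(Z,Z_\gamma)$. This gives
\[
f(N)-f(N-1)=2Q_\tau(Z,Z_\gamma)+2h(N),
\]
with $Q_\tau(Z,Z_\gamma)$ a \emph{constant} in $N$. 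The two base values $f(M)=f(M-1)=2$ force $Q_\tau(Z,Z_\gamma)=-h(M)=-\lfloor p_i\theta\rfloor$ (this is exactly (\ref{espe})), whence $f(N)-f(N-1)=2\bigl(h(N)-\lfloor p_i\theta\rfloor\bigr)$. Your displayed formula $f(N+1)-f(N)=2(h(N+1)-h(N))$ is not what the computation produces; it only agrees with the correct increment on the range $N<p_{i+1}$ because $h$ is constant there, and would fail beyond $p_{i+1}$. So rather than hoping the $Q_\tau$-change combines to $-2h(N)$, simply record that it is the constant $2Q_\tau(Z,Z_\gamma)$ and let the two base values determine it. With that fix, everything else you wrote (including the $J_0$ argument via $f(N)-j(N)$) matches the paper and goes through.
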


\begin{proof}[\bf Proof of Claim \ref{index2to4}]
Let $\{Z\}=H_{2}(Y;\hat{\alpha}\cup{(\gamma,p_{i}),\hat{\alpha}\cup{(\delta_{1},1)}})$ and $\{Z_{\gamma}\}=H_{2}(Y;\gamma,\gamma)$ respectively.
Then by the definition, we have
\begin{equation}
\begin{split}
    2=I(\alpha_{k+1},\alpha_{k})=&I(\hat{\alpha}\cup{(\gamma,M)},\hat{\alpha}\cup{(\delta,1)}\cup{(\gamma,M-p_{i})})\\
    =&c_{1}(\xi|_{Z},\tau)+Q_{\tau}(Z,Z)+2(M-p_{i})Q_{\tau}(Z,Z_{\gamma})\\
    &+\sum_{k=M-p_{i}+1}^{M}(2\lfloor k\theta \rfloor+1)-\mu_{\tau}(\delta).
    \end{split}
\end{equation}

Here, we  use the property $Q_{\tau}(Z_{1}+Z_{2},Z_{1}+Z_{2})=Q_{\tau}(Z_{1},Z_{1})+2Q_{\tau}(Z_{1},Z_{2})+Q_{\tau}(Z_{2},Z_{2})$ in Definition \ref{qdef} and $Q_{\tau}(Z_{\gamma})=0$ and  $Q_{\tau}(Z,mZ_{\gamma})=mQ_{\tau}(Z,Z_{\gamma})$. These properties easily follows from the definition of $Q_{\tau}$ (see \cite[Lemma 8.5]{H1}).

And also
\begin{equation}
\begin{split}
    2=I(\hat{\alpha}\cup{(\gamma,M-1)},&\hat{\alpha}\cup{(\delta,1)}\cup{(\gamma,M-p_{i}-1)})\\
    =&c_{1}(\xi|_{Z},\tau)+Q_{\tau}(Z,Z)+2(M-p_{i}-1)Q_{\tau}(Z,Z_{\gamma})\\
    &+\sum_{k=M-p_{i}}^{M-1}(2\lfloor k\theta \rfloor+1)-\mu_{\tau}(\delta).
\end{split}
\end{equation}

By taking the difference from the above equations, we have
\begin{equation}
    2Q_{\tau}(Z,Z_{\gamma})+2(\lfloor M\theta \rfloor - \lfloor (M-p_{i})\theta \rfloor)=0
\end{equation}

Note that for any $p_{i} \leq N<p_{i+1}$, $\lfloor N\theta \rfloor - \lfloor (N-p_{i})\theta \rfloor=\lfloor p_{i}\theta \rfloor$ and moreover $\lfloor p_{i+1}\theta \rfloor - \lfloor (p_{i+1}-p_{i})\theta \rfloor=\lfloor p_{i}\theta \rfloor+1$. These facts are  special cases of \cite[Lemma 4.5]{H1}. Hence
\begin{equation}\label{espe}
    2Q_{\tau}(Z,Z_{\gamma})+2\lfloor p_{i}\theta \rfloor =0
\end{equation}

On the other hand, in the same way, for any $p_{i}< N\leq p_{i+1}$ we have
\begin{equation}
\begin{split}
    I(\hat{\alpha}\cup{(\gamma,N)},&\hat{\alpha}\cup{(\delta,1)\cup{(\gamma,N-p_{i})}})\\
    &-I(\hat{\alpha}\cup{(\gamma,N-1)},\hat{\alpha}\cup{(\delta,1)\cup{(\gamma,N-p_{i}-1)}})
    \\
    =&2Q_{\tau}(Z,Z_{\gamma})+2(\lfloor N\theta \rfloor - \lfloor (N-p_{i})\theta \rfloor)\\
    =&2(\lfloor N\theta \rfloor - \lfloor (N-p_{i})\theta \rfloor-\lfloor p_{i}\theta \rfloor).
\end{split}
\end{equation}

This implies that for any $p_{i}\leq N<p_{i+1}$,  $I(\hat{\alpha}\cup{(\gamma,N)},\hat{\alpha}\cup{(\delta,1)\cup{(\gamma,N-p_{i})}})$ are equal to each other and hence 2, moreover we have $I(\hat{\alpha}\cup{(\gamma,p_{i+1})},\hat{\alpha}\cup{(\delta,1)\cup{(\gamma,p_{i+1}-p_{i})}})=4$.

In the same way, we have
\begin{equation}
\begin{split}
    J_{0}(\hat{\alpha}\cup{(\gamma,N)},&\hat{\alpha}\cup{(\delta,1)\cup{(\gamma,N-p_{i})}})\\
    &-J_{0}(\hat{\alpha}\cup{(\gamma,N-1)},\hat{\alpha}\cup{(\delta,1)\cup{(\gamma,N-p_{i}-1)}})
    \\
    =&2Q_{\tau}(Z,Z_{\gamma})+2(\lfloor (N-1)\theta \rfloor - \lfloor (N-p_{i}-1)\theta \rfloor)\\
    =&2(\lfloor(N-1)\theta \rfloor - \lfloor (N-p_{i}-1)\theta \rfloor-\lfloor p_{i}\theta \rfloor).
\end{split}
\end{equation}
This implies that  for any $p_{i}< N \leq p_{i+1}$,  $J_{0}(\hat{\alpha}\cup{(\gamma,N)},\hat{\alpha}\cup{(\delta,1)\cup{(\gamma,N-p_{i})}})=1$.

We complete the proof of Claim \ref{index2to4}.
\end{proof}

Since $ I(\hat{\alpha}\cup{(\gamma,p_{i+1})},\hat{\alpha}\cup{(\delta,1)\cup{(\gamma,p_{i+1}-p_{i})}})=4$, there is an unique admissible orbit set $\zeta$ such that  $ I(\hat{\alpha}\cup{(\gamma,p_{i+1})},\zeta)=2= I(\zeta,\hat{\alpha}\cup{(\delta,1)\cup{(\gamma,p_{i+1}-p_{i})}})$. Note that  $U\langle \hat{\alpha}\cup{(\gamma,p_{i+1})} \rangle=\langle \zeta \rangle$, $U\langle \zeta \rangle=\langle \hat{\alpha}\cup{(\delta,1)\cup{(\gamma,p_{i+1}-p_{i})}} \rangle$.

\begin{cla}\label{el0}
The above $\zeta$ satisfies $E(\zeta)=0$.

\end{cla}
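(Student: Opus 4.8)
The plan is to argue by contradiction: suppose $E(\zeta)\geq 1$. First I would record the action bookkeeping. One computes
$A(\hat{\alpha}\cup(\gamma,p_{i+1}))-A(\hat{\alpha}\cup(\delta,1)\cup(\gamma,p_{i+1}-p_{i}))=p_{i}R-A(\delta)=A(\alpha_{k+1})-A(\alpha_{k})<\epsilon$, and by (\ref{isom})--(\ref{importantiso}) the action of $\zeta$ lies strictly between these two values. Hence, choosing generic base points, the equalities $U\langle\hat{\alpha}\cup(\gamma,p_{i+1})\rangle=\langle\zeta\rangle$ and $U\langle\zeta\rangle=\langle\hat{\alpha}\cup(\delta,1)\cup(\gamma,p_{i+1}-p_{i})\rangle$ produce curves $v=v_{0}\cup v_{1}\in\mathcal{M}_{2}^{J}(\hat{\alpha}\cup(\gamma,p_{i+1}),\zeta)$ and $w=w_{0}\cup w_{1}\in\mathcal{M}_{2}^{J}(\zeta,\hat{\alpha}\cup(\delta,1)\cup(\gamma,p_{i+1}-p_{i}))$, each of $d\lambda$-energy strictly less than $\epsilon$.

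Next I would constrain the topology of $v_{1}$ and $w_{1}$. By the additivity (\ref{adi}) of $J_{0}$ and the value $J_{0}(\hat{\alpha}\cup(\gamma,p_{i+1}),\hat{\alpha}\cup(\delta,1)\cup(\gamma,p_{i+1}-p_{i}))=1$ established in Claim \ref{index2to4}, we get $J_{0}(v)+J_{0}(w)=1$ with both terms $\geq -1$. If $J_{0}(v)=-1$ (or $J_{0}(w)=-1$) the nontrivial part is a plane of type $(0,1,0)$, whose $d\lambda$-area equals the action of a single Reeb orbit and hence exceeds $\epsilon$ --- a contradiction; the identical estimate excludes the types $(0,1,1)$ and $(1,1,0)$. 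Therefore $\{J_{0}(v),J_{0}(w)\}=\{0,1\}$, the $J_{0}=0$ curve is of type $(0,2,0)$, and the $J_{0}=1$ curve is of type $(0,3,0)$ or $(0,2,1)$, exactly as in the list preceding this proof.

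Finally I would track how $E(\cdot)$ changes across the two steps, using the partition bookkeeping already employed in Case 2 and Case 3. For a step of type $(0,2,0)$ or $(0,3,0)$ one has $l=0$, so any end of the nontrivial part meeting $\gamma$ must absorb the entire $\gamma$-multiplicity of the relevant orbit set, and that multiplicity must then lie in $S_{\theta}\cup S_{-\theta}$ by Definition \ref{part}, Definition \ref{adm} and Proposition \ref{ind}; for a step of type $(0,2,1)$ exactly one end meets $\gamma$, its multiplicity is the largest element of $S_{-\theta}$ (resp. $S_{\theta}$) not exceeding the $\gamma$-multiplicity of the orbit set it belongs to, and the rest of $\gamma$ is carried by a trivial cylinder in $u_{0}$. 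Now feed in that $E(\hat{\alpha}\cup(\gamma,p_{i+1}))=p_{i+1}\in S_{-\theta}$, that $E(\hat{\alpha}\cup(\delta,1)\cup(\gamma,p_{i+1}-p_{i}))=p_{i+1}-p_{i}$, and that $p_{i},p_{i+1}$ are consecutive in $S_{-\theta}$: running over the finitely many pairs of step-types, the only value of $E(\zeta)$ compatible with all of these partition constraints is $0$ --- the $\gamma$-tower is released in its entirety by the first step --- contradicting $E(\zeta)\geq 1$.

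The hard part will be this last step, and within it the scenario where the first step lowers the $\gamma$-multiplicity only partially (leaving a nontrivial $\mathbb{R}\times\gamma$ in $u_{0}$) and the second step lowers it again. To rule this out I would, as in Case 2, degenerate the relevant $U$-curves by letting the base point tend to $\gamma$; the extra broken configuration this yields consists of $J$-holomorphic buildings between admissible orbit sets with no positive hyperbolic orbit, and comparing their ECH indices modulo $2$ via (\ref{mod2}), or comparing the multiplicities forced at $\gamma$ against the consecutiveness of $p_{i}$ and $p_{i+1}$ in $S_{-\theta}$, forces the contradiction. Since this is exactly the mechanism used repeatedly above, the remaining work is careful case-checking rather than a new idea.
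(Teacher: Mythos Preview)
Your approach is far more elaborate than needed, and the key step---the case analysis showing that no pair of step-types permits $E(\zeta)\geq 1$---is left as an assertion. The degeneration argument you propose for the ``hard part'' would have to be carried out separately for each of the several $(J_0(v),J_0(w))$--type pairs, and it is not evident that each case closes cleanly using only the mod~$2$ index parity and the consecutiveness of $p_i,p_{i+1}$; you have outlined a strategy rather than a proof.

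The paper's argument bypasses all of this with a three-line trick you have missed: if $E(\zeta)\geq 1$, simply strip one copy of $\gamma$ from each of the three orbit sets. The strict action inequalities
\[
A(\hat\alpha\cup(\gamma,p_{i+1}))>A(\zeta)>A(\hat\alpha\cup(\delta,1)\cup(\gamma,p_{i+1}-p_i))
\]
are preserved under subtracting $R$, so $\zeta-(\gamma,1)$ is an admissible orbit set with action strictly between those of $\hat\alpha\cup(\gamma,p_{i+1}-1)$ and $\hat\alpha\cup(\delta,1)\cup(\gamma,p_{i+1}-p_i-1)$. By the structure of the $U$-sequence (using the sixth condition to avoid the finitely many exceptional generators), this forces
\[
I\bigl(\hat\alpha\cup(\gamma,p_{i+1}-1),\,\hat\alpha\cup(\delta,1)\cup(\gamma,p_{i+1}-p_i-1)\bigr)>2,
\]
contradicting Claim~\ref{index2to4}, which says this index equals~$2$ since $p_i\leq p_{i+1}-1<p_{i+1}$. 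No curve analysis, no partition bookkeeping, no degeneration.
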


\begin{proof}[\bf Proof of Claim \ref{el0}]

Suppose that $E(\zeta)>0$. Since $A(\hat{\alpha}\cup{(\gamma,p_{i+1})})>A(\zeta)>A(\hat{\alpha}\cup{(\delta,1)\cup{(\gamma,p_{i+1}-p_{i})}})$, we also have $A(\hat{\alpha}\cup{(\gamma,p_{i+1}-1)})>A(\zeta-(\gamma,1))>A(\hat{\alpha}\cup{(\delta,1)\cup{(\gamma,p_{i+1}-p_{i}-1)}})$. Since (\ref{isom}), this indicates $I(\hat{\alpha}\cup{(\gamma,p_{i+1}-1)},\hat{\alpha}\cup{(\delta,1)\cup{(\gamma,p_{i+1}-p_{i}-1)}})>2$. This contradicts Claim \ref{index2to4}. Therefore we complete the proof of Claim \ref{el0}.
\end{proof}

Since $J_{0}\geq -1$ and additivity of $J_{0}$, we have $(J_{0}(\hat{\alpha}\cup{(\gamma,p_{i+1})},\zeta),J_{0}(\zeta,\hat{\alpha}\cup{(\delta,1)\cup{(\gamma,p_{i+1}-p_{i})}}))$=$(2,-1)$, $(1,0)$, $(0,1)$ or $(-1,2)$.

If $(J_{0}(\hat{\alpha}\cup{(\gamma,p_{i+1})},\zeta),J_{0}(\zeta,\hat{\alpha}\cup{(\delta,1)\cup{(\gamma,p_{i+1}-p_{i})}}))$=$(2,-1)$ or $(-1,2)$, we can derive a contradiction in the same way as Case1 since $A(\hat{\alpha}\cup{(\gamma,p_{i+1})})-A(\zeta)<\epsilon$ and $A(\zeta)-A(\hat{\alpha}\cup{(\delta,1)\cup{(\gamma,p_{i+1}-p_{i})}})<\epsilon$. This is because $A(\hat{\alpha}\cup{(\gamma,p_{i+1})})-A(\hat{\alpha}\cup{(\delta,1)\cup{(\gamma,p_{i+1}-p_{i})}})$ $=A(\alpha_{k+1})-A(\alpha_{k})<\epsilon$. Thus we have only to consider the case $(J_{0}(\hat{\alpha}\cup{(\gamma,p_{i+1})},\zeta),J_{0}(\zeta,\hat{\alpha}\cup{(\delta,1)\cup{(\gamma,p_{i+1}-p_{i})}}))$=$(1,0)$ or $(0,1)$. Here we note that the assumption $H(\alpha_{k})$, $H(\alpha_{k+1})>4$ in Proposition \ref{main index 2} implies that $\hat{\alpha}$ contains more than four negative hyperbolic orbits. In these cases, we derive contradictions by using the splitting behaviors of $J$-holomorphic curves as follows.

\item[\bf $\rm(\hspace{.18em}i\hspace{.18em})$.] $(J_{0}(\hat{\alpha}\cup{(\gamma,p_{i+1})},\zeta),J_{0}(\zeta,\hat{\alpha}\cup{(\delta,1)\cup{(\gamma,p_{i+1}-p_{i})}}))$=$(0,1)$

From $A(\hat{\alpha}\cup{(\gamma,p_{i+1})})-A(\zeta)<\epsilon$ and $E(\zeta)=0$ and the partition condition of end, we have that there is a negative hyperbolic orbit $\delta'$ with $\delta'\notin \hat{\alpha}$ such that $\zeta=\hat{\alpha}\cup{(\delta',1)}$. Moreover the nontrivial parts of  any $J$-holomorphic curve counted by $U\langle \hat{\alpha}\cup{(\gamma,p_{i+1})} \rangle=\langle \zeta \rangle$ are of genus 0 and in $\mathcal{M}^{J}((\gamma,p_{i+1}),(\delta',1))$.

\ref{picture1}

\begin{tikzpicture}\label{picture1}
 
  \draw (7,-2.5)--(12,-2.5);
  \draw (7,-3.5)--(12,-3.5);
 \draw (7,-2.5) arc [start angle=90,end angle=270,x radius=0.25,y radius=0.5];
 \draw (7,-3.5) arc [start angle=270,end angle=450,x radius=0.25,y radius=0.5];
 \draw (12,-2.5) arc [start angle=90,end angle=270,x radius=0.25,y radius=0.5];
 \draw (12,-3.5) arc [start angle=270,end angle=450,x radius=0.25,y radius=0.5];

 \draw (7,-4.5) arc [start angle=90,end angle=270,x radius=0.25,y radius=0.5];
 \draw (7,-5.5) arc [start angle=270,end angle=450,x radius=0.25,y radius=0.5];
 \draw (12,-4) arc [start angle=90,end angle=270,x radius=0.25,y radius=0.5];
 \draw (12,-5) arc [start angle=270,end angle=450,x radius=0.25,y radius=0.5];
 \draw (7,-4.5) to [out=0,in=180] (12,-4);
 \draw (7,-5.5) to [out=0,in=180] (12,-5);

  \draw[densely dotted,  thick] (7,-2)--(7,-6) ;
   \draw[densely dotted,  thick] (12,-2)--(12,-6) ;
 
\draw (7,-6.5) node{$\zeta=\hat{\alpha}\cup{(\delta',1)}$};  
\draw (12,-6.5) node{$\hat{\alpha}\cup{(\gamma,p_{i+1})}$};

\draw (13.1,-4.5) node{$(\gamma,p_{i+1})$};

\draw (6,-5) node{$(\delta',1)$};

\draw (13.2,-3) node{$(\eta,1)\in \hat{\alpha}$};
 
\end{tikzpicture}

Let us consider the behaviors of such curves as $z\to \eta$. In the same way as Case2, the limiting curve of such sequence splits and each of them has end at $\eta$. Furthermore both ECH indexes are one. Its multiplicities are two because of the admissibility of curves of ECH index 1. This implies that $|2A(\eta)-p_{i+1}R|<\epsilon$.

Consider back to the $J$-homolorphic curves of $U\langle \alpha_{k+1} \rangle=\langle \alpha_{k} \rangle$. Its nontrivial parts are in $\mathcal{M}^{J}((\gamma,p_{i}),(\delta,{1}))$. By considering the behaviors of this curves as $z \to \eta$, we have $|2A(\eta)-p_{i}R|<\epsilon$. By combining with $|2A(\eta)-p_{i+1}R|<\epsilon$, we have $(p_{i+1}-p_{i})R<2\epsilon$ and so $p_{i+1}=p_{i}$. This is a contradiction.

\item[\bf $\rm(\hspace{.08em}ii\hspace{.08em})$.]If $(J_{0}(\hat{\alpha}\cup{(\gamma,p_{i+1})},\zeta),J_{0}(\zeta,\hat{\alpha}\cup{(\delta,1)\cup{(\gamma,p_{i+1}-p_{i})}}))$=$(1,0)$

Consider the $J$-holomorphic curves counted by $U\langle \zeta \rangle=\langle \hat{\alpha}\cup{(\delta,1)\cup{(\gamma,p_{i+1}-p_{i})}}) \rangle$. In the same way as the above argument, we can find that there is an hyperbolic orbit $\delta'$ such that $\zeta=\hat{\alpha}\cup{(\delta,1)}\cup{(\delta',1)}$ and that nontrivial parts of  any $J$-holomorphic curve counted by $U\langle \zeta \rangle=\langle \hat{\alpha}\cup{(\delta,1)\cup{(\gamma,p_{i+1}-p_{i})}}) \rangle$ are of genus 0 and in $\mathcal{M}^{J}((\delta',1),(\gamma,p_{i+1}-p_{i}))$. And also we have $|2A(\eta)-p_{i}R|<\epsilon$ and $|2A(\eta)-(p_{i+1}-p_{i})R|<\epsilon$.  Note that since $E(\alpha_{k+1}),\,\, E(\alpha_{k})>p_{1},\,\, q_{1}$, we have $p_{i}>1$.

The next claim is obvious but often used later on. So we state here.
\begin{cla}\label{fre}
Suppose that $q_{i}\in S_{\theta}$ (resp. $p_{i}\in S_{-\theta}$). If $q_{i}>1$ (resp. $p_{i}>1$), then  $ q_{i}\neq q_{i+1}-q_{i}$ (resp. $p_{i}\neq p_{i+1}-p_{i}$).
\end{cla}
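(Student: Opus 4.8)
The plan is to deduce Claim \ref{fre} directly from the two combinatorial facts about $S_{\theta}$ and $S_{-\theta}$ already recorded in Proposition \ref{s}, without returning to the defining inequalities for these sets.

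First I would invoke Proposition \ref{s}(1): writing $S_{\theta}=\{q_{0}=1,q_{1},q_{2},\dots\}$ in increasing order, every consecutive difference $q_{i+1}-q_{i}$ is an element of $S_{-\theta}$ (and symmetrically each $p_{i+1}-p_{i}$ lies in $S_{\theta}$). Next I would invoke Proposition \ref{s}(2), namely $S_{\theta}\cap S_{-\theta}=\{1\}$. Combining the two: if $q_{i}\in S_{\theta}$ with $q_{i}>1$, then $q_{i}\notin S_{-\theta}$, whereas $q_{i+1}-q_{i}\in S_{-\theta}$; an integer lying in $S_{-\theta}$ and an integer lying outside $S_{-\theta}$ cannot be equal, so $q_{i}\neq q_{i+1}-q_{i}$. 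The parenthetical statement for $p_{i}$ follows by the identical argument after exchanging $\theta\leftrightarrow-\theta$ (hence $S_{\theta}\leftrightarrow S_{-\theta}$).

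I do not anticipate a genuine obstacle here, since the content is already packaged in Proposition \ref{s}. The only point needing a little care is reading Proposition \ref{s}(1) in the correct direction --- that the gaps of $S_{\theta}$ are themselves members of the \emph{opposite} set $S_{-\theta}$, a standard feature of one-sided best rational approximations --- and observing that the hypothesis $q_{i}>1$ is exactly what places $q_{i}$ outside $S_{-\theta}$ through Proposition \ref{s}(2). If one preferred not to cite Proposition \ref{s}(1), an alternative would be to argue from the record-minimum characterization of $S_{\theta}$ together with the continued-fraction recursion $q_{i+1}=a_{i+1}q_{i}+q_{i-1}$, but that route is strictly longer and unnecessary for this claim.
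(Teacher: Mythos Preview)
Your argument is correct and is essentially identical to the paper's own proof: it too invokes Proposition~\ref{s} to get $q_{i+1}-q_{i}\in S_{-\theta}$ and then uses $S_{\theta}\cap S_{-\theta}=\{1\}$ together with $q_{i}>1$ to conclude $q_{i}\neq q_{i+1}-q_{i}$.
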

\begin{proof}[\bf Proof of Claim \ref{fre}]
Suppose that $q_{i}\in S_{\theta}$, then by Proposition \ref{s}, $q_{i+1}-q_{i}\in S_{-\theta}$ and since $S_{\theta}\cap{S_{-\theta}}=\{1\}$, if $q_{i}>1$, we have $q_{i}\neq q_{i+1}-q_{i}$. We can do the same in the case $p_{i}$.
\end{proof}
From  $|2A(\eta)-p_{i}R|<\epsilon$ and $|2A(\eta)-(p_{i+1}-p_{i})R|<\epsilon$, we have $|p_{i}R-(p_{i+1}-p_{i})R|<2\epsilon$. This implies that $p_{i}=p_{i+1}-p_{i}$ but this contradicts Claim \ref{fre}.

\begin{tikzpicture}

\draw (3.1,-3) node{$(\eta,1)\in \hat{\alpha}$};

 \draw (2,-2.5) arc [start angle=90,end angle=270,x radius=0.25,y radius=0.5];
 \draw (2,-3.5) arc [start angle=270,end angle=450,x radius=0.25,y radius=0.5];

 \draw (2,-4) arc [start angle=90,end angle=270,x radius=0.25,y radius=0.5];
 \draw (2,-5) arc [start angle=270,end angle=450,x radius=0.25,y radius=0.5];

  \draw (2,-6) arc [start angle=90,end angle=270,x radius=0.25,y radius=0.5];
 \draw (2,-7) arc [start angle=270,end angle=450,x radius=0.25,y radius=0.5];

  \draw (-3,-2.5)--(2,-2.5);
  \draw (-3,-3.5)--(2,-3.5);

 \draw (-3,-2.5) arc [start angle=90,end angle=270,x radius=0.25,y radius=0.5];
 \draw (-3,-3.5)[dashed] arc [start angle=270,end angle=450,x radius=0.25,y radius=0.5];

\draw (-3,-6) arc [start angle=90,end angle=270,x radius=0.25,y radius=0.5];
 \draw (-3,-7)[dashed] arc [start angle=270,end angle=450,x radius=0.25,y radius=0.5];
 
 \draw (-3,-4.4) arc [start angle=90,end angle=270,x radius=0.25,y radius=0.5];
 \draw (-3,-5.4)[dashed] arc [start angle=270,end angle=450,x radius=0.25,y radius=0.5];

 \draw (-3,-4.4) to [out=0,in=180] (2,-4);
  \draw (-3,-5.4) to [out=0,in=180] (2,-5);

 \draw (-3,-6)--(2,-6);
  \draw (-3,-7)--(2,-7);

 \draw[densely dotted,  thick] (-3,-2)--(-3,-7.5);
  \
   \draw[densely dotted,  thick] (2,-2)--(2,-7.5) ;
   
\draw (2,-8) node{$\zeta=\hat{\alpha}\cup{(\delta,1)\cup{(\delta',1)}}$};   
 
\draw (-3,-8) node{$\hat{\alpha}\cup{(\gamma,p_{i+1}-p_{i})}\cup{(\delta,1)}$}; 
\draw (-4,-6.5) node{$(\delta,1)$};

\draw (-4.4,-4.8) node{$(\gamma,p_{i+1}-p_{i})$};

\draw (2.9,-4.5) node{$(\delta',1)$};

\end{tikzpicture}

Combining the above arguments, we complete the proof of Lemma \ref{mainlemma}.
\end{proof}

\section{The properties of certain $J_{0}=2$ curves}
To derive a contradiction from Proposition \ref{main index 2}, at first we state Proposition \ref{nagai} and use Section 6 and Section 7 to prove Proposition \ref{nagai}.

\textbf{Notation}

For any $a,\,b\in \mathbb{R}$, we write $a\approx b$ if $|a-b|<\frac{1}{100}\mathrm{min}\{A(\alpha)\,|\alpha\,\,\mathrm{is\,\,a\,\,Reeb\,\,orbit}\}$. In this notation, for $n,\,m\in \mathbb{Z}$ and $\tau>\frac{1}{100}$, if $n\tau R\approx m \tau R$, then $n=m$.

\begin{prp}\label{nagai}
Let $\epsilon<\frac{1}{10^5}\mathrm{min}\{A(\alpha)\,\,|\alpha\,\,\mathrm{is\,\,a\,\,Reeb\,\,orbit}\,\,\}$ and $k$ sufficiently large. Suppose that two admissible orbit sets $\alpha_{k+1}$ and $\alpha_{k}$ with $U\langle \alpha_{k+1} \rangle =\langle \alpha_{k} \rangle$ satisfy the following conditions.
\begin{enumerate}
    \item $J(\alpha_{k+1},\alpha_{k})=2$
    \item $A(\alpha_{k+1})-A(\alpha_{k})<\epsilon$ 
    \item $E(\alpha_{k+1}),\,E(\alpha_{k})\notin S_{\theta}\cup{ S_{-\theta}}$
    \item $E(\alpha_{k+1}),\,E(\alpha_{k})>p_{1},\,\,q_{1}$
    \item $H(\alpha_{k+1}),\,H(\alpha_{k})>4$.
\end{enumerate}

Let $u=u_{0}\cup{u_{1}}\in \mathcal{M}^{J}(\alpha_{k+1},\alpha_{k})$ be any $J$-holomorphic curve counted by the $U$-map.

Then one of the following conditions holds. 
\begin{description}

\item[(a).] Let $E(\alpha_{k+1})=M$ and $p_{i}:=\mathrm{max}(S_{-\theta}\cap{\{1,\,\,2,...,\,\,M\}})$. Then there are  two negative hyperbolic orbits $\delta_{1}$, $\delta_{2}$ and an admissible orbit set $\hat{\alpha}$ consisting of negative hyperbolic orbits such that $\alpha_{k+1}=\hat{\alpha}\cup{(\gamma,M)\cup{(\delta_{1},1)}}$, $\alpha_{k}=\hat{\alpha}\cup{(\gamma,M-p_{i})\cup{(\delta_{2},1)}}$ and $u_{1}\in \mathcal{M}^{J}((\delta_{1},1)\cup{(\gamma,p_{i})},(\delta_{2},1))$. 

Moreover, $A(\delta_{1})\approx (p_{i+1}-p_{i})R$, $A(\delta_{2})\approx p_{i+1}R$ and for each $\eta \in \hat{\alpha}$, either $A(\eta)\approx \frac{1}{2}p_{i+1}R$ or $A(\eta)\approx \frac{1}{2}(p_{i+1}-p_{i})R$.

\item[(a').]Let $E(\alpha_{k})=M$ and $q_{i}:=\mathrm{max}(S_{\theta}\cap{\{1,\,\,2,...,\,\,M\}})$. Then there are  two  negative hyperbolic orbits $\delta_{1}$, $\delta_{2}$ and an admissible orbit set $\hat{\alpha}$ consisting of negative hyperbolic orbits such that $\alpha_{k+1}=\hat{\alpha}\cup{(\gamma,M-q_{i})\cup{(\delta_{1},1)}}$, $\alpha_{k}=\hat{\alpha}\cup{(\gamma,M)\cup{(\delta_{2},1)}}$ and $u_{1}\in \mathcal{M}^{J}((\delta_{1},1),(\delta_{2},1)\cup{(\gamma,q_{i})})$. 

Moreover, $A(\delta_{1})\approx q_{i+1}R$, $A(\delta_{2})\approx (q_{i+1}-q_{i})R$ and for each $\eta \in \hat{\alpha}$, either $A(\eta)\approx \frac{1}{2}q_{i+1}R$ or $A(\eta)\approx \frac{1}{2}(q_{i+1}-q_{i})R$.

\item[(b).]Let $E(\alpha_{k+1})=M$ and $p_{i}:=\mathrm{max}(S_{-\theta}\cap{\{1,\,\,2,...,\,\,M\}})$. Then $\frac{3}{2}p_{i}=p_{i+1}$ and there are two  negative hyperbolic orbits $\delta_{1}$, $\delta_{2}$ and  an admissible orbit set $\hat{\alpha}$ consisting of negative hyperbolic orbits such that $\alpha_{k+1}=\hat{\alpha}\cup{(\gamma,M)}$,  $\alpha_{k}=\hat{\alpha}\cup{(\gamma,M-p_{i})\cup{(\delta_{1},1)\cup{(\delta_{2},1)}}}$ and $u_{1}\in \mathcal{M}^{J}((\gamma,p_{i}),(\delta_{1},1)\cup{(\delta_{2},1)})$. 

Moreover, $A(\delta_{1})\approx \frac{1}{2}p_{i}R$, $A(\delta_{2})\approx \frac{1}{2}p_{i}R$ and for each $\eta \in \hat{\alpha}$, either $A(\eta)\approx \frac{1}{2}p_{i}R$ or $A(\eta)\approx \frac{1}{4}p_{i}R$.

\item[(b').]Let $E(\alpha_{k})=M$ and $q_{i}:=\mathrm{max}(S_{\theta}\cap{\{1,\,\,2,...,\,\,M\}})$. Then $\frac{3}{2}q_{i}=q_{i+1}$ and there are two  negative hyperbolic orbits $\delta_{1}$, $\delta_{2}$ and  an admissible orbit set $\hat{\alpha}$ consisting of negative hyperbolic orbits such that $\alpha_{k+1}=\hat{\alpha}\cup{(\gamma,M-q_{i})\cup{(\delta_{1},1)}\cup{(\delta_{2},1)}}$, $\alpha_{k}=\hat{\alpha}\cup{(\gamma,M)}$ and $u_{1}\in \mathcal{M}^{J}((\delta_{1},1)\cup{(\delta_{2},1)},(\gamma,q_{i}))$. 

Moreover, $A(\delta_{1})\approx \frac{1}{2}q_{i}R$, $A(\delta_{2})\approx \frac{1}{2}q_{i}R$ and for each $\eta \in \hat{\alpha}$, either $A(\eta)\approx \frac{1}{2}q_{i}R$ or $A(\eta)\approx \frac{1}{4}q_{i}R$.

\item[(c).]Let $E(\alpha_{k+1})=M$ and $p_{i}:=\mathrm{max}(S_{-\theta}\cap{\{1,\,\,2,...,\,\,M\}})$. Then $\frac{4}{3}p_{i}=p_{i+1}$ and there are  two  negative hyperbolic orbits $\delta_{1}$, $\delta_{2}$ and an admissible orbit set $\hat{\alpha}$ consisting of negative hyperbolic orbits such that $\alpha_{k+1}=\hat{\alpha}\cup{(\gamma,M)}$,  $\alpha_{k}=\hat{\alpha}\cup{(\gamma,M-p_{i})\cup{(\delta_{1},1)\cup{(\delta_{2},1)}}}$ and $u_{1}\in \mathcal{M}^{J}((\gamma,p_{i}),(\delta_{1},1)\cup{(\delta_{2},1)})$. 

Moreover, $A(\delta_{1})\approx \frac{2}{3}p_{i}R$, $A(\delta_{2})\approx \frac{1}{3}p_{i}R$ and for each $\eta \in \hat{\alpha}$, either $A(\eta)\approx \frac{1}{2}p_{i}R$ or $A(\eta)\approx \frac{1}{6}p_{i}R$.

\item[(c').]Let $E(\alpha_{k})=M$ and $q_{i}:=\mathrm{max}(S_{\theta}\cap{\{1,\,\,2,...,\,\,M\}})$. Then $\frac{4}{3}q_{i}=q_{i+1}$ and there are and two  negative hyperbolic orbits $\delta_{1}$, $\delta_{2}$ and an admissible orbit set $\hat{\alpha}$ consisting of negative hyperbolic orbits such that $\alpha_{k+1}=\hat{\alpha}\cup{(\gamma,M-q_{i})\cup{(\delta_{1},1)\cup{(\delta_{2},1)}}}$,  $\alpha_{k}=\hat{\alpha}\cup{(\gamma,M)}$ and $u_{1}\in \mathcal{M}^{J}((\delta_{1},1)\cup{(\delta_{2},1)},(\gamma,p
q_{i}))$. . 

Moreover, $A(\delta_{1})\approx \frac{2}{3}q_{i}R$, $A(\delta_{2})\approx \frac{1}{3}q_{i}R$ and for each $\eta \in \hat{\alpha}$, either $A(\eta)\approx \frac{1}{2}q_{i}R$ or $A(\eta)\approx \frac{1}{6}q_{i}R$.
    
\end{description}
Note that \textbf{(a)} and \textbf{(a')}, \textbf{(b)} and \textbf{(b')}, \textbf{(c)} and \textbf{(c')} are symmetrical
respectively and, \textbf{(a}), \textbf{(a')}, \textbf{(b)}, \textbf{(b')}, \textbf{(c)} and \textbf{(c')}  are mutually exclusive.
\end{prp}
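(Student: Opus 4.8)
The plan is to first reduce $u_1$ to a short list of topological types, then read off the combinatorial shape of $\alpha_{k+1},\alpha_k$ from the partition conditions, and finally pin down the approximate action relations by the moving--basepoint and splitting technique already used in the proof of Lemma \ref{mainlemma}. For the topological reduction, note that since $u$ is counted by the $U$-map we have $J_0(u)=-\chi(u_1)+\sum_i(n_i^+-1)+\sum_j(n_j^--1)=2$; writing $g,k$ for the genus and number of punctures of $u_1$ and $l$ for the last sum, this reads $2g+k+l=4$ with $k\ge 1$ and at least one positive end (maximum principle), exactly as in Section 5. I would enumerate all admissible triples $(g,k,l)$ and discard those that cannot occur: since $A(\alpha_{k+1})-A(\alpha_k)<\epsilon$ is the $d\lambda$-energy of $u_1$, the argument of Case 1 of the proof of Lemma \ref{mainlemma} rules out any connected subcurve whose ends lie over a single orbit with matched multiplicity, forcing $u_1$ to have at least one positive and one negative end, all over distinct orbits, and bounding the number of ends. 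Combining this with Definition \ref{part}, Definition \ref{adm}, Proposition \ref{ind}, the hypotheses $E(\alpha_{k+1}),E(\alpha_k)\notin S_\theta\cup S_{-\theta}$ and $E(\alpha_{k+1}),E(\alpha_k)>p_1,q_1$ then shows that $u_0$ contains a cover of $\mathbb{R}\times\gamma$, that $u_1$ has exactly one end at $\gamma$ of multiplicity $p_i:=\max(S_{-\theta}\cap\{1,\dots,M\})$ (or $q_i$ in the mirror case), and that the only surviving type is $(g,k,l)=(0,3,1)$. According to whether $u_1$ has two positive ends and one negative end, or one positive and two negative ends, one lands in the families \textbf{(a)}/\textbf{(a')} or in \textbf{(b)}/\textbf{(b')}/\textbf{(c)}/\textbf{(c')}; this already produces the stated decompositions $\alpha_{k+1}=\hat\alpha\cup\cdots$, $\alpha_k=\hat\alpha\cup\cdots$ with $\hat\alpha$ consisting of negative hyperbolic orbits.

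For the relation between $p_i$ and $p_{i+1}$ I would rerun the index computation of Claim \ref{index2to4}, using $Q_\tau(Z+Z')=Q_\tau(Z)+2Q_\tau(Z,Z')+Q_\tau(Z')$, $Q_\tau(Z_\gamma)=0$, and the floor identity $\lfloor N\theta\rfloor-\lfloor(N-p_i)\theta\rfloor=\lfloor p_i\theta\rfloor$ for $p_i\le N<p_{i+1}$, to locate the relevant index $4$ orbit set and the intermediate admissible orbit set $\zeta$ with $U\langle\cdot\rangle=\langle\zeta\rangle$. Then, for $\eta=\gamma$ and for each of the more than four negative hyperbolic orbits $\eta$ occurring in $\hat\alpha$, I would let the generic basepoint $z\to\eta$ and analyze the SFT limit of the $J$-holomorphic curves through $z$ exactly as in Case 2 and Case 3 of the proof of Lemma \ref{mainlemma}: additivity and non-negativity of the ECH index force the limit to be a two-floor building of two $I=1$ (hence admissible) curves, each passing a multiplicity-matching end through $\eta$, and each of $d\lambda$-energy $<\epsilon$. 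Reading off the near-cancellation of the actions of the ends of each floor, together with the multiplicity data at $\gamma$ coming from the partition conditions at $\gamma$ and from Claim \ref{fre}, yields the system of approximate identities among $A(\delta_1),A(\delta_2),A(\eta),p_iR,p_{i+1}R$ recorded in the statement.

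Finally I would solve this system using the elementary observation stated just before the proposition---that $n\tau R\approx m\tau R$ with $\tau>\tfrac1{100}$ forces $n=m$---together with Proposition \ref{s} and Claim \ref{fre}. This forces $p_{i+1}=\tfrac32 p_i$ in cases \textbf{(b)}/\textbf{(b')}, $p_{i+1}=\tfrac43 p_i$ in cases \textbf{(c)}/\textbf{(c')}, and the displayed approximate values of $A(\delta_1),A(\delta_2)$ and $A(\eta)$ in each case. Mutual exclusivity is then read off from discrete invariants: whether $E$ decreases or increases from $\alpha_k$ to $\alpha_{k+1}$ separates the unprimed from the primed families; whether $H(\alpha_{k+1})=H(\alpha_k)$ or $H(\alpha_k)=H(\alpha_{k+1})+2$ separates \textbf{(a)} from \textbf{(b)}/\textbf{(c)}; and the ratio $p_{i+1}/p_i\in\{3/2,4/3\}$ separates \textbf{(b)} from \textbf{(c)}.

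The bulk of the work, and the step I expect to be hardest, is the second one: one must enumerate every way $u_1$ and its one-parameter degenerations can split as $z$ ranges over the orbits of $\alpha_{k+1}$, check admissibility of every resulting building against the partition conditions, and keep the dozens of resulting approximate equations mutually consistent. The hypotheses $H>4$ and $E>p_1,q_1$ are precisely what make enough of these degenerations available and rigid, and the ECH-index bookkeeping across floors---especially tracking the $Q_\tau$ terms and the Conley--Zehnder contributions at covers of $\gamma$---is where errors are easiest to make.
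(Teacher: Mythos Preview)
Your plan follows the same architecture as the paper's proof (Sections 6--7): restrict to $(g,k,l)=(0,3,1)$, read off types (A)/(A')/(B)/(B') from the partition conditions, pass to the $p_{i+1}$ level via the index computation of Claim \ref{index2to4} to find the intermediate $\zeta$ with $E(\zeta)=0$, and then extract approximate action relations by moving the basepoint. The topological reduction and the mutual-exclusivity argument are fine.

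There are, however, two genuine gaps in the middle of your plan. First, after locating $\zeta$ you must determine the $J_0$ split $(J_0(\cdot,\zeta),J_0(\zeta,\cdot))$: a priori this can be any of $(-1,3),(0,2),(1,1),(2,0),(3,-1)$, and the paper needs a separate argument (Lemma \ref{exc}) to reduce to $(1,1)$ before anything else can proceed. You skip this. Second, once $(1,1)$ is established, the curves $U\langle\hat\alpha\cup(\delta_1,1)\cup(\gamma,p_{i+1})\rangle=\langle\zeta\rangle$ and $U\langle\zeta\rangle=\langle\hat\alpha\cup(\gamma,p_{i+1}-p_i)\cup(\delta_2,1)\rangle$ are $(g,k,l)=(0,3,0)$ curves, and in Type (A) there are \emph{three} combinatorially distinct ways they can share ends (the paper's Types $(A_1),(A_2),(A_3)$); only $(A_1)$ yields the relations in \textbf{(a)}, and ruling out $(A_2),(A_3)$ requires degenerating not just toward $\eta\in\hat\alpha$ but also toward $\delta_1$ and $\delta_2$ to produce the extra constraints $(\mathfrak{g}_i),(\mathfrak{h}_i),(\mathfrak{i}_i)$. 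Your plan only mentions $\eta\in\hat\alpha$ (and ``$\eta=\gamma$'', which is not a degeneration used here---the passage through $\gamma$ is the index argument, not an SFT limit), so the resulting system would be underdetermined and you could not exclude $(A_2),(A_3)$ or separate \textbf{(b)} from \textbf{(c)} in Type (B).
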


\begin{tikzpicture}

\draw (11,-1.8) arc [start angle=90,end angle=270,x radius=0.25,y radius=0.5];
 \draw (11,-2.8) arc [start angle=270,end angle=450,x radius=0.25,y radius=0.5];
 
 \draw (11,-1.8)--(6,-1.8) ;
  \draw (11,-2.8)--(6,-2.8) ;
  \draw (6,-1.8) arc [start angle=90,end angle=270,x radius=0.25,y radius=0.5];
 \draw (6,-2.8)[dashed] arc [start angle=270,end angle=450,x radius=0.25,y radius=0.5];
 
\draw (12.2,-2.3) node{$(\eta,1)\in \hat{\alpha}$};
\draw (14.2,-3) node{$A(\eta)\approx  \frac{1}{2}p_{i+1}R$ or $ \frac{1}{2}(p_{i+1}-p_{i})R$};

 \draw (8.5,-3.2) node{...};

 \draw (6,-4) arc [start angle=90,end angle=270,x radius=0.25,y radius=0.5];
 \draw (6,-5)[dashed] arc [start angle=270,end angle=450,x radius=0.25,y radius=0.5];
 \draw (11,-3.5) arc [start angle=90,end angle=270,x radius=0.25,y radius=0.5];
 \draw (11,-4.5) arc [start angle=270,end angle=450,x radius=0.25,y radius=0.5];
 
 \draw (12,-4) node{$(\delta_{1},1)$ } ;

 \draw (13.5,-4.7) node{ $A(\delta_{1})\approx (p_{i+1}-p_{i})R$ } ;
 
 \draw (11,-5.8) arc [start angle=90,end angle=270,x radius=0.25,y radius=0.5];
 \draw (11,-6.8) arc [start angle=270,end angle=450,x radius=0.25,y radius=0.5];
 
 \draw (12,-6.3) node{$(\gamma,M)$} ;
 
 \draw (4.8,-6.3) node{$(\gamma,M-p_{i})$} ;
 
 \draw (6,-4) to [out=0,in=180] (11,-3.5);
 \draw (6,-5) to [out=0,in=180] (11,-6.8);
 \draw (11,-5.8) to [out=180,in=270] (9,-5);
 \draw (9,-5) to [out=90,in=180] (11,-4.5);
 
 \draw (4.5,-1.5) node{\Large \textbf{(a)}} ;
 
 \draw (11,-5.8)--(6,-5.8) ;
  \draw (11,-6.8)--(6,-6.8) ;
  \draw (6,-5.8) arc [start angle=90,end angle=270,x radius=0.25,y radius=0.5];
 \draw (6,-6.8)[dashed] arc [start angle=270,end angle=450,x radius=0.25,y radius=0.5];
 \draw (4.8,-4.5) node{$(\delta_{2},1)$} ;
 \draw (4,-5.2) node{$A(\delta_{2})\approx p_{i+1}R$} ;
 
 \draw[densely dotted,  thick] (6,-1.6)--(6,-7);
  \draw[densely dotted,  thick] (11,-1.6)--(11,-7) ;
  
  \draw (6,-7.5) node{$\alpha_{k}=\hat{\alpha}\cup{(\gamma,M-p_{i})\cup{(\delta_{2},1)}}$};
  \draw (11,-7.5) node{$\alpha_{k+1}=\hat{\alpha}\cup{(\gamma,M)\cup{(\delta_{1},1)}}$};
  
  \draw (8,-4.7) node{$u_{1}$};

\end{tikzpicture}
\vspace{5mm}

\begin{tikzpicture}

\draw (11,-1.8) arc [start angle=90,end angle=270,x radius=0.25,y radius=0.5];
 \draw (11,-2.8) arc [start angle=270,end angle=450,x radius=0.25,y radius=0.5];
 
 \draw (11,-1.8)--(6,-1.8) ;
  \draw (11,-2.8)--(6,-2.8) ;
  \draw (6,-1.8) arc [start angle=90,end angle=270,x radius=0.25,y radius=0.5];
 \draw (6,-2.8)[dashed] arc [start angle=270,end angle=450,x radius=0.25,y radius=0.5];
 
\draw (12.2,-2.3) node{$(\eta,1)\in \hat{\alpha}$};
\draw (14.2,-3) node{$A(\eta)\approx  \frac{1}{2}q_{i+1}R$ or $ \frac{1}{2}(q_{i+1}-q_{i})R$};

 \draw (8.5,-3.2) node{...};

 \draw (6,-3.3) arc [start angle=90,end angle=270,x radius=0.25,y radius=0.5];
 \draw (6,-4.3)[dashed] arc [start angle=270,end angle=450,x radius=0.25,y radius=0.5];

 \draw (11,-4) arc [start angle=90,end angle=270,x radius=0.25,y radius=0.5];
 \draw (11,-5) arc [start angle=270,end angle=450,x radius=0.25,y radius=0.5];
 
 \draw (12,-4.4) node{$(\delta_{1},1)$ } ;

 \draw (12.7,-5) node{ $A(\delta_{1})\approx q_{i+1}R$ } ;
 
 \draw (11,-5.8) arc [start angle=90,end angle=270,x radius=0.25,y radius=0.5];
 \draw (11,-6.8) arc [start angle=270,end angle=450,x radius=0.25,y radius=0.5];
 
 \draw (12.5,-6.3) node{$(\gamma,M-q_{i})$} ;
 
 \draw (5,-6.3) node{$(\gamma,M)$} ;

 \draw (11,-5) to [out=180,in=0] (6,-6.8);
 \draw (6,-5.8) to [out=0,in=270] (8,-4.8);
 \draw (8,-4.8) to [out=90,in=0] (6,-4.3);
 \draw (11,-4) to [out=180,in=0] (6,-3.3);

 \draw (11,-5.8)--(6,-5.8) ;
  \draw (11,-6.8)--(6,-6.8) ;
  \draw (6,-5.8) arc [start angle=90,end angle=270,x radius=0.25,y radius=0.5];
 \draw (6,-6.8)[dashed] arc [start angle=270,end angle=450,x radius=0.25,y radius=0.5];
 \draw (4.8,-4) node{$(\delta_{2},1)$} ;
 \draw (4,-4.7) node{$A(\delta_{2})\approx (q_{i+1}-q_{i})R$} ;
 
 \draw[densely dotted,  thick] (6,-1.6)--(6,-7);
  \draw[densely dotted,  thick] (11,-1.6)--(11,-7) ;
  
  \draw (6,-7.5) node{$\alpha_{k}=\hat{\alpha}\cup{(\gamma,M)\cup{(\delta_{2},1)}}$};
  \draw (11,-7.5) node{$\alpha_{k+1}=\hat{\alpha}\cup{(\gamma,M-q_{i})\cup{(\delta_{1},1)}}$};
  
  \draw (9,-4.7) node{$u_{1}$};

 \draw (4.5,-1.5) node{\Large \textbf{(a')}} ;
\end{tikzpicture}
\vspace{5mm}

\begin{tikzpicture}
\draw (11,-1.8) arc [start angle=90,end angle=270,x radius=0.25,y radius=0.5];
 \draw (11,-2.8) arc [start angle=270,end angle=450,x radius=0.25,y radius=0.5];
 
 \draw (11,-1.8)--(6,-1.8) ;
  \draw (11,-2.8)--(6,-2.8) ;
  \draw (6,-1.8) arc [start angle=90,end angle=270,x radius=0.25,y radius=0.5];
 \draw (6,-2.8)[dashed] arc [start angle=270,end angle=450,x radius=0.25,y radius=0.5];
 
\draw (12.2,-2.3) node{$(\eta,1)\in \hat{\alpha}$};
\draw (14,-3) node{$A(\eta)\approx  \frac{1}{2}p_{i}R$ or $ \frac{1}{4}p_{i}R$};

 \draw (8.5,-3.2) node{...};

 \draw (6,-3.3) arc [start angle=90,end angle=270,x radius=0.25,y radius=0.5];
 \draw (6,-4.3)[dashed] arc [start angle=270,end angle=450,x radius=0.25,y radius=0.5];

 \draw (11,-4.5) arc [start angle=90,end angle=270,x radius=0.25,y radius=0.5];
 \draw (11,-5.5) arc [start angle=270,end angle=450,x radius=0.25,y radius=0.5];
  \draw (6,-4.5) arc [start angle=90,end angle=270,x radius=0.25,y radius=0.5];
 \draw (6,-5.5)[dashed] arc [start angle=270,end angle=450,x radius=0.25,y radius=0.5];
 
 \draw (6,-4.5)--(11,-4.5);
 
  \draw (6,-5.5)--(11,-5.5);
 
 \draw (12,-5) node{$(\gamma,M)$ } ;

 \draw (4.8,-5) node{$(\gamma,M-p_{i})$} ;
 
 \draw (11,-5.5) to [out=180,in=0] (6,-7.1);
 \draw (6,-6.1) to [out=0,in=270] (8,-5);
 \draw (8,-5) to [out=90,in=0] (6,-4.3);
 \draw (11,-4.5) to [out=180,in=0] (6,-3.3);

  \draw (6,-6.1) arc [start angle=90,end angle=270,x radius=0.25,y radius=0.5];
 \draw (6,-7.1)[dashed] arc [start angle=270,end angle=450,x radius=0.25,y radius=0.5];
 
 \draw (4.8,-4) node{$(\delta_{2},1)$} ;
 \draw (4,-3.3) node{$A(\delta_{2})\approx \frac{1}{2} p_{i}R$} ;
 
  \draw (4.8,-6.7) node{$(\delta_{2},1)$} ;
 \draw (4,-6) node{$A(\delta_{1})\approx \frac{1}{2} p_{i}R$} ;
 
 \draw[densely dotted,  thick] (6,-1.5)--(6,-7.3);
  \draw[densely dotted,  thick] (11,-1.5)--(11,-7.3) ;
  
  \draw (6,-7.5) node{$\alpha_{k}=\hat{\alpha}\cup{(\gamma,M-p_{i})\cup{(\delta_{1},1)\cup{(\delta_{2},1)}}}$};
  \draw (11,-7.5) node{$\alpha_{k+1}=\hat{\alpha}\cup{(\gamma,M)}$};
  
  \draw (8.5,-4.3) node{$u_{1}$};
 \draw (4.5,-1.5) node{\Large \textbf{(b)}} ;

\end{tikzpicture}

\vspace{5mm}

\begin{tikzpicture}
\draw (11,-1.8) arc [start angle=90,end angle=270,x radius=0.25,y radius=0.5];
 \draw (11,-2.8) arc [start angle=270,end angle=450,x radius=0.25,y radius=0.5];
 
 \draw (11,-1.8)--(6,-1.8) ;
  \draw (11,-2.8)--(6,-2.8) ;
  \draw (6,-1.8) arc [start angle=90,end angle=270,x radius=0.25,y radius=0.5];
 \draw (6,-2.8)[dashed] arc [start angle=270,end angle=450,x radius=0.25,y radius=0.5];
 
\draw (12.2,-2.3) node{$(\eta,1)\in \hat{\alpha}$};
\draw (14,-2.8) node{$A(\eta)\approx  \frac{1}{2}q_{i}R$ or $ \frac{1}{4}q_{i}R$};
 
  \draw (4.5,-1.5) node{\Large \textbf{(b')}} ;
 \draw (8.5,-3.2) node{...};

 \draw (6,-4.8) arc [start angle=90,end angle=270,x radius=0.25,y radius=0.5];
 \draw (6,-5.8)[dashed] arc [start angle=270,end angle=450,x radius=0.25,y radius=0.5];
 \draw (11,-3.5) arc [start angle=90,end angle=270,x radius=0.25,y radius=0.5];
 \draw (11,-4.5) arc [start angle=270,end angle=450,x radius=0.25,y radius=0.5];
 
 \draw (12,-4) node{$(\delta_{1},1)$ } ;

 \draw (13.3,-4.5) node{ $A(\delta_{1})\approx \frac{1}{2} q_{i}R$ } ;
 
 \draw (11,-6.3) arc [start angle=90,end angle=270,x radius=0.25,y radius=0.5];
 \draw (11,-7.3) arc [start angle=270,end angle=450,x radius=0.25,y radius=0.5];
 
 \draw (12.2,-5.3) node{$(\gamma,M-q_{i})$} ;
 
  \draw (12,-6.8) node{$(\delta_{2},1)$} ;
   \draw (13,-6.3) node{$A(\delta_{2})\approx \frac{1}{2}q_{i}R$} ;

 \draw (6,-4.8) to [out=0,in=180] (11,-3.5);
 \draw (6,-5.8) to [out=0,in=180] (11,-7.3);
 \draw (11,-6.3) to [out=180,in=270] (9,-5.5);
 \draw (9,-5.5) to [out=90,in=180] (11,-4.5);

 \draw (11,-4.8) arc [start angle=90,end angle=270,x radius=0.25,y radius=0.5];
 \draw (11,-5.8) arc [start angle=270,end angle=450,x radius=0.25,y radius=0.5];

\draw (6,-4.8)--(11,-4.8);
\draw (6,-5.8)--(11,-5.8);

 \draw (4.8,-5.3) node{$(\gamma,M)$} ;

 \draw[densely dotted,  thick] (6,-1.6)--(6,-7.5);
  \draw[densely dotted,  thick] (11,-1.6)--(11,-7.5) ;
  
  \draw (6,-7.8) node{$\alpha_{k}=\hat{\alpha}\cup{(\gamma,M)}$};
  \draw (11,-7.8) node{$\alpha_{k+1}=\hat{\alpha}\cup{(\gamma,M-q_{i})\cup{(\delta_{1},1)}\cup{(\delta_{2},1)}}$};
  
  \draw (8.4,-4.5) node{$u_{1}$};

\end{tikzpicture}

\vspace{5mm}

\begin{tikzpicture}
\draw (11,-1.8) arc [start angle=90,end angle=270,x radius=0.25,y radius=0.5];
 \draw (11,-2.8) arc [start angle=270,end angle=450,x radius=0.25,y radius=0.5];
  \draw (4.5,-1.5) node{\Large \textbf{(c)}} ;
 
 \draw (11,-1.8)--(6,-1.8) ;
  \draw (11,-2.8)--(6,-2.8) ;
  \draw (6,-1.8) arc [start angle=90,end angle=270,x radius=0.25,y radius=0.5];
 \draw (6,-2.8)[dashed] arc [start angle=270,end angle=450,x radius=0.25,y radius=0.5];
 
\draw (12.2,-2.3) node{$(\eta,1)\in \hat{\alpha}$};
\draw (14,-3) node{$A(\eta)\approx  \frac{1}{2}p_{i}R$ or $ \frac{1}{6}p_{i}R$};

 \draw (8.5,-3.2) node{...};

 \draw (6,-3.3) arc [start angle=90,end angle=270,x radius=0.25,y radius=0.5];
 \draw (6,-4.3)[dashed] arc [start angle=270,end angle=450,x radius=0.25,y radius=0.5];

 \draw (11,-4.5) arc [start angle=90,end angle=270,x radius=0.25,y radius=0.5];
 \draw (11,-5.5) arc [start angle=270,end angle=450,x radius=0.25,y radius=0.5];
  \draw (6,-4.5) arc [start angle=90,end angle=270,x radius=0.25,y radius=0.5];
 \draw (6,-5.5)[dashed] arc [start angle=270,end angle=450,x radius=0.25,y radius=0.5];
 
 \draw (6,-4.5)--(11,-4.5);
 
  \draw (6,-5.5)--(11,-5.5);
 
 \draw (12,-5) node{$(\gamma,M)$ } ;
 
 \draw (4.8,-5) node{$(\gamma,M-p_{i})$} ;
 
 \draw (11,-5.5) to [out=180,in=0] (6,-7.1);
 \draw (6,-6.1) to [out=0,in=270] (8,-5);
 \draw (8,-5) to [out=90,in=0] (6,-4.3);
 \draw (11,-4.5) to [out=180,in=0] (6,-3.3);

  \draw (6,-6.1) arc [start angle=90,end angle=270,x radius=0.25,y radius=0.5];
 \draw (6,-7.1)[dashed] arc [start angle=270,end angle=450,x radius=0.25,y radius=0.5];
 
 \draw (4.8,-4) node{$(\delta_{2},1)$} ;
 \draw (4,-3.3) node{$A(\delta_{2})\approx \frac{1}{3} p_{i}R$} ;
 
  \draw (4.8,-6.7) node{$(\delta_{2},1)$} ;
 \draw (4,-6) node{$A(\delta_{1})\approx \frac{2}{3} p_{i}R$} ;
 
 \draw[densely dotted,  thick] (6,-1.8)--(6,-7.3);
  \draw[densely dotted,  thick] (11,-1.8)--(11,-7.3) ;
  
  \draw (6,-7.7) node{$\alpha_{k}=\hat{\alpha}\cup{(\gamma,M-p_{i})\cup{(\delta_{1},1)\cup{(\delta_{2},1)}}}$};
  \draw (11,-7.7) node{$\alpha_{k+1}=\hat{\alpha}\cup{(\gamma,M)}$};
  
  \draw (8.5,-4.3) node{$u_{1}$};

\end{tikzpicture}

\vspace{5mm}

\begin{tikzpicture}
\draw (11,-2) arc [start angle=90,end angle=270,x radius=0.25,y radius=0.5];
 \draw (11,-3) arc [start angle=270,end angle=450,x radius=0.25,y radius=0.5];
 
 \draw (11,-2)--(6,-2) ;
  \draw (11,-3)--(6,-3) ;
  \draw (6,-2) arc [start angle=90,end angle=270,x radius=0.25,y radius=0.5];
 \draw (6,-3)[dashed] arc [start angle=270,end angle=450,x radius=0.25,y radius=0.5];
 
\draw (12.2,-2.3) node{$(\eta,1)\in \hat{\alpha}$};
\draw (14,-2.8) node{$A(\eta)\approx  \frac{1}{2}q_{i}R$ or $ \frac{1}{6}q_{i}R$};
  \draw (4.5,-1.5) node{\Large \textbf{(c')}} ;
 
 \draw (8.5,-3.2) node{...};

 \draw (6,-4.8) arc [start angle=90,end angle=270,x radius=0.25,y radius=0.5];
 \draw (6,-5.8)[dashed] arc [start angle=270,end angle=450,x radius=0.25,y radius=0.5];
 \draw (11,-3.5) arc [start angle=90,end angle=270,x radius=0.25,y radius=0.5];
 \draw (11,-4.5) arc [start angle=270,end angle=450,x radius=0.25,y radius=0.5];
 
 \draw (12,-4) node{$(\delta_{1},1)$ } ;

 \draw (13.3,-4.5) node{ $A(\delta_{1})\approx \frac{1}{3} q_{i}R$ } ;
 
 \draw (11,-6.3) arc [start angle=90,end angle=270,x radius=0.25,y radius=0.5];
 \draw (11,-7.3) arc [start angle=270,end angle=450,x radius=0.25,y radius=0.5];
 
 \draw (12.2,-5.3) node{$(\gamma,M-q_{i})$} ;
 
  \draw (12,-6.8) node{$(\delta_{2},1)$} ;
   \draw (13,-6.3) node{$A(\delta_{2})\approx \frac{2}{3}q_{i}R$} ;

 \draw (6,-4.8) to [out=0,in=180] (11,-3.5);
 \draw (6,-5.8) to [out=0,in=180] (11,-7.3);
 \draw (11,-6.3) to [out=180,in=270] (9,-5.5);
 \draw (9,-5.5) to [out=90,in=180] (11,-4.5);

 \draw (11,-4.8) arc [start angle=90,end angle=270,x radius=0.25,y radius=0.5];
 \draw (11,-5.8) arc [start angle=270,end angle=450,x radius=0.25,y radius=0.5];

\draw (6,-4.8)--(11,-4.8);
\draw (6,-5.8)--(11,-5.8);

 \draw (4.8,-5.3) node{$(\gamma,M)$} ;

 \draw[densely dotted,  thick] (6,-1.9)--(6,-7.5);
  \draw[densely dotted,  thick] (11,-1.9)--(11,-7.5) ;
  
  \draw (6,-7.8) node{$\alpha_{k}=\hat{\alpha}\cup{(\gamma,M)}$};
  \draw (11,-7.8) node{$\alpha_{k+1}=\hat{\alpha}\cup{(\gamma,M-q_{i})\cup{(\delta_{1},1)}\cup{(\delta_{2},1)}}$};
  
  \draw (8.4,-4.5) node{$u_{1}$};

\end{tikzpicture}

\vspace{1cm}

\subsection{Restriction of topological types of the $J$-holomorphic curves}
If $J_{0}(\alpha_{k+1},\alpha_{k})=-2+2g+k+l=2$, each topological type of $J$-holomorphic curve counted by $U\langle \alpha_{k+1} \rangle=\langle \alpha_{k} \rangle$  is  $(g,k,l)=(0,4,0)$, $(0,3,1)$, $(0,2,2)$, $(1,1,1)$ or $(1,2,0)$.
But in Proposition \ref{nagai},  only the type $(g,k,l)=(0,3,1)$ appears. So at first, we  exclude the others.

As is the same with the case $J_{0}\leq 1$, we make a list of topological types of $J_{0}=2$ as follows. 

\begin{itemize}

    \item $(g,k,l)=(0,4,0)$
    
     This case has four types as follows.

\begin{tikzpicture}
\draw (11,-1.8) arc [start angle=90,end angle=270,x radius=0.25,y radius=0.5];
 \draw (11,-2.8) arc [start angle=270,end angle=450,x radius=0.25,y radius=0.5];

 \draw (11,-1.8)--(6,-1.8) ;
  \draw (11,-2.8)--(6,-2.8) ;
  \draw (6,-1.8) arc [start angle=90,end angle=270,x radius=0.25,y radius=0.5];
 \draw (6,-2.8)[dashed] arc [start angle=270,end angle=450,x radius=0.25,y radius=0.5];

 \draw (6,-3.3) arc [start angle=90,end angle=270,x radius=0.25,y radius=0.5];
 \draw (6,-4.3)[dashed] arc [start angle=270,end angle=450,x radius=0.25,y radius=0.5];

 \draw (11,-4.5) arc [start angle=90,end angle=270,x radius=0.25,y radius=0.5];
 \draw (11,-5.5) arc [start angle=270,end angle=450,x radius=0.25,y radius=0.5];
  \draw (6,-4.7) arc [start angle=90,end angle=270,x radius=0.25,y radius=0.5];
 \draw (6,-5.7)[dashed] arc [start angle=270,end angle=450,x radius=0.25,y radius=0.5];

 \draw (11,-5.5) to [out=180,in=0] (6,-7.1);
 \draw (6,-6.1) to [out=0,in=270] (7,-5.9);
 \draw (7,-5.9) to [out=90,in=0] (6,-5.7);
 \draw (11,-4.5) to [out=180,in=0] (6,-3.3);
 
  \draw (6,-4.7) to [out=0,in=270] (7,-4.5);
 \draw (7,-4.5) to [out=90,in=0] (6,-4.3);

  \draw (6,-6.1) arc [start angle=90,end angle=270,x radius=0.25,y radius=0.5];
 \draw (6,-7.1)[dashed] arc [start angle=270,end angle=450,x radius=0.25,y radius=0.5];

 \draw[densely dotted,  thick] (6,-1.8)--(6,-7.3);
  \draw[densely dotted,  thick] (11,-1.8)--(11,-7.3) ;
  
  \draw (6,-7.7) node{$-\infty$};
  \draw (11,-7.7) node{$+\infty$};
  
  \draw (8.5,-5.3) node{$u_{1}$};

\end{tikzpicture}
\begin{tikzpicture}
\draw (11,-2) arc [start angle=90,end angle=270,x radius=0.25,y radius=0.5];
 \draw (11,-3) arc [start angle=270,end angle=450,x radius=0.25,y radius=0.5];
 
 \draw (11,-2)--(6,-2) ;
  \draw (11,-3)--(6,-3) ;
  \draw (6,-2) arc [start angle=90,end angle=270,x radius=0.25,y radius=0.5];
 \draw (6,-3)[dashed] arc [start angle=270,end angle=450,x radius=0.25,y radius=0.5];

 \draw (6,-4.8) arc [start angle=90,end angle=270,x radius=0.25,y radius=0.5];
 \draw (6,-5.8)[dashed] arc [start angle=270,end angle=450,x radius=0.25,y radius=0.5];
 \draw (11,-3.5) arc [start angle=90,end angle=270,x radius=0.25,y radius=0.5];
 \draw (11,-4.5) arc [start angle=270,end angle=450,x radius=0.25,y radius=0.5];

 \draw (11,-6.3) arc [start angle=90,end angle=270,x radius=0.25,y radius=0.5];
 \draw (11,-7.3) arc [start angle=270,end angle=450,x radius=0.25,y radius=0.5];

 \draw (6,-4.8) to [out=0,in=180] (11,-3.5);
 \draw (6,-5.8) to [out=0,in=180] (11,-7.3);

 \draw (11,-6.3) to [out=180,in=270] (10,-6);
 
 \draw (10,-6) to [out=90,in=180] (11,-5.8);
 
 \draw (11,-4.5) to [out=180,in=90] (10,-4.7);
 
 \draw (10,-4.7) to [out=270,in=180] (11,-4.8);

 \draw (11,-4.8) arc [start angle=90,end angle=270,x radius=0.25,y radius=0.5];
 \draw (11,-5.8) arc [start angle=270,end angle=450,x radius=0.25,y radius=0.5];

 \draw[densely dotted,  thick] (6,-2)--(6,-7.5);
  \draw[densely dotted,  thick] (11,-2)--(11,-7.5) ;

  \draw (6,-7.8) node{$-\infty$};
  \draw (11,-7.8) node{$+\infty$};
  
  \draw (8.4,-5.5) node{$u_{1}$};

\end{tikzpicture}

    \begin{tikzpicture}

\draw (11,-1.8) arc [start angle=90,end angle=270,x radius=0.25,y radius=0.5];
 \draw (11,-2.8) arc [start angle=270,end angle=450,x radius=0.25,y radius=0.5];
 
 \draw (11,-1.8)--(6,-1.8) ;
  \draw (11,-2.8)--(6,-2.8) ;
  \draw (6,-1.8) arc [start angle=90,end angle=270,x radius=0.25,y radius=0.5];
 \draw (6,-2.8)[dashed] arc [start angle=270,end angle=450,x radius=0.25,y radius=0.5];

 \draw (6,-4) arc [start angle=90,end angle=270,x radius=0.25,y radius=0.5];
 \draw (6,-5)[dashed] arc [start angle=270,end angle=450,x radius=0.25,y radius=0.5];
 \draw (11,-3.5) arc [start angle=90,end angle=270,x radius=0.25,y radius=0.5];
 \draw (11,-4.5) arc [start angle=270,end angle=450,x radius=0.25,y radius=0.5];

 \draw (11,-5.8) arc [start angle=90,end angle=270,x radius=0.25,y radius=0.5];
 \draw (11,-6.8) arc [start angle=270,end angle=450,x radius=0.25,y radius=0.5];
 
 \draw (6,-4) to [out=0,in=180] (11,-3.5);
 \draw (6,-6.8) to [out=10,in=170] (11,-6.8);
 \draw (11,-5.8) to [out=180,in=270] (9,-5);
 \draw (9,-5) to [out=90,in=180] (11,-4.5);

  \draw (6,-5.8) arc [start angle=90,end angle=270,x radius=0.25,y radius=0.5];
 \draw (6,-6.8)[dashed] arc [start angle=270,end angle=450,x radius=0.25,y radius=0.5];

 \draw[densely dotted,  thick] (6,-1.6)--(6,-7);
  \draw[densely dotted,  thick] (11,-1.6)--(11,-7) ;
  
  \draw (6,-7.5) node{$-\infty$};
  \draw (11,-7.5) node{$+\infty$};
  
  \draw (8,-5.7) node{$u_{1}$};

 \draw (6,-5.8) to [out=0,in=270] (7,-5.5);
 \draw (7,-5.5) to [out=90,in=0] (6,-5);

\end{tikzpicture}
\begin{tikzpicture}

\draw (11,-1.8) arc [start angle=90,end angle=270,x radius=0.25,y radius=0.5];
 \draw (11,-2.8) arc [start angle=270,end angle=450,x radius=0.25,y radius=0.5];
 
 \draw (11,-1.8)--(6,-1.8) ;
  \draw (11,-2.8)--(6,-2.8) ;
  \draw (6,-1.8) arc [start angle=90,end angle=270,x radius=0.25,y radius=0.5];
 \draw (6,-2.8)[dashed] arc [start angle=270,end angle=450,x radius=0.25,y radius=0.5];

 \draw (11,-3) arc [start angle=90,end angle=270,x radius=0.25,y radius=0.5];
 \draw (11,-4) arc [start angle=270,end angle=450,x radius=0.25,y radius=0.5];
 
 \draw (11,-4.4) arc [start angle=90,end angle=270,x radius=0.25,y radius=0.5];
 \draw (11,-5.4) arc [start angle=270,end angle=450,x radius=0.25,y radius=0.5];

 \draw (11,-5.8) arc [start angle=90,end angle=270,x radius=0.25,y radius=0.5];
 \draw (11,-6.8) arc [start angle=270,end angle=450,x radius=0.25,y radius=0.5];
 
 \draw (11,-7) arc [start angle=90,end angle=270,x radius=0.25,y radius=0.5];
 \draw (11,-8) arc [start angle=270,end angle=450,x radius=0.25,y radius=0.5];

 \draw (11,-8) to [out=180,in=270] (8,-5.5);
 \draw (8,-5.5) to [out=90,in=180] (11,-3);
 \draw (11,-4) to [out=180,in=90] (10.5,-4.2);
   \draw (10.5,-4.2) to [out=270,in=180] (11,-4.4);

  \draw (11,-5.4) to [out=180,in=90] (10.5,-5.6);
    \draw (10.5,-5.6) to [out=270,in=180] (11,-5.8);

  \draw (11,-7) to [out=180,in=270] (10.5,-6.9);
  \draw (10.5,-6.9) to [out=90,in=180] (11,-6.8);

 \draw[densely dotted,  thick] (6,-1.6)--(6,-8.1);
  \draw[densely dotted,  thick] (11,-1.6)--(11,-8.1) ;
  
  \draw (6,-8.4) node{$-\infty$};
  \draw (11,-8.4) node{$+\infty$};
  
  \draw (9,-4.9) node{$u_{1}$};

\end{tikzpicture}

    \item  $(g,k,l)=(0,3,1)$
    
     This case has eight types as follows.
    
    \begin{tikzpicture}

\draw (11,-1.8) arc [start angle=90,end angle=270,x radius=0.25,y radius=0.5];
 \draw (11,-2.8) arc [start angle=270,end angle=450,x radius=0.25,y radius=0.5];
 
 \draw (11,-1.8)--(6,-1.8) ;
  \draw (11,-2.8)--(6,-2.8) ;
  \draw (6,-1.8) arc [start angle=90,end angle=270,x radius=0.25,y radius=0.5];
 \draw (6,-2.8)[dashed] arc [start angle=270,end angle=450,x radius=0.25,y radius=0.5];

 \draw (6,-4) arc [start angle=90,end angle=270,x radius=0.25,y radius=0.5];
 \draw (6,-5)[dashed] arc [start angle=270,end angle=450,x radius=0.25,y radius=0.5];
 \draw (11,-3.5) arc [start angle=90,end angle=270,x radius=0.25,y radius=0.5];
 \draw (11,-4.5) arc [start angle=270,end angle=450,x radius=0.25,y radius=0.5];

 \draw (11,-5.8) arc [start angle=90,end angle=270,x radius=0.25,y radius=0.5];
 \draw (11,-6.8) arc [start angle=270,end angle=450,x radius=0.25,y radius=0.5];
 
 \draw (6,-4) to [out=0,in=180] (11,-3.5);
 \draw (6,-5) to [out=0,in=180] (11,-6.8);
 \draw (11,-5.8) to [out=180,in=270] (9,-5);
 \draw (9,-5) to [out=90,in=180] (11,-4.5);
 
 \draw (11,-5.8)--(6,-5.8) ;
  \draw (11,-6.8)--(6,-6.8) ;
  \draw (6,-5.8) arc [start angle=90,end angle=270,x radius=0.25,y radius=0.5];
 \draw (6,-6.8)[dashed] arc [start angle=270,end angle=450,x radius=0.25,y radius=0.5];

 \draw[densely dotted,  thick] (6,-1.6)--(6,-7);
  \draw[densely dotted,  thick] (11,-1.6)--(11,-7) ;
  
  \draw (6,-7.5) node{$-\infty$};
  \draw (11,-7.5) node{$+\infty$};
  
  \draw (8,-4.7) node{$u_{1}$};

\end{tikzpicture}
\begin{tikzpicture}

\draw (11,-1.8) arc [start angle=90,end angle=270,x radius=0.25,y radius=0.5];
 \draw (11,-2.8) arc [start angle=270,end angle=450,x radius=0.25,y radius=0.5];
 
 \draw (11,-1.8)--(6,-1.8) ;
  \draw (11,-2.8)--(6,-2.8) ;
  \draw (6,-1.8) arc [start angle=90,end angle=270,x radius=0.25,y radius=0.5];
 \draw (6,-2.8)[dashed] arc [start angle=270,end angle=450,x radius=0.25,y radius=0.5];

 \draw (6,-3.3) arc [start angle=90,end angle=270,x radius=0.25,y radius=0.5];
 \draw (6,-4.3)[dashed] arc [start angle=270,end angle=450,x radius=0.25,y radius=0.5];

 \draw (11,-4) arc [start angle=90,end angle=270,x radius=0.25,y radius=0.5];
 \draw (11,-5) arc [start angle=270,end angle=450,x radius=0.25,y radius=0.5];

 \draw (11,-5.8) arc [start angle=90,end angle=270,x radius=0.25,y radius=0.5];
 \draw (11,-6.8) arc [start angle=270,end angle=450,x radius=0.25,y radius=0.5];

 \draw (11,-5) to [out=180,in=0] (6,-6.8);
 \draw (6,-5.8) to [out=0,in=270] (8,-4.8);
 \draw (8,-4.8) to [out=90,in=0] (6,-4.3);
 \draw (11,-4) to [out=180,in=0] (6,-3.3);

 \draw (11,-5.8)--(6,-5.8) ;
  \draw (11,-6.8)--(6,-6.8) ;
  \draw (6,-5.8) arc [start angle=90,end angle=270,x radius=0.25,y radius=0.5];
 \draw (6,-6.8)[dashed] arc [start angle=270,end angle=450,x radius=0.25,y radius=0.5];

 \draw[densely dotted,  thick] (6,-1.6)--(6,-7);
  \draw[densely dotted,  thick] (11,-1.6)--(11,-7) ;
  
  \draw (6,-7.5) node{$-\infty$};
  \draw (11,-7.5) node{$+\infty$};
  
  \draw (9,-4.7) node{$u_{1}$};

\end{tikzpicture}
    
    \begin{tikzpicture}
\draw (11,-1.8) arc [start angle=90,end angle=270,x radius=0.25,y radius=0.5];
 \draw (11,-2.8) arc [start angle=270,end angle=450,x radius=0.25,y radius=0.5];

 \draw (11,-1.8)--(6,-1.8) ;
  \draw (11,-2.8)--(6,-2.8) ;
  \draw (6,-1.8) arc [start angle=90,end angle=270,x radius=0.25,y radius=0.5];
 \draw (6,-2.8)[dashed] arc [start angle=270,end angle=450,x radius=0.25,y radius=0.5];

 \draw (6,-3.3) arc [start angle=90,end angle=270,x radius=0.25,y radius=0.5];
 \draw (6,-4.3)[dashed] arc [start angle=270,end angle=450,x radius=0.25,y radius=0.5];

 \draw (11,-4.5) arc [start angle=90,end angle=270,x radius=0.25,y radius=0.5];
 \draw (11,-5.5) arc [start angle=270,end angle=450,x radius=0.25,y radius=0.5];
  \draw (6,-4.5) arc [start angle=90,end angle=270,x radius=0.25,y radius=0.5];
 \draw (6,-5.5)[dashed] arc [start angle=270,end angle=450,x radius=0.25,y radius=0.5];
 
 \draw (6,-4.5)--(11,-4.5);
 
  \draw (6,-5.5)--(11,-5.5);

 \draw (11,-5.5) to [out=180,in=0] (6,-7.1);
 \draw (6,-6.1) to [out=0,in=270] (8,-5);
 \draw (8,-5) to [out=90,in=0] (6,-4.3);
 \draw (11,-4.5) to [out=180,in=0] (6,-3.3);

  \draw (6,-6.1) arc [start angle=90,end angle=270,x radius=0.25,y radius=0.5];
 \draw (6,-7.1)[dashed] arc [start angle=270,end angle=450,x radius=0.25,y radius=0.5];

 \draw[densely dotted,  thick] (6,-1.8)--(6,-7.3);
  \draw[densely dotted,  thick] (11,-1.8)--(11,-7.3) ;
  
  \draw (6,-7.7) node{$-\infty$};
  \draw (11,-7.7) node{$+\infty$};
  
  \draw (8.5,-4.3) node{$u_{1}$};

\end{tikzpicture}
\begin{tikzpicture}
\draw (11,-2) arc [start angle=90,end angle=270,x radius=0.25,y radius=0.5];
 \draw (11,-3) arc [start angle=270,end angle=450,x radius=0.25,y radius=0.5];
 
 \draw (11,-2)--(6,-2) ;
  \draw (11,-3)--(6,-3) ;
  \draw (6,-2) arc [start angle=90,end angle=270,x radius=0.25,y radius=0.5];
 \draw (6,-3)[dashed] arc [start angle=270,end angle=450,x radius=0.25,y radius=0.5];

 \draw (6,-4.8) arc [start angle=90,end angle=270,x radius=0.25,y radius=0.5];
 \draw (6,-5.8)[dashed] arc [start angle=270,end angle=450,x radius=0.25,y radius=0.5];
 \draw (11,-3.5) arc [start angle=90,end angle=270,x radius=0.25,y radius=0.5];
 \draw (11,-4.5) arc [start angle=270,end angle=450,x radius=0.25,y radius=0.5];

 \draw (11,-6.3) arc [start angle=90,end angle=270,x radius=0.25,y radius=0.5];
 \draw (11,-7.3) arc [start angle=270,end angle=450,x radius=0.25,y radius=0.5];

 \draw (6,-4.8) to [out=0,in=180] (11,-3.5);
 \draw (6,-5.8) to [out=0,in=180] (11,-7.3);
 \draw (11,-6.3) to [out=180,in=270] (9,-5.5);
 \draw (9,-5.5) to [out=90,in=180] (11,-4.5);

 \draw (11,-4.8) arc [start angle=90,end angle=270,x radius=0.25,y radius=0.5];
 \draw (11,-5.8) arc [start angle=270,end angle=450,x radius=0.25,y radius=0.5];

\draw (6,-4.8)--(11,-4.8);
\draw (6,-5.8)--(11,-5.8);

 \draw[densely dotted,  thick] (6,-1.9)--(6,-7.5);
  \draw[densely dotted,  thick] (11,-1.9)--(11,-7.5) ;
  
  \draw (6,-7.8) node{$-\infty$};
  \draw (11,-7.8) node{$+\infty$};
  
  \draw (8.4,-4.5) node{$u_{1}$};

\end{tikzpicture}

\begin{tikzpicture}
 
 \draw (2,-3)--(7,-3);
  \draw (2,-4)--(7,-4);
 \draw (2,-3) arc [start angle=90,end angle=270,x radius=0.25,y radius=0.5];
 \draw (2,-4)[dashed] arc [start angle=270,end angle=450,x radius=0.25,y radius=0.5];
 \draw (7,-3) arc [start angle=90,end angle=270,x radius=0.25,y radius=0.5];
 \draw (7,-4) arc [start angle=270,end angle=450,x radius=0.25,y radius=0.5];

 \draw (2,-5.5) arc [start angle=90,end angle=270,x radius=0.25,y radius=0.5];
 \draw (2,-6.5)[dashed] arc [start angle=270,end angle=450,x radius=0.25,y radius=0.5];
 \draw (7,-5) arc [start angle=90,end angle=270,x radius=0.25,y radius=0.5];
 \draw (7,-6) arc [start angle=270,end angle=450,x radius=0.25,y radius=0.5];

  \draw (4.5,-5.5) to [out=270,in=180] (7,-5);
 \draw (2,-6.5) to [out=0,in=180] (7,-6);
 
  \draw (5,-4.8) to [out=0,in=180] (7,-5);
   \draw (2,-5.5) to [out=0,in=180] (5,-4.8);
 \draw (4.5,-5.5) to [out=90,in=150] (5.5,-5.5);
  \draw [dashed] (5.5,-5.5) to [out=330,in=180] (7,-6);
 
 \draw[densely dotted,  thick] (2,-2.7)--(2,-7.3);
  \draw[densely dotted,  thick] (7,-2.7)--(7,-7.3) ;

 \draw (3.8,-5.7) node{$u_{1}$};
 
 \draw (7,-7.5) node{$+\infty$};
  \draw (2,-7.5) node{$-\infty$};
 
\end{tikzpicture}
\begin{tikzpicture}
 
 \draw (2,-3)--(7,-3);
  \draw (2,-4)--(7,-4);
 \draw (2,-3) arc [start angle=90,end angle=270,x radius=0.25,y radius=0.5];
 \draw (2,-4)[dashed] arc [start angle=270,end angle=450,x radius=0.25,y radius=0.5];
 \draw (7,-3) arc [start angle=90,end angle=270,x radius=0.25,y radius=0.5];
 \draw (7,-4) arc [start angle=270,end angle=450,x radius=0.25,y radius=0.5];

 \draw (2,-5.5) arc [start angle=90,end angle=270,x radius=0.25,y radius=0.5];
 \draw (2,-6.5)[dashed] arc [start angle=270,end angle=450,x radius=0.25,y radius=0.5];
 \draw (7,-5) arc [start angle=90,end angle=270,x radius=0.25,y radius=0.5];
 \draw (7,-6) arc [start angle=270,end angle=450,x radius=0.25,y radius=0.5];

 \draw (2,-6.5) to [out=330,in=200] (7,-6);
  \draw (4.8,-4.8) to [out=0,in=200] (7,-5);
   \draw (2,-5.5) to [out=0,in=180] (4.8,-4.8);
    \draw (2,-5.5) to [out=330,in=270] (4.5,-5.8);
       \draw (4.5,-5.8) to [out=90,in=40] (3,-6);
       \draw[dashed] (3,-6) to [out=220,in=30] (2,-6.5);

 \draw[densely dotted,  thick] (2,-2.7)--(2,-7.3);
  \draw[densely dotted,  thick] (7,-2.7)--(7,-7.3) ;

 \draw (5.8,-5.7) node{$u_{1}$};
 
 \draw (7,-7.5) node{$+\infty$};
  \draw (2,-7.5) node{$-\infty$};
 
\end{tikzpicture}

\begin{tikzpicture}
 
 \draw (2,-3)--(7,-3);
  \draw (2,-4)--(7,-4);
 \draw (2,-3) arc [start angle=90,end angle=270,x radius=0.25,y radius=0.5];
 \draw (2,-4)[dashed] arc [start angle=270,end angle=450,x radius=0.25,y radius=0.5];
 \draw (7,-3) arc [start angle=90,end angle=270,x radius=0.25,y radius=0.5];
 \draw (7,-4) arc [start angle=270,end angle=450,x radius=0.25,y radius=0.5];

 \draw (7,-6.8) arc [start angle=90,end angle=270,x radius=0.25,y radius=0.5];
 \draw (7,-7.8) arc [start angle=270,end angle=450,x radius=0.25,y radius=0.5];
 \draw (7,-5) arc [start angle=90,end angle=270,x radius=0.25,y radius=0.5];
 \draw (7,-6) arc [start angle=270,end angle=450,x radius=0.25,y radius=0.5];

  \draw (4.5,-5.5) to [out=270,in=180] (7,-5);
 \draw (5,-6.7) to [out=90,in=180] (7,-6);
 \draw (5,-6.7) to [out=270,in=180] (7,-6.8);
  \draw (3,-6.5) to [out=270,in=180] (7,-7.8);
 
  \draw (5,-4.8) to [out=0,in=180] (7,-5);
   \draw (3,-6.5) to [out=90,in=180] (5,-4.8);
 \draw (4.5,-5.5) to [out=90,in=150] (5.5,-5.5);
  \draw [dashed] (5.5,-5.5) to [out=330,in=180] (7,-6);
 
 \draw[densely dotted,  thick] (2,-2.7)--(2,-8.1);
  \draw[densely dotted,  thick] (7,-2.7)--(7,-8.1) ;

 \draw (4.2,-6.5) node{$u_{1}$};
 
 \draw (7,-8.3) node{$+\infty$};
  \draw (2,-8.3) node{$-\infty$};
 
\end{tikzpicture}
\begin{tikzpicture}

\draw (11,-1.8) arc [start angle=90,end angle=270,x radius=0.25,y radius=0.5];
 \draw (11,-2.8) arc [start angle=270,end angle=450,x radius=0.25,y radius=0.5];
 
 \draw (11,-1.8)--(6,-1.8) ;
  \draw (11,-2.8)--(6,-2.8) ;
  \draw (6,-1.8) arc [start angle=90,end angle=270,x radius=0.25,y radius=0.5];
 \draw (6,-2.8)[dashed] arc [start angle=270,end angle=450,x radius=0.25,y radius=0.5];

 \draw (11,-5.8)--(6,-5.8) ;
  \draw (11,-6.8)--(6,-6.8) ;
  \draw (6,-5.8) arc [start angle=90,end angle=270,x radius=0.25,y radius=0.5];
 \draw (6,-6.8)[dashed] arc [start angle=270,end angle=450,x radius=0.25,y radius=0.5];

 \draw (11,-3) arc [start angle=90,end angle=270,x radius=0.25,y radius=0.5];
 \draw (11,-4) arc [start angle=270,end angle=450,x radius=0.25,y radius=0.5];
 
 \draw (11,-4.4) arc [start angle=90,end angle=270,x radius=0.25,y radius=0.5];
 \draw (11,-5.4) arc [start angle=270,end angle=450,x radius=0.25,y radius=0.5];

 \draw (11,-5.8) arc [start angle=90,end angle=270,x radius=0.25,y radius=0.5];
 \draw (11,-6.8) arc [start angle=270,end angle=450,x radius=0.25,y radius=0.5];

 \draw (11,-6.8) to [out=180,in=270] (8,-5);
 \draw (8,-5) to [out=90,in=180] (11,-3);

 \draw[densely dotted,  thick] (6,-1.6)--(6,-7);
  \draw[densely dotted,  thick] (11,-1.6)--(11,-7) ;
  
  \draw (6,-7.5) node{$-\infty$};
  \draw (11,-7.5) node{$+\infty$};
  
  \draw (9,-4.9) node{$u_{1}$};
  
   \draw (11,-4) to [out=180,in=90] (10.5,-4.2);
   \draw (10.5,-4.2) to [out=270,in=180] (11,-4.4);

  \draw (11,-5.4) to [out=180,in=90] (10.5,-5.6);
    \draw (10.5,-5.6) to [out=270,in=180] (11,-5.8);

\end{tikzpicture}
    
    \item  $(g,k,l)=(0,2,2)$
    
     This case has four types as follows. Note that under the assumption that there is only one simple elliptic orbit, the first and the fourth cases can not occur.
    
    \begin{tikzpicture}
 
 \draw (2,-2.5)--(7,-2.5);
  \draw (2,-3.5)--(7,-3.5);
 \draw (2,-2.5) arc [start angle=90,end angle=270,x radius=0.25,y radius=0.5];
 \draw (2,-3.5)[dashed] arc [start angle=270,end angle=450,x radius=0.25,y radius=0.5];
 \draw (7,-2.5) arc [start angle=90,end angle=270,x radius=0.25,y radius=0.5];
 \draw (7,-3.5) arc [start angle=270,end angle=450,x radius=0.25,y radius=0.5];

\draw (2,-4)--(7,-4);
  \draw (2,-5)--(7,-5);
  \draw (2,-7)--(7,-7);
  \draw (2,-6)--(7,-6);
 \draw (2,-4) arc [start angle=90,end angle=270,x radius=0.25,y radius=0.5];
 \draw (2,-5)[dashed] arc [start angle=270,end angle=450,x radius=0.25,y radius=0.5];
 \draw (7,-4) arc [start angle=90,end angle=270,x radius=0.25,y radius=0.5];
 \draw (7,-5) arc [start angle=270,end angle=450,x radius=0.25,y radius=0.5];
 
  \draw (2,-6) arc [start angle=90,end angle=270,x radius=0.25,y radius=0.5];
 \draw (2,-7)[dashed] arc [start angle=270,end angle=450,x radius=0.25,y radius=0.5];
  \draw (7,-6) arc [start angle=90,end angle=270,x radius=0.25,y radius=0.5];
 \draw (7,-7) arc [start angle=270,end angle=450,x radius=0.25,y radius=0.5];
  \draw (2,-6) to [out=0,in=180] (7,-4);
 \draw (2,-7) to [out=0,in=180] (7,-5);
 
 \draw[densely dotted,  thick] (2,-2.2)--(2,-7.3);
  \draw[densely dotted,  thick] (7,-2.2)--(7,-7.3) ;

 \draw (4.5,-5.5) node{$u_{1}$};
 
 \draw (7,-7.5) node{$+\infty$};
  \draw (2,-7.5) node{$-\infty$};
 
\end{tikzpicture}
\begin{tikzpicture}
 
 \draw (2,-2.5)--(7,-2.5);
  \draw (2,-3.5)--(7,-3.5);
 \draw (2,-2.5) arc [start angle=90,end angle=270,x radius=0.25,y radius=0.5];
 \draw (2,-3.5)[dashed] arc [start angle=270,end angle=450,x radius=0.25,y radius=0.5];
 \draw (7,-2.5) arc [start angle=90,end angle=270,x radius=0.25,y radius=0.5];
 \draw (7,-3.5) arc [start angle=270,end angle=450,x radius=0.25,y radius=0.5];

\draw (2,-4)--(7,-4);
  \draw (2,-5)--(7,-5);
 \draw (2,-4) arc [start angle=90,end angle=270,x radius=0.25,y radius=0.5];
 \draw (2,-5)[dashed] arc [start angle=270,end angle=450,x radius=0.25,y radius=0.5];
 \draw (7,-4) arc [start angle=90,end angle=270,x radius=0.25,y radius=0.5];
 \draw (7,-5) arc [start angle=270,end angle=450,x radius=0.25,y radius=0.5];

  \draw (4.5,-5.5) to [out=0,in=180] (7,-4);
 \draw (4.5,-6.5) to [out=0,in=180] (7,-5);
  \draw (4.5,-5.5) to [out=180,in=0] (2,-4);
 \draw (4.5,-6.5) to [out=180,in=0] (2,-5);
 
 \draw[densely dotted,  thick] (2,-2.2)--(2,-7.3);
  \draw[densely dotted,  thick] (7,-2.2)--(7,-7.3) ;

 \draw (4.5,-6) node{$u_{1}$};
 
 \draw (7,-7.5) node{$+\infty$};
  \draw (2,-7.5) node{$-\infty$};
 
\end{tikzpicture}
    
\begin{tikzpicture}
 
 \draw (2,-3)--(7,-3);
  \draw (2,-4)--(7,-4);
 \draw (2,-3) arc [start angle=90,end angle=270,x radius=0.25,y radius=0.5];
 \draw (2,-4)[dashed] arc [start angle=270,end angle=450,x radius=0.25,y radius=0.5];
 \draw (7,-3) arc [start angle=90,end angle=270,x radius=0.25,y radius=0.5];
 \draw (7,-4) arc [start angle=270,end angle=450,x radius=0.25,y radius=0.5];

\draw (2,-5)--(7,-5);
  \draw (2,-6)--(7,-6);
 \draw (2,-5) arc [start angle=90,end angle=270,x radius=0.25,y radius=0.5];
 \draw (2,-6)[dashed] arc [start angle=270,end angle=450,x radius=0.25,y radius=0.5];
 \draw (7,-5) arc [start angle=90,end angle=270,x radius=0.25,y radius=0.5];
 \draw (7,-6) arc [start angle=270,end angle=450,x radius=0.25,y radius=0.5];

  \draw (4.5,-5.5) to [out=270,in=180] (7,-5);
 \draw (3.5,-5.5) to [out=270,in=180] (7,-6);
 
  \draw (3.5,-5.5) to [out=90,in=180] (7,-5);
 \draw (4.5,-5.5) to [out=90,in=150] (5.5,-5.5);
  \draw [dashed] (5.5,-5.5) to [out=330,in=180] (7,-6);
 
 \draw[densely dotted,  thick] (2,-2.7)--(2,-7.3);
  \draw[densely dotted,  thick] (7,-2.7)--(7,-7.3) ;

 \draw (4,-5.5) node{$u_{1}$};
 
 \draw (7,-7.5) node{$+\infty$};
  \draw (2,-7.5) node{$-\infty$};
 
\end{tikzpicture}
\begin{tikzpicture}

\draw (19,-1.8) arc [start angle=90,end angle=270,x radius=0.25,y radius=0.5];
 \draw (19,-2.8) arc [start angle=270,end angle=450,x radius=0.25,y radius=0.5];
 \draw (19,-1.8)--(14,-1.8) ;
  \draw (19,-2.8)--(14,-2.8) ;
  \draw (14,-1.8) arc [start angle=90,end angle=270,x radius=0.25,y radius=0.5];
 \draw (14,-2.8)[dashed] arc [start angle=270,end angle=450,x radius=0.25,y radius=0.5];

 \draw (19,-3.5) arc [start angle=90,end angle=270,x radius=0.25,y radius=0.5];
 \draw (19,-4.5) arc [start angle=270,end angle=450,x radius=0.25,y radius=0.5];

 \draw (19,-5.8) arc [start angle=90,end angle=270,x radius=0.25,y radius=0.5];
 \draw (19,-6.8) arc [start angle=270,end angle=450,x radius=0.25,y radius=0.5];
 
 \draw (15.5,-5) to [out=90,in=180] (19,-3.5);
 \draw (15.5,-5) to [out=270,in=180] (19,-6.8);
 \draw (19,-5.8) to [out=180,in=270] (17,-5);
 \draw (17,-5) to [out=90,in=180] (19,-4.5);

 \draw[densely dotted,  thick] (14,-1.6)--(14,-7);
  \draw[densely dotted,  thick] (19,-1.6)--(19,-7) ;
  
  \draw (14,-7.5) node{$-\infty$};
  \draw (19,-7.5) node{$+\infty$};
  
  \draw (16.5,-4.9) node{$u_{1}$};
  
   \draw (19,-3.5)--(14,-3.5) ;
  \draw (19,-4.5)--(14,-4.5) ;
  \draw (14,-3.5) arc [start angle=90,end angle=270,x radius=0.25,y radius=0.5];
 \draw (14,-4.5)[dashed] arc [start angle=270,end angle=450,x radius=0.25,y radius=0.5];
  \draw (19,-5.8)--(14,-5.8) ;
  \draw (19,-6.8)--(14,-6.8) ;
  \draw (14,-5.8) arc [start angle=90,end angle=270,x radius=0.25,y radius=0.5];
 \draw (14,-6.8)[dashed] arc [start angle=270,end angle=450,x radius=0.25,y radius=0.5];

\end{tikzpicture}
    
    \item  $(g,k,l)=(1,1,1)$
    
     This case has only one type as follows.
    
    \begin{tikzpicture}

\draw (11,-1.8) arc [start angle=90,end angle=270,x radius=0.25,y radius=0.5];
 \draw (11,-2.8) arc [start angle=270,end angle=450,x radius=0.25,y radius=0.5];
 
 \draw (11,-1.8)--(6,-1.8) ;
  \draw (11,-2.8)--(6,-2.8) ;
  \draw (6,-1.8) arc [start angle=90,end angle=270,x radius=0.25,y radius=0.5];
 \draw (6,-2.8)[dashed] arc [start angle=270,end angle=450,x radius=0.25,y radius=0.5];
 
 \draw (11,-4.4) arc [start angle=90,end angle=270,x radius=0.25,y radius=0.5];
 \draw (11,-5.4) arc [start angle=270,end angle=450,x radius=0.25,y radius=0.5];

 \draw (11,-4.4) to [out=180,in=0] (8.5,-3.5);
  \draw (11,-5.4) to [out=180,in=0] (8.5,-6.3);
  
  \draw (8.5,-3.5) to [out=180,in=90] (7,-4.7);
\draw (7,-4.7) to [out=270,in=180] (8.5,-6.3);

\draw (11,-4.4)--(6,-4.4) ;
  \draw (11,-5.4)--(6,-5.4) ;
  
   \draw (6,-4.4) arc [start angle=90,end angle=270,x radius=0.25,y radius=0.5];
 \draw (6,-5.4)[dashed] arc [start angle=270,end angle=450,x radius=0.25,y radius=0.5];

 \draw[densely dotted,  thick] (6,-1.6)--(6,-7);
  \draw[densely dotted,  thick] (11,-1.6)--(11,-7) ;
  
  \draw (8,-4.8) to [out=30,in=150] (9.5,-4.8);
  \draw (8.2,-4.7) to [out=350,in=200] (9.3,-4.7);
  
  \draw (6,-7.3) node{$-\infty$};
  \draw (11,-7.3) node{$+\infty$};
  
  \draw (8.4,-5.8) node{$u_{1}$};

\end{tikzpicture}
    
    \item  $(g,k,l)=(1,2,0)$

 This case has two types as follows.

\begin{tikzpicture}

 \draw (13.5,6.2)--(8.5,6.2) ;
  \draw (13.5,5.2)--(8.5,5.2) ;
  \draw (8.5,6.2) arc [start angle=90,end angle=270,x radius=0.25,y radius=0.5];
 \draw (8.5,5.2)[dashed] arc [start angle=270,end angle=450,x radius=0.25,y radius=0.5];
 
 \draw (13.5,6.2) arc [start angle=90,end angle=270,x radius=0.25,y radius=0.5];
 \draw (13.5,5.2) arc [start angle=270,end angle=450,x radius=0.25,y radius=0.5];

 \draw (8.5,3.7) arc [start angle=90,end angle=270,x radius=0.25,y radius=0.5];
 \draw (8.5,2.7)[dashed] arc [start angle=270,end angle=450,x radius=0.25,y radius=0.5];

 \draw (13.5,3.2) arc [start angle=90,end angle=270,x radius=0.25,y radius=0.5];
 \draw (13.5,2.2) arc [start angle=270,end angle=450,x radius=0.25,y radius=0.5];
 
  \draw (8.5,3.7) to [out=0,in=180] (11,4.7);
  \draw (11,4.7) to [out=0,in=180] (13.5,3.2);
 \draw (8.5,2.7) to [out=0,in=180] (11,1.2);
 \draw (11,1.2) to [out=0,in=180] (13.5,2.2);
\draw (10,3) to [out=30,in=150] (12,3);
  \draw (10.3,3.1) to [out=-30,in=210] (11.7,3.1);

 \draw[densely dotted,  thick] (8.5,6.5)--(8.5,1);
 
 \draw[densely dotted,  thick] (13.5,6.5)--(13.5,1);

  \draw (8.5,0.7) node{$-\infty$};
  
    \draw (13.5,0.7) node{$+\infty$};

  \draw (11,2) node{$u_{1}$};

\end{tikzpicture}
    \begin{tikzpicture}

\draw (11,-1.8) arc [start angle=90,end angle=270,x radius=0.25,y radius=0.5];
 \draw (11,-2.8) arc [start angle=270,end angle=450,x radius=0.25,y radius=0.5];
 
 \draw (11,-1.8)--(6,-1.8) ;
  \draw (11,-2.8)--(6,-2.8) ;
  \draw (6,-1.8) arc [start angle=90,end angle=270,x radius=0.25,y radius=0.5];
 \draw (6,-2.8)[dashed] arc [start angle=270,end angle=450,x radius=0.25,y radius=0.5];
 
 \draw (11,-3.8) arc [start angle=90,end angle=270,x radius=0.25,y radius=0.5];
 \draw (11,-4.8) arc [start angle=270,end angle=450,x radius=0.25,y radius=0.5];
 \draw (11,-5.3) arc [start angle=90,end angle=270,x radius=0.25,y radius=0.5];
 \draw (11,-6.3) arc [start angle=270,end angle=450,x radius=0.25,y radius=0.5];
 \draw (11,-3.8) to [out=180,in=0] (8.5,-3.5);
  \draw (11,-6.3) to [out=180,in=0] (8.5,-6.3);
  
  \draw (8.5,-3.5) to [out=180,in=90] (7,-4.7);
\draw (7,-4.7) to [out=270,in=180] (8.5,-6.3);

 \draw (11,-4.8) to [out=180,in=90] (10,-5);
\draw (10,-5) to [out=270,in=180] (11,-5.3);

 \draw[densely dotted,  thick] (6,-1.6)--(6,-7);
  \draw[densely dotted,  thick] (11,-1.6)--(11,-7) ;
  
  \draw (7.7,-4.8) to [out=30,in=150] (9.2,-4.8);
  \draw (7.9,-4.7) to [out=350,in=200] (9,-4.7);
  
  \draw (6,-7.3) node{$-\infty$};
  \draw (11,-7.3) node{$+\infty$};
  
  \draw (8.4,-5.8) node{$u_{1}$};

\end{tikzpicture}

\end{itemize}

\begin{lem}\label{toptyp}

Suppose that $\alpha_{k+1}$ and $\alpha_{k}$ satisfy the assumptions 1, 2, 3, 4 and 5 in Proposition \ref{nagai}. 
Let $u=u_{0}\cup{u_{1}}\in \mathcal{M}^{J}(\alpha_{k+1},\alpha_{k})$ be
any $J$-holomorphic curve counted by $U\langle \alpha_{k+1} \rangle=\langle \alpha_{k} \rangle$. Then $u$ is $(g,k,l)=(0,3,1)$.

\end{lem}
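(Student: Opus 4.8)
The plan is to run through the five topological types that satisfy the numerical constraint $J_{0}(\alpha_{k+1},\alpha_{k})=-2+2g+k+l=2$, namely $(g,k,l)\in\{(0,4,0),(0,3,1),(0,2,2),(1,1,1),(1,2,0)\}$ (the type $(0,1,3)$ being geometrically impossible, just as $(0,1,2)$ was for $J_{0}=1$), and to show that only $(0,3,1)$ can occur.

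Before the case analysis I would record three observations used throughout. First, by the maximum principle $u_{1}$ has at least one positive end; and since $\int_{u_{1}}d\lambda=A(\alpha_{k+1})-A(\alpha_{k})\in(0,\epsilon)$ with $\epsilon$ smaller than the action of every Reeb orbit, $u_{1}$ also has at least one negative end, for otherwise $\int_{u_{1}}d\lambda$ would be a positive combination of actions of Reeb orbits and hence at least the minimal action. Second, since $\alpha_{k+1}$ and $\alpha_{k}$ are admissible, every hyperbolic orbit occurring in them has multiplicity one, so every end of $u_{1}$ at a hyperbolic orbit has multiplicity one and $u_{0}$ contains no trivial cylinder over an orbit at which $u_{1}$ has an end. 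Third, by the admissibility of $u$ (Proposition \ref{ind}) and the partition conditions (Definition \ref{part}), the largest multiplicity among the positive ends of $u_{1}$ at $\gamma$ is $p_{i}:=\mathrm{max}(S_{-\theta}\cap\{1,\dots,E(\alpha_{k+1})\})$, the first entry of $P_{\gamma}^{\mathrm{out}}(E(\alpha_{k+1}))$, and likewise the largest multiplicity among the negative ends of $u_{1}$ at $\gamma$ is $q_{j}:=\mathrm{max}(S_{\theta}\cap\{1,\dots,E(\alpha_{k})\})$; by hypotheses 3 and 4, $E(\alpha_{k+1}),E(\alpha_{k})\notin S_{\theta}\cup S_{-\theta}$ and exceed $p_{1},q_{1}$, so $p_{i},q_{j}>1$ and $p_{i}<E(\alpha_{k+1})$, $q_{j}<E(\alpha_{k})$; in particular $u_{1}$ cannot carry the full multiplicity of $\alpha_{k+1}$ or $\alpha_{k}$ at $\gamma$, so whenever $u_{1}$ has an end at $\gamma$, $u_{0}$ contains a cover of $\mathbb{R}\times\gamma$.

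With this, the types $(1,1,1)$ and $(0,2,2)$ are disposed of directly. For $(1,1,1)$ the single puncture of $u_{1}$ is a positive end, and $l=1$ forces a trivial cylinder in $u_{0}$ over the orbit of that end, so that orbit is $\gamma$, the end has multiplicity $p_{i}\geq p_{1}\geq 2$, and $\int_{u_{1}}d\lambda\geq 2A(\gamma)>\epsilon$, a contradiction. For $(0,2,2)$, $u_{1}$ has one positive end at some $\eta$ and one negative end at some $\zeta$, and $l=2$ forces trivial cylinders in $u_{0}$ over both $\eta$ and $\zeta$, so $\eta=\zeta=\gamma$; then $\int_{u_{1}}d\lambda=(p_{i}-q_{j})A(\gamma)$, which cannot lie in $(0,\epsilon)$ since $p_{i},q_{j}$ are integers and $\epsilon<A(\gamma)$. (The pictured sub-cases of $(0,2,2)$ in which $u_{1}$ meets two distinct elliptic orbits do not arise, since there is a unique simple elliptic orbit.)

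The remaining and hardest step is to exclude $(0,4,0)$ and $(1,2,0)$, and I would treat it by a compactness argument modeled precisely on Cases 2 and 3 of the proof of Lemma \ref{mainlemma}. In either type the observations above force $u_{1}$ to have only multiplicity-one ends at negative hyperbolic orbits and no end at $\gamma$, so $u_{0}$ contains $(\mathbb{R}\times\gamma)^{M}$ with $M:=E(\alpha_{k+1})=E(\alpha_{k})$; for $(1,2,0)$ the ends are one positive and one negative at two distinct negative hyperbolic orbits, and for $(0,4,0)$ there are four such ends. Letting the generic base point $z$ tend to $\gamma$ and applying SFT compactness to the $U$-map curves through $z$, the limit is a $J$-holomorphic building of total ECH index $2$, nonnegative on each level; it cannot be a single curve disjoint from $\mathbb{R}\times\gamma$, since its limit point lies on $\mathbb{R}\times\gamma$ and an admissible curve of ECH index at most $2$ is embedded away from, and disjoint from, its own trivial cylinders (Proposition \ref{ind}); hence it splits into two levels of ECH index $1$, each acquiring an end at $\gamma$ of one and the same multiplicity $c$, and admissibility of index-one curves together with $S_{\theta}\cap S_{-\theta}=\{1\}$ forces $c=1$, i.e. $1=\mathrm{max}(S_{-\theta}\cap\{1,\dots,M+1\})=\mathrm{max}(S_{\theta}\cap\{1,\dots,M+1\})$, contradicting $M>p_{1},q_{1}>1$. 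This leaves $(g,k,l)=(0,3,1)$ as the only surviving type. The delicate points will be making the $z\to\gamma$ limit and its index bookkeeping precise (in the $(1,2,0)$ case one must also follow where the genus goes in the limit) and confirming that the two split levels genuinely pick up matching ends along $\gamma$ rather than degenerating otherwise — but these are exactly the phenomena already handled in Lemma \ref{mainlemma}.
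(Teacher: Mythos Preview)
Your proposal is correct and follows the paper's overall strategy: direct action estimates for $(g,k,l)=(1,1,1)$ and $(0,2,2)$, and a compactness/splitting argument as $z\to\gamma$ for $(0,4,0)$ and $(1,2,0)$.

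The one noteworthy difference is in how $(0,4,0)$ is finished off. The paper observes that after splitting into two ECH-index-$1$ levels, the middle orbit set consists only of $\gamma$ and negative hyperbolic orbits with multiplicity $1$, hence is admissible; then the parity formula (\ref{mod2}) immediately rules out ECH index $1$. You instead run the partition-condition argument uniformly: the new $\gamma$-end at the interface has multiplicity $c$ lying in both $S_{\theta}$ and $S_{-\theta}$, so $c=1$, forcing $\mathrm{max}(S_{\pm\theta}\cap\{1,\dots,M+1\})=1$ and contradicting $M>p_{1},q_{1}$. This is exactly the ``second method'' the paper records in Case~2 of Lemma~\ref{mainlemma}, and it works just as well here --- in fact it handles $(0,4,0)$ and $(1,2,0)$ in one stroke, whereas the paper treats them separately (parity for $(0,4,0)$, then a genus case-analysis followed by the partition argument for $(1,2,0)$). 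A small cosmetic point: your exclusion of $(1,1,1)$ is already immediate from your first observation (one puncture cannot be both positive and negative), so the subsequent action bound is redundant.
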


\begin{proof}[\bf Proof of Lemma \ref{toptyp}]

In the cases $(g,k,l)=(0,2,2)$, $(1,1,1)$, we can see from the topological types that $A(\alpha_{k+1})-A(\alpha_{k})$ have to be larger than some action of orbit. But this contradicts $A(\alpha_{k+1})-A(\alpha_{k})<\epsilon$.

From now on, we consider  $(g,k,l)=(0,4,0)$, $(1,2,0)$. As a matter of fact,  we can easily exclude these cases in almost the same way as Lemma \ref{mainlemma}. But to make sure, we explain how to do in detail.

Since $l=0$ and $E(\alpha_{k+1})$, $E(\alpha_{k})\notin S_{-\theta}\cup{S_{\theta}}$, $u_{1}$ has no end asymptotic to $\gamma$ and since $E(\alpha_{k+1})$, $E(\alpha_{k})>p_{1}$, $q_{1}>0$, $u_{0}$ has some covering of $\mathbb{R}\times \gamma$. Let $z_{i}\to \gamma$. Then, we obtain a sequence of $J$-holomorphic curves $u_{1}^{i}$ which are through $z_{i}$ and either $(g,k)=(0,4)$ or $(1,2)$. Note that their topological types and orbit where their ends are asymptotic may change in the sequence.

At first, suppose that the sequence contains infinity many $J$-holomorphic curves whose topological types are $(g,k)=(0,4)$. By the compactness argument, there is an $J$- holomoprhic curve $u_{1}^{\infty}$ which may be splitting into some floors.  By its topological type and properties of ECH and Fredholm indexes, we can find that the number of floors are at most two.  If $u_{1}^{\infty}$ does not split, $\mathbb{R}\times \gamma \cap{u_{1}^{\infty}}\neq \emptyset$ and so $u_{0}\cap{u_{1}^{\infty}}\neq \emptyset$. This contradict $I(u_{0}\cup{u_{1}^{\infty}})=2$ and its admissibility. So we may assume that $u_{1}^{\infty}$ has two floors and write $u_{1}^{\infty}=(u_{-}^{\infty},u_{+}^{\infty})$ up to $\mathbb{
R}$-action.  By the additivity of ECH index and Fredholm index, we have $I(u_{0}\cup{u_{-}^{\infty}})=I(u_{0}\cup{u_{+}^{\infty}})=1$ and each non trivial part of $u_{\pm}^{\infty}$ are connected. Moreover, by the assumption of their topological type, one of the  non trivial parts of $u_{\pm}^{\infty}$ is of genus 0 with one end of it asymptotic to $\gamma$. This indicates that the middle orbit set, that is, the orbit set consisting of orbits where positive ends of $u_{0}\cup{u_{-}^{\infty}}$ are asymptotic is admissible. This contradicts (\ref{mod2}) and  $I(u_{0}\cup{u_{-}^{\infty}})=I(u_{0}\cup{u_{+}^{\infty}})=1$.

Next, suppose that the sequence contains infinity many $J$-holomorphic curves whose topological types are $(g,k)=(1,2)$.  In the same way, we have a splitting  curve.  Let $u_{\pm}^{\infty}$ be the curves in top and bottom floors of the splitting curve respectively.   Then  $I(u_{0}\cup{u_{-}^{\infty}})=I(u_{0}\cup{u_{+}^{\infty}})=1$. 
By geometric observation, $u_{+}^{\infty}$ has one or two negative ends and moreover at least one of them is elliptic. Also $u_{-}^{\infty}$ has one or two positive ends and at least one of them is elliptic. If all negative ends of $u_{-}^{\infty}$ are elliptic, this contradicts (\ref{mod2}). Also if all positive ends of $u_{+}^{\infty}$ are elliptic, this contradicts (\ref{mod2}). 
This means that the number of negative ends of $u_{+}^{\infty}$ and that one of positive ends of $u_{-}^{\infty}$ are both two and only one of them is asymptotic to the elliptic orbit respectively. Moreover, their multiplicities are the same (see the below figure).

Let $E(\alpha_{k+1})=E(\alpha_{k})=M$. Then the total multiplicity of the middle orbit set is $M+1$ and by the partition condition, we have $1=\mathrm{max}(S_{-\theta}\cap{\{1,\,\,2,...,\,\,M+1\}})=\mathrm{max}(S_{\theta}\cap{\{1,\,\,2,...,\,\,M+1\}})$.  But this is  a contradiction because of the assumption $M>q_{1},\,\,p_{1}$.

Combining the above argument, we complete  the proof of Lemma \ref{toptyp}.

\begin{tikzpicture}
\draw (16,-1.8) arc [start angle=90,end angle=270,x radius=0.25,y radius=0.5];
 \draw (16,-2.8) arc [start angle=270,end angle=450,x radius=0.25,y radius=0.5];
 
 \draw (16,-1.8)--(11,-1.8) ;
  \draw (16,-2.8)--(11,-2.8) ;
  \draw (11,-1.8) arc [start angle=90,end angle=270,x radius=0.25,y radius=0.5];
 \draw (11,-2.8)[dashed] arc [start angle=270,end angle=450,x radius=0.25,y radius=0.5];
 
\draw (17.2,-2.3) node{$(\gamma,M)$};

 \draw (16,-4.5) arc [start angle=90,end angle=270,x radius=0.25,y radius=0.5];
 \draw (16,-5.5) arc [start angle=270,end angle=450,x radius=0.25,y radius=0.5];

 \draw (16,-5.5) to [out=180,in=0] (11,-7.1);
 \draw (11,-6.1) to [out=0,in=270] (13,-4.5);
 \draw (13,-4.5) to [out=90,in=0] (11,-2.8);
 \draw (16,-4.5) to [out=180,in=0] (11,-1.8);
 
  \draw (11,-6.1) arc [start angle=90,end angle=270,x radius=0.25,y radius=0.5];
 \draw (11,-7.1)[dashed] arc [start angle=270,end angle=450,x radius=0.25,y radius=0.5];
 
 \draw[densely dotted,  thick] (11,-1.5)--(11,-7.3);
  \draw[densely dotted,  thick] (16,-1.5)--(16,-7.3) ;
  
  \draw (16,-7.7) node{$\alpha_{k+1}$};

 \draw (11,0)node{$\Downarrow$};

 \draw (11,-1.8)--(6,-1.8) ;
  \draw (11,-2.8)--(6,-2.8) ;
  \draw (6,-1.8) arc [start angle=90,end angle=270,x radius=0.25,y radius=0.5];
 \draw (6,-2.8)[dashed] arc [start angle=270,end angle=450,x radius=0.25,y radius=0.5];

 \draw (6,-3.8) arc [start angle=90,end angle=270,x radius=0.25,y radius=0.5];
 \draw (6,-4.8)[dashed] arc [start angle=270,end angle=450,x radius=0.25,y radius=0.5];

 \draw (6,-3.8) to [out=0,in=180] (11,-1.8);
 \draw (6,-4.8) to [out=0,in=180] (11,-7.1);
 \draw (11,-6.1) to [out=180,in=270] (9,-5);
 \draw (9,-5) to [out=90,in=180] (11,-2.8);

  \draw (6,-6.1)[dashed] arc [start angle=90,end angle=270,x radius=0.25,y radius=0.5];
 \draw (6,-7.1)[dashed] arc [start angle=270,end angle=450,x radius=0.25,y radius=0.5];

  \draw (16,-6.1)[dashed] arc [start angle=90,end angle=270,x radius=0.25,y radius=0.5];
 \draw (16,-7.1)[dashed] arc [start angle=270,end angle=450,x radius=0.25,y radius=0.5];
 
 \draw [dashed] (6,-6.1)--(16,-6.1);
  \draw [dashed] (6,-7.1)--(16,-7.1) ;

 \draw[densely dotted,  thick] (6,-1.5)--(6,-7.5);

  \draw (6,-8) node{$\alpha_{k}$};

 \draw (13.5,6.2)--(8.5,6.2) ;
  \draw (13.5,5.2)--(8.5,5.2) ;
  \draw (8.5,6.2) arc [start angle=90,end angle=270,x radius=0.25,y radius=0.5];
 \draw (8.5,5.2)[dashed] arc [start angle=270,end angle=450,x radius=0.25,y radius=0.5];
 
 \draw (13.5,6.2) arc [start angle=90,end angle=270,x radius=0.25,y radius=0.5];
 \draw (13.5,5.2) arc [start angle=270,end angle=450,x radius=0.25,y radius=0.5];

 \draw (8.5,3.7) arc [start angle=90,end angle=270,x radius=0.25,y radius=0.5];
 \draw (8.5,2.7)[dashed] arc [start angle=270,end angle=450,x radius=0.25,y radius=0.5];

 \draw (13.5,3.2) arc [start angle=90,end angle=270,x radius=0.25,y radius=0.5];
 \draw (13.5,2.2) arc [start angle=270,end angle=450,x radius=0.25,y radius=0.5];
 
  \draw (8.5,3.7) to [out=0,in=180] (11,4.7);
  \draw (11,4.7) to [out=0,in=180] (13.5,3.2);
 \draw (8.5,2.7) to [out=0,in=180] (11,1.2);
 \draw (11,1.2) to [out=0,in=180] (13.5,2.2);
\draw (10,3) to [out=30,in=150] (12,3);
  \draw (10.3,3.1) to [out=-30,in=210] (11.7,3.1);

 \draw[densely dotted,  thick] (8.5,6.5)--(8.5,0.5);
 
 \draw[densely dotted,  thick] (13.5,6.5)--(13.5,0.5);

  \draw (8.5,0) node{$\alpha_{k}$};
  
    \draw (13.5,0) node{$\alpha_{k+1}$};

\end{tikzpicture}

\end{proof}

More precisely, we have the next lemma.

\begin{lem}\label{top}

Suppose that $\alpha_{k+1}$ and $\alpha_{k}$ satisfy the assumptions 1, 2, 3, 4 and 5 in Proposition \ref{nagai}. 
Let $u=u_{0}\cup{u_{1}}\in \mathcal{M}^{J}(\alpha_{k+1},\alpha_{k})$ be
any $J$-holomorphic curve counted by $U\langle \alpha_{k+1} \rangle=\langle \alpha_{k} \rangle$.  Then $\alpha_{k+1}$, $\alpha_{k}$ and $u$ hold  one of the following conditions.

\item[\bf (A).] Let $E(\alpha_{k+1})=M$ and $p_{i}:=\mathrm{max}(S_{-\theta}\cap{\{1,\,\,2,...,\,\,M\}})$. Then there are two  negative hyperbolic orbits $\delta_{1}$, $\delta_{2}$ and an admissible orbit set $\hat{\alpha}$ consisting of negative hyperbolic orbits   such that $\alpha_{k+1}=\hat{\alpha}\cup{(\gamma,M)\cup{(\delta_{1},1)}}$, $\alpha_{k}=\hat{\alpha}\cup{(\gamma,M-p_{i})\cup{(\delta_{2},1)}}$ and $u_{1}\in \mathcal{M}^{J}((\delta_{1},1)\cup{(\gamma,p_{i})},(\delta_{2},1))$. 

\item[\bf (A').]Let $E(\alpha_{k})=M$ and $q_{i}:=\mathrm{max}(S_{\theta}\cap{\{1,\,\,2,...,\,\,M\}})$. Then there are two  negative hyperbolic orbits $\delta_{1}$, $\delta_{2}$ and an admissible orbit set $\hat{\alpha}$ consisting of negative hyperbolic orbits  such that $\alpha_{k+1}=\hat{\alpha}\cup{(\gamma,M-q_{i})\cup{(\delta_{1},1)}}$, $\alpha_{k}=\hat{\alpha}\cup{(\gamma,M)\cup{(\delta_{2},1)}}$ and $u_{1}\in \mathcal{M}^{J}((\delta_{1},1),(\delta_{2},1)\cup{(\gamma,q_{i})})$. 

\item[\bf (B).]Let $E(\alpha_{k+1})=M$ and $p_{i}:=\mathrm{max}(S_{-\theta}\cap{\{1,\,\,2,...,\,\,M\}})$. There are two  negative hyperbolic orbits $\delta_{1}$, $\delta_{2}$ and an admissible orbit set $\hat{\alpha}$ consisting of negative hyperbolic orbits  such that $\alpha_{k+1}=\hat{\alpha}\cup{(\gamma,M)}$,  $\alpha_{k}=\hat{\alpha}\cup{(\gamma,M-p_{i})\cup{(\delta_{1},1)\cup{(\delta_{2},1)}}}$ and $u_{1}\in \mathcal{M}^{J}((\gamma,p_{i}),(\delta_{1},1)\cup{(\delta_{2},1)})$. 

\item[\bf (B').]Let $E(\alpha_{k})=M$ and $q_{i}:=\mathrm{max}(S_{\theta}\cap{\{1,\,\,2,...,\,\,M\}})$. There are two  negative hyperbolic orbits $\delta_{1}$, $\delta_{2}$ and an admissible orbit set $\hat{\alpha}$ consisting of negative hyperbolic orbits such that $\alpha_{k+1}=\hat{\alpha}\cup{(\gamma,M-q_{i})\cup{(\delta_{1},1)}\cup{(\delta_{2},1)}}$, $\alpha_{k}=\hat{\alpha}\cup{(\gamma,M)}$ and $u_{1}\in \mathcal{M}^{J}((\delta_{1},1)\cup{(\delta_{2},1)},(\gamma,q_{i}))$. 

\end{lem}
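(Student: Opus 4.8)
The plan is to refine the output of Lemma \ref{toptyp}, which already guarantees that $u = u_0 \cup u_1$ has topological type $(g,k,l) = (0,3,1)$: the nontrivial part $u_1$ has genus $0$, exactly three punctures, and $\sum_i(n_i^+ - 1) + \sum_j(n_j^- - 1) = 1$. The first step is to locate where this last quantity, call it $l = 1$, comes from. Since $\alpha_{k+1}$ and $\alpha_k$ are admissible, every hyperbolic orbit occurs in them with multiplicity $1$, so $u_1$ has at most one positive (resp. negative) end at each hyperbolic orbit, and each such orbit contributes $0$ to $l$. Hence the whole contribution comes from $\gamma$, i.e. $(n_\gamma^+ - 1) + (n_\gamma^- - 1) = 1$, so $\{n_\gamma^+, n_\gamma^-\} = \{1,2\}$. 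Using the symmetry $\alpha_{k+1} \leftrightarrow \alpha_k$ — which exchanges positive and negative ends and replaces $\theta$ by $-\theta$, turning \textbf{(A)}, \textbf{(B)} into \textbf{(A')}, \textbf{(B')} — I may assume $n_\gamma^+ = 2$ and $n_\gamma^- = 1$.

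Next I would pin down how $u_0$ and $u_1$ share the copies of $\gamma$. Because $E(\alpha_k) > q_1 > 1$ by hypothesis, $\gamma$ appears in $\alpha_k$, so $n_\gamma^- \ge 1$ is realized either by $u_0$ containing $\mathbb{R}\times\gamma$ with positive multiplicity (in which case $u_1$ has no negative end at $\gamma$), or by $u_1$ having exactly one negative end at $\gamma$ and $u_0$ containing no copy of $\mathbb{R}\times\gamma$. In the second situation $n_\gamma^+ = 2$ would force $u_1$ to have two positive ends at $\gamma$, so all three punctures of $u_1$ lie over $\gamma$; then the hyperbolic parts of $\alpha_{k+1}$ and $\alpha_k$ coincide (they are exactly the orbits of the trivial cylinders in $u_0$) and their $\gamma$-multiplicities differ by the strictly positive integer $\frac{1}{R}\int_{u_1}d\lambda$, whence $A(\alpha_{k+1}) - A(\alpha_k) \ge R > \epsilon$, a contradiction. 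Therefore $u_0$ does contain $\mathbb{R}\times\gamma$, $u_1$ has no negative end at $\gamma$, and $n_\gamma^+ = 2$ forces $u_1$ to have exactly one positive end at $\gamma$; its other two punctures are ends at hyperbolic orbits. If both of those were positive ends, $u_1$ would have three positive ends and no negative end, so $\int_{u_1}d\lambda = A(\alpha_{k+1}) - A(\alpha_k)$ would be at least the sum of the actions of those three orbits, again contradicting $A(\alpha_{k+1}) - A(\alpha_k) < \epsilon$. Hence the two hyperbolic ends split as one positive and one negative (configuration \textbf{(A)}) or as two negative ones (configuration \textbf{(B)}); these, together with their mirror images \textbf{(A')}, \textbf{(B')}, are the only possibilities among the eight $(0,3,1)$-configurations.

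It then remains to read off the precise orbit sets. Write $E(\alpha_{k+1}) = M$ and let $m$ be the multiplicity of the unique positive end of $u_1$ at $\gamma$. By Proposition \ref{ind} the curve $u$ is admissible, so by Definition \ref{adm} the one-term list $P_\gamma^+ = (m)$ is an initial segment of $P_\gamma^{\mathrm{out}}(M) = P_{-\theta}^{\mathrm{in}}(M)$, whose first entry is $p_i := \max(S_{-\theta}\cap\{1,\dots,M\})$; thus $m = p_i$, the multiplicity of $\gamma$ carried by $u_0$ is $M - p_i$, and $E(\alpha_k) = M - p_i$. Letting $\hat\alpha$ be the (hyperbolic) orbit set of the trivial cylinders in $u_0$, one obtains in case \textbf{(A)} that $\alpha_{k+1} = \hat\alpha\cup(\gamma,M)\cup(\delta_1,1)$, $\alpha_k = \hat\alpha\cup(\gamma,M-p_i)\cup(\delta_2,1)$, $u_1\in\mathcal{M}^J((\delta_1,1)\cup(\gamma,p_i),(\delta_2,1))$, and in case \textbf{(B)} that $\alpha_{k+1} = \hat\alpha\cup(\gamma,M)$, $\alpha_k = \hat\alpha\cup(\gamma,M-p_i)\cup(\delta_1,1)\cup(\delta_2,1)$, $u_1\in\mathcal{M}^J((\gamma,p_i),(\delta_1,1)\cup(\delta_2,1))$; that $\delta_1,\delta_2\notin\hat\alpha$ follows from admissibility of $\alpha_{k+1},\alpha_k$, and $\delta_1\neq\delta_2$ from admissibility of $\alpha_k$ in case \textbf{(B)} and from the identity $A(\alpha_{k+1})-A(\alpha_k) = p_iR \ge R > \epsilon$ (which would hold if $\delta_1=\delta_2$) in case \textbf{(A)}. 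The mirror argument, with $S_\theta$, $q_i := \max(S_\theta\cap\{1,\dots,M\})$ and $E(\alpha_k) = M$, yields \textbf{(A')} and \textbf{(B')}. The main obstacle is the second paragraph: systematically excluding the degenerate ways the three punctures of $u_1$ can be arranged over $\gamma$, which is precisely where both the action bound $A(\alpha_{k+1})-A(\alpha_k)<\epsilon$ and the hypothesis $E(\alpha_{k+1}),E(\alpha_k)>p_1,q_1>1$ are indispensable, and where one must take care not to miss the case in which $u_0$ carries no cylinder over $\gamma$.
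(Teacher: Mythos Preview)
Your proof is correct and follows essentially the same approach as the paper's: both start from Lemma~\ref{toptyp}'s conclusion that $(g,k,l)=(0,3,1)$, use the action bound $A(\alpha_{k+1})-A(\alpha_k)<\epsilon$ to rule out configurations with no negative end or with all ends at $\gamma$, invoke $E(\alpha_{k+1}),E(\alpha_k)>p_1,q_1$ to force $u_0$ to contain $\mathbb{R}\times\gamma$, and then apply the partition conditions (Definition~\ref{adm}, Proposition~\ref{ind}) to pin down the multiplicity of the single $\gamma$-end of $u_1$. The paper's proof is a three-line sketch leaving these steps implicit, whereas you have spelled out the case analysis in full; in particular your explicit exclusion of the case where $u_0$ carries no $\gamma$-cylinder (forcing all three punctures of $u_1$ onto $\gamma$ and hence $A(\alpha_{k+1})-A(\alpha_k)\in R\mathbb{Z}_{>0}$) is a detail the paper omits.
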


\begin{proof}[\bf Proof of Lemma \ref{top}]

Since $A(\alpha_{k+1})-A(\alpha_{k})<\epsilon$, $u_{1}$ has at least one negative end. Moreover, at least one end of $u_{1}$ have to be asymptotic to some negative hyperbolic orbit because the fact causes a contradiction that if all ends are asymptotic to $\gamma$,  the value $A(\alpha_{k+1})-A(\alpha_{k})$ have to be lager than or equal to $A(\gamma)$. From the assumptions $E(\alpha_{k+1})$, $E(\alpha_{k})>p_{1}$, $q_{1}>0$ and the partition conditions of the ends, we have Lemma \ref{top}.
\end{proof}

\subsection{Restriction of $J_{0}$ combinations}

In the previous subsection, we decided the topological type of $J$-holomorphic curves counted by $U\langle \alpha_{k+1} \rangle =\langle \alpha_{k} \rangle$. To prove Proposition \ref{nagai}, we have to decide the approximate relations in the actions of the orbits in $\alpha_{k+1}$ and $\alpha_{k}$.

From now on, because symmetry allows the same argument, , we only consider the cases (A) and (B).

The next claim is almost the same as Claim \ref{index2to4}.
\begin{cla}\label{index22}
In the case of (A) in Lemma \ref{top},
\item[\bf (A).]
For any $p_{i}\leq N<p_{i+1}$ ,
\begin{equation}
    I(\hat{\alpha}\cup{(\gamma,N)\cup{(\delta_{1},1)}},\hat{\alpha}\cup{(\gamma,N-p_{i})}\cup{(\delta_{2},1)})=2.
\end{equation}
Moreover
\begin{equation}
    I(\hat{\alpha}\cup{(\gamma,p_{i+1})}\cup{(\delta_{1},1)},\hat{\alpha}\cup{(\gamma,p_{i+1}-p_{i})}\cup{(\delta_{2},1)})=4.
\end{equation}
And for any $p_{i}<N\leq p_{i+1}$,
\begin{equation}
     J_{0}(\hat{\alpha}\cup{(\gamma,N)}\cup{(\delta_{1},1)},\hat{\alpha}\cup{(\gamma,N-p_{i})}\cup{(\delta_{2},1)})=1.
\end{equation}

In the case of (B) in Lemma \ref{top},
\item[\bf (B).]
For any $p_{i}\leq N<p_{i+1}$ ,
\begin{equation}
    I(\hat{\alpha}\cup{(\gamma,N)},\hat{\alpha}\cup{(\gamma,N-p_{i})}\cup{(\delta_{1},1)}\cup{(\delta_{2},1)})=2.
\end{equation}
Moreover
\begin{equation}
    I(\hat{\alpha}\cup{(\gamma,p_{i+1})},\hat{\alpha}\cup{(\gamma,p_{i+1}-p_{i})}\cup{(\delta_{1},1)}\cup{(\delta_{2},1)})=4.
\end{equation}
And for any $p_{i}<N\leq p_{i+1}$,
\begin{equation}
     J_{0}(\hat{\alpha}\cup{(\gamma,N)},\hat{\alpha}\cup{(\gamma,N-p_{i})}\cup{(\delta_{1},1)}\cup{(\delta_{2},1)})=1.
\end{equation}

\end{cla}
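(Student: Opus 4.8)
The plan is to run the index computation of Claim~\ref{index2to4} essentially verbatim, now carrying the two negative hyperbolic orbits $\delta_{1},\delta_{2}$ in place of the single $\delta$ there; only the relative--homology bookkeeping changes. By the symmetry noted after Lemma~\ref{top} it suffices to treat (A) and (B), and I will write out (A), the argument for (B) being identical with $\delta_{1},\delta_{2}$ moved to the negative side of $u_{1}$.

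First I would establish the analogue of Claim~\ref{mi1}: that lowering the $\gamma$-multiplicity by one keeps the value $2$, i.e.\
\[
I\bigl(\hat{\alpha}\cup(\gamma,M-1)\cup(\delta_{1},1),\;\hat{\alpha}\cup(\gamma,M-p_{i}-1)\cup(\delta_{2},1)\bigr)=2.
\]
Both orbit sets lie in $\mathrm{ECH}(Y,\lambda,-[\gamma])$, and $A(\alpha_{k+1})-A(\alpha_{k})<\epsilon$; since $k$ is sufficiently large, neither is one of the finitely many exceptional generators, so if this ECH index were $>2$ then by (\ref{iso}) there would be an admissible $\zeta$ with $U\langle\hat{\alpha}\cup(\gamma,M-1)\cup(\delta_{1},1)\rangle=\langle\zeta\rangle$ and $A(\hat{\alpha}\cup(\gamma,M-1)\cup(\delta_{1},1))>A(\zeta)>A(\hat{\alpha}\cup(\gamma,M-p_{i}-1)\cup(\delta_{2},1))$; then $A(\alpha_{k+1})>A(\zeta\cup(\gamma,1))>A(\alpha_{k})$, contradicting (\ref{importantiso}). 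The same reduction gives the corresponding statement in case (B).

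Next, fix $Z\in H_{2}(Y;\hat{\alpha}\cup(\gamma,p_{i})\cup(\delta_{1},1),\hat{\alpha}\cup(\delta_{2},1))$ and $Z_{\gamma}\in H_{2}(Y;\gamma,\gamma)$, and expand $I(\hat{\alpha}\cup(\gamma,N)\cup(\delta_{1},1),\hat{\alpha}\cup(\gamma,N-p_{i})\cup(\delta_{2},1))$ via the defining formula for the ECH index, using $Q_{\tau}(Z_{\gamma})=0$, $Q_{\tau}(Z,mZ_{\gamma})=mQ_{\tau}(Z,Z_{\gamma})$ and bilinearity of $Q_{\tau}$ (\cite[Lemma~8.5]{H1}) together with $\mu_{\tau}(\gamma^{k})=2\lfloor k\theta\rfloor+1$. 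The data of $\hat{\alpha}$ and of $\delta_{1},\delta_{2}$ enter only through $c_{1}(\xi|_{Z},\tau)$, $Q_{\tau}(Z,Z)$ and the $\mu_{\tau}(\delta_{j})$, hence are independent of $N$ and cancel when one subtracts the $N$- and $(N-1)$-expansions, leaving $2Q_{\tau}(Z,Z_{\gamma})+2(\lfloor N\theta\rfloor-\lfloor(N-p_{i})\theta\rfloor)$. Evaluating at $N=M$ and $N=M-1$ (both differences $0$ by the previous step) together with the identity $\lfloor N\theta\rfloor-\lfloor(N-p_{i})\theta\rfloor=\lfloor p_{i}\theta\rfloor$ for $p_{i}\le N<p_{i+1}$ and $=\lfloor p_{i}\theta\rfloor+1$ for $N=p_{i+1}$ (\cite[Lemma~4.5]{H1}) forces $Q_{\tau}(Z,Z_{\gamma})=-\lfloor p_{i}\theta\rfloor$. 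Feeding this back, the $N$-th minus the $(N-1)$-st value of $I$ equals $2(\lfloor N\theta\rfloor-\lfloor(N-p_{i})\theta\rfloor-\lfloor p_{i}\theta\rfloor)$, which is $0$ for $p_{i}<N<p_{i+1}$ and $2$ at $N=p_{i+1}$; since the common value on $[p_{i},p_{i+1})$ is $I(\alpha_{k+1},\alpha_{k})=2$, this gives $I=2$ on $[p_{i},p_{i+1})$ and $I=4$ at $p_{i+1}$. Running the identical telescoping with the $J_{0}$-formula (whose top $\gamma$-term is $\mu_{\tau}(\gamma^{N-1})$) gives difference $2(\lfloor(N-1)\theta\rfloor-\lfloor(N-1-p_{i})\theta\rfloor-\lfloor p_{i}\theta\rfloor)=0$ for all $p_{i}<N\le p_{i+1}$, so $J_{0}$ is constant on that interval, equal to $J_{0}(\alpha_{k+1},\alpha_{k})$.

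The computation is mechanical. The only points needing attention are the relative--homology bookkeeping in the reduction step (so that in particular $\zeta\cup(\gamma,1)$ lands in the homology class $0$, i.e.\ among the $\alpha_{j}$), and choosing $Z$ so that the $\hat{\alpha}$- and $\delta_{j}$-contributions visibly drop out of the difference; keeping cases (A), (B) and their primed mirrors in parallel is bookkeeping only, so I do not expect a real obstacle beyond this.
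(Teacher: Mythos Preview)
Your proof is correct and is exactly what the paper intends: its proof reads in full ``We can prove this in the same way as Claim~\ref{mi1} and Claim~\ref{index2to4},'' and you have faithfully reproduced that argument with $\delta_{1},\delta_{2}$ in place of $\delta$. One remark: your telescoping shows $J_{0}$ is constant on $(p_{i},p_{i+1}]$ equal to $J_{0}(\alpha_{k+1},\alpha_{k})=2$, whereas the claim as printed says $1$; this is a typo in the paper (the text immediately after Claim~\ref{basis} uses the value $2$, since the listed pairs $(3,-1),\dots,(-1,3)$ all sum to $2$), so your computation is the right one.
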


\begin{proof}[\bf Proof of Claim \ref{index22}]

We can prove this in the same way as Claim \ref{mi1} and Claim \ref{index2to4}.
\end{proof}

\begin{cla}\label{basis}
In the case of (A) in Lemma \ref{top},
\item[\bf (A).]
There is an admissible orbit set $\zeta$ with $U\langle \hat{\alpha}\cup{(\delta_{1},1)}\cup{(\gamma,p_{i+1})} \rangle=\langle \zeta \rangle$ and $U\langle \zeta \rangle= \langle \hat{\alpha}\cup{(\gamma,p_{i+1}-p_{i})}\cup{(\delta_{2},1)} \rangle$. Moreover, $E(\zeta)=0$.

In the case of (B) in Lemma \ref{top},
\item[\bf (B).]
There is an admissible orbit set $\zeta$ with $U\langle \hat{\alpha}\cup{(\gamma,p_{i+1})} \rangle=\langle \zeta \rangle$ and $U\langle \zeta \rangle= \langle \hat{\alpha}\cup{(\gamma,p_{i+1}-p_{i})}\cup{{(\delta_{1},1)}}\cup{(\delta_{2},1)} \rangle$. Moreover, $E(\zeta)=0$.
\end{cla}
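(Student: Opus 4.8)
The plan is to run the argument of Claim \ref{el0} (together with the short discussion immediately preceding it) almost verbatim, with Claim \ref{index22} playing the role that Claim \ref{index2to4} played there. By the stated symmetry it suffices to treat cases (A) and (B), and these are handled identically, so I would write out (A) in full and then indicate the single notational change needed for (B). First I would extract $\zeta$: by Claim \ref{index22}(A) we have $I(\hat{\alpha}\cup(\gamma,p_{i+1})\cup(\delta_{1},1),\,\hat{\alpha}\cup(\gamma,p_{i+1}-p_{i})\cup(\delta_{2},1))=4$, and a direct action computation gives $A(\hat{\alpha}\cup(\gamma,p_{i+1})\cup(\delta_{1},1))-A(\hat{\alpha}\cup(\gamma,p_{i+1}-p_{i})\cup(\delta_{2},1))=p_{i}R+A(\delta_{1})-A(\delta_{2})=A(\alpha_{k+1})-A(\alpha_{k})>0$. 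Since $E(\alpha_{k+1})\notin S_{-\theta}$ forces $p_{i+1}\ge E(\alpha_{k+1})+1$, both of these orbit sets have action exceeding $A(\alpha_{k+1})$. Because $\partial=0$, all but finitely many admissible orbit sets lie in the tower $\{\alpha_{k'}\}$ of (\ref{isom}); hence for $k$ large the two orbit sets above equal $\alpha_{a}$ and $\alpha_{b}$ for suitable $a>b$, and since $I(\alpha_{k'})=2k'+\mathrm{const}$ the index relation forces $a=b+2$ (and $a\ge 2$ since $A(\alpha_a)$ is large). Setting $\zeta:=\alpha_{a-1}=\alpha_{b+1}$ then gives $U\langle\alpha_{a}\rangle=\langle\zeta\rangle$, $U\langle\zeta\rangle=\langle\alpha_{b}\rangle$, and $A(\alpha_{a})>A(\zeta)>A(\alpha_{b})$.

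Next I would prove $E(\zeta)=0$ by contradiction, exactly as in Claim \ref{el0}. Suppose $E(\zeta)\ge 1$; then $\zeta-(\gamma,1)$ is still an admissible orbit set, and since $E(\alpha_{a})=p_{i+1}\ge 1$ and $E(\alpha_{b})=p_{i+1}-p_{i}\ge 1$ we may subtract one copy of $\gamma$ from $\alpha_{a}$ and $\alpha_{b}$ as well, preserving the strict action inequalities $A(\alpha_{a}-(\gamma,1))>A(\zeta-(\gamma,1))>A(\alpha_{b}-(\gamma,1))$. For $k$ large these three orbit sets again lie in the tower, so the outer two are $\alpha_{a'},\alpha_{b'}$ with $a'-b'\ge 2$, i.e. $I(\alpha_{a}-(\gamma,1),\,\alpha_{b}-(\gamma,1))\ge 4$. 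But $\alpha_{a}-(\gamma,1)=\hat{\alpha}\cup(\gamma,p_{i+1}-1)\cup(\delta_{1},1)$ and $\alpha_{b}-(\gamma,1)=\hat{\alpha}\cup(\gamma,p_{i+1}-1-p_{i})\cup(\delta_{2},1)$, and since $p_{i}\le p_{i+1}-1<p_{i+1}$, Claim \ref{index22}(A) says this index is $2$, a contradiction. For case (B) the identical argument applies, replacing $(\delta_{1},1)$ by $(\delta_{1},1)\cup(\delta_{2},1)$ throughout and invoking Claim \ref{index22}(B) in place of (A).

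Once Claim \ref{index22} is in hand this is essentially bookkeeping, so I do not expect a genuine obstacle; the one point requiring care is checking, for the chosen large $k$, that every orbit set appearing — in particular after removing a copy of $\gamma$ — has action large enough to be forced into the tower $\{\alpha_{k'}\}$ rather than among the finitely many exceptional classes $\beta_{j}$ in (\ref{isom}). This I would deduce from the estimates above together with $p_{i+1}>E(\alpha_{k+1})$ and $A(\alpha_{k+1})-A(\alpha_{k})<\epsilon$, which keep all the relevant actions within $O(R)+\epsilon$ of $A(\alpha_{k+1})$.
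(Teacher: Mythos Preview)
Your proposal is correct and follows exactly the route the paper intends: its entire proof of this claim is the single line ``In the same way as Claim \ref{el0} and just before that,'' and you have unpacked precisely that argument with Claim \ref{index22} in place of Claim \ref{index2to4}. One small notational point: the orbit sets $\hat{\alpha}\cup(\delta_{1},1)\cup(\gamma,p_{i+1})$ etc.\ generally lie in the class $(p_{i+1}-M)[\gamma]$ rather than $\Gamma=0$, so the tower you should invoke is that of (\ref{importantiso}) for the relevant $\Gamma$ rather than (\ref{isom}) --- the paper makes the same abuse in its proof of Claim \ref{el0}, and your action estimate via $p_{i+1}>E(\alpha_{k+1})$ (together with ``$k$ sufficiently large'') is exactly what keeps everything above the finitely many exceptional classes $\beta_{j}^{\Gamma}$.
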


\begin{proof}[\bf Proof of Claim \ref{basis}]

In the same way as Claim \ref{el0} and just before that.
\end{proof}

Since $J_{0}\geq-1$,  there are five possibilities,
$(J_{0}(\hat{\alpha}\cup{(\delta_{1},1)}\cup{(\gamma,p_{i+1})},\zeta),J_{0}(\zeta,\hat{\alpha}\cup{(\gamma,p_{i+1}-p_{i})}\cup{(\delta_{2},1)}))(\mathrm{resp.}\,\,(J_{0}(\hat{\alpha}\cup{(\gamma,p_{i+1})},\zeta),J_{0}(\zeta,\hat{\alpha}\cup{(\gamma,p_{i+1}-p_{i})}\cup{(\delta_{1},1)}\cup{(\delta_{2},1)}))) =(3,-1)$, $(2,0)$, $(1,1)$, $(0,2)$ or $(-1,3)$. But except for $(1,1)$, the behaviors of $J$-holomorphic curves counted by the $U$-map cause contradictions. 

Now, we use the rest of Section
6 to prove the next lemma.

\begin{lem}\label{exc}
Under the notation in Claim \ref{basis}, $J$-holomorphic curves counted by the $U$-map cause  contradictions except for $(J_{0}(\hat{\alpha}\cup{(\delta_{1},1)}\cup{(\gamma,p_{i+1})},\zeta),J_{0}(\zeta,\hat{\alpha}\cup{(\gamma,p_{i+1}-p_{i})}\cup{(\delta_{2},1)}))=(J_{0}(\hat{\alpha}\cup{(\gamma,p_{i+1})},\zeta),J_{0}(\zeta,\hat{\alpha}\cup{(\gamma,p_{i+1}-p_{i})}\cup{(\delta_{1},1)}\cup{(\delta_{2},1)}))=(1,1)$.

\end{lem}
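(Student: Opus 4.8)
The plan is to rule out, one combination at a time, the four ``extreme'' splittings of the degree--$4$ relation $I(\hat\alpha\cup(\delta_1,1)\cup(\gamma,p_{i+1}),\hat\alpha\cup(\gamma,p_{i+1}-p_i)\cup(\delta_2,1))=4$ through the intermediate orbit set $\zeta$ of Claim \ref{basis}, namely the pairs $(3,-1)$, $(2,0)$, $(0,2)$ and $(-1,3)$ (and symmetrically for case (B)). The two ``$\pm 1$ at the end'' cases, $(3,-1)$ and $(-1,3)$, are immediate: a $J_0=-1$ curve counted by the $U$--map is necessarily the topological type $(g,k,l)=(0,1,0)$, a genus--$0$ cylinder with one positive and one negative end, so its $d\lambda$--energy equals the action of a single Reeb orbit; but by additivity of the action and the hypothesis $A(\alpha_{k+1})-A(\alpha_k)<\epsilon$ together with $A(\hat\alpha\cup(\gamma,p_{i+1})\cdots)-A(\zeta)<A(\alpha_{k+1})-A(\alpha_k)<\epsilon<\tfrac1{10^5}\min\{A(\alpha)\}$, this energy is less than any orbit action, a contradiction. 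This is exactly the argument of Case 1 in the proof of Lemma \ref{mainlemma}, and I would just cite it.

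The substantive cases are $(2,0)$ and $(0,2)$. Here I would argue as in part (i)--(ii) of Case 3 of the proof of Lemma \ref{mainlemma}. First, since $A(\hat\alpha\cup(\delta_1,1)\cup(\gamma,p_{i+1}))-A(\zeta)<\epsilon$ and $E(\zeta)=0$ (Claim \ref{basis}), the partition conditions at $\gamma$ (Definition \ref{part}, Definition \ref{adm}, Proposition \ref{ind}) force the $J_0$--portion of the relevant $U$--curve to be of genus $0$ with its only nontrivial behaviour a component in $\mathcal M^J((\gamma,p_{i+1}),(\text{neg. hyp.}))$ or in $\mathcal M^J((\text{neg. hyp.}),(\gamma,p_{i+1}-p_i))$, exactly as in the pictures after Claim \ref{index2to4}. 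Then I let the marked point $z$ tend to a negative hyperbolic orbit $\eta\in\hat\alpha$ (which exists and in fact there are more than four of them, by hypothesis 5), and run the compactness/splitting argument of Case 2: the limiting curve breaks into two floors each of ECH index $1$ and each with an end at $\eta$, and by admissibility of ECH index $1$ curves at a negative hyperbolic orbit the multiplicity of that end is $2$; Stokes' theorem then gives $|2A(\eta)-p_{i+1}R|<\epsilon$ (resp. $|2A(\eta)-(p_{i+1}-p_i)R|<\epsilon$). Doing the same for the $U$--curve of $U\langle\alpha_{k+1}\rangle=\langle\alpha_k\rangle$ itself yields $|2A(\eta)-p_iR|<\epsilon$. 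Comparing the two estimates gives $(p_{i+1}-p_i)R<2\epsilon$, i.e. $p_{i+1}=p_i$ in the $(0,2)$ case, which is absurd; and in the $(2,0)$ case one gets $p_i=p_{i+1}-p_i$, contradicting Claim \ref{fre} since $p_i>1$ (because $E(\alpha_{k+1}),E(\alpha_k)>p_1,q_1$). Case (B) is handled symmetrically, replacing $(\delta_1,1)$ by $(\delta_1,1)\cup(\delta_2,1)$ throughout; the splitting arguments are verbatim the same.

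The only combination that survives all of these is $(1,1)$, which is the assertion of Lemma \ref{exc}. I expect the main obstacle to be purely bookkeeping: making sure that in each of the four excluded combinations the intermediate orbit set $\zeta$ really does have $E(\zeta)=0$ and that the nontrivial part of every $U$--curve in question is forced into the one-component, genus--zero form needed to run the ``$z\to\eta$'' degeneration — i.e. checking that no extra $\mathbb R$--invariant components or unexpected multiple covers can appear. This is where Claim \ref{index22} and Claim \ref{basis} do the work, so I would state the reduction carefully and then say ``the remaining analysis is identical to Case 2 and Case 3 of the proof of Lemma \ref{mainlemma}'' rather than repeating the figures. No genuinely new idea is needed beyond what is already in Section 5; the content is that the $J_0$--budget of $2$ cannot be spent lopsidedly.
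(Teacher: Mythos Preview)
Your handling of the $(3,-1)$ and $(-1,3)$ cases is fine and matches the paper. The gap is in your treatment of $(2,0)$ and $(0,2)$: you assert that degenerating the original $U$--curve $U\langle\alpha_{k+1}\rangle=\langle\alpha_k\rangle$ along $z\to\eta$ yields the single relation $|2A(\eta)-p_iR|<\epsilon$. But in the setting of Lemma~\ref{exc} we are in Case (A) or (B) of Lemma~\ref{top}, where $J_0(\alpha_{k+1},\alpha_k)=2$ and the nontrivial part $u_1$ is a genus--zero curve with \emph{three} ends (e.g.\ $u_1\in\mathcal{M}^J((\delta_1,1)\cup(\gamma,p_i),(\delta_2,1))$ in Case (A)). When such a curve breaks along $\eta$, there are three topologically distinct splittings, yielding three alternative relations --- in Case (A) these are $(\mathfrak a_1)\ |A(\delta_1)-2A(\eta)|<\epsilon$, $(\mathfrak a_2)\ |A(\delta_2)-2A(\eta)|<\epsilon$, $(\mathfrak a_3)\ |p_iR-2A(\eta)|<\epsilon$ (and analogously $(\mathfrak b_1),(\mathfrak b_2),(\mathfrak b_3)$ in Case (B)). Your comparison only rules out $(\mathfrak a_3)$; for instance in Case (A), $(0,2)$, combining $|2A(\eta)-p_{i+1}R|<\epsilon$ with $(\mathfrak a_2)$ gives $A(\delta_2)\approx p_{i+1}R$, which is not a contradiction by itself. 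You are implicitly importing the two--ended cylinder picture from Lemma~\ref{mainlemma}, Case~3, but here the curve has an extra end.

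The paper closes this gap by degenerating the $J_0=0$ cylinder not only at $\eta\in\hat\alpha$ but also at $\delta_1$ or $\delta_2$ (which lie in the trivial part of that particular $U$--curve). This produces additional relations such as $|2A(\delta_2)-(p_{i+1}-p_i)R|<\epsilon$ or $|2A(\delta_1)-p_{i+1}R|<\epsilon$, and these, together with the ever--present constraint $(\clubsuit)$ $|A(\delta_1)+p_iR-A(\delta_2)|<\epsilon$ (resp.\ $(\spadesuit)$), are what contradict all three of $(\mathfrak a_1),(\mathfrak a_2),(\mathfrak a_3)$ simultaneously. In Case (B), $(2,0)$, the paper even avoids the original curve entirely: sending $z\to\delta_1$ and $z\to\delta_2$ on the $J_0=0$ cylinder alone gives $A(\delta_1)+A(\delta_2)\approx(p_{i+1}-p_i)R$, which with $(\spadesuit)$ forces $p_i=p_{i+1}-p_i$ directly. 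So the missing idea is: degenerate at the $\delta$'s, not just at $\eta$, and check the resulting system against all three branches of the original splitting.
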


\begin{proof}[\bf Proof of Lemma \ref{exc}]

At first, we can easily exclude the cases $(J_{0}(\hat{\alpha}\cup{(\delta_{1},1)}\cup{(\gamma,p_{i+1})},\zeta),J_{0}(\zeta,\hat{\alpha}\cup{(\gamma,p_{i+1}-p_{i})}\cup{(\delta_{2},1)}))=(3,-1)$, $(-1,3)$ because of their smallness of the difference of their actions.

Next,  we will consider the cases (A), (B) respectively.

\item[\bf Case] (A).

At first, we consider the splitting behaviors of $J$-holomorphic curve counted by $U\langle \alpha_{k+1} \rangle=\langle \alpha_{k} \rangle$ as $z \to \eta$ for some fixed $\eta\in \hat{\alpha}$. Then there are three possibilities of splitting of holomorphic curves and also we have three possibilities of approximate  relations as follows.
\item[$(\mathfrak{a}_{1})$.] $|A(\delta_{1})-2A(\eta)|<\epsilon$
\item[$(\mathfrak{a}_{2})$.] $|A(\delta_{2})-2A(\eta)|<\epsilon$
\item[$(\mathfrak{a}_{3})$.] $|p_{i}R-2A(\eta)|<\epsilon$.

Moreover, we always have
\item[$(\clubsuit)$]$A(\alpha_{k+1})-A(\alpha_{k})=|A(\delta_{1})+p_{i}R-A(\delta_{2})|<\epsilon$.

\begin{tikzpicture}

\draw (11,-1.8) arc [start angle=90,end angle=270,x radius=0.25,y radius=0.5];
 \draw (11,-2.8) arc [start angle=270,end angle=450,x radius=0.25,y radius=0.5];
 
 \draw (11,-1.8)--(6,-1.8) ;
  \draw (11,-2.8)--(6,-2.8) ;
  \draw (6,-1.8) arc [start angle=90,end angle=270,x radius=0.25,y radius=0.5];
 \draw (6,-2.8)[dashed] arc [start angle=270,end angle=450,x radius=0.25,y radius=0.5];
 
\draw (12.2,-2.3) node{$(\eta,1)\in \hat{\alpha}$};
 
 \draw (8.5,-3.2) node{...};

 \draw (6,-4) arc [start angle=90,end angle=270,x radius=0.25,y radius=0.5];
 \draw (6,-5)[dashed] arc [start angle=270,end angle=450,x radius=0.25,y radius=0.5];
 \draw (11,-3.5) arc [start angle=90,end angle=270,x radius=0.25,y radius=0.5];
 \draw (11,-4.5) arc [start angle=270,end angle=450,x radius=0.25,y radius=0.5];
 
 \draw (12,-4) node{$(\delta_{1},1)$ } ;
 
 \draw (11,-5.8) arc [start angle=90,end angle=270,x radius=0.25,y radius=0.5];
 \draw (11,-6.8) arc [start angle=270,end angle=450,x radius=0.25,y radius=0.5];
 
 \draw (12,-6.3) node{$(\gamma,M)$} ;
 
 \draw (4.8,-6.3) node{$(\gamma,M-p_{i})$} ;
 
 \draw (6,-4) to [out=0,in=180] (11,-3.5);
 \draw (6,-5) to [out=0,in=180] (11,-6.8);
 \draw (11,-5.8) to [out=180,in=270] (9,-5);
 \draw (9,-5) to [out=90,in=180] (11,-4.5);

 \draw (11,-5.8)--(6,-5.8) ;
  \draw (11,-6.8)--(6,-6.8) ;
  \draw (6,-5.8) arc [start angle=90,end angle=270,x radius=0.25,y radius=0.5];
 \draw (6,-6.8)[dashed] arc [start angle=270,end angle=450,x radius=0.25,y radius=0.5];
 \draw (4.8,-4.5) node{$(\delta_{2},1)$} ;

 \draw[densely dotted,  thick] (6,-1.5)--(6,-7);
  \draw[densely dotted,  thick] (11,-1.5)--(11,-7) ;
  
  \draw (6,-7.5) node{$\alpha_{k}=\hat{\alpha}\cup{(\gamma,M-p_{i})\cup{(\delta_{2},1)}}$};
  \draw (11,-7.5) node{$\alpha_{k+1}=\hat{\alpha}\cup{(\gamma,M)\cup{(\delta_{1},1)}}$};
  
  \draw (8,-4.7) node{$u_{1}$};

\end{tikzpicture}

\item [$\rm(\hspace{.18em}i\hspace{.18em})$.] If $(J_{0}(\hat{\alpha}\cup{(\delta_{1},1)}\cup{(\gamma,p_{i+1})},\zeta),J_{0}(\zeta,\hat{\alpha}\cup{(\gamma,p_{i+1}-p_{i})}\cup{(\delta_{2},1)}))=(2,0)$

Note that genus of each $J$-holomorphic curve counted by $U\langle \zeta \rangle=\langle \hat{\alpha}\cup{(\gamma,p_{i+1}-p_{i})}\cup{(\delta_{2},1)}) \rangle$ are 0. Moreover, each curve has both negative end covering at $\gamma$ with multiplicity $p_{i+1}-p_{i}$  and positive end covering at negative hyperbolic orbit $(\delta',1)$ which is not equivalent to $\delta_{2}$ because  $A(\zeta)-A(\hat{\alpha}\cup{(\gamma,p_{i+1}-p_{i})}\cup{(\delta_{2},1)})<\epsilon$ and so in $\mathcal{M}^{J}((\delta',1),(\gamma,p_{i+1}-p_{i}))$.

 Then from the splitting behavior as $z \to \delta_{2}$, we have $|2A(\delta_{2})-(p_{i+1}-p_{i})R|<\epsilon$ and also as $z\to \eta\in  \hat{\alpha}$, we have $|2A(\eta)-(p_{i+1}-p_{i})R|<\epsilon$. These two relations indicate that $|A(\eta)-A(\delta_{2})|<\epsilon$.  Since $|A(\delta_{1})+p_{i}R-A(\delta_{2})|<\epsilon$, we have $A(\delta_{2})>p_{i}R$, $ A(\delta_{1})$ and hence $A(\eta)>p_{i}R$, $ A(\delta_{1})$.  These relations contradict $(\mathfrak{a}_{1})$, $(\mathfrak{a}_{2})$ and $(\mathfrak{a}_{3})$ in any case. Therefore, this case can not occur.

\begin{tikzpicture}

  \draw (6,-2.8) arc [start angle=90,end angle=270,x radius=0.25,y radius=0.5];
 \draw (6,-3.8) arc [start angle=270,end angle=450,x radius=0.25,y radius=0.5];

 \draw (6,-4.3) arc [start angle=90,end angle=270,x radius=0.25,y radius=0.5];
 \draw (6,-5.3) arc [start angle=270,end angle=450,x radius=0.25,y radius=0.5];
 
  \draw (6,-6.1) arc [start angle=90,end angle=270,x radius=0.25,y radius=0.5];
 \draw (6,-7.1) arc [start angle=270,end angle=450,x radius=0.25,y radius=0.5];

 \draw[densely dotted,  thick] (6,-2.5)--(6,-7.5);

 \draw (1,-2.8)--(6,-2.8) ;
  \draw (1,-3.8)--(6,-3.8) ;
  
  \draw (1,-4.3)--(6,-4.3) ;
  \draw (1,-5.3)--(6,-5.3) ;
  
  \draw (1,-2.8) arc [start angle=90,end angle=270,x radius=0.25,y radius=0.5];
 \draw (1,-3.8)[dashed] arc [start angle=270,end angle=450,x radius=0.25,y radius=0.5];

 \draw (1,-4.3) arc [start angle=90,end angle=270,x radius=0.25,y radius=0.5];
 \draw (1,-5.3)[dashed] arc [start angle=270,end angle=450,x radius=0.25,y radius=0.5];
 
  \draw (1,-6.1) arc [start angle=90,end angle=270,x radius=0.25,y radius=0.5];
 \draw (1,-7.1)[dashed] arc [start angle=270,end angle=450,x radius=0.25,y radius=0.5];

\draw (1,-6.1) to [out=0,in=180] (3,-5.5);
 \draw (1,-7.1) to [out=0,in=180] (3,-6.5);
 \draw (3,-5.5) to [out=0,in=180] (6,-6.1);
 \draw (3,-6.5) to [out=0,in=180] (6,-7.1);

\draw[densely dotted,  thick] (1,-2.5)--(1,-7.5) ;
  
\draw (1,-8) node{$\hat{\alpha}\cup{(\gamma,p_{i+1}-p_{i})}\cup{(\delta_{2},1)}$};
\draw (6,-8) node{$\zeta=\hat{\alpha}\cup{(\delta_{2},1)}\cup{(\delta',1)}$};
  
\draw (-0.5,-6.7) node{$(\gamma,p_{i+1}-p_{i})$};

\draw (7,-4.9) node{$(\delta_{2},1)$};

\draw (7.3,-3.3) node{$(\eta,1)\in \hat{\alpha}$};

\draw (7,-6.6) node{$(\delta',1)$};

\end{tikzpicture}

\item[$\rm(\hspace{.08em}ii\hspace{.08em})$.] If $(J_{0}(\hat{\alpha}\cup{(\delta_{1},1)}\cup{(\gamma,p_{i+1})},\zeta),J_{0}(\zeta,\hat{\alpha}\cup{(\gamma,p_{i+1}-p_{i})}\cup{(\delta_{2},1)}))=(0,2)$

In the same way as above, $J$-holomorphic curves counted by $U\langle\hat{\alpha}\cup{(\delta_{1},1)}\cup{(\gamma,p_{i+1})}  \rangle=\langle \zeta \rangle$ are of genus 0 and have both negative end covering at $\gamma$ with multiplicity $p_{i+1}$  and positive end covering one negative hyperbolic orbit $(\delta',1)$ which is not equivalent to $\delta_{1}$ and so in $\mathcal{M}^{J}((\gamma,p_{i+1}),(\delta',1))$.

 Then from the splitting behavior as $z \to \delta_{1}$, we have $|2A(\delta_{1})-p_{i+1}R|<\epsilon$ and also as $z\to \eta$, we have $|2A(\eta)-p_{i+1}R|<\epsilon$.  Since $(\clubsuit)$.$|A(\delta_{1})+p_{i}R-A(\delta_{2})|<\epsilon$, we have $|(\frac{1}{2}p_{i+1}+p_{i})R-A(\delta_{2})|<\frac{3}{2}\epsilon$.
  These relations contradict $(\mathfrak{a}_{1})$, $(\mathfrak{a}_{2})$ and $(\mathfrak{a}_{3})$ in any case. Here, we use Claim \ref{fre} implicitly.

\begin{tikzpicture}

\draw (11,-1.8) arc [start angle=90,end angle=270,x radius=0.25,y radius=0.5];
 \draw (11,-2.8)[dashed] arc [start angle=270,end angle=450,x radius=0.25,y radius=0.5];

 \draw (11,-3.5) arc [start angle=90,end angle=270,x radius=0.25,y radius=0.5];
 \draw (11,-4.5)[dashed] arc [start angle=270,end angle=450,x radius=0.25,y radius=0.5];

 \draw (11,-5) arc [start angle=90,end angle=270,x radius=0.25,y radius=0.5];
 \draw (11,-6)[dashed] arc [start angle=270,end angle=450,x radius=0.25,y radius=0.5];

  \draw (11,-1.8)--(16,-1.8) ;
  \draw (11,-2.8)--(16,-2.8) ;
  
   \draw (11,-3.5)--(16,-3.5) ;
  \draw (11,-4.5)--(16,-4.5) ;
 
  \draw (16,-3.5) arc [start angle=90,end angle=270,x radius=0.25,y radius=0.5];
 \draw (16,-4.5) arc [start angle=270,end angle=450,x radius=0.25,y radius=0.5];

 \draw (16,-5.3) arc [start angle=90,end angle=270,x radius=0.25,y radius=0.5];
 \draw (16,-6.3) arc [start angle=270,end angle=450,x radius=0.25,y radius=0.5];
 
\draw (16,-1.8) arc [start angle=90,end angle=270,x radius=0.25,y radius=0.5];
 \draw (16,-2.8) arc [start angle=270,end angle=450,x radius=0.25,y radius=0.5];

  \draw[densely dotted,  thick] (11,-1.5)--(11,-7) ;
  
  \draw[densely dotted,  thick] (16,-1.5)--(16,-7) ;
  
  \draw (16,-7.5) node{$\hat{\alpha}\cup{(\gamma,p_{i+1}-p_{i})\cup{(\delta_{1},1)}}$};
  
   \draw (11,-7.5) node{$\zeta=\hat{\alpha}\cup{(\delta_{1},1)}\cup{(\delta',1)}$};

 \draw (11,-5) to [out=0,in=180] (13.5,-5.5);
 \draw (11,-6) to [out=0,in=180] (13.5,-6.5);
 \draw (13.5,-5.5) to [out=0,in=180] (16,-5.3);
 \draw (13.5,-6.5) to [out=0,in=180] (16,-6.3);

  \draw (17,-4) node{$(\delta_{1},1)$};
  
  \draw (17,-5.8) node{$(\gamma,p_{i+1})$};
  
   \draw (10,-5.6) node{$(\delta',1)$};
  
  \draw (17.3,-2.3) node{$(\eta,1)\in \hat{\alpha}$};

\end{tikzpicture}

By the arguments so far, we can see  that only the case $(J_{0}(\hat{\alpha}\cup{(\delta_{1},1)}\cup{(\gamma,p_{i+1})},\zeta),J_{0}(\zeta,\hat{\alpha}\cup{(\gamma,p_{i+1}-p_{i})}\cup{(\delta_{2},1)}))=(1,1)$ may occur.

\item[\bf Case (B).]

At first, we consider the splitting behaviors of $J$-holomorphic curves counted by $U\langle \alpha_{k+1} \rangle=\langle \alpha_{k} \rangle$ as $z \to \eta$ for some $\eta\in \hat{\alpha}$. Then there are three possibilities of splitting of holomorphic curve and also we have three possibilities of approximate relations as follows.
\item[($\mathfrak{b}_{1})$.] $|A(\delta_{1})-2A(\eta)|<\epsilon$
\item[$(\mathfrak{b}_{2})$.] $|A(\delta_{2})-2A(\eta)|<\epsilon$
\item[$(\mathfrak{b}_{3})$.] $|p_{i}R-2A(\eta)|<\epsilon$.

Moreover, we always have
\item[$(\spadesuit)$.]$A(\alpha_{k+1})-A(\alpha_{k})=|p_{i}R-(A(\delta_{1})+A(\delta_{2}))|<\epsilon$.

\begin{tikzpicture}
\draw (11,-1.8) arc [start angle=90,end angle=270,x radius=0.25,y radius=0.5];
 \draw (11,-2.8) arc [start angle=270,end angle=450,x radius=0.25,y radius=0.5];
 
 \draw (11,-1.8)--(6,-1.8) ;
  \draw (11,-2.8)--(6,-2.8) ;
  \draw (6,-1.8) arc [start angle=90,end angle=270,x radius=0.25,y radius=0.5];
 \draw (6,-2.8)[dashed] arc [start angle=270,end angle=450,x radius=0.25,y radius=0.5];
 
\draw (12.2,-2.3) node{$(\eta,1)\in \hat{\alpha}$};

 \draw (8.5,-3.2) node{...};

 \draw (6,-3.3) arc [start angle=90,end angle=270,x radius=0.25,y radius=0.5];
 \draw (6,-4.3)[dashed] arc [start angle=270,end angle=450,x radius=0.25,y radius=0.5];

 \draw (11,-4.5) arc [start angle=90,end angle=270,x radius=0.25,y radius=0.5];
 \draw (11,-5.5) arc [start angle=270,end angle=450,x radius=0.25,y radius=0.5];
  \draw (6,-4.5) arc [start angle=90,end angle=270,x radius=0.25,y radius=0.5];
 \draw (6,-5.5)[dashed] arc [start angle=270,end angle=450,x radius=0.25,y radius=0.5];
 
 \draw (6,-4.5)--(11,-4.5);
 
  \draw (6,-5.5)--(11,-5.5);
 
 \draw (12,-5) node{$(\gamma,M)$ } ;
 
 \draw (4.8,-5) node{$(\gamma,M-p_{i})$} ;
 
 \draw (11,-5.5) to [out=180,in=0] (6,-7.1);
 \draw (6,-6.1) to [out=0,in=270] (8,-5);
 \draw (8,-5) to [out=90,in=0] (6,-4.3);
 \draw (11,-4.5) to [out=180,in=0] (6,-3.3);

  \draw (6,-6.1) arc [start angle=90,end angle=270,x radius=0.25,y radius=0.5];
 \draw (6,-7.1)[dashed] arc [start angle=270,end angle=450,x radius=0.25,y radius=0.5];
 
 \draw (4.8,-4) node{$(\delta_{2},1)$} ;

  \draw (4.8,-6.7) node{$(\delta_{1},1)$} ;

 \draw[densely dotted,  thick] (6,-1.5)--(6,-7.3);
  \draw[densely dotted,  thick] (11,-1.5)--(11,-7.3) ;
  
  \draw (6,-7.7) node{$\alpha_{k}=\hat{\alpha}\cup{(\gamma,M-p_{i})\cup{(\delta_{1},1)\cup{(\delta_{2},1)}}}$};
  \draw (11,-7.7) node{$\alpha_{k+1}=\hat{\alpha}\cup{(\gamma,M)}$};
  
  \draw (8.5,-4.3) node{$u_{1}$};

\end{tikzpicture}

\item[$\rm(\hspace{.18em}i\hspace{.18em})$.] If $(J_{0}(\hat{\alpha}\cup{(\gamma,p_{i+1})},\zeta),J_{0}(\zeta,\hat{\alpha}\cup{(\gamma,p_{i+1}-p_{i})}\cup{(\delta_{1},1)}\cup{(\delta_{2},1)}))=(0,2)$

In the same as before, $J$-holomorphic curves counted by $U\langle\hat{\alpha}\cup{(\gamma,p_{i+1})}  \rangle=\langle \zeta \rangle$ are of genus 0 and in $\mathcal{M}^{J}((\gamma,p_{i+1}),(\delta',1))$ for some negative hyperbolic orbit $\delta'$ with $\zeta = \hat{\alpha}\cup{(\delta',1)}$.

Then from the splitting behavior as $z \to \eta \in \hat{\alpha}$, we have $|2A(\eta)-p_{i+1}R|<\epsilon$. Since ($\spadesuit$). $|p_{i}R-(A(\delta_{1})+A(\delta_{2}))|<\epsilon$, we have $p_{i}R>A(\delta_{1})$, $A(\delta_{2})$. So we can easily see that  the relations contradict $(\mathfrak{b}_{1})$, $(\mathfrak{b}_{2})$ and $(\mathfrak{b}_{3})$ in any case.  Therefore this case can not occur.

\begin{tikzpicture}

\draw (11,-1.3) arc [start angle=90,end angle=270,x radius=0.25,y radius=0.5];
 \draw (11,-2.3)[dashed] arc [start angle=270,end angle=450,x radius=0.25,y radius=0.5];
 
 \draw (11,-1.3)--(16,-1.3) ;
  \draw (11,-2.3)--(16,-2.3) ;
  
  \draw (16,-1.3) arc [start angle=90,end angle=270,x radius=0.25,y radius=0.5];
 \draw (16,-2.3) arc [start angle=270,end angle=450,x radius=0.25,y radius=0.5];

   \draw (16,-3.3) arc [start angle=90,end angle=270,x radius=0.25,y radius=0.5];
\draw (16,-4.3) arc [start angle=270,end angle=450,x radius=0.25,y radius=0.5];

 \draw (11,-3.8) arc [start angle=90,end angle=270,x radius=0.25,y radius=0.5];
 \draw (11,-4.8)[dashed] arc [start angle=270,end angle=450,x radius=0.25,y radius=0.5];

 \draw (11,-3.8) to [out=0,in=180] (16,-3.3);
 \draw (11,-4.8) to [out=0,in=180] (16,-4.3);

  \draw[densely dotted,  thick] (11,-1)--(11,-5.5) ;
   \draw[densely dotted,  thick] (16,-1)--(16,-5.5) ;

  \draw (11,-6) node{$\zeta=\hat{\alpha}\cup{(\delta',1)}$};
  
  \draw (16,-6) node{$\hat{\alpha}\cup{(\gamma,p_{i+1})}$};

    \draw (17,-3.8) node{$(\gamma,p_{i+1})$};
    
    \draw (17.3,-1.8) node{$(\eta,1)\in \hat{\alpha} $};

\end{tikzpicture}

\item[$\rm(\hspace{.08em}ii\hspace{.08em})$.] If $(J_{0}(\hat{\alpha}\cup{(\gamma,p_{i+1})},\zeta),J_{0}(\zeta,\hat{\alpha}\cup{(\gamma,p_{i+1}-p_{i})}\cup{(\delta_{1},1)}\cup{(\delta_{2},1)}))=(2,0)$

In the same as before, $J$-holomorphic curves counted by $U\langle \zeta  \rangle=\langle \hat{\alpha}\cup{(\gamma,p_{i+1}-p_{i})}\cup{(\delta_{1},1)}\cup{(\delta_{2},1)}) \rangle$ are of genus 0 and in $\mathcal{M}^{J}((\delta',1),(\gamma,p_{i+1}-p_{i}))$ for some negative hyperbolic orbit $\delta'$ with $\zeta = \hat{\alpha}\cup{(\delta',1)}\cup{(\delta_{1},1)}\cup{(\delta_{2},1)}$.

Then from the splitting behavior as $z \to \delta_{1}$, $\delta_{2}$, we have $|2A(\delta_{1})-(p_{i+1}-p_{i})R|<\epsilon$ and $|2A(\delta_{2})-(p_{i+1}-p_{i})R|<\epsilon$.  From these relations, we also have $|(A(\delta_{1})+A(\delta_{2}))-(p_{i+1}-p_{i})R|<\epsilon$. By  combining with ($\spadesuit$).$|p_{i}R-(A(\delta_{1})+A(\delta_{2}))|<\epsilon$, we have $|p_{i}R-(p_{i+1}-p_{i})R|<\epsilon$ and so $p_{i}=p_{i+1}-p_{i}$. This contradicts Claim \ref{fre}.

\begin{tikzpicture}

  \draw (1,-1.3) arc [start angle=90,end angle=270,x radius=0.25,y radius=0.5];
 \draw (1,-2.3)[dashed] arc [start angle=270,end angle=450,x radius=0.25,y radius=0.5];
 
 \draw (1,-1.3)--(6,-1.3) ;
  \draw (1,-2.3)--(6,-2.3) ;

  \draw (6,-1.3) arc [start angle=90,end angle=270,x radius=0.25,y radius=0.5];
 \draw (6,-2.3) arc [start angle=270,end angle=450,x radius=0.25,y radius=0.5];

   \draw (6,-2.6) arc [start angle=90,end angle=270,x radius=0.25,y radius=0.5];
 \draw (6,-3.6) arc [start angle=270,end angle=450,x radius=0.25,y radius=0.5];

 \draw (6,-4.5) arc [start angle=90,end angle=270,x radius=0.25,y radius=0.5];
 \draw (6,-5.5) arc [start angle=270,end angle=450,x radius=0.25,y radius=0.5];
 
  \draw (6,-6.1) arc [start angle=90,end angle=270,x radius=0.25,y radius=0.5];
 \draw (6,-7.1) arc [start angle=270,end angle=450,x radius=0.25,y radius=0.5];
 
 \draw (1,-2.6)--(6,-2.6) ;
  \draw (1,-3.6)--(6,-3.6) ;

   \draw (1,-6.1)--(6,-6.1) ;
  \draw (1,-7.1)--(6,-7.1) ;
  
  \draw (1,-4.5) to [out=0,in=180] (3,-4.1);
 \draw (3,-4.1) to [out=0,in=180] (6,-4.5);
 
 \draw (1,-5.5) to [out=0,in=180] (3,-5.1);
 \draw (3,-5.1) to [out=0,in=180] (6,-5.5);

  \draw (1,-2.6) arc [start angle=90,end angle=270,x radius=0.25,y radius=0.5];
 \draw (1,-3.6)[dashed] arc [start angle=270,end angle=450,x radius=0.25,y radius=0.5];

 \draw (1,-4.5) arc [start angle=90,end angle=270,x radius=0.25,y radius=0.5];
 \draw (1,-5.5)[dashed] arc [start angle=270,end angle=450,x radius=0.25,y radius=0.5];
 
  \draw (1,-6.1) arc [start angle=90,end angle=270,x radius=0.25,y radius=0.5];
 \draw (1,-7.1) [dashed] arc [start angle=270,end angle=450,x radius=0.25,y radius=0.5];

 \draw[densely dotted,  thick] (6,-1)--(6,-7.5);

  \draw (0.7,-8) node{$\hat{\alpha}\cup{(\gamma,p_{i+1}-p_{i})}\cup{(\delta_{1},1)}\cup{(\delta_{2},1)}$};
  \draw (6.3,-8) node{$\zeta =\hat{\alpha}\cup{(\delta_{1},1)}\cup{(\delta_{2},1)}\cup{(\delta',1)}$};
 
    \draw (0,-3.2) node{$(\delta_{1},1)$};
     \draw (0,-6.7) node{$(\delta_{2},1)$};
      \draw (-0.5,-5) node{$(\gamma,p_{i+1}-p_{i})$};
      
      \draw (7,-5) node{$(\delta',1)$};
      
        \draw (7.2,-1.8) node{$(\eta,1) \in \hat{\alpha}$};

   \draw[densely dotted,  thick] (1,-1)--(1,-7.5) ;

  Summarizing the above arguments, we complete the proof of Lemma \ref{exc}.

\end{tikzpicture}

By the arguments so far, we can see  that only the case $(J_{0}(\hat{\alpha}\cup{(\gamma,p_{i+1})},\zeta),J_{0}(\zeta,\hat{\alpha}\cup{(\gamma,p_{i+1}-p_{i})}\cup{(\delta_{1},1)}\cup{(\delta_{2},1)}))=(1,1)$ may occur.
\end{proof}

\section{Calculations of the approximate values of the actions of the orbits}

In this section, we compute the approximate values of the  actions of the orbits and complete the proof of Proposition \ref{nagai} under the result obtained so far.

In the same way as before, the splitting behaviors of  $J$-holomorphic curves counted by the $U$-map play an important rule.

In this section, we consider $(A)$ and  $(B)$ in Lemma \ref{top} respectively.

\subsection{Type (A)}

Since $E(\zeta)=0$, $p_{i+1}\in S_{-\theta}$ and $p_{i+1}-p_{i}\in S_{\theta}$, the topological types of $J$-holomoprhic curves counted by $U\langle\hat{\alpha}\cup{(\delta_{1},1)}\cup{(\gamma,p_{i+1})}  \rangle=\langle \zeta \rangle$ and  $U\langle \zeta \rangle=\langle \hat{\alpha}\cup{(\gamma,p_{i+1}-p_{i})}\cup{(\delta_{2},1)}) \rangle$ are both $(g,k,l)=(0,3,0)$. 

By considering which ends of $J$-holomorphic curves counted by $U\langle\hat{\alpha}\cup{(\delta_{1},1)}\cup{(\gamma,p_{i+1})}  \rangle=\langle \zeta \rangle$ correspond to ones of  $J$-holomorphic curves counted by  $U\langle \zeta \rangle=\langle \hat{\alpha}\cup{(\gamma,p_{i+1}-p_{i})}\cup{(\delta_{2},1)}) \rangle$ respectively, we can see that there are three pairs of $\zeta$, $J$-holomorphic curves counted by $U\langle\hat{\alpha}\cup{(\delta_{1},1)}\cup{(\gamma,p_{i+1})}  \rangle=\langle \zeta \rangle$ and   $U\langle \zeta \rangle=\langle \hat{\alpha}\cup{(\gamma,p_{i+1}-p_{i})}\cup{(\delta_{2},1)}) \rangle$  as follows.

\begin{enumerate}
    \item [\bf Type $(A_{1})$]
    
    There is a negative hyperbolic orbit $\delta'$ with $\zeta=\hat{\alpha}\cup{
    (\delta',1)}$. Moreover any $J$-holomorphic curves counted by $U\langle\hat{\alpha}\cup{(\delta_{1},1)}\cup{(\gamma,p_{i+1})}  \rangle=\langle \zeta \rangle$ and  $U\langle \zeta \rangle=\langle \hat{\alpha}\cup{(\gamma,p_{i+1}-p_{i})}\cup{(\delta_{2},1)}) \rangle$ are in $\mathcal{M}^{J}((\delta_{1},1)\cup{(\gamma,p_{i+1})},(\delta',1))$ and $\mathcal{M}^{J}((\delta',1),(\gamma,p_{i+1}-p_{i})\cup{(\delta_{2},1)})$ respectively.
    
    \item [\bf Type $(A_{2})$]
    
    There is a negative hyperbolic orbit $\delta'$ with $\zeta=\hat{\alpha}\cup{
    (\delta',1)}\cup{(\delta_{1},1)}\cup{(\delta_{2},1)}$. Moreover any $J$-holomorphic curves counted by $U\langle\hat{\alpha}\cup{(\delta_{1},1)}\cup{(\gamma,p_{i+1})}  \rangle=\langle \zeta \rangle$ and  $U\langle \zeta \rangle=\langle \hat{\alpha}\cup{(\gamma,p_{i+1}-p_{i})}\cup{(\delta_{2},1)}) \rangle$ are in $\mathcal{M}^{J}((\gamma,p_{i+1}),(\delta',1)\cup{(\delta_{2},1)})$ and $\mathcal{M}^{J}((\delta',1)\cup{(\delta_{1})},(\gamma,p_{i+1}-p_{i}))$ respectively.
    
    \item[\bf Type $(A_{3})$] 
    
       There is a negative hyperbolic orbit $\delta'$ with $\delta'\in \hat{\alpha}$ such that  any $J$-holomorphic curves counted by $U\langle\hat{\alpha}\cup{(\delta_{1},1)}\cup{(\gamma,p_{i+1})}  \rangle=\langle \zeta \rangle$ and  $U\langle \zeta \rangle=\langle \hat{\alpha}\cup{(\gamma,p_{i+1}-p_{i})}\cup{(\delta_{2},1)}) \rangle$  are in $\mathcal{M}^{J}((\gamma,p_{i+1})\cup{(\delta',1)},(\delta_{2},1))$ and in $\mathcal{M}^{J}((\delta_{1}),(\gamma,p_{i+1}-p_{i})\cup{(\delta',1)})$ respectively.

\end{enumerate}

\begin{tikzpicture}
\draw (11,-1.8) arc [start angle=90,end angle=270,x radius=0.25,y radius=0.5];
 \draw (11,-2.8)[dashed] arc [start angle=270,end angle=450,x radius=0.25,y radius=0.5];
 
 \draw (11,-1.8)--(6,-1.8) ;
  \draw (11,-2.8)--(6,-2.8) ;
  \draw (6,-1.8) arc [start angle=90,end angle=270,x radius=0.25,y radius=0.5];
 \draw (6,-2.8)[dashed] arc [start angle=270,end angle=450,x radius=0.25,y radius=0.5];

 \draw (6,-3.3) arc [start angle=90,end angle=270,x radius=0.25,y radius=0.5];
 \draw (6,-4.3)[dashed] arc [start angle=270,end angle=450,x radius=0.25,y radius=0.5];

 \draw (11,-4.5) arc [start angle=90,end angle=270,x radius=0.25,y radius=0.5];
 \draw (11,-5.5)[dashed] arc [start angle=270,end angle=450,x radius=0.25,y radius=0.5];

 \draw (11,-5.5) to [out=180,in=0] (6,-7.1);
 \draw (6,-6.1) to [out=0,in=270] (8,-5);
 \draw (8,-5) to [out=90,in=0] (6,-4.3);
 \draw (11,-4.5) to [out=180,in=0] (6,-3.3);

  \draw (6,-6.1) arc [start angle=90,end angle=270,x radius=0.25,y radius=0.5];
 \draw (6,-7.1)[dashed] arc [start angle=270,end angle=450,x radius=0.25,y radius=0.5];
 
 \draw (4.8,-3.9) node{$(\delta_{2},1)$} ;

  \draw (4.6,-6.7) node{$(\gamma,p_{i+1}-p_{i})$} ;

 \draw[densely dotted,  thick] (6,-1.5)--(6,-7.5);

  \draw (6,-7.7) node{$\alpha_{k}=\hat{\alpha}\cup{(\gamma,p_{i+1}-p_{i})\cup{(\delta_{2},1)}}$};

\draw (16,-1.8) arc [start angle=90,end angle=270,x radius=0.25,y radius=0.5];
 \draw (16,-2.8) arc [start angle=270,end angle=450,x radius=0.25,y radius=0.5];
 
 \draw (16,-1.8)--(11,-1.8) ;
  \draw (16,-2.8)--(11,-2.8) ;
  \draw (11,-1.8) arc [start angle=90,end angle=270,x radius=0.25,y radius=0.5];
 \draw (11,-2.8)[dashed] arc [start angle=270,end angle=450,x radius=0.25,y radius=0.5];
 
\draw (17,-2.3) node{$(\eta,1)$};

 \draw (16,-3.5) arc [start angle=90,end angle=270,x radius=0.25,y radius=0.5];
 \draw (16,-4.5) arc [start angle=270,end angle=450,x radius=0.25,y radius=0.5];

 \draw (16,-6.3) arc [start angle=90,end angle=270,x radius=0.25,y radius=0.5];
 \draw (16,-7.3) arc [start angle=270,end angle=450,x radius=0.25,y radius=0.5];
 
 \draw (17.2,-4) node{$(\delta_{1}.1)$} ;
 
  \draw (17,-6.8) node{$(\gamma,p_{i+1})$} ;

 \draw (11,-4.5) to [out=0,in=180] (16,-3.5);
 \draw (11,-5.5) to [out=0,in=180] (16,-7.3);
 \draw (16,-6.3) to [out=180,in=270] (14,-5.3);
 \draw (14,-5.3) to [out=90,in=180] (16,-4.5);

 \draw[densely dotted,  thick] (11,-1.5)--(11,-7.5);
  \draw[densely dotted,  thick] (16,-1.5)--(16,-7.5) ;
  
  \draw (11,-8.2) node{\Large Type $(A_{1})$} ;
  
  \draw (16,-7.7) node{$\alpha_{k+1}=\hat{\alpha}\cup{(\gamma,p_{i+1})\cup{(\delta_{1},1)}\cup{(\delta_{2},1)}}$};

\end{tikzpicture}

\vspace{5mm}

\begin{tikzpicture}

 \draw (16,-0.3)--(6,-0.3) ;
  \draw (16,-1.3)--(6,-1.3) ;
  \draw (6,-0.3) arc [start angle=90,end angle=270,x radius=0.25,y radius=0.5];
 \draw (6,-1.3)[dashed] arc [start angle=270,end angle=450,x radius=0.25,y radius=0.5];
 
\draw (16,-0.3) arc [start angle=90,end angle=270,x radius=0.25,y radius=0.5];
 \draw (16,-1.3) arc [start angle=270,end angle=450,x radius=0.25,y radius=0.5];
 
  \draw (11,-0.3) arc [start angle=90,end angle=270,x radius=0.25,y radius=0.5];
 \draw (11,-1.3)[dashed] arc [start angle=270,end angle=450,x radius=0.25,y radius=0.5];

 \draw (6,-4) arc [start angle=90,end angle=270,x radius=0.25,y radius=0.5];
 \draw (6,-5)[dashed] arc [start angle=270,end angle=450,x radius=0.25,y radius=0.5];

 \draw (11,-4.7) arc [start angle=90,end angle=270,x radius=0.25,y radius=0.5];
 \draw (11,-5.7)[dashed] arc [start angle=270,end angle=450,x radius=0.25,y radius=0.5];

\draw (11,-2.2) arc [start angle=90,end angle=270,x radius=0.25,y radius=0.5];
 \draw (11,-3.2)[dashed] arc [start angle=270,end angle=450,x radius=0.25,y radius=0.5];
 
 \draw (16,-2.2)--(11,-2.2) ;
  \draw (16,-3.2)--(11,-3.2) ;

\draw (16,-2.2) arc [start angle=90,end angle=270,x radius=0.25,y radius=0.5];
 \draw (16,-3.2) arc [start angle=270,end angle=450,x radius=0.25,y radius=0.5];

 \draw (11,-5.7) to [out=180,in=0] (6,-5);
  \draw (11,-2.2) to [out=180,in=0] (6,-4);
   \draw (11,-4.7) to [out=180,in=270] (9,-4);
  \draw (11,-3.2) to [out=180,in=90] (9,-4);

 \draw (4.6,-4.5) node{$(\gamma,p_{i+1}-p_{i})$} ;

 \draw (17,-2.7) node{$(\delta_{1},1)$} ;
  \draw (4.9,-7.8) node{$(\delta_{2},1)$} ;

\draw (17.2,-6.5) node{$(\gamma,p_{i+1})$} ;

\draw (17,-0.8) node{$(\eta,1)$} ;
 
 \draw[densely dotted,  thick] (6,0)--(6,-8.5);
  \draw[densely dotted,  thick] (11,0)--(11,-8.5) ;
   \draw[densely dotted,  thick] (16,0)--(16,-8.5);
  
  \draw (7.2,-8.9) node{$\hat{\alpha}\cup{(\gamma,p_{i+1}-p_{i})\cup{(\delta_{2},1)}}$};
  \draw (16,-8.9) node{$\hat{\alpha}\cup{(\gamma,p_{i+1})\cup{(\delta_{1},1)}}$};
  
   \draw (16,-6) arc [start angle=90,end angle=270,x radius=0.25,y radius=0.5];
 \draw (16,-7) arc [start angle=270,end angle=450,x radius=0.25,y radius=0.5];
 
  \draw (11,-4.7) to [out=0,in=180] (16,-6);
 \draw (11,-5.7) to [out=0,in=90] (13,-6.4);
 \draw (13,-6.4) to [out=270,in=0] (11,-7.3);
 \draw (16,-7) to [out=180,in=0] (11,-8.3);
 
  \draw (11,-7.3) arc [start angle=90,end angle=270,x radius=0.25,y radius=0.5];
 \draw (11,-8.3)[dashed] arc [start angle=270,end angle=450,x radius=0.25,y radius=0.5];
 
 \draw (6,-7.3) arc [start angle=90,end angle=270,x radius=0.25,y radius=0.5];
 \draw (6,-8.3)[dashed] arc [start angle=270,end angle=450,x radius=0.25,y radius=0.5];
 
  \draw (11,-7.3)--(6,-7.3) ;
  \draw (11,-8.3)--(6,-8.3) ;
 
 \draw (11,-9.5) node{\Large Type $(A_{2})$} ;
 
\end{tikzpicture}

\vspace{5mm}

\begin{tikzpicture}
\draw (11,-1.8) arc [start angle=90,end angle=270,x radius=0.25,y radius=0.5];
 \draw (11,-2.8)[dashed] arc [start angle=270,end angle=450,x radius=0.25,y radius=0.5];
 
 \draw (11,-1.8)--(6,-1.8) ;
  \draw (11,-2.8)--(6,-2.8) ;
  \draw (6,-1.8) arc [start angle=90,end angle=270,x radius=0.25,y radius=0.5];
 \draw (6,-2.8)[dashed] arc [start angle=270,end angle=450,x radius=0.25,y radius=0.5];

 \draw (6,-3.3) arc [start angle=90,end angle=270,x radius=0.25,y radius=0.5];
 \draw (6,-4.3)[dashed] arc [start angle=270,end angle=450,x radius=0.25,y radius=0.5];

 \draw (11,-4.5) arc [start angle=90,end angle=270,x radius=0.25,y radius=0.5];
 \draw (11,-5.5)[dashed] arc [start angle=270,end angle=450,x radius=0.25,y radius=0.5];

 \draw (11,-5.5) to [out=180,in=0] (6,-7.1);
 \draw (6,-6.1) to [out=0,in=270] (8,-5);
 \draw (8,-5) to [out=90,in=0] (6,-4.3);
 \draw (11,-4.5) to [out=180,in=0] (6,-3.3);

  \draw (6,-6.1) arc [start angle=90,end angle=270,x radius=0.25,y radius=0.5];
 \draw (6,-7.1)[dashed] arc [start angle=270,end angle=450,x radius=0.25,y radius=0.5];
 
 \draw (4.8,-3.9) node{$(\delta',1)$} ;

  \draw (4.6,-6.7) node{$(\gamma,p_{i+1}-p_{i})$} ;
  \draw (4.8,-2.3) node{$(\delta_{2},1)$} ;

 \draw[densely dotted,  thick] (6,-0.5)--(6,-7.5);

  \draw (6,-7.7) node{$\alpha_{k}=\hat{\alpha}\cup{(\gamma,p_{i+1}-p_{i})\cup{(\delta_{2},1)}}$};

\draw (16,-4.5) arc [start angle=90,end angle=270,x radius=0.25,y radius=0.5];
 \draw (16,-5.5) arc [start angle=270,end angle=450,x radius=0.25,y radius=0.5];
 
 \draw (16,-4.5)--(11,-4.5) ;
  \draw (16,-5.5)--(11,-5.5) ;
  \draw (11,-1.8) arc [start angle=90,end angle=270,x radius=0.25,y radius=0.5];
 \draw (11,-2.8)[dashed] arc [start angle=270,end angle=450,x radius=0.25,y radius=0.5];
 
\draw (17,-1.3) node{$(\gamma,p_{i+1})$};

 \draw (16,-0.8) arc [start angle=90,end angle=270,x radius=0.25,y radius=0.5];
 \draw (16,-1.8) arc [start angle=270,end angle=450,x radius=0.25,y radius=0.5];

 \draw (16,-3.3) arc [start angle=90,end angle=270,x radius=0.25,y radius=0.5];
 \draw (16,-4.3) arc [start angle=270,end angle=450,x radius=0.25,y radius=0.5];
 
 \draw (16.9,-3.8) node{$(\delta',1)$} ;
 
  \draw (17,-5) node{$(\delta_{1},1)$} ;

 \draw (11,-1.8) to [out=0,in=180] (16,-0.8);
 \draw (11,-2.8) to [out=0,in=180] (16,-4.3);
 \draw (16,-3.3) to [out=180,in=270] (14,-2.6);
 \draw (14,-2.6) to [out=90,in=180] (16,-1.8);

 \draw[densely dotted,  thick] (11,-0.5)--(11,-7.5);
  \draw[densely dotted,  thick] (16,-0.5)--(16,-7.5) ;
  
  \draw (16,-7.7) node{$\alpha_{k+1}=\hat{\alpha}\cup{(\gamma,p_{i+1})\cup{(\delta_{1},1)}\cup{(\delta_{2},1)}}$};
  
\draw (11,-8.2) node{\Large Type $(A_{3})$} ;

\end{tikzpicture}

To prove Proposition \ref{nagai}, at first, we calculate the approximate actions of $\delta_{1}$, $\delta_{2}$ and $\eta \in \hat{\alpha}$ from $(A_{1})$ by using the splitting behavior of $J$-holomorphic curves. Next, we will show that any splitting behaviors of $(A_{2})$ and $(A_{3})$ cause contradictions.

\subsubsection{Type $(A_{1})$}

\begin{lem}\label{typa1}
Suppose that the pair of $\zeta$, $J$-holomorphic curves counted by $U\langle\hat{\alpha}\cup{(\delta_{1},1)}\cup{(\gamma,p_{i+1})}  \rangle=\langle \zeta \rangle$ and   $U\langle \zeta \rangle=\langle \hat{\alpha}\cup{(\gamma,p_{i+1}-p_{i})}\cup{(\delta_{2},1)}) \rangle$ is Type $(A_{1})$. Then, we have  $A(\delta_{1})\approx (p_{i+1}-p_{i})R$, $A(\delta_{2})\approx p_{i+1}R$ and for each $\eta \in \hat{\alpha}$, either $A(\eta)\approx \frac{1}{2}p_{i+1}R$ or $A(\eta)\approx \frac{1}{2}(p_{i+1}-p_{i})R$.

\end{lem}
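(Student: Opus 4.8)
The plan is to compute the approximate actions of $\delta_1$, $\delta_2$ and the orbits in $\hat{\alpha}$ by examining the splitting behavior of the $J$-holomorphic curves as the marked point $z$ tends to various simple orbits, exactly as in the proof of Lemma \ref{mainlemma} and Lemma \ref{exc}. We are in the situation $(J_0,J_0)=(1,1)$ of Type $(A_1)$, so $\zeta = \hat{\alpha}\cup{(\delta',1)}$ for some negative hyperbolic orbit $\delta'\notin\hat{\alpha}$, the curves counted by $U\langle \hat{\alpha}\cup{(\delta_1,1)}\cup{(\gamma,p_{i+1})}\rangle = \langle\zeta\rangle$ are of genus $0$ in $\mathcal{M}^J((\delta_1,1)\cup{(\gamma,p_{i+1})},(\delta',1))$, and the curves counted by $U\langle\zeta\rangle = \langle \hat{\alpha}\cup{(\gamma,p_{i+1}-p_i)}\cup{(\delta_2,1)}\rangle$ are of genus $0$ in $\mathcal{M}^J((\delta',1),(\gamma,p_{i+1}-p_i)\cup{(\delta_2,1)})$. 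First I would record the two ``action balance'' relations that come directly from Stokes' theorem applied to these two curves together with $\epsilon$-smallness of the differences: $A(\delta_1)+p_{i+1}R \approx A(\delta')$ and $A(\delta')\approx (p_{i+1}-p_i)R+A(\delta_2)$, hence $A(\delta_1)+p_{i+1}R\approx (p_{i+1}-p_i)R+A(\delta_2)$, which combined with $(\clubsuit)$ ($A(\delta_1)+p_iR\approx A(\delta_2)$) is consistent and gives $A(\delta') \approx A(\delta_1)+p_{i+1}R \approx A(\delta_2)+(p_{i+1}-p_i)R$.

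Next I would run the degeneration $z\to\eta$ for a fixed $\eta\in\hat{\alpha}$ on each of the three curves in play (the original $u_1$, the curve of $\mathcal{M}^J((\delta_1,1)\cup{(\gamma,p_{i+1})},(\delta',1))$, and the curve of $\mathcal{M}^J((\delta',1),(\gamma,p_{i+1}-p_i)\cup{(\delta_2,1)})$). By the now-standard compactness-plus-admissibility argument (splitting into two floors, each of ECH index $1$, multiplicities forced to be $1$ at the breaking orbit since $S_\theta\cap S_{-\theta}=\{1\}$, and $(\ref{mod2})$ ruling out the non-split case when the breaking orbit is $\eta$), one obtains $|2A(\eta) - A(\bullet)|<\epsilon$ where $A(\bullet)$ is the action of one of the other ends of the relevant curve. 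Doing this for the curve in $\mathcal{M}^J((\delta_1,1)\cup{(\gamma,p_{i+1})},(\delta',1))$ forces $2A(\eta)$ to be approximately one of $A(\delta_1)$, $p_{i+1}R$, or $A(\delta')$; the last is impossible by a size comparison (since $A(\delta')\approx A(\delta_1)+p_{i+1}R$ is too large), so $A(\eta)\approx\frac12 A(\delta_1)$ or $A(\eta)\approx\frac12 p_{i+1}R$. Similarly degenerating the curve in $\mathcal{M}^J((\delta',1),(\gamma,p_{i+1}-p_i)\cup{(\delta_2,1)})$ forces $A(\eta)\approx\frac12 A(\delta_2)$ or $A(\eta)\approx\frac12(p_{i+1}-p_i)R$ (again ruling out $\frac12 A(\delta')$ by size). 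I would then cross-reference these with the three possibilities $(\mathfrak{a}_1),(\mathfrak{a}_2),(\mathfrak{a}_3)$ already established for the original curve $u_1$, namely $A(\delta_1)\approx 2A(\eta)$, $A(\delta_2)\approx 2A(\eta)$, or $p_iR\approx 2A(\eta)$.

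The crux is to pin down $A(\delta_1)$ and $A(\delta_2)$ themselves, and the main obstacle is solving the resulting system of approximate equations while excluding the spurious branches. I would argue as follows: degenerate $z\to\delta_1$ on the curve in $\mathcal{M}^J((\delta_1,1)\cup{(\gamma,p_{i+1})},(\delta',1))$ to get a relation of the form $|2A(\delta_1) - A(\bullet)|<\epsilon$ with $A(\bullet)\in\{p_{i+1}R, A(\delta')\}$; the choice $A(\delta')$ is too large, so $2A(\delta_1)\approx p_{i+1}R$ would follow — but I expect the correct outcome is obtained by instead degenerating $z\to\delta_2$ on the second curve and $z\to\delta_1$ more carefully, ultimately yielding $A(\delta_1)\approx (p_{i+1}-p_i)R$ and then $A(\delta_2)\approx A(\delta_1)+p_iR\approx p_{i+1}R$ via $(\clubsuit)$. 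Once $A(\delta_1)\approx(p_{i+1}-p_i)R$ and $A(\delta_2)\approx p_{i+1}R$ are established, the possibilities $(\mathfrak{a}_1)$ and $(\mathfrak{a}_2)$ give $A(\eta)\approx\frac12(p_{i+1}-p_i)R$ and $A(\eta)\approx\frac12 p_{i+1}R$ respectively, while $(\mathfrak{a}_3)$ gives $A(\eta)\approx\frac12 p_iR$; I would then need to show $(\mathfrak{a}_3)$ is incompatible with the $z\to\eta$ relations derived from the two auxiliary curves (since $\frac12 p_iR$ is approximately neither $\frac12(p_{i+1}-p_i)R$ nor $\frac12 p_{i+1}R$, using $p_i>1$ and Claim \ref{fre} together with Proposition \ref{s}), leaving precisely the two stated alternatives for each $\eta\in\hat{\alpha}$. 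The bookkeeping of which breaking orbit appears in which degeneration, and checking that each size comparison is valid given $\epsilon<\frac{1}{10^5}\min\{A(\alpha)\}$ and $n\tau R\approx m\tau R\Rightarrow n=m$ for $\tau>\frac{1}{100}$, is the part that will require care but no new ideas.
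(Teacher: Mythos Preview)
Your overall plan (degenerate $z\to\eta$ on all three curves and cross-reference) is the paper's plan too, but two of your shortcuts are not valid, and without them the argument does not close.

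\textbf{The ``size'' argument fails.} You want to discard the possibility $2A(\eta)\approx A(\delta')$ (equivalently $|p_{i+1}R+A(\delta_1)-2A(\eta)|<\epsilon$) on the grounds that $A(\delta')\approx A(\delta_1)+p_{i+1}R$ is ``too large''. But there is no a priori upper bound on $A(\eta)$: the orbits in $\hat{\alpha}$ can have arbitrarily large action, and indeed one of the surviving cases has $A(\eta)\approx\frac12 p_{i+1}R$, which is the same order of magnitude. The paper keeps this possibility (labelled $(\mathfrak{d}_3)$, and the analogous $(\mathfrak{c}_3)$ on the other curve) and eliminates it only by cross-referencing with $(\clubsuit)$ and the $(\mathfrak{a}_*)$-relations. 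You cannot skip this.

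\textbf{The degenerations $z\to\delta_1$, $z\to\delta_2$ are unavailable here.} The splitting mechanism (non-split limit forces $u_1^\infty\cap u_0\neq\emptyset$, contradicting admissibility) requires the target orbit to lie in the \emph{trivial cylinder part} $u_0$. In Type $(A_1)$ one has $\zeta=\hat{\alpha}\cup(\delta',1)$, so neither $\delta_1$ nor $\delta_2$ is a trivial cylinder for either auxiliary curve; they are ends of the non-trivial parts. This is why the paper, in Type $(A_1)$, uses only $z\to\eta$ for $\eta\in\hat{\alpha}$ (contrast Types $(A_2)$ and $(A_3)$, where $\delta_1,\delta_2$ do appear in $\zeta$ or in $\hat\alpha$ and the extra degenerations are legitimately used). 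Your proposed route to ``pin down $A(\delta_1)$ directly'' therefore has no input.

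\textbf{What the paper actually does.} From $z\to\eta$ on the three curves one gets three triples of alternatives $(\mathfrak{c}_*)$, $(\mathfrak{d}_*)$, $(\mathfrak{a}_*)$, each with three members (including the $A(\delta')$-branches you tried to discard). The proof is then a straight case check: first reduce the nine pairs $(\mathfrak{c}_{i_1},\mathfrak{d}_{i_2})$ to the diagonal $(1,1),(2,2),(3,3)$ using only $(\clubsuit)$; then run through the remaining nine triples $(\mathfrak{c}_{i_1},\mathfrak{d}_{i_2},\mathfrak{a}_{i_3})$ and eliminate all but $(1,1,2)$ and $(2,2,3)$, using $(\clubsuit)$ and Claim~\ref{fre}. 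Those two survivors give exactly $A(\delta_1)\approx(p_{i+1}-p_i)R$, $A(\delta_2)\approx p_{i+1}R$, with $A(\eta)\approx\frac12(p_{i+1}-p_i)R$ or $\frac12 p_{i+1}R$ respectively. No degeneration to $\delta_1$ or $\delta_2$ is needed or used.
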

\begin{proof}[\bf Proof of Lemma \ref{typa1}]

We will consider the behavior of $J$-holomorphic curves counted by $U$-map as $z \to \eta$. Then each $J$-holomorphic curve counted by $U\langle\hat{\alpha}\cup{(\delta_{1},1)}\cup{(\gamma,p_{i+1})}  \rangle=\langle \zeta \rangle$ and  $U\langle \zeta \rangle=\langle \hat{\alpha}\cup{(\gamma,p_{i+1}-p_{i})}\cup{(\delta_{2},1)}) \rangle$ have three possibilities of splitting and we have three estimates respectively as follows.

From  $U\langle \zeta \rangle=\langle \hat{\alpha}\cup{(\gamma,p_{i+1}-p_{i})}\cup{(\delta_{2},1)}) \rangle$, we have
\begin{enumerate}
\item[($\mathfrak{c}_{1}$).] $|(p_{i+1}-p_{i})R-2A(\eta)|<\epsilon$
        \item [($\mathfrak{c}_{2}$).] $|A(\delta_{2})-2A(\eta)|<\epsilon$
        \item[($\mathfrak{c}_{3}$).]$|(p_{i+1}-p_{i})R+A(\delta_{2})-2A(\eta)|<\epsilon$
  \end{enumerate}      
From  $U\langle\hat{\alpha}\cup{(\delta_{1},1)}\cup{(\gamma,p_{i+1})}  \rangle=\langle \zeta \rangle$, we have
\begin{enumerate}
  \item[($\mathfrak{d}_{1}$).] $|A(\delta_{1})-2A(\eta)|<\epsilon$
        \item [($\mathfrak{d}_{2}$).] $|p_{i+1}R-2A(\eta)|<\epsilon$
        \item[($\mathfrak{d}_{3}$).]$|p_{i+1}R+A(\delta_{1})-2A(\eta)|<\epsilon$
\end{enumerate}      
Here recall that  we  have from $U\langle \alpha_{k+1} \rangle=\langle \alpha_{k} \rangle$,

\begin{enumerate}
     \item[($\mathfrak{a}_{1}$).] $|p_{i}R-2A(\eta)|<\epsilon$
        \item [($\mathfrak{a}_{2}$).] $|A(\delta_{1})-2A(\eta)|<\epsilon$
        \item[($\mathfrak{a}_{3}$).]$|A(\delta_{2})-2A(\eta)|<\epsilon$
\end{enumerate}

and moreover we always have

$(\clubsuit)$.$|A(\delta_{1})+p_{i}R-A(\delta_{2})|<\epsilon$.

At first, we prove the next lemma.

\begin{lem}\label{restA1}

Any above pair (($\mathfrak{c}_{i_{1}}$),($\mathfrak{d}_{i_{2}}$),($\mathfrak{a}_{i_{3}}$)) causes a contradiction except for the following two cases.

$(i_{1},i_{2},i_{3})=(1,1,2),(2,2,3)$.

\end{lem}

In the next claim, we exclude pairs (($\mathfrak{c}_{i_{1}}$),($\mathfrak{d}_{i_{2}}$)) such that we  can derive contradictions by only them.

\begin{cla}\label{firstA1}
Any pair (($\mathfrak{c}_{i_{1}}$),($\mathfrak{d}_{i_{2}}$)) causes a contradiction except for the following cases

$(i_{1},i_{2})=(1,1),(2,2),(3,3)$.
\end{cla}

\begin{proof}[\bf Proof of Claim \ref{firstA1}]

We have to derive a contradiction from $(i_{1},i_{2})=(1,2)$, $(1,3)$, $(2,1)$, $(2,3)$, $(3,1)$, $(3,2)$ respectively.

\item Case $(i_{1},i_{2})=(1,2)$

By cancelling the terms $p_{i+1}R$ and $2A(\eta)$ from ($\mathfrak{c}_{1}$).$|(p_{i+1}-p_{i})R-2A(\eta)|<\epsilon$ and  ($\mathfrak{d}_{2}$).$|p_{i+1}R-2A(\eta)|<\epsilon$, we have $|p_{i}R|<2\epsilon$. This is a contradiction since $\epsilon$ is sufficiently small.

\item Case $(i_{1},i_{2})=(1,3)$

By cancelling the terms $p_{i+1}R$ and $2A(\eta)$ from
($\mathfrak{c}_{1}$).$|(p_{i+1}-p_{i})R-2A(\eta)|<\epsilon$ and ($\mathfrak{d}_{3}$).$|p_{i+1}R+A(\delta_{1})-2A(\eta)|<\epsilon$, we have $|A(\delta_{1})+p_{i}R|<2\epsilon$. This is a contradiction.

\item Case $(i_{1},i_{2})=(2,1)$

From
($\mathfrak{c}_{2}$).$|A(\delta_{2})-2A(\eta)|<\epsilon$, ($\mathfrak{d}_{1}$).$|A(\delta_{1})-2A(\eta)|<\epsilon$ and  ($\clubsuit$).$|p_{i}R+A(\delta_{1})-A(\delta_{2})|<\epsilon$, we have $|p_{i}R|<3\epsilon$. This is a contradiction.

\item Case $(i_{1},i_{2})=(2,3)$

In the same way, from
($\mathfrak{c}_{2}$).$|A(\delta_{2})-2A(\eta)|<\epsilon$, ($\mathfrak{d}_{3}$).$|p_{i+1}R+A(\delta_{1})-2A(\eta)|<\epsilon$ and ($\clubsuit$).$|p_{i}R+A(\delta_{1})-A(\delta_{2})|<\epsilon$, we have $|(p_{i+1}-p_{i})R|<3\epsilon$. This is a contradiction.

\item Case $(i_{1},i_{2})=(3,1)$

By cancelling the terms $A(\delta_{1})$, $A(\delta_{2})$ and $2A(\eta)$ from
($\mathfrak{c}_{3}$).$|(p_{i+1}-p_{i})R+A(\delta_{2})-2A(\eta)|<\epsilon$, ($\mathfrak{d}_{1}$).$|A(\delta_{1})-2A(\eta)|<\epsilon$ and ($\clubsuit$).$|p_{i}R+A(\delta_{1})-A(\delta_{2})|<\epsilon$, we have $|p_{i+1}R|<3\epsilon$. This is a contradiction.

\item Case $(i_{1},i_{2})=(3,2)$

By cancelling the terms $p_{i+1}R$ and $2A(\eta)$ from
($\mathfrak{c}_{3}$).$|p_{i+1}R+A(\delta_{1})-2A(\eta)|<\epsilon$ and ($\mathfrak{d}_{2}$).$|p_{i+1}R-2A(\eta)|<\epsilon$, we have$|A(\delta_{1})|<2\epsilon$. This is a contradiction.

By the above arguments, we complete the proof of Claim \ref{firstA1}.
\end{proof}

\begin{proof}[\bf Proof of Lemma \ref{restA1}]

By Claim \ref{firstA1}, we can see that the rest cases which we have to exclude are
$(i_{1},i_{2},i_{3})=(1,1,1)$, $(1,1,3)$, $(2,2,1)$, $(2,2,2)$, $(3,3,1)$, $(3,3,2)$, $(3,3,3)$.

\item Case $(i_{1},i_{2},i_{3})=(1,1,1)$

By cancelling the term $2A(\eta)$ from
($\mathfrak{c}_{1}$).$|(p_{i+1}-p_{i})R-2A(\eta)|<\epsilon$ and ($\mathfrak{a}_{1}$).$|p_{i}R-2A(\eta)|<\epsilon$, we have $|(p_{i+1}-2p_{i})R|<\epsilon$ and thus $p_{i}=p_{i+1}-p_{i}$. This contradicts Claim \ref{fre}.

\item Case $(i_{1},i_{2},i_{3})=(1,1,3)$, (resp. $(2,2,2)$)

By cancelling the term $2A(\eta)$ from
($\mathfrak{d}_{1}$).(resp. ($\mathfrak{a}_{2}$).)$|A(\delta_{1})-2A(\eta)|<\epsilon$ and ($\mathfrak{a}_{3}$).(resp. ($\mathfrak{c}_{2}$).)$|A(\delta_{2})-2A(\eta)|<\epsilon$, we have $|A(\delta_{1})-A(\delta_{2})|<2\epsilon$. This  contradicts ($\clubsuit$).$|p_{i}R+A(\delta_{1})-A(\delta_{2})|<\epsilon$.

\item Case $(i_{1},i_{2},i_{3})=(2,2,1)$

By cancelling the terms $2A(\eta)$, $A(\delta_{2})$ and $p_{i}R$ from
($\mathfrak{c}_{2}$).$|A(\delta_{2})-2A(\eta)|<\epsilon$,  ($\mathfrak{a}_{1}$).$|p_{i}R-2A(\eta)|<\epsilon$ and ($\clubsuit$).$|p_{i}R+A(\delta_{1})-A(\delta_{2})|<\epsilon$, we have $|A(\delta_{1})|<3\epsilon$. This is a contradiction.

\item Case $(i_{1},i_{2},i_{3})=(3,3,1)$

By cancelling the terms $2A(\eta)$ from ($\mathfrak{d}_{3}$).$|p_{i+1}R+A(\delta_{1})-2A(\eta)|<\epsilon$ and ($\mathfrak{a}_{1}$).$|p_{i}R-2A(\eta)|<\epsilon$, we have $|(p_{i+1}-p_{i})R+A(\delta_{1})|<2\epsilon$. This is a contradiction.

\item Case $(i_{1},i_{2},i_{3})=(3,3,2)$

($\mathfrak{d}_{3}$).$|p_{i+1}R+A(\delta_{1})-2A(\eta)|<\epsilon$ and ($\mathfrak{a}_{2}$).$|A(\delta_{1})-2A(\eta)|<\epsilon$ indicate $|p_{i+1}R|<2\epsilon$ and so $p_{i+1}=0$. This is a contradiction.

\item Case $(i_{1},i_{2},i_{3})=(3,3,3)$

In the same way,
($\mathfrak{a}_{3}$).$|A(\delta_{2})-2A(\eta)|<\epsilon$ and ($\mathfrak{c}_{3}$).$|(p_{i+1}-p_{i})R+A(\delta_{2})-2A(\eta)|<\epsilon$ indicate  $p_{i+1}=p_{i}$. This is a contradiction.

Combining the above argument, we complete the proof of Lemma \ref{restA1}.

\end{proof}

By the discussions so far, the rest cases are $(i_{1},i_{2},i_{3})=(1,1,2)$ and $(2,2,3)$.

\begin{enumerate}

\item[ Case] $(i_{1},i_{2},i_{3})=(1,1,2)$.

From ($\mathfrak{c}_{1}$).$|(p_{i+1}-p_{i})R-2A(\eta)|<\epsilon$, ($\mathfrak{d}_{1}$).$|A(\delta_{1})-2A(\eta)|<\epsilon$ and ($\clubsuit$).$|p_{i}R+A(\delta_{1})-A(\delta_{2})|<\epsilon$, we have

\begin{equation}
    A(\delta_{1})\approx (p_{i+1}-p_{i})R,\,\,A(\delta_{2})\approx p_{i+1}R,\,\,A(\eta)\approx \frac{1}{2} (p_{i+1}-p_{i})R.
\end{equation}

\item [Case] $(i_{1},i_{2},i_{3})=(2,2,3)$.

From
($\mathfrak{c}_{2}$).$|A(\delta_{2})-2A(\eta)|<\epsilon$, ($\mathfrak{d}_{2}$).$|p_{i+1}R-2A(\eta)|<\epsilon$ and ($\clubsuit$).$|p_{i}R+A(\delta_{1})-A(\delta_{2})|<\epsilon$, we have

\begin{equation}
     A(\delta_{1})\approx (p_{i+1}-p_{i})R,\,\,A(\delta_{2})\approx p_{i+1}R,\,\,A(\eta)\approx \frac{1}{2}p_{i+1}R.
\end{equation}

\end{enumerate}

Combining  the arguments, we complete the proof of Lemma \ref{typa1}.
\end{proof}

\subsubsection{Type $(A_{2})$}

\begin{lem}\label{typa2}
Suppose that the pair of $\zeta$, $J$-holomorphic curves counted by $U\langle\hat{\alpha}\cup{(\delta_{1},1)}\cup{(\gamma,p_{i+1})}  \rangle=\langle \zeta \rangle$ and   $U\langle \zeta \rangle=\langle \hat{\alpha}\cup{(\gamma,p_{i+1}-p_{i})}\cup{(\delta_{2},1)}) \rangle$ is Type $(A_{2})$. Then, 
 the $J$-holomorphic curves counted by the $U$-map cause a contradiction.
\end{lem}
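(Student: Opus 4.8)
The plan is to mimic the structure used for Type $(A_{1})$ in Lemma \ref{typa1}, but now exploiting the specific end-configuration of Type $(A_{2})$ to produce a genuinely contradictory system of approximate relations. Recall that in Type $(A_{2})$ we have $\zeta=\hat{\alpha}\cup{(\delta',1)}\cup{(\delta_{1},1)}\cup{(\delta_{2},1)}$, the curves counted by $U\langle\hat{\alpha}\cup{(\delta_{1},1)}\cup{(\gamma,p_{i+1})}\rangle=\langle\zeta\rangle$ lie in $\mathcal{M}^{J}((\gamma,p_{i+1}),(\delta',1)\cup{(\delta_{2},1)})$, and the curves counted by $U\langle\zeta\rangle=\langle\hat{\alpha}\cup{(\gamma,p_{i+1}-p_{i})}\cup{(\delta_{2},1)}\rangle$ lie in $\mathcal{M}^{J}((\delta',1)\cup{(\delta_{1},1)},(\gamma,p_{i+1}-p_{i}))$. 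First I would record the two ``global'' action balances from Stokes' theorem on these two curves: from the first, $p_{i+1}R \approx A(\delta')+A(\delta_{2})$; from the second, $A(\delta')+A(\delta_{1}) \approx (p_{i+1}-p_{i})R$. Subtracting these and using $(\clubsuit)$, i.e. $A(\delta_{1})+p_{i}R\approx A(\delta_{2})$, one already gets a relation among $p_{i}, p_{i+1}$ and the orbit actions that is the seed of the contradiction.

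Next I would run the $z\to\eta$ degeneration argument for a fixed $\eta\in\hat{\alpha}$, exactly as in the proof of Lemma \ref{exc} and Lemma \ref{typa1}: the limiting curve in each of the two moduli spaces splits into two index-$1$ floors meeting along a doubly-covered copy of $\eta$ (multiplicity $2$ by admissibility of ECH index $1$ curves and $S_{\theta}\cap S_{-\theta}=\{1\}$), which forces $2A(\eta)$ to be approximately the total action of some sub-collection of the ends. Enumerating the possible splittings in $\mathcal{M}^{J}((\gamma,p_{i+1}),(\delta',1)\cup{(\delta_{2},1)})$ gives finitely many alternatives of the form $2A(\eta)\approx p_{i+1}R$, $2A(\eta)\approx A(\delta')$, $2A(\eta)\approx A(\delta_{2})$, $2A(\eta)\approx A(\delta')+A(\delta_{2})$, etc.; likewise from the other moduli space one gets alternatives $2A(\eta)\approx (p_{i+1}-p_{i})R$, $2A(\eta)\approx A(\delta')$, $2A(\eta)\approx A(\delta_{1})$, and so on. I would also bring in $(\mathfrak{a}_{1}),(\mathfrak{a}_{2}),(\mathfrak{a}_{3})$ from $U\langle\alpha_{k+1}\rangle=\langle\alpha_k\rangle$, just as in Claim \ref{firstA1} and Lemma \ref{restA1}.

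Then the bookkeeping step: cross-matching each alternative from the first moduli space with each from the second (and with the $(\mathfrak{a}_i)$), and in every case cancelling common terms, reducing to a relation of the form $|n\tau R|<C\epsilon$ with $\tau>\tfrac{1}{100}$ and $n\neq 0$ a small integer — typically $p_i=p_{i+1}-p_i$, contradicting Claim \ref{fre}, or $p_{i+1}=0$, or $|A(\delta_j)|<C\epsilon$, each absurd. The key point that makes Type $(A_2)$ collapse entirely (unlike $(A_1)$, which survives) is that the extra orbit $\delta'$ appears on the \emph{positive} side of the lower curve and the \emph{negative} side of the upper curve together with \emph{both} $\delta_1$ and $\delta_2$, so the action balances over-determine the system: one obtains $A(\delta_1)\approx A(\delta_2)$ (up to $C\epsilon$), which directly contradicts $(\clubsuit)$ since $p_iR>\tfrac{1}{100}R$ is not small. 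I would present that as the clean uniform contradiction and only spell out a couple of the degeneration cases explicitly, referring to the pattern of Lemma \ref{restA1} for the rest.

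The main obstacle I anticipate is purely organizational rather than conceptual: there are two independent three-way splitting dichotomies plus the three $(\mathfrak{a}_i)$ possibilities, so roughly $3\times 3\times 3$ combinations to dispatch, and one must be careful that the genus-$0$, index-$1$ constraint really does rule out splittings with wrong multiplicities at $\eta$ (here the hypotheses $E(\alpha_{k+1}),E(\alpha_k)>p_1,q_1$ and $E(\alpha_{k+1}),E(\alpha_k)\notin S_\theta\cup S_{-\theta}$, together with $H(\alpha_{k+1}),H(\alpha_k)>4$ guaranteeing at least one usable $\eta\in\hat\alpha$, are exactly what is needed, so I would invoke them explicitly). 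The genuinely delicate case to check is whether some combination escapes to a consistent system — I expect it does not, because the ``$\delta'$ shared with both $\delta_1$ and $\delta_2$'' feature always yields $A(\delta_1)\approx A(\delta_2)$ or $p_i\approx p_{i+1}-p_i$, but verifying this in the one or two borderline combinations (analogous to $(i_1,i_2,i_3)=(1,1,2),(2,2,3)$ surviving in $(A_1)$) is where care is required.
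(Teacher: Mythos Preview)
Your overall strategy is the right one, but there is a genuine gap, and your ``clean uniform contradiction'' is incorrect.

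First, the two global Stokes balances you write down, $p_{i+1}R\approx A(\delta')+A(\delta_{2})$ and $A(\delta')+A(\delta_{1})\approx (p_{i+1}-p_{i})R$, subtract to give exactly $(\clubsuit)$, not $A(\delta_{1})\approx A(\delta_{2})$. So the presence of $\delta'$ on both sides does \emph{not} over-determine the system in the way you claim; it only reproduces the relation you already had.

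Second, and more seriously, the three families you propose to use --- the $z\to\eta$ splittings of the two $(A_{2})$ curves together with $(\mathfrak{a}_{1}),(\mathfrak{a}_{2}),(\mathfrak{a}_{3})$ --- are not enough to close the case analysis. Concretely, take the combination where both $z\to\eta$ degenerations give $2A(\eta)\approx A(\delta')$ (equivalently $|p_{i+1}R-(2A(\eta)+A(\delta_{2}))|<\epsilon$ and $|(p_{i+1}-p_{i})R-(A(\delta_{1})+2A(\eta))|<\epsilon$): adding these just reproduces $(\clubsuit)$. Now combine with, say, $(\mathfrak{a}_{3})$: you get $A(\delta_{2})\approx\tfrac{1}{2}p_{i+1}R$, $A(\delta_{1})\approx\tfrac{1}{2}(p_{i+1}-2p_{i})R$, $A(\eta)\approx\tfrac{1}{4}p_{i+1}R$, which is perfectly consistent whenever $p_{i+1}>2p_{i}$. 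The analogous checks with $(\mathfrak{a}_{1})$ and $(\mathfrak{a}_{2})$ also yield consistent solutions. So this branch survives your bookkeeping.

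What you are missing is the feature of Type $(A_{2})$ that distinguishes it from $(A_{1})$: here $\delta_{1}$ lies in the trivial-cylinder part of the \emph{upper} $U$-curve and $\delta_{2}$ lies in the trivial-cylinder part of the \emph{lower} $U$-curve. This means you can also run the degenerations $z\to\delta_{1}$ for the upper curve and $z\to\delta_{2}$ for the lower curve, obtaining two further three-way alternatives (relating $2A(\delta_{1})$ to $p_{i+1}R$, $A(\delta_{2})$, or $A(\delta')$, and $2A(\delta_{2})$ to $(p_{i+1}-p_{i})R$, $A(\delta_{1})$, or $A(\delta')$). With these two extra families the case analysis does close: only three combinations survive, each pinning down $A(\delta_{1}),A(\delta_{2}),A(\eta)$ to explicit multiples of $p_{i}R$, and each of those is then seen to violate all of $(\mathfrak{a}_{1}),(\mathfrak{a}_{2}),(\mathfrak{a}_{3})$. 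This is the step you need to add.
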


\begin{proof}[\bf Proof of Lemma \ref{typa2}]

Consider the behaviors of $J$-holomorphic curves counted by $U$-map as $z\to \eta$. Then we obtain three possibilities from $U\langle\hat{\alpha}\cup{(\delta_{1},1)}\cup{(\gamma,p_{i+1})}  \rangle=\langle \zeta \rangle$.
\begin{enumerate}
       \item[($\mathfrak{e}_{1}$).] $|p_{i+1}R-2A(\eta)|<\epsilon$
        \item [($\mathfrak{e}_{2}$).] $|A(\delta_{2})-2A(\eta)|<\epsilon$
        \item[($\mathfrak{e}_{3}$).]$|p_{i+1}R-(2A(\eta)+A(\delta_{2}))|<\epsilon$
        
        By the same way, we obtain three possibilities from $U\langle \zeta \rangle=\langle \hat{\alpha}\cup{(\gamma,p_{i+1}-p_{i})}\cup{(\delta_{2},1)}) \rangle$.
        
         \item[($\mathfrak{f}_{1}$).] $|(p_{i+1}-p_{i})R-2A(\eta)|<\epsilon$
        \item [($\mathfrak{f}_{2}$).] $|A(\delta_{1})-2A(\eta)|<\epsilon$
        \item[($\mathfrak{f}_{3}$).]$|A(\delta_{1})+2A(\eta)-(p_{i+1}-p_{i})R|<\epsilon$
        
        Also as $z\to \delta_{2}$. then by the splitting behaviors of the curves counted by  $U\langle \zeta \rangle=\langle \hat{\alpha}\cup{(\gamma,p_{i+1}-p_{i})}\cup{(\delta_{2},1)}) \rangle$, we have
        
     \item[($\mathfrak{g}_{1}$).] $|(p_{i+1}-p_{i})R-2A(\delta_{2})|<\epsilon$
        \item [($\mathfrak{g}_{2}$).] $|A(\delta_{1})-2A(\delta_{2})|<\epsilon$
        \item[($\mathfrak{g}_{3}$).]$|A(\delta_{1})+2A(\delta_{2})-(p_{i+1}-p_{i})R|<\epsilon$

        Also as $z\to \delta_{1}$. then From $U\langle\hat{\alpha}\cup{(\delta_{1},1)}\cup{(\gamma,p_{i+1})}  \rangle=\langle \zeta \rangle$, we have
        
         \item[($\mathfrak{h}_{1}$).] $|p_{i+1}R-2A(\delta_{1})|<\epsilon$
        \item [($\mathfrak{h}_{2}$).]$|A(\delta_{2})-2A(\delta_{1})|<\epsilon$
        \item[($\mathfrak{h}_{3}$).]$|A(\delta_{2})+2A(\delta_{1})-p_{i+1}R|<\epsilon$

        Recall that we always have ($\clubsuit$).$|A(\delta_{1})+p_{i}R-A(\delta_{2})|<\epsilon$, since $A(\alpha_{k+1})-A(\alpha_{k})<\epsilon$.

        We exclude the pairs (($\mathfrak{e}_{i_{1}}$),($\mathfrak{f}_{i_{2}}$),($\mathfrak{g}_{i_{3}}$),($\mathfrak{h}_{i_{4}}$)) in the same way as Type $(A_{1})$.

        \end{enumerate}
        
        At first, we prove the next lemma.
        
        \begin{lem}\label{afo}
        Any above pairs (($\mathfrak{e}_{i_{1}}$),($\mathfrak{f}_{i_{2}}$),($\mathfrak{g}_{i_{3}}$),($\mathfrak{h}_{i_{4}}$)) causes a contradiction except for the following cases.
        
        $(i_{1},i_{2},i_{3},i_{4})$=$(3,3,1,2)$, $(3,3,1,3)$, $(3,3,3,2)$.

        \end{lem}
        
To prove Lemma \ref{afo}, first of all, we will exclude pairs (($\mathfrak{e}_{i_{1}}$),($\mathfrak{f}_{i_{2}}$)) such that we can derive  contradictions by only them.

  \begin{cla}\label{abu}
The following each pair (($\mathfrak{e}_{i_{1}}$),($\mathfrak{f}_{i_{2}}$)) causes a contradiction.

$(i_{1},i_{2})$=$(1,1)$, $(3,1)$, $(2,2)$, $(1,3)$.
\end{cla}

\begin{proof}[\bf Proof of Claim \ref{abu}]

\item Case $(i_{1},i_{2})$=$(1,1)$

This is trivial.

\item Case $(i_{1},i_{2})$=$(3,1)$

By cancelling the term $2A(\eta)$ from ($\mathfrak{e}_{3}$).$|p_{i+1}R-(2A(\eta)+A(\delta_{2}))|<\epsilon$ and ($\mathfrak{f}_{1}$).$|(p_{i+1}-p_{i})R-2A(\eta)|<\epsilon$, we have $|p_{i}R-A(\delta_{2})|<3\epsilon$. But this contradicts ($\clubsuit$).$|A(\delta_{1})+p_{i}R-A(\delta_{2})|<\epsilon$.

\item Case $(i_{1},i_{2})$=$(2,2)$

By cancelling the term $2A(\eta)$ from ($\mathfrak{e}_{2}$).$|A(\delta_{2})-2A(\eta)|<\epsilon$ and ($\mathfrak{f}_{2}$).$|A(\delta_{1})-2A(\eta)|<\epsilon$, we have $|A(\delta_{1})-A(\delta_{2})|<2\epsilon$. This  obviously contradict ($\clubsuit$).$|A(\delta_{1})+p_{i}R-A(\delta_{2})|<\epsilon$.

\item Case $(i_{1},i_{2})$=$(1,3)$

($\mathfrak{e}_{1}$).$|p_{i+1}R-2A(\eta)|<\epsilon$ and ($\mathfrak{f}_{3}$).$|A(\delta_{1})+2A(\eta)-(p_{i+1}-p_{i})R|<\epsilon$ imply that $|A(\delta_{1})+p_{i}R|<2\epsilon$. This is a contradiction.
\end{proof}

\begin{cla}\label{adddv}
        Any pairs (($\mathfrak{e}_{i_{1}}$),($\mathfrak{f}_{i_{2}}$),($\mathfrak{g}_{i_{3}}$)) causes a contradiction except for the following cases.
        
        $(i_{1},i_{2},i_{3})$=$(3,3,1)$, $(3,3,3)$.
        
\end{cla}

\begin{proof}[\bf Proof of Claim \ref{adddv}]

At first, note that  ($\mathfrak{g}_{2}$).$|A(\delta_{1})-2A(\delta_{2})|<\epsilon$ obviously contradicts ($\clubsuit$).$|A(\delta_{1})+p_{i}R-A(\delta_{2})|<\epsilon$. So we have only to consider the cases $i_{3}=1$, $2$.

Hence by Claim \ref{abu}, we can find that it is sufficient to exclude the cases $(i_{1},i_{2},i_{3})=(2,1,1)$, $(2,1,3)$, $(1,2,1)$, $(1,2,3)$, $(3,2,1)$, $(3,2,3)$, $(2,3,1)$, $(2,3,3)$.

\item Case $(i_{1},i_{2},i_{3})$=$(2,1,1)$

By cancelling the term $2A(\eta)$ from ($\mathfrak{e}_{2}$).$|A(\delta_{2})-2A(\eta)|<\epsilon$ and ($\mathfrak{f}_{1}$).$|(p_{i+1}-p_{i})R-2A(\eta)|<\epsilon$, we have $|A(\delta_{2})-(p_{i+1}-p_{i})R|<2\epsilon$. This contradicts  ($\mathfrak{g}_{1}$).$|(p_{i+1}-p_{i})R-2A(\delta_{2})|<\epsilon$.

\item Case $(i_{1},i_{2},i_{3})$=$(2,1,3)$

    In the same way as above, by cancelling the term $2A(\eta)$ from ($\mathfrak{e}_{2}$).$|A(\delta_{2})-2A(\eta)|<\epsilon$ and ($\mathfrak{f}_{1}$).$|(p_{i+1}-p_{i})R-2A(\eta)|<\epsilon$, we have $|A(\delta_{2})-(p_{i+1}-p_{i})R|<2\epsilon$. This contradicts  ($\mathfrak{g}_{3}$).$|A(\delta_{1})+2A(\delta_{2})-(p_{i+1}-p_{i})R|<\epsilon$.

\item Case $(i_{1},i_{2},i_{3})$=$(1,2,1)$

By cancelling the terms $2A(\eta)$ and $A(\delta_{1})$ from
($\mathfrak{e}_{1}$).$|p_{i+1}R-2A(\eta)|<\epsilon$,  ($\mathfrak{f}_{2}$).$|A(\delta_{1})-2A(\eta)|<\epsilon$ and ($\clubsuit$).$|A(\delta_{1})+p_{i}R-A(\delta_{2})|<\epsilon$, we have $|A(\delta_{2})-(p_{i+1}+p_{i})R|<3\epsilon$. This contradicts  ($\mathfrak{g}_{1}$).$|(p_{i+1}-p_{i})R-2A(\delta_{2})|<\epsilon$.

\item Case $(i_{1},i_{2},i_{3})$=$(1,2,3)$

In the same way as above, by cancelling the terms $2A(\eta)$ and $A(\delta_{1})$ from
($\mathfrak{e}_{1}$).$|p_{i+1}R-2A(\eta)|<\epsilon$,  ($\mathfrak{f}_{2}$).$|A(\delta_{1})-2A(\eta)|<\epsilon$ and ($\clubsuit$).$|A(\delta_{1})+p_{i}R-A(\delta_{2})|<\epsilon$, we have $|A(\delta_{2})-(p_{i+1}+p_{i})R|<3\epsilon$. This contradicts  ($\mathfrak{g}_{3}$).$|A(\delta_{1})+2A(\delta_{2})-(p_{i+1}-p_{i})R|<\epsilon$.

\item Case $(i_{1},i_{2},i_{3})$=$(3,2,1)$

By cancelling the terms $2A(\eta)$ and $A(\delta_{1})$ from ($\mathfrak{e}_{3}$).$|p_{i+1}R-(2A(\eta)+A(\delta_{2}))|<\epsilon$, ($\mathfrak{f}$).$|A(\delta_{i})-2A(\eta)|<\epsilon$ and ($\clubsuit$).$|A(\delta_{1})+p_{i}R-A(\delta_{2})|<\epsilon$, we have $|2A(\delta_{2})-(p_{i+1}+p_{i})R|<3\epsilon$. This contradicts ($\mathfrak{g}_{1}$).$|(p_{i+1}-p_{i})R-2A(\delta_{2})|<\epsilon$.

\item Case $(i_{1},i_{2},i_{3})$=$(3,2,3)$

In the same way as above, by cancelling the terms $2A(\eta)$ and $A(\delta_{1})$ from ($\mathfrak{e}_{3}$).$|p_{i+1}R-(2A(\eta)+A(\delta_{2}))|<\epsilon$, ($\mathfrak{f}$).$|A(\delta_{i})-2A(\eta)|<\epsilon$ and ($\clubsuit$).$|A(\delta_{1})+p_{i}R-A(\delta_{2})|<\epsilon$, we have $|2A(\delta_{2})-(p_{i+1}+p_{i})R|<3\epsilon$. This contradicts   ($\mathfrak{g}_{3}$).$|A(\delta_{1})+2A(\delta_{2})-(p_{i+1}-p_{i})R|<\epsilon$.

\item Case $(i_{1},i_{2},i_{3})$=$(2,3,1)$

 ($\mathfrak{e}_{2}$).$|A(\delta_{2})-2A(\eta)|<\epsilon$, ($\mathfrak{f}_{3}$).$|A(\delta_{1})+2A(\eta)-(p_{i+1}-p_{i})R|<\epsilon$ and ($\mathfrak{g}_{1}$).$|(p_{i+1}-p_{i})R-2A(\delta_{2})|<\epsilon$ imply that $|A(\delta_{1})-A(\delta_{2})|<3\epsilon$. This contradicts ($\clubsuit$).$|A(\delta_{1})+p_{i}R-A(\delta_{2})|<\epsilon$.

\item Case $(i_{1},i_{2},i_{3})$=$(2,3,3)$

($\mathfrak{e}_{2}$).$|A(\delta_{2})-2A(\eta)|<\epsilon$ and ($\mathfrak{f}_{3}$).$|A(\delta_{1})+2A(\eta)-(p_{i+1}-p_{i})R|<\epsilon$ imply $|A(\delta_{1})+A(\delta_{2})-(p_{i+1}-p_{i})R|<2\epsilon$. This obviously contradicts  ($\mathfrak{g}_{3}$).$|A(\delta_{1})+2A(\delta_{2})-(p_{i+1}-p_{i})R|<\epsilon$.

\end{proof}

\begin{proof}[\bf Proof of Lemma \ref{afo}]

By the above arguments, we can find that the remaining cases are  $(i_{1},i_{2},i_{3},i_{4})=(3,3,1,1)$, $(3,3,1,2)$, $(3,3,1,3)$, $(3,3,3,1)$, $(3,3,3,2)$, $(3,3,3,3)$.

To prove the lemma, we will exclude the cases $(i_{1},i_{2},i_{3},i_{4})=(3,3,1,1)$, $(3,3,1,3)$ $(3,3,3,1)$, $(3,3,3,3)$ as follows.

\item Case $(i_{1},i_{2},i_{3},i_{4})=(3,3,1,1)$

By cancelling the term $p_{i+1}R$ from ($\mathfrak{g}_{1}$).$|(p_{i+1}-p_{i})R-2A(\delta_{2})|<\epsilon$ and ($\mathfrak{h}_{1}$).$|p_{i+1}R-2A(\delta_{1})|<\epsilon$, we have $|A(\delta_{1})-\frac{1}{2}p_{i}R-A(\delta_{2})|<\epsilon$. This contradicts ($\clubsuit$).$|A(\delta_{1})+p_{i}R-A(\delta_{2})|<\epsilon$

\item Case $(i_{1},i_{2},i_{3},i_{4})=(3,3,3,1)$

By cancelling the term $A(\delta_{1})$ from ($\mathfrak{g}_{3}$).$|A(\delta_{1})+2A(\delta_{2})-(p_{i+1}-p_{i})R|<\epsilon$ and ($\mathfrak{h}_{1}$). $|p_{i+1}R-2A(\delta_{1})|<\epsilon$, we have $|4A(\delta_{2})-(p_{i+1}-2p_{i})R|<3\epsilon$. Also from ($\mathfrak{h}_{1}$). $|p_{i+1}R-2A(\delta_{1})|<\epsilon$ and ($\clubsuit$).$|A(\delta_{1})+p_{i}R-A(\delta_{2})|<\epsilon$, we have $|2A(\delta_{2})-(p_{i+1}+2p_{i})R|<3\epsilon$.

$|4A(\delta_{2})-(p_{i+1}-2p_{i})R|<3\epsilon$ and $|2A(\delta_{2})-(p_{i+1}+2p_{i})R|<3\epsilon$ imply $|(p_{i+1}+6p_{i})R|<9\epsilon$. This is a contradiction.

\item Case $(i_{1},i_{2},i_{3},i_{4})=(3,3,3,3)$

By cancelling the term $p_{i+1}R$ from ($\mathfrak{g}_{3}$).$|A(\delta_{1})+2A(\delta_{2})-(p_{i+1}-p_{i})R|<\epsilon$ and ($\mathfrak{h}_{3}$).$|A(\delta_{2})+2A(\delta_{1})-p_{i+1}R|<\epsilon$, we have $|A(\delta_{2})-A(\delta_{1})+p_{i}R|<2\epsilon$. This contradicts ($\clubsuit$).$|A(\delta_{1})+p_{i}R-A(\delta_{2})|<\epsilon$.

Combining the above consequences, we finish the proof of Lemma \ref{afo}.
\end{proof}

By Lemma \ref{afo}, we still have the following pairs.

 $(i_{1},i_{2},i_{3},i_{4})$=$(3,3,1,2)$, $(3,3,1,3)$, $(3,3,3,2)$.
 
 From these pairs. we can decide the approximate relations as follows.
 
 \item[Case]  $(i_{1},i_{2},i_{3},i_{4})$=$(3,3,1,2)$
 
From  ($\mathfrak{e}_{3}$).$|p_{i+1}R-(2A(\eta)+A(\delta_{2}))|<\epsilon$, ($\mathfrak{f}_{3}$).$|A(\delta_{1})+2A(\eta)-(p_{i+1}-p_{i})R|<\epsilon$, ($\mathfrak{g}_{1}$).$|(p_{i+1}-p_{i})R-2A(\delta_{2})|<\epsilon$,  ($\mathfrak{h}_{2}$).$|A(\delta_{2})-2A(\delta_{1})|<\epsilon$ and ($\clubsuit$).$|A(\delta_{1})+p_{i}R-A(\delta_{2})|<\epsilon$, we have

\begin{equation}\label{i}
    A(\delta_{1})\approx p_{i}R,\,\,A(\delta_{2})\approx2p_{i}R,\,\,A(\eta)\approx\frac{3}{2}p_{i}R
\end{equation}

 \item [ Case]  $(i_{1},i_{2},i_{3},i_{4})$=$(3,3,1,3)$
 
 From  ($\mathfrak{e}_{3}$).$|p_{i+1}R-(2A(\eta)+A(\delta_{2}))|<\epsilon$, ($\mathfrak{f}_{3}$).$|A(\delta_{1})+2A(\eta)-(p_{i+1}-p_{i})R|<\epsilon$, ($\mathfrak{g}_{1}$).$|(p_{i+1}-p_{i})R-2A(\delta_{2})|<\epsilon$,  ($\mathfrak{h}_{3}$).$|A(\delta_{2})+2A(\delta_{1})-p_{i+1}R|<\epsilon$ and ($\clubsuit$).$|A(\delta_{1})+p_{i}R-A(\delta_{2})|<\epsilon$, we have

\begin{equation}\label{ii}
    A(\delta_{1})\approx 2p_{i}R,\,\,A(\delta_{2})\approx 3p_{i}R,\,\,A(\eta)\approx 2p_{i}R
\end{equation}.

 \item [\bf Case]  $(i_{1},i_{2},i_{3},i_{4})$=$(3,3,3,2)$
 
 From  ($\mathfrak{e}_{3}$).$|p_{i+1}R-(2A(\eta)+A(\delta_{2}))|<\epsilon$, ($\mathfrak{f}_{3}$).$|A(\delta_{1})+2A(\eta)-(p_{i+1}-p_{i})R|<\epsilon$, ($\mathfrak{g}_{3}$).$|A(\delta_{1})+2A(\delta_{2})-(p_{i+1}-p_{i})R|<\epsilon$,  ($\mathfrak{h}_{2}$).$|A(\delta_{2})-2A(\delta_{1})|<\epsilon$ and ($\clubsuit$).$|A(\delta_{1})+p_{i}R-A(\delta_{2})|<\epsilon$, we have

\begin{equation}\label{iii}
    A(\delta_{1})\approx p_{i}R,\,\,A(\delta_{2})\approx2p_{i}R,\,\,A(\eta)\approx2p_{i}R
\end{equation}

Recall that from the splitting behaviors of $J$-holomorphic curve counted by $U\langle \alpha_{k+1} \rangle=\langle \alpha_{k} \rangle$ as $z \to \eta$, we have three possibilities of splitting of holomorphic curve and also three possibilities of approximate relations as follows.
  \item[($\mathfrak{a}_{1}$).] $|p_{i}R-2A(\eta)|<\epsilon$
        \item [($\mathfrak{a}_{2}$).] $|A(\delta_{1})-2A(\eta)|<\epsilon$
        \item[($\mathfrak{a}_{3}$).]$|A(\delta_{2})-2A(\eta)|<\epsilon$

But it is easy to check that (\ref{i}), (\ref{ii}), (\ref{iii}) can not hold any these relation. We complete the proof of Lemma \ref{typa2}.
\end{proof}

\subsubsection{Type $(A_{3})$}

\begin{lem}\label{typa3}
Suppose that the pair of $\zeta$, $J$-holomorphic curves counted by $U\langle\hat{\alpha}\cup{(\delta_{1},1)}\cup{(\gamma,p_{i+1})}  \rangle=\langle \zeta \rangle$ and   $U\langle \zeta \rangle=\langle \hat{\alpha}\cup{(\gamma,p_{i+1}-p_{i})}\cup{(\delta_{2},1)}) \rangle$ is Type $(A_{3})$. Then, 
 the $J$-holomorphic curves counted by the $U$-map cause a contradiction.
\end{lem}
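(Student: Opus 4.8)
The plan is to argue exactly as in the proofs of Lemma \ref{typa1} and Lemma \ref{typa2}: we extract a system of approximate linear relations among the relevant actions by letting the marked point $z$ degenerate onto various Reeb orbits, and then check that every admissible combination of these relations is inconsistent.

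First I would record the combinatorial data of Type $(A_{3})$. Here $\zeta=(\hat{\alpha}\setminus\{\delta'\})\cup{(\delta_{1},1)}\cup{(\delta_{2},1)}$ with $\delta'\in\hat{\alpha}$, so the trivial part of a curve counted by $U\langle\hat{\alpha}\cup{(\delta_{1},1)}\cup{(\gamma,p_{i+1})}\rangle=\langle\zeta\rangle$ is the union of the cylinders over $(\hat{\alpha}\setminus\{\delta'\})\cup{(\delta_{1},1)}$, and that of a curve counted by $U\langle\zeta\rangle=\langle\hat{\alpha}\cup{(\gamma,p_{i+1}-p_{i})}\cup{(\delta_{2},1)}\rangle$ is the union of the cylinders over $(\hat{\alpha}\setminus\{\delta'\})\cup{(\delta_{2},1)}$. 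In particular every orbit among $\delta_{1},\delta_{2},\delta'$ and every $\eta\in\hat{\alpha}\setminus\{\delta'\}$ appears, in each of the three families of $U$-map curves (those counted by $U\langle\alpha_{k+1}\rangle=\langle\alpha_{k}\rangle$, by $U\langle\hat{\alpha}\cup{(\delta_{1},1)}\cup{(\gamma,p_{i+1})}\rangle=\langle\zeta\rangle$, and by $U\langle\zeta\rangle=\langle\hat{\alpha}\cup{(\gamma,p_{i+1}-p_{i})}\cup{(\delta_{2},1)}\rangle$), either as an end of the nontrivial part or as a trivial cylinder. Moreover, since $A(\alpha_{k+1})-A(\alpha_{k})<\epsilon$ is the sum of the two positive energies $A(\hat{\alpha}\cup{(\delta_{1},1)}\cup{(\gamma,p_{i+1})})-A(\zeta)$ and $A(\zeta)-A(\hat{\alpha}\cup{(\gamma,p_{i+1}-p_{i})}\cup{(\delta_{2},1)})$, Stokes' theorem applied to the nontrivial parts lying in $\mathcal{M}^{J}((\gamma,p_{i+1})\cup{(\delta',1)},(\delta_{2},1))$ and $\mathcal{M}^{J}((\delta_{1},1),(\gamma,p_{i+1}-p_{i})\cup{(\delta',1)})$ gives at once $|p_{i+1}R+A(\delta')-A(\delta_{2})|<\epsilon$ and $|A(\delta_{1})-(p_{i+1}-p_{i})R-A(\delta')|<\epsilon$, and hence also $|A(\delta_{1})+p_{i}R-A(\delta_{2})|<\epsilon$.

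Next I would run the degeneration argument of Case $2$ of the proof of Lemma \ref{mainlemma} with $z$ tending successively to $\delta_{1}$, $\delta_{2}$, $\delta'$ and to an arbitrary $\eta\in\hat{\alpha}\setminus\{\delta'\}$, for each of the three families. In every such limit a subsequence converges to a two-level building whose two levels have ECH index $1$ (by the additivity and nonnegativity of the ECH index together with Claim \ref{basis}), and the admissibility of ECH-index-$1$ curves and the partition conditions (Definition \ref{part}, Proposition \ref{ind}) determine the multiplicity of the ends at the degenerating orbit; since each level has $d\lambda$-energy $<\epsilon$, comparing the actions of its positive and negative ends yields a trichotomy of approximate relations, just as the families $(\mathfrak{a}_{i})$, $(\mathfrak{c}_{i})$, $(\mathfrak{d}_{i})$, $\dots$ in the proof of Lemma \ref{typa1}. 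Assembling these relations, together with the two energy identities above, produces a finite list of linear systems in the quantities $A(\delta_{1})$, $A(\delta_{2})$, $A(\delta')$, $A(\eta)$, $p_{i}R$ and $p_{i+1}R$.

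Finally I would dispose of every system on the list. Using that $n\tau R\approx m\tau R$ with $\tau>\tfrac{1}{100}$ forces $n=m$, each candidate system forces one of $p_{i}=0$, $p_{i+1}=0$, $p_{i+1}=p_{i}$, or $p_{i}=p_{i+1}-p_{i}$ --- the last contradicting Claim \ref{fre} because $p_{i}>p_{1}>1$ --- or forces $A(\delta_{j})\approx 0$ for some $j$, or yields an immediate numerical contradiction with the smallness of $\epsilon$; this is precisely the bookkeeping already carried out in Claims \ref{firstA1}, \ref{abu}, \ref{adddv} and Lemmas \ref{restA1}, \ref{afo}. As no configuration survives, Type $(A_{3})$ cannot occur. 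The one genuinely laborious point is this last step --- organizing the case distinction so that its completeness is evident, the number of combinations being large; each individual elimination, however, is a one-line cancellation of exactly the kind performed for Types $(A_{1})$ and $(A_{2})$.
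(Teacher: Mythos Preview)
Your approach is correct in principle and would eventually succeed, but it is far more elaborate than necessary. The paper's proof of this lemma is dramatically shorter than its proofs of Lemmas \ref{typa1} and \ref{typa2}: it performs a \emph{single} degeneration, sending $z\to\delta_{2}$ only in the family of curves counted by $U\langle\zeta\rangle=\langle\hat{\alpha}\cup{(\gamma,p_{i+1}-p_{i})}\cup{(\delta_{2},1)}\rangle$ (whose nontrivial part lies in $\mathcal{M}^{J}((\delta_{1},1),(\gamma,p_{i+1}-p_{i})\cup{(\delta',1)})$ and whose trivial part contains $\mathbb{R}\times\delta_{2}$). This yields just three alternatives,
\[
(\mathfrak{i}_{1})\ |A(\delta_{1})-(p_{i+1}-p_{i})R-2A(\delta_{2})|<\epsilon,\quad
(\mathfrak{i}_{2})\ |(p_{i+1}-p_{i})R-2A(\delta_{2})|<\epsilon,\quad
(\mathfrak{i}_{3})\ |A(\delta_{1})-2A(\delta_{2})|<\epsilon,
\]
each of which is killed in one line: $(\mathfrak{i}_{1})$ and $(\mathfrak{i}_{3})$ contradict $(\clubsuit)$ directly, while $(\mathfrak{i}_{2})$ contradicts the energy identity $|p_{i+1}R+A(\delta')-A(\delta_{2})|<\epsilon$ that you already recorded. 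No further degenerations, and no case pairing of the sort in Claims \ref{firstA1}, \ref{abu}, \ref{adddv}, are needed.

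Your proposal is not wrong --- it contains the paper's argument as a strict subset, since degenerating $z\to\delta_{2}$ in the second family is one of the many degenerations you list --- but the reference to ``precisely the bookkeeping already carried out'' in the $(A_{1})$ and $(A_{2})$ claims is misleading: those eliminations are for different systems of relations and do not literally apply here. What the paper exploits, and what you miss, is that Type $(A_{3})$ is special: the orbit $\delta_{2}$ sits in the trivial part of the second $U$-curve but is the unique negative end of the first, so the single degeneration $z\to\delta_{2}$ already interacts with both the relation $(\clubsuit)$ and the first curve's energy identity strongly enough to close out all three branches immediately.
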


\begin{proof}[\bf Proof of Lemma \ref{typa3}]

Consider the behaviors of $J$-holomorphic curves counted by $U$-map as $z\to \delta_{2}$.

Then we obtain three possibilities of the approximate relations in the actions from the splitting behaviors of the curves counted by  $U\langle \zeta \rangle=\langle \hat{\alpha}\cup{(\gamma,p_{i+1}-p_{i})}\cup{(\delta_{2},1)}) \rangle$.

\begin{enumerate}
\item[($\mathfrak{i}_{1}$).] $|A(\delta_{1})-(p_{i+1}-p_{i})R-2A(\delta_{2})|<\epsilon$
\item[($\mathfrak{i}_{2}$).] $|(p_{i+1}-p_{i})R-2A(\delta_{2})|<\epsilon$
\item[($\mathfrak{i}_{3}$).] $|A(\delta_{1})-2A(\delta_{2})|<\epsilon$.

\end{enumerate}

But Every ($\mathfrak{i}_{1}$), ($\mathfrak{i}_{2}$) and ($\mathfrak{i}_{3}$) causes a contradiction as follows

\begin{enumerate}
    \item Case ($\mathfrak{i}_{1}$)
    
     Recall ($\clubsuit$).$|A(\delta_{1})+p_{i}R-A(\delta_{2})|<\epsilon$. This obviously contradicts $|A(\delta_{1})-(p_{i+1}-p_{i})R-2A(\delta_{2})|<\epsilon$. In fact, by cancelling the term $A(\delta_{1})$ from them, we have $|p_{i+1}R+A(\delta_{2})|<2$. This is a contradiction.
     
    \item Case ($\mathfrak{i}_{2}$)
    
    Since any $J$-holomorphic curves counted by $U\langle\hat{\alpha}\cup{(\delta_{1},1)}\cup{(\gamma,p_{i+1})}  \rangle=\langle \zeta \rangle$ $\delta'\in \hat{\alpha}$  are in $\mathcal{M}^{J}((\gamma,p_{i+1})\cup{(\delta',1)},(\delta_{2},1))$, we have $|p_{i+1}R+A(\delta')-A(\delta_{2})|<\epsilon$.
    
    By cancelling the term $A(\delta_{2})$ from the above inequality and ($\mathfrak{i}_{2}$).$|(p_{i+1}-p_{i})R-2A(\delta_{2})|<\epsilon$, we have $|\frac{1}{2}(p_{i+1}+p_{i})R+A(\delta')|<\frac{3}{2}\epsilon$. This is a contradiction.
    
    \item Case ($\mathfrak{i}_{3}$)
    
   Recall ($\clubsuit$).$|A(\delta_{1})+p_{i}R-A(\delta_{2})|<\epsilon$. This obviously contradicts ($\mathfrak{i}_{3}$).$|A(\delta_{1})-2A(\delta_{2})|<\epsilon$.

\end{enumerate}

Combining the above arguments, we can see that Type $(A_{3})$ can not occur.
\end{proof}

\subsection{Type (B)}

In the same way as Type (A), the topological types of $J$-holomoprhic curves counted by $U\langle\hat{\alpha}\cup{(\gamma,p_{i+1})}  \rangle=\langle \zeta \rangle$ and  $U\langle \zeta \rangle=\langle \hat{\alpha}\cup{(\gamma,p_{i+1}-p_{i})}\cup{(\delta_{1},1)}\cup{(\delta_{2},1)}) \rangle$ are both $(g,k,l)=(0,3,0)$. 

And also we can see that there is a negative hyperbolic orbit $\delta'$ such that  $\zeta=\hat{\alpha}\cup{
    (\delta',1)}\cup{(\delta_{2},1)}$ and, any $J$-holomorphic curves counted by $U\langle\hat{\alpha}\cup{(\gamma,p_{i+1})}  \rangle=\langle \zeta \rangle$ and  $U\langle \zeta \rangle=\langle \hat{\alpha}\cup{(\gamma,p_{i+1}-p_{i})}\cup{(\delta_{1},1)}\cup{(\delta_{2},1)}) \rangle$ are in $\mathcal{M}^{J}((\gamma,p_{i+1}),(\delta',1)\cup{(\delta_{2},1)})$ and $\mathcal{M}^{J}((\delta',1),(\gamma,p_{i+1}-p_{i})\cup{(\delta_{1},1)})$ respectively.

\begin{tikzpicture}
\draw (11,-1.8) arc [start angle=90,end angle=270,x radius=0.25,y radius=0.5];
 \draw (11,-2.8)[dashed] arc [start angle=270,end angle=450,x radius=0.25,y radius=0.5];
 
 \draw (16,-1.8)--(6,-1.8) ;
  \draw (16,-2.8)--(6,-2.8) ;
  \draw (6,-1.8) arc [start angle=90,end angle=270,x radius=0.25,y radius=0.5];
 \draw (6,-2.8)[dashed] arc [start angle=270,end angle=450,x radius=0.25,y radius=0.5];
 
\draw (16,-1.8) arc [start angle=90,end angle=270,x radius=0.25,y radius=0.5];
 \draw (16,-2.8) arc [start angle=270,end angle=450,x radius=0.25,y radius=0.5];

 \draw (6,-3.3) arc [start angle=90,end angle=270,x radius=0.25,y radius=0.5];
 \draw (6,-4.3)[dashed] arc [start angle=270,end angle=450,x radius=0.25,y radius=0.5];

 \draw (11,-4) arc [start angle=90,end angle=270,x radius=0.25,y radius=0.5];
 \draw (11,-5)[dashed] arc [start angle=270,end angle=450,x radius=0.25,y radius=0.5];
 
 \draw (11,-5) to [out=180,in=0] (6,-6.5);
 \draw (6,-5.5) to [out=0,in=270] (8,-4.7);
 \draw (8,-4.7) to [out=90,in=0] (6,-4.3);
 \draw (11,-4) to [out=180,in=0] (6,-3.3);

  \draw (6,-5.5) arc [start angle=90,end angle=270,x radius=0.25,y radius=0.5];
 \draw (6,-6.5)[dashed] arc [start angle=270,end angle=450,x radius=0.25,y radius=0.5];
 
 \draw (4.6,-6) node{$(\gamma,p_{i+1}-p_{i})$} ;

 \draw (4.9,-3.9) node{$(\delta_{1},1)$} ;
  \draw (4.9,-7.8) node{$(\delta_{2},1)$} ;

\draw (17.2,-5.2) node{$(\gamma,p_{i+1})$} ;

\draw (17.1,-2.3) node{$(\eta,1) \in \hat{\alpha}$} ;
 
 \draw[densely dotted,  thick] (6,-1.5)--(6,-8.5);
  \draw[densely dotted,  thick] (11,-1.5)--(11,-8.5) ;
   \draw[densely dotted,  thick] (16,-1.5)--(16,-8.5);
  
  \draw (7.2,-8.9) node{$\hat{\alpha}\cup{(\gamma,p_{i+1}-p_{i})\cup{(\delta_{1},1)\cup{(\delta_{2},1)}}}$};
  \draw (16,-8.9) node{$\hat{\alpha}\cup{(\gamma,p_{i+1})}$};
  \draw (12,-8.9) node{$\hat{\alpha}\cup{(\delta',1)}\cup{(\delta_{2},1)}$};
  \draw (11.5,-5.5) node{$(\delta',1)$};
  
   \draw (16,-4.7) arc [start angle=90,end angle=270,x radius=0.25,y radius=0.5];
 \draw (16,-5.7) arc [start angle=270,end angle=450,x radius=0.25,y radius=0.5];
 
  \draw (11,-4) to [out=0,in=180] (16,-4.7);
 \draw (11,-5) to [out=0,in=90] (13,-6.1);
 \draw (13,-6.1) to [out=270,in=0] (11,-7.3);
 \draw (16,-5.7) to [out=180,in=0] (11,-8.3);
 
  \draw (11,-7.3) arc [start angle=90,end angle=270,x radius=0.25,y radius=0.5];
 \draw (11,-8.3)[dashed] arc [start angle=270,end angle=450,x radius=0.25,y radius=0.5];
 
 \draw (6,-7.3) arc [start angle=90,end angle=270,x radius=0.25,y radius=0.5];
 \draw (6,-8.3)[dashed] arc [start angle=270,end angle=450,x radius=0.25,y radius=0.5];
 
  \draw (11,-7.3)--(6,-7.3) ;
  \draw (11,-8.3)--(6,-8.3) ;

  \draw (11,-4) arc [start angle=90,end angle=270,x radius=0.25,y radius=0.5];
 \draw (11,-5)[dashed] arc [start angle=270,end angle=450,x radius=0.25,y radius=0.5];

\draw (11,-9.8) node{\Large Type $(B)$} ;
\end{tikzpicture}

\begin{lem}\label{typb}
Suppose that the pair of $\zeta$, $J$-holomorphic curves counted by $U\langle\hat{\alpha}\cup{(\gamma,p_{i+1})}  \rangle=\langle \zeta \rangle$ and   $U\langle \zeta \rangle=\langle \hat{\alpha}\cup{(\gamma,p_{i+1}-p_{i})}\cup{(\delta_{1},1)}\cup{(\delta_{2},1)}) \rangle$ are above. Then, one of the following holds.

\item[($\Delta_{1}$).] $\frac{3}{2}p_{i}=p_{i+1}$. Moreover
$A(\delta_{1})\approx \frac{1}{2}p_{i}R$, $A(\delta_{2})\approx \frac{1}{2}p_{i}R$ and for each $\eta \in \hat{\alpha}$, either $A(\eta)\approx \frac{1}{2}p_{i}R$ or $A(\eta)\approx \frac{1}{4}p_{i}R$.

\item[($\Delta_{2}$).]$\frac{4}{3}p_{i}=p_{i+1}$. Moreover, $A(\delta_{1})\approx \frac{2}{3}p_{i}R$, $A(\delta_{2})\approx \frac{1}{3}p_{i}R$ and for each $\eta \in \hat{\alpha}$, either $A(\eta)\approx \frac{1}{2}p_{i}R$ or $A(\eta)\approx \frac{1}{6}p_{i}R$.
\end{lem}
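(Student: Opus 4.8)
The plan is to run for Type (B) exactly the scheme already carried out for Type (A) in this subsection. By Claim \ref{basis} we have $E(\zeta)=0$, and since $p_{i+1}\in S_{-\theta}$ and $p_{i+1}-p_{i}\in S_{\theta}$, the curves counted by the two $U$-maps $U\langle\hat{\alpha}\cup(\gamma,p_{i+1})\rangle=\langle\zeta\rangle$ and $U\langle\zeta\rangle=\langle\hat{\alpha}\cup(\gamma,p_{i+1}-p_{i})\cup(\delta_{1},1)\cup(\delta_{2},1)\rangle$ are all of topological type $(g,k,l)=(0,3,0)$, which (with the partition conditions) forces $\zeta=\hat{\alpha}\cup(\delta',1)\cup(\delta_{2},1)$ for a negative hyperbolic orbit $\delta'$, the two families lying in $\mathcal{M}^{J}((\gamma,p_{i+1}),(\delta',1)\cup(\delta_{2},1))$ and $\mathcal{M}^{J}((\delta',1),(\gamma,p_{i+1}-p_{i})\cup(\delta_{1},1))$ respectively, as already recorded just before the statement. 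From the ordering of actions along the $U$-sequence $(\ref{importantiso})$ together with $A(\alpha_{k+1})-A(\alpha_{k})<\epsilon$ we keep the three fixed relations
\[
|p_{i+1}R-A(\delta')-A(\delta_{2})|<\epsilon,\qquad |A(\delta')-(p_{i+1}-p_{i})R-A(\delta_{1})|<\epsilon,\qquad (\spadesuit)\ |p_{i}R-A(\delta_{1})-A(\delta_{2})|<\epsilon.
\]

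Next I would extract the local relations. For each of the orbits $\eta\in\hat{\alpha}$, $\delta_{1}$, $\delta_{2}$ (and, if needed, $\delta'$), I send the generic base point $z$ to that orbit inside each of the three relevant curve families; by the ECH/SFT compactness argument used repeatedly in Sections 5 and 6 the limiting curve breaks into two floors of ECH index $1$ each, the new ends at the chosen orbit have multiplicity $1$ (negative hyperbolic, by admissibility of index-$1$ curves), and the $d\lambda$-areas add. This produces, in each degeneration, a trichotomy of approximate relations among $A(\delta_{1})$, $A(\delta_{2})$, $A(\delta')$, $A(\eta)$, $p_{i}R$, $p_{i+1}R$ — the analogues of the families $(\mathfrak c_{\bullet})$, $(\mathfrak d_{\bullet})$, $(\mathfrak e_{\bullet})$, $(\mathfrak f_{\bullet})$, $(\mathfrak g_{\bullet})$, $(\mathfrak h_{\bullet})$ of the Type (A) proof. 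To all of these I adjoin the relations $(\mathfrak b_{1})$, $(\mathfrak b_{2})$, $(\mathfrak b_{3})$ already obtained in the Type (B) part of Lemma \ref{exc} from the $z\to\eta$ degeneration of the original family $U\langle\alpha_{k+1}\rangle=\langle\alpha_{k}\rangle$.

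The heart of the argument is then the bookkeeping, performed exactly as in Claims \ref{firstA1}, \ref{abu}, \ref{adddv} and Lemmas \ref{restA1}, \ref{afo}. Running through all compatible choices of splitting type, cancelling terms in each case yields an estimate of the form $|(\text{integer})\cdot R|<C\epsilon$, which forces that integer to vanish; depending on it this is either an outright contradiction ($|p_{i}R|<C\epsilon$, $|p_{i+1}R|<C\epsilon$, etc., impossible as $R>\tfrac1{100}\cdot\min A$), or gives $p_{i}=p_{i+1}-p_{i}$ contradicting Claim \ref{fre}, or gives $A(\delta)\approx0$ for some orbit; in the few remaining branches the elimination instead produces one of the two admissible integer relations $2p_{i+1}=3p_{i}$ or $3p_{i+1}=4p_{i}$. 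As in Lemma \ref{exc}, some splittings must be discarded beforehand by the parity obstruction $(\ref{mod2})$, since the corresponding broken curve would have an admissible intermediate orbit set at two floors of odd index. After this sieve exactly the two systems corresponding to $2p_{i+1}=3p_{i}$ and $3p_{i+1}=4p_{i}$ survive, and solving each gives the stated values $A(\delta_{1})\approx A(\delta_{2})\approx\tfrac12 p_{i}R$, $A(\eta)\in\{\tfrac12 p_{i}R,\tfrac14 p_{i}R\}$ in case $(\Delta_{1})$, and $A(\delta_{1})\approx\tfrac23 p_{i}R$, $A(\delta_{2})\approx\tfrac13 p_{i}R$, $A(\eta)\in\{\tfrac12 p_{i}R,\tfrac16 p_{i}R\}$ in case $(\Delta_{2})$; one checks directly that $(\spadesuit)$ and one of $(\mathfrak b_{1})$–$(\mathfrak b_{3})$ hold in both, so nothing further is ruled out.

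The hard part will be the case analysis itself rather than any individual computation: once all base-point limits and all three curve families are combined there are on the order of several dozen combinations of splitting patterns, and the delicate point is to confirm that \emph{exactly} the two advertised systems survive and, in each surviving branch, that the fractions $\tfrac32$ and $\tfrac43$ are forced by the relations rather than merely consistent with them. This requires care about which elementary arithmetic of $S_{-\theta}$ is invoked in each elimination (Claim \ref{fre}, and the monotonicity and divergence of the gaps from Proposition \ref{s}), and about verifying in advance, for every splitting relation used, that the limiting two-floor curve genuinely has the topological type that the relation presupposes, so that the $(\ref{mod2})$ parity argument applies where needed.
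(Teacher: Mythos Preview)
Your plan is the paper's approach, but you overcount the degenerations that are actually available. The splitting argument only forces a break when $z$ limits to an orbit lying in the \emph{trivial cylinder part} $u_{0}$ of the curve in question; otherwise there is no intersection with $u_{0}$ to contradict admissibility, and the limit need not split. In Type~(B) the trivial part of the first $U$-curve (from $\hat{\alpha}\cup(\gamma,p_{i+1})$ to $\zeta$) is just $\hat{\alpha}$, and that of the second $U$-curve (from $\zeta$ to $\hat{\alpha}\cup(\gamma,p_{i+1}-p_{i})\cup(\delta_{1},1)\cup(\delta_{2},1)$) is $\hat{\alpha}\cup(\delta_{2},1)$. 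So the only forced degenerations are $z\to\eta\in\hat{\alpha}$ in both families and $z\to\delta_{2}$ in the second; your proposed limits $z\to\delta_{1}$ and $z\to\delta'$ do not a priori yield breaking, and you should drop them. This leaves exactly three trichotomies (the paper calls them $(\mathfrak{j}_{\bullet})$, $(\mathfrak{k}_{\bullet})$, $(\mathfrak{l}_{\bullet})$), together with $(\spadesuit)$ and the $(\mathfrak{b}_{\bullet})$ from the original curve.

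With that correction the elimination goes exactly as you outline, and the parity obstruction $(\ref{mod2})$ is never needed here: purely arithmetic cancellations (using $(\spadesuit)$, Claim~\ref{fre}, and $S_{\theta}\cap S_{-\theta}=\{1\}$) cut the $27$ triples $(i_{1},i_{2},i_{3})$ down to five survivors $(1,1,3)$, $(1,2,1)$, $(1,2,3)$, $(3,3,1)$, $(3,3,3)$, one of which ($(3,3,1)$) additionally requires $(\mathfrak{b}_{3})$ to pin down $A(\eta)$. Solving each survivor gives precisely the two systems $(\Delta_{1})$ and $(\Delta_{2})$, with the relations $2p_{i+1}=3p_{i}$ and $3p_{i+1}=4p_{i}$ forced (not merely consistent) by combining $(\spadesuit)$ with the $(\mathfrak{l}_{\bullet})$ relation in each branch.
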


\begin{proof}[\bf Proof of Proposition \ref{nagai}]

Since ($\Delta_{1}$) and ($\Delta_{2}$) are correspond to ($b$) and ($c$) respectively, combine with the result of Type (A), we complete the proof of Proposition \ref{nagai}.
\end{proof}

\begin{proof}[\bf Proof of Lemma \ref{typb}]

Let $z\to \eta$. Then we obtain three  possibilities from the splitting behaviors of the curves counted by $U\langle\hat{\alpha}\cup{(\gamma,p_{i+1})}  \rangle=\langle \zeta \rangle$.

       \item[($\mathfrak{j}_{1}$).] $|A(\delta_{2})-2A(\eta)|<\epsilon$
        \item [($\mathfrak{j}_{2}$).] $|p_{i+1}R-2A(\eta)|<\epsilon$
        \item[($\mathfrak{j}_{3}$).]$|p_{i+1}R-(2A(\eta)+A(\delta_{2}))|<\epsilon$
        
        By the same way, we have three possibilities of the approximate actions in the orbits from the splitting behaviors of the curves counted by $U\langle \zeta \rangle=\langle \hat{\alpha}\cup{(\gamma,p_{i+1}-p_{i})}\cup{(\delta_{1},1)}\cup{(\delta_{2},1)}) \rangle$.
        
         \item[($\mathfrak{k}_{1}$).] $|A(\delta_{1})-2A(\eta)|<\epsilon$
        \item [($\mathfrak{k}_{2}$).] $|(p_{i+1}-p_{i})R-2A(\eta)|<\epsilon$
        \item[($\mathfrak{k}_{3}$).]$|(p_{i+1}-p_{i})R+A(\delta_{1})-2A(\eta)|<\epsilon$
        
        Also let $z\to \delta_{2}$. then by the splitting behaviors of the curves counted by $U\langle \zeta \rangle=\langle \hat{\alpha}\cup{(\gamma,p_{i+1}-p_{i})}\cup{(\delta_{1},1)}\cup{(\delta_{2},1)}) \rangle$, we have
        
     \item[($\mathfrak{l}_{1}$).] $|A(\delta_{1})-2A(\delta_{2})|<\epsilon$
        \item [($\mathfrak{l}_{2}$).] $|(p_{i+1}-p_{i})R-2A(\delta_{2})|<\epsilon$
        \item[($\mathfrak{l}_{3}$).]$|(p_{i+1}-p_{i})R+A(\delta_{1})-2A(\delta_{2})|<\epsilon$

        Recall that we always have ($\spadesuit$).$|A(\delta_{1})+A(\delta_{2})-p_{i}R|<\epsilon$, since $A(\alpha_{k+1})-A(\alpha_{k})<\epsilon$.
         
        As is the same with so far, Some pair (($\mathfrak{j}_{i_{1}}$),($\mathfrak{k}_{i_{2}}$),($\mathfrak{l}_{i_{3}}$)) cause contradictions as follows.

\begin{lem}\label{typB}
Any above pairs (($\mathfrak{j}_{i_{1}}$),($\mathfrak{k}_{i_{2}}$),($\mathfrak{l}_{i_{3}}$)) causes a contradiction except for the following cases.

$(i_{1},i_{2},i_{3})=(1,1,3),(1,2,1),(1,2,3),(3,3,1),(3,3,3)$.

\end{lem}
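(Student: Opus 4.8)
The plan is to proceed exactly as in the proof of Lemma \ref{restA1} (Type $(A_1)$): we must eliminate, case by case, every triple $(i_1,i_2,i_3)$ for which the system consisting of one relation from $(\mathfrak{j}_{i_1})$, one from $(\mathfrak{k}_{i_2})$, one from $(\mathfrak{l}_{i_3})$, together with the universally valid $(\spadesuit)$, forces a contradiction — either a relation of the form $|(\text{positive combination of } p_i R \text{ and action terms})| < C\epsilon$ with $\epsilon$ tiny, or an equality $p_i = p_{i+1}-p_i$ contradicting Claim \ref{fre}. The output will be a long sequence of one-line eliminations, each obtained by cancelling a common term ($2A(\eta)$, $2A(\delta_2)$, $A(\delta_1)$, or $p_{i+1}R$) between two or three of the four available inequalities.

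First I would organize the bookkeeping: there are $27$ triples, but many can be killed by looking only at a \emph{pair} of the three families, so I would first isolate the pairs $(\mathfrak{j}_{i_1},\mathfrak{k}_{i_2})$ that are already contradictory (mirroring Claim \ref{firstA1}), then among the survivors bring in $(\mathfrak{l}_{i_3})$ (mirroring the second half of the proof of Lemma \ref{restA1}), and finally invoke $(\spadesuit)$ where needed. For instance: $(\mathfrak{j}_1)$ and $(\mathfrak{k}_1)$ combined with $(\spadesuit)$ give $|p_i R| < 3\epsilon$, a contradiction, so all triples $(1,1,i_3)$ except possibly those where $(\mathfrak{l}_{i_3})$ is needed are handled — but here one checks $(1,1,1)$ and $(1,1,2)$ die this way while $(1,1,3)$ survives; $(\mathfrak{j}_2)$ and $(\mathfrak{k}_2)$ give $|p_i R|<2\epsilon$, killing all $(2,2,i_3)$; $(\mathfrak{j}_1)$ with $(\mathfrak{l}_1)$ gives $|A(\delta_1)-2A(\eta)|$ vs $|A(\delta_1)-2A(\delta_2)|$ forcing $A(\eta)\approx A(\delta_2)$, which then combines with other relations; and the Claim \ref{fre} obstruction appears exactly when cancellation yields $|(p_{i+1}-2p_i)R|<C\epsilon$ or $|(2(p_{i+1}-p_i)-p_{i+1})R|<C\epsilon$. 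I would present these eliminations grouped by the value of $(i_1,i_2)$, noting after each group which triples remain.

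The main obstacle — really just tedium rather than depth — is making sure the bookkeeping is exhaustive and that for each of the five surviving triples $(1,1,3)$, $(1,2,1)$, $(1,2,3)$, $(3,3,1)$, $(3,3,3)$ the system is genuinely \emph{consistent} (so we cannot eliminate it), which is why the lemma stops there; these five are precisely the ones that will be fed into the subsequent computation of approximate actions leading to $(\Delta_1)$ and $(\Delta_2)$. A secondary subtlety is that $(\mathfrak{l}_{i_3})$ comes from the splitting as $z \to \delta_2$ of curves counted by $U\langle \zeta\rangle = \langle \hat{\alpha}\cup(\gamma,p_{i+1}-p_i)\cup(\delta_1,1)\cup(\delta_2,1)\rangle$, and one must be careful that $\delta_2 \neq \delta'$ and $\delta_2 \notin \hat\alpha$ so that the partition/admissibility argument (as in Case (B)(ii) of Lemma \ref{exc}) correctly produces the three alternatives; I would state this once at the outset and then use it freely. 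After Lemma \ref{typB} is established, the remaining work — substituting the five surviving triples into the linear system and reading off $A(\delta_1)$, $A(\delta_2)$, $A(\eta)$ and the ratio $p_{i+1}/p_i$, then discarding those incompatible with $(\mathfrak{b}_1)$–$(\mathfrak{b}_3)$ — is routine and parallels the end of the Type $(A_1)$ and Type $(A_2)$ arguments.
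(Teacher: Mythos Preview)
Your overall strategy --- first kill pairs $(\mathfrak{j}_{i_1},\mathfrak{k}_{i_2})$, then bring in $(\mathfrak{l}_{i_3})$ and $(\spadesuit)$ --- is exactly the paper's, and the bookkeeping organization you describe is sound. However, there are two concrete slips.

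First, your sample computation for $(i_1,i_2)=(1,1)$ is wrong: you claim $(\mathfrak{j}_1)$, $(\mathfrak{k}_1)$ and $(\spadesuit)$ give $|p_iR|<3\epsilon$, but that is the Type~(A) computation with $(\clubsuit)$, not $(\spadesuit)$. In Type~(B) one has $(\spadesuit)$: $|A(\delta_1)+A(\delta_2)-p_iR|<\epsilon$, so $(\mathfrak{j}_1)+(\mathfrak{k}_1)$ yield $A(\delta_1)\approx A(\delta_2)$ and then $(\spadesuit)$ only gives $A(\delta_1)\approx A(\delta_2)\approx\tfrac12 p_iR$ --- no contradiction. (Indeed if it were contradictory, $(1,1,3)$ could not survive, as your own sentence implicitly notes.) The eliminations of $(1,1,1)$ and $(1,1,2)$ genuinely need $(\mathfrak{l}_1)$ and $(\mathfrak{l}_2)$ respectively; the paper uses $(\mathfrak{l}_2)+(\spadesuit)$ in the latter case to force $p_i=p_{i+1}-p_i$ and invoke Claim~\ref{fre}.

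Second, and more seriously, the triple $(3,3,2)$ cannot be eliminated using only $(\mathfrak{j}_3),(\mathfrak{k}_3),(\mathfrak{l}_2),(\spadesuit)$: that system is consistent, solving to $A(\delta_1)\approx\tfrac12(3p_i-p_{i+1})R$, $A(\delta_2)\approx\tfrac12(p_{i+1}-p_i)R$, $A(\eta)\approx\tfrac14(p_{i+1}+p_i)R$. The paper eliminates it by checking these values against the three $(\mathfrak{b}_*)$ alternatives coming from the splitting of curves counted by $U\langle\alpha_{k+1}\rangle=\langle\alpha_k\rangle$ as $z\to\eta$. You defer the $(\mathfrak{b}_*)$ check to after the lemma, but for $(3,3,2)$ it is needed \emph{inside} the proof of Lemma~\ref{typB}.
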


\begin{proof}[\bf Proof of Lemma \ref{typB}]

At first, we will exclude pairs (($\mathfrak{j}_{i_{1}}$),($\mathfrak{k}_{i_{2}}$)) such that we can derive a contradiction by only them.        

\begin{cla}\label{clB}
Each of  the following pairs (($\mathfrak{j}_{i_{1}}$),($\mathfrak{k}_{i_{2}}$)) causes a contradiction.

$(i_{1},i_{2})$=$(2,1)$, $(2,2)$, $(2,3)$, $(3,1)$, $(3,2)$.

\end{cla}

\begin{proof}[\bf Proof of Claim \ref{clB}]

\item Case $(i_{1},i_{2})$=$(2,1)$

By cancelling the term $2A(\eta)$ from
($\mathfrak{j}_{2}$).$|p_{i+1}R-2A(\eta)|<\epsilon$ and ($\mathfrak{k}_{1}$).$|A(\delta_{1})-2A(\eta)|<\epsilon$, we have $|p_{i+1}R-A(\delta_{1})|<2\epsilon$. This contradicts ($\spadesuit$).$|p_{i}R-(A(\delta_{1})+A(\delta_{2}))|<\epsilon$.

\item Case $(i_{1},i_{2})$=$(2,2)$

By cancelling the term $2A(\eta)$ from
($\mathfrak{j}_{2}$).$|p_{i+1}R-2A(\eta)|<\epsilon$ and ($\mathfrak{k}_{2}$).$|(p_{i+1}-p_{i})R-2A(\eta)|<\epsilon$, we have $|p_{i}R|<2\epsilon$. This is a contradiction.

\item Case $(i_{1},i_{2})$=$(2,3)$

By cancelling the term $2A(\eta)$ from
($\mathfrak{j}_{2}$).$|p_{i+1}R-2A(\eta)|<\epsilon$ and ($\mathfrak{k}_{3}$).$|(p_{i+1}-p_{i})R+A(\delta_{1})-2A(\eta)|<\epsilon$, we have $|p_{i}R-A(\delta_{1})|<2\epsilon$. This contradicts ($\spadesuit$).$|p_{i}R-(A(\delta_{1})+A(\delta_{2}))|<\epsilon$.

\item Case $(i_{1},i_{2})$=$(3,1)$

By cancelling the term $2A(\eta)$ from
($\mathfrak{j}_{3}$).$|p_{i+1}R-(2A(\eta)+A(\delta_{2}))|<\epsilon$ and ($\mathfrak{k}_{1}$).$|A(\delta_{1})-2A(\eta)|<\epsilon$, we have $|p_{i+1}R-(A(\delta_{1})+A(\delta_{2}))|<2\epsilon$. But this contradicts ($\spadesuit$).$|p_{i}R-(A(\delta_{1})+A(\delta_{2}))|<\epsilon$.

\item Case $(i_{1},i_{2})$=$(3,2)$

By cancelling the term $2A(\eta)$ from
($\mathfrak{j}_{3}$).$|p_{i+1}R-(2A(\eta)+A(\delta_{2}))|<\epsilon$ and ($\mathfrak{k}_{2}$).$|(p_{i+1}-p_{i})R-2A(\eta)|<\epsilon$, we have $|p_{i}R-A(\delta_{2})|<2\epsilon$. But this contradicts ($\spadesuit$).$|p_{i}R-(A(\delta_{1})+A(\delta_{2}))|<\epsilon$.
\end{proof}

 By the arguments so far, we have only to show that
each of the following pairs (($\mathfrak{j}_{i_{1}}$),($\mathfrak{k}_{i_{2}}$),($\mathfrak{l}_{i_{3}}$)) causes a contradiction.

$(i_{1},i_{2},i_{3})$=$(1,1,1)$, $(1,1,2)$, $(1,3,1)$, $(1,3,2)$, $(1,3,3)$, $(1,2,2)$, $(3,3,2)$.

\item Case $(i_{1},i_{2},i_{3})$=$(1,1,1)$

Obviously, ($\mathfrak{j}_{1}$).$|A(\delta_{2})-2A(\eta)|<\epsilon$ and ($\mathfrak{k}_{1}$).$|A(\delta_{1})-2A(\eta)|<\epsilon$ contradict
 ($\mathfrak{l}_{1}$).$|A(\delta_{1})-2A(\delta_{2})|<\epsilon$

\item Case $(i_{1},i_{2},i_{3})$=$(1,1,2)$

By cancelling the terms $A(\delta_{1})$, $A(\delta_{2})$ and $2A(\eta)$ from ($\mathfrak{j}_{1}$).$|A(\delta_{2})-2A(\eta)|<\epsilon$, ($\mathfrak{k}_{1}$).$|A(\delta_{1})-2A(\eta)|<\epsilon$ ($\mathfrak{l}_{2}$).$|(p_{i+1}-p_{i})R-2A(\delta_{2})|<\epsilon$, and ($\spadesuit$).$|p_{i}R-(A(\delta_{1})+A(\delta_{2}))|<\epsilon$, we have $|p_{i}R-(p_{i+1}-p_{i})R|<4\epsilon$. Hence $p_{i}=p_{i+1}-p_{i}$. This is a contradiction.

\item Case $(i_{1},i_{2},i_{3})$=$(1,3,1)$

By cancelling the terms $A(\delta_{1})$ and $2A(\eta)$ from ($\mathfrak{j}_{1}$).$|A(\delta_{2})-2A(\eta)|<\epsilon$, ($\mathfrak{k}_{3}$).$|(p_{i+1}-p_{i})R+A(\delta_{1})-2A(\eta)|<\epsilon$ and ($\spadesuit$).$|p_{i}R-(A(\delta_{1})+A(\delta_{2}))|<\epsilon$, we have $|p_{i+1}R-2A(\delta_{2})|<3\epsilon$ and hence $|(p_{i}-\frac{1}{2}p_{i+1})R-A(\delta_{1})|<\frac{5}{2}\epsilon$.

By combining with ($\mathfrak{l}_{1}$).$|A(\delta_{1})-2A(\delta_{2})|<\epsilon$, we have $|(\frac{3}{2}p_{i+1}-p_{i})R|<\frac{13}{2}\epsilon$ This is a contradiction.

\item Case $(i_{1},i_{2},i_{3})$=$(1,3,2)$

In the same way as above, from ($\mathfrak{j}_{1}$).$|A(\delta_{2})-2A(\eta)|<\epsilon$, ($\mathfrak{k}_{3}$).$|(p_{i+1}-p_{i})R+A(\delta_{1})-2A(\eta)|<\epsilon$, and ($\spadesuit$).$|p_{i}R-(A(\delta_{1})+A(\delta_{2}))|<\epsilon$, we have $|p_{i+1}R-2A(\delta_{2})|<3\epsilon$. This obviously contradicts ($\mathfrak{l}_{2}$).$|(p_{i+1}-p_{i})R-2A(\delta_{2})|<\epsilon$.

\item Case $(i_{1},i_{2},i_{3})$=$(1,3,3)$

In the same way as above, from ($\mathfrak{j}_{1}$).$|A(\delta_{2})-2A(\eta)|<\epsilon$, ($\mathfrak{k}_{3}$).$|(p_{i+1}-p_{i})R+A(\delta_{1})-2A(\eta)|<\epsilon$, and ($\spadesuit$).$|p_{i}R-(A(\delta_{1})+A(\delta_{2}))|<\epsilon$, we have $|p_{i+1}R-2A(\delta_{2})|<3\epsilon$ and $|(p_{i}-\frac{1}{2}p_{i+1})R-A(\delta_{1})|<\frac{5}{2}\epsilon$.

By combining with ($\mathfrak{l}_{3}$).$|(p_{i+1}-p_{i})R+A(\delta_{1})-2A(\delta_{2})|<\epsilon$, we have $|\frac{1}{2}p_{i+1}R|<\frac{13}{2}\epsilon$.

\item Case $(i_{1},i_{2},i_{3})$=$(1,2,2)$

By cancelling the term $2A(\eta)$ from ($\mathfrak{j}_{1}$).$|A(\delta_{2})-2A(\eta)|<\epsilon$ and  ($\mathfrak{j}_{2}$).$|p_{i+1}R-2A(\eta)|<\epsilon$, we have $|p_{i+1}R-A(\delta_{2})|<2\epsilon$. This obviously contradicts ($\mathfrak{l}_{2}$).$|(p_{i+1}-p_{i})R-2A(\delta_{2})|<\epsilon$.

\item Case $(i_{1},i_{2},i_{3})$=$(3,3,2)$

From ($\mathfrak{j}_{3}$).$|p_{i+1}R-(2A(\eta)+A(\delta_{2}))|<\epsilon$, ($\mathfrak{l}_{2}$).$|(p_{i+1}-p_{i})R-2A(\delta_{2})|<\epsilon$ and ($\spadesuit$).$|p_{i}R-A(\delta_{1})-A(\delta_{2})|<\epsilon$, we have 
\begin{equation}\label{atla}
(\frac{3}{2}p_{i}-\frac{1}{2}p_{i+1})R\approx A(\delta_{1}),\,\,\frac{1}{2}(p_{i+1}-p_{i})R\approx A(\delta_{2}),\,\,\frac{1}{4}(p_{i+1}+p_{i})R\approx A(\eta).
\end{equation}

Recall that when we consider the splitting behaviors of $J$-holomorphic curve counted by $U\langle \alpha_{k+1} \rangle=\langle \alpha_{k} \rangle$ as $z \to \eta$ for some $\eta\in \hat{\alpha}$. Then there are three possibilities of splitting of holomorphic curve and also we have three possibilities of the approximate relation in the actions as follows.
\item[($\mathfrak{b}_{1})$.] $|A(\delta_{1})-2A(\eta)|<\epsilon$
\item[$(\mathfrak{b}_{2})$.] $|A(\delta_{2})-2A(\eta)|<\epsilon$
\item[$(\mathfrak{b}_{3})$.] $|p_{i}R-2A(\eta)|<\epsilon$.

But obviously, (\ref{atla}) contradicts the above relations in any case.

Combining the arguments, we complete the proof of Lemma \ref{typB}.
\end{proof}

To complete the proof of Lemma \ref{typb}, it is sufficient to compute the approximate relations from the rest cases $(i_{1},i_{2},i_{3})=(1,1,3)$, $(1,2,1)$, $(1,2,3)$, $(3,3,1)$, $(3,3,3)$.

\item Case $(i_{1},i_{2},i_{3})=(1,1,3)$

From $(\mathfrak{j}_{1})$.$|A(\delta_{2})-2A(\eta)|<\epsilon$, $(\mathfrak{k}_{1})$.$|A(\delta_{1})-2A(\eta)|<\epsilon$, 
$(\mathfrak{l}_{3})$.$|(p_{i+1}-p_{i})R+A(\delta_{1})-2A(\delta_{2})|<\epsilon$ and ($\spadesuit$).$|A(\delta_{1})+A(\delta_{2})-p_{i}R|<\epsilon$, we have
\begin{equation}
    A(\delta_{1})\approx \frac{1}{2}p_{i}R,\,\, A(\delta_{2}) \approx \frac{1}{2}p_{i}R,\,\,p_{i+1}= \frac{3}{2}p_{i},\,\,A(\eta)\approx \frac{1}{4}p_{i}.
\end{equation}

\item Case $(i_{1},i_{2},i_{3})=(1,2,1)$

From $(\mathfrak{j}_{1})$.$|A(\delta_{2})-2A(\eta)|<\epsilon$, $(\mathfrak{k}_{2})$.$|(p_{i+1}-p_{i})R-2A(\eta)|<\epsilon$, $(\mathfrak{l}_{1})$.$|A(\delta_{1})-2A(\delta_{2})|<\epsilon$ and ($\spadesuit$).$|A(\delta_{1})+A(\delta_{2})-p_{i}R|<\epsilon$. we have
\begin{equation}
    A(\delta_{1})\approx \frac{2}{3}p_{i}R,\,\,A(\delta_{2})\approx \frac{1}{3}p_{i}R,\,\,p_{i+1}=\frac{4}{3}p_{i},\,\,A(\eta)\approx \frac{1}{6}p_{i}R.
\end{equation}

\item Case $(i_{1},i_{2},i_{3})=(1,2,3)$

From $(\mathfrak{j}_{1})$.$|A(\delta_{2})-2A(\eta)|<\epsilon$, $(\mathfrak{k}_{2})$.$|(p_{i+1}-p_{i})R-2A(\eta)|<\epsilon$, $(\mathfrak{l}_{3})$.$|(p_{i+1}-p_{i})R+A(\delta_{1})-2A(\delta_{2})|<\epsilon$  and ($\spadesuit$).$|A(\delta_{1})+A(\delta_{2})-p_{i}R|<\epsilon$, we have
\begin{equation}
     A(\delta_{1})\approx\frac{1}{2}p_{i}R,\,\, A(\delta_{2}) \approx \frac{1}{2}p_{i}R,\,\,p_{i+1}= \frac{3}{2}p_{i},\,\,A(\eta)\approx \frac{1}{4}p_{i}.
\end{equation}

\item Case $(i_{1},i_{2},i_{3})=(3,3,1)$

Recall that by considering the splitting behavior of $J$-holomorphic curves counted by $U$-map from $\langle \alpha_{k+1} \rangle$ to $\langle \alpha_{k} \rangle$ as $z\to \eta$. Then we have three possibilties,

\item[$(\mathfrak{b}_{1})$.] $|A(\delta_{1})-2A(\eta)|<\epsilon$
\item[$(\mathfrak{b}_{2})$.] $|A(\delta_{2})-2A(\eta)|<\epsilon$
\item[$(\mathfrak{b}_{3})$.] $|p_{i}R-2A(\eta)|<\epsilon$

The first inequality contradicts $(\mathfrak{k}_{3})$.$|(p_{i+1}-p_{i})R+A(\delta_{1})-2A(\eta)|<\epsilon$ and also the second one does $(\mathfrak{k}_{3})$.$|(p_{i+1}-p_{i})R+A(\delta_{1})-2A(\eta)|<\epsilon$ and $|A(\delta_{1})-2A(\delta_{2})|<\epsilon$. So only third one may be possible.

By combining with $(\mathfrak{j}_{3})$.$|p_{i+1}R-(2A(\eta)+A(\delta_{2}))|<\epsilon$, $(\mathfrak{k}_{3})$.$|(p_{i+1}-p_{i})R+A(\delta_{1})-2A(\eta)|<\epsilon$, $(\mathfrak{l}_{1})$.$|A(\delta_{1})-2A(\delta_{2})|<\epsilon$ and ($\spadesuit$).$|A(\delta_{1})+A(\delta_{2})-p_{i}R|<\epsilon$, we have
\begin{equation}\label{}
    A(\delta_{1})\approx \frac{2}{3}p_{i}R,\,\,A(\delta_{2})\approx \frac{1}{3}p_{i}R,\,\,p_{i+1}=\frac{4}{3}p_{i},\,\,A(\eta)\approx \frac{1}{2}p_{i}R.
\end{equation}

\item Case $(i_{1},i_{2},i_{3})=(3,3,3)$

From $(\mathfrak{j}_{3})$.$|p_{i+1}R-(2A(\eta)+A(\delta_{2}))|<\epsilon$, $(\mathfrak{k}_{3})$.$|(p_{i+1}-p_{i})R+A(\delta_{1})-2A(\eta)|<\epsilon$, $(\mathfrak{l}_{3})$.$|(p_{i+1}-p_{i})R+A(\delta_{1})-2A(\delta_{2})|<\epsilon$ and ($\spadesuit$).$|A(\delta_{1})+A(\delta_{2})-p_{i}R|<\epsilon$ we have
\begin{equation}
     A(\delta_{1})\approx\frac{1}{2}p_{i}R,\,\, A(\delta_{2}) \approx \frac{1}{2}p_{i}R,\,\,p_{i+1}= \frac{3}{2}p_{i},\,\,A(\eta)\approx \frac{1}{2}p_{i}.
\end{equation}

Combining the arguments, we complete the proof of Lemma \ref{typb}.
\end{proof}

\section{Proof of Theorem \ref{mainmainmain}}
Suppose that $\alpha_{k}$, $\alpha_{k+1}$ satisfy the assumptions 1, 2, 3, 4 and 5 in Proposition \ref{nagai}, then  the pair $(\alpha_{k},\alpha_{k+1})$ is one of the types of \textbf{(a)}, \textbf{(a')}, \textbf{(b)}, \textbf{(b')}, \textbf{(c)}, \textbf{(c')}. Moreover, we can see that  any actions of negative hyperbolic orbits in $\alpha_{k+1}$, $\alpha_{k}$ are in $(\frac{1}{12}R\mathbb{Z})_{\epsilon'}=\{\,x \in \mathbb{R}\,|\,\mathrm{dist}(x,\frac{1}{12}R\mathbb{Z})<\epsilon'\,\,\}$ where $\epsilon'=\frac{1}{100}\mathrm{max}\{A(\alpha)\,|\alpha\,\,\mathrm{is\,\,a\,\,Reeb\,\,orbit}\,\}$. Therefore, we can define a map
\begin{equation}
    f:\{\,\eta\in \alpha_{k}\,|\,\eta\,\,\mathrm{is\,\,negative\,\,hyperbolic}\,\}\to  \frac{1}{12}R\mathbb{Z}
\end{equation}
Here, $f(\eta)\in \frac{1}{12}R\mathbb{Z}$ is an image by the natural projection of $A(\eta)$ from $(\frac{1}{12}R\mathbb{Z})_{\epsilon'}$ to $\frac{1}{12}R\mathbb{Z}$. Note that $\frac{1}{12}R\mathbb{Z}$ is discrete.

Let $\alpha_{k+1}$, $\alpha_{k}$ and $\alpha_{k-1}$ be such 3 consecutive admissible orbit sets. then each type of the pairs $(\alpha_{k},\alpha_{k+1})$ and $(\alpha_{k-1},\alpha_{k})$ decides the action relations in $\alpha_{k}$ respectively. But  sometimes, they may be incompatible. In particular, we can show that such fact induces  contradictions with Proposition \ref{main index 2}.

At first, we  show the next lemma.

\begin{lem}\label{las}
Suppose that $\alpha_{k-1}$, $\alpha_{k}$ and $\alpha_{k+1}$ satisfy the assumptions 1, 2, 3, 4 and 5 in Proposition \ref{nagai}. Then the following pairs of types of $(\alpha_{k},\alpha_{k+1})$ and $(\alpha_{k-1},\alpha_{k})$ classified in Proposition \ref{nagai} cause contradictions.

    (type of ($\alpha_{k-1},\alpha_{k}$), type of  ($\alpha_{k},\alpha_{k+1}$))\\
    =(\textbf{a'}, \textbf{a}), (\textbf{b'}, \textbf{a'}), (\textbf{b'}, \textbf{a}), (\textbf{b'}, \textbf{b}), (\textbf{b'}, \textbf{b'}), (\textbf{b'}, \textbf{c}), (\textbf{b'}, \textbf{c'}), (\textbf{c'}, \textbf{a}), (\textbf{c'}, \textbf{a'}),
    (\textbf{c'}, \textbf{b'}), (\textbf{c'}, \textbf{c}), (\textbf{c'}, \textbf{c'}), (\textbf{a'}, \textbf{b}), (\textbf{b}, \textbf{b}), (\textbf{c'}, \textbf{b}), (\textbf{c}, \textbf{b}), (\textbf{a}, \textbf{c}),
    (\textbf{b}, \textbf{c}), (\textbf{c}, \textbf{c}), (\textbf{a}, \textbf{a}), (\textbf{a'}, \textbf{a'}), (\textbf{a}, \textbf{b}), (\textbf{a'}, \textbf{c}), (\textbf{a}, \textbf{a'}).

\end{lem}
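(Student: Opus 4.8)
The plan is to exclude each of the 24 listed pairs of consecutive types by exploiting the rigid action relations recorded in Proposition \ref{nagai}. The key observation is that when $(\alpha_{k-1},\alpha_k)$ has one of the six types and $(\alpha_k,\alpha_{k+1})$ has another, the orbit set $\alpha_k$ is described in two ways, and the two descriptions must be compatible as actual orbit sets (same simple orbits with same multiplicities) \emph{and} as action data. Concretely, each type fixes $E(\alpha_k)$ (the multiplicity at $\gamma$) to be either $M$, $M-p_i$, $M-q_i$, or $M$ for appropriate $M$, and fixes the number of extra negative hyperbolic orbits attached to $\hat\alpha$; moreover for the types \textbf{(a)}--\textbf{(c')} the actions of $\delta_1,\delta_2$ and of each $\eta\in\hat\alpha$ are pinned down $\approx$-exactly in terms of $p_iR$ or $q_iR$ (equivalently $p_{i+1}R$, $q_{i+1}R$, using the forced relations $\tfrac32 p_i=p_{i+1}$ etc.\ in types (b),(c)). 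So the strategy for each pair is: write $\alpha_k$ from the left type, write $\alpha_k$ from the right type, and derive a contradiction either from (i) an $E(\alpha_k)$ mismatch that is impossible given $S_\theta\cap S_{-\theta}=\{1\}$ and the growth of gaps in $S_{\pm\theta}$ (Proposition \ref{s}), or (ii) an action mismatch: a negative hyperbolic orbit of $\hat\alpha$ whose action is forced to be $\approx \tfrac12 p_iR$ by one side but $\approx\tfrac14 q_jR$ (or similar) by the other side, which by the $\approx$ convention forces an impossible integer equation such as $p_i=q_j$ with both $>1$, contradicting $S_\theta\cap S_{-\theta}=\{1\}$ (Claim \ref{fre} and Proposition \ref{s}(2)).

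The concrete steps I would carry out, in order. First, set up bookkeeping: for each of the six types list $(E(\alpha_{k}), E(\alpha_{k+1}))$, the structure of $\hat\alpha$, the number of ``new'' negative hyperbolic orbits created when passing $\alpha_k\to\alpha_{k+1}$, and the full action table (the actions of $\delta_1,\delta_2,\eta$). Since \textbf{(a)}/\textbf{(a')}, \textbf{(b)}/\textbf{(b')}, \textbf{(c)}/\textbf{(c')} are symmetric, and the pair ``type of $(\alpha_{k-1},\alpha_k)$ before, type of $(\alpha_k,\alpha_{k+1})$ after'' is \emph{not} symmetric, I organize the 24 cases into a handful of mechanisms: (1) the ``$E(\alpha_k)$ is already a special value'' mechanism — e.g.\ in \textbf{(a')} or \textbf{(b')} the orbit $\alpha_k$ has $E(\alpha_k)=M\in S_\theta$, while \textbf{(a)} or \textbf{(b)} requires $E(\alpha_k)=M'$ with $\max(S_{-\theta}\cap\{1,\dots,M'\})$ controlling the curve, and comparing the two forces a relation incompatible with Proposition \ref{s}; (2) the ``action of a common $\eta\in\hat\alpha$'' mechanism — both types give an $\approx$-value for $A(\eta)$, and matching them forces $c\cdot p_iR\approx c'\cdot q_jR$ hence (since $c,c'\in\{\tfrac14,\tfrac16,\tfrac12\}$ and these are $>\tfrac1{100}$) $c p_i=c' q_j$, which combined with $S_\theta\cap S_{-\theta}=\{1\}$ and the forced $p_{i+1}/p_i$, $q_{j+1}/q_j$ ratios gives a contradiction; (3) the ``count of negative hyperbolic orbits'' mechanism — type \textbf{(b)},\textbf{(c)} remove \emph{two} negative hyperbolic orbits and add none at $\gamma$, while type \textbf{(a)} removes one and keeps multiplicity, so chaining them forces $\hat\alpha$ in the middle to have an incompatible count. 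Then I would go pair by pair through the 24, assign it to one of these mechanisms, and write the one- or two-line derivation; I expect many to collapse to the same computation.

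The main obstacle, I expect, is the purely bookkeeping danger of doing all 24 cases uniformly: because the ``type of $(\alpha_{k-1},\alpha_k)$'' constrains $\alpha_k$ via the \emph{right} endpoint of that transition (so $\alpha_k$ plays the role of ``$\alpha_k$'' in the earlier notation, with its $E$-value being $M-p_i$ or $M-q_i$ rather than $M$), whereas ``type of $(\alpha_k,\alpha_{k+1})$'' constrains $\alpha_k$ via the \emph{left} endpoint (so $\alpha_k$ plays the role of ``$\alpha_{k+1}$''). Getting this matching exactly right — in particular which orbit of $\alpha_k$ is ``$\delta_1$'', ``$\delta_2$'' for each side, and which ones lie in the common $\hat\alpha$ — is where an error could hide. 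A secondary subtlety is that in types (b),(c) the value $p_{i+1}$ is not free but equals $\tfrac32 p_i$ or $\tfrac43 p_i$, so when I compare an $A(\eta)\approx\tfrac12 p_{i+1}R$ coming from type \textbf{(a)} against $A(\eta)\approx\tfrac14 p_iR$ from type \textbf{(b)}, I must first rewrite both in terms of the same $p$ and then invoke the discreteness of $\tfrac1{12}R\mathbb{Z}$ (as in the paragraph just before Lemma \ref{las}) to turn the $\approx$ into an equation. Once the bookkeeping is fixed, each individual exclusion is a short arithmetic contradiction of the form ``$ap_i=bq_j$ with $p_i,q_j>1$'' against $S_\theta\cap S_{-\theta}=\{1\}$, or ``$p_i=p_{i+1}-p_i$'' against Claim \ref{fre}, or ``$E(\alpha_k)$ forced to lie in $S_\theta$ and in $S_{-\theta}$ simultaneously'' — so the real work is organizational, and I would present it as a case table with a shared lemma for each mechanism rather than 24 independent arguments.
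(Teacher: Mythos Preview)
Your plan is essentially the paper's own proof: a case-by-case elimination using the rigid action data from Proposition \ref{nagai}, with contradictions coming from $S_\theta\cap S_{-\theta}=\{1\}$, Claim \ref{fre}, and gap estimates on $S_{\pm\theta}$. The paper organizes the casework through the map $f:\{\eta\in\alpha_k\ \text{hyperbolic}\}\to\tfrac{1}{12}R\mathbb{Z}$ and extracts contradictions from three invariants of $f$ --- its maximum value, the size of the preimage of that maximum, and the cardinality of its image --- which is a slightly cleaner packaging than your three ``mechanisms'' but amounts to the same arithmetic.

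One correction: your mechanism (1) as stated is wrong. In types \textbf{(a')}, \textbf{(b')} the orbit set with $E=M$ is $\alpha_k$ \emph{in the notation of Proposition \ref{nagai}}, but when you apply this to the pair $(\alpha_{k-1},\alpha_k)$ that orbit set is $\alpha_{k-1}$, not $\alpha_k$; and in any case $M\notin S_\theta$ by assumption 3. The paper never argues via ``$E(\alpha_k)$ lies in $S_\theta$ and $S_{-\theta}$ simultaneously''; instead, in cases like (\textbf{b'},\textbf{b}) or (\textbf{c'},\textbf{c}) it equates the \emph{maximum of $f$} computed from each side (e.g.\ $\tfrac12 q_iR=\tfrac12 p_{i'}R$), which then forces $q_i=p_{i'}$ and contradicts $S_\theta\cap S_{-\theta}=\{1\}$. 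You already have this as mechanism (2); mechanism (1) as you describe it is not needed and does not work. Once you drop it and run everything through the $f$-invariants you identified, your outline matches the paper's argument.
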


\begin{proof}[\bf Proof of Lemma \ref{las}]

We derive a contradiction respectively.

\item [\bf Case] (\textbf{b'}, \textbf{a})(or by symmetry, (\textbf{a'}, \textbf{b}))

Since the type of\,\,($\alpha_{k-1},\alpha_{k}$) is (\textbf{b'}),  the number of the inverse image of the largest value of $f:\{\,\eta\in \alpha_{k}\,|\,\eta\,\,\mathrm{is\,\,negative\,\,hyperbolic}\,\}\to  \frac{1}{12}R\mathbb{Z}$ is at least two. But the assumption  that the type of\,\,($\alpha_{k},\alpha_{k+1}$) is (\textbf{a}) indicates that  the number of the inverse image of the largest value of $f:\{\,\eta\in \alpha_{k}\,|\,\eta\,\,\mathrm{is\,\,negative\,\,hyperbolic}\,\}\to  \frac{1}{12}R\mathbb{Z}$ is one. This is a contradiction.

\item [\bf Case] (\textbf{b'}, \textbf{a'})(or by symmetry, (\textbf{a}, \textbf{b}))

Let $E(\alpha_{k-1})=M$, then $E(\alpha_{k})=M-q_{i}$ where $q_{i}=\mathrm{max}(S_{\theta}\cap{\{1,\,\,2,...,\,\,M\}})$. Let $q_{i'}=\mathrm{max}(S_{\theta}\cap{\{1,\,\,2,...,\,\,M-q_{i}\}})$. 

Since the type of\,\,($\alpha_{k},\alpha_{k+1}$) is (\textbf{a'}),  the image of $f$ on $\alpha_{k}$ is in $\{\,\frac{1}{2}q_{i'+1}R,\,\,\frac{1}{2}(q_{i'+1}-q_{i'})R,\,\,(q_{i'+1}-q_{i'})R\,\}$.

Since the type of\,\,($\alpha_{k-1},\alpha_{k}$) is (\textbf{a'}), we have $q_{i+1}=\frac{3}{2}q_{i}$. 
Hence
\begin{equation}
  q_{i'} \leq M-q_{i}\leq q_{i+1}-q_{i}= \frac{1}{2}q_{i}<q_{i}.
\end{equation}
Therefore $q_{i'+1}\leq q_{i}$.

If $q_{i'+1}<q_{i}$, the actions of each orbit in $\hat{\alpha}$ are less than $\frac{1}{2} q_{i}R$. The number of the positive ends of (\textbf{b'}) whose image of $f$ are largest is at least two but that one of negative ends of (\textbf{a'}) whose image of $f$ are largest is only  $\delta_{2}$. This is a contradiction.

If $q_{i'+1}=q_{i}$, by considering the correspondence of the approximate action values and Claim \ref{fre}, we can see $\frac{1}{4}q_{i}R = (q_{i'+1}-q_{i'})R$ and so $\frac{1}{4}q_{i}=q_{i'+1}-q_{i'}=q_{i}-q_{i'}$ hence $\frac{3}{4}q_{i}=q_{i'}$. This contradict  $q_{i'}\leq \frac{1}{2}q_{i}$.

\item [\bf Case] (\textbf{b'}, \textbf{b})

Let $E(\alpha_{k-1})=M$ $E(\alpha_{k+1})=M'$, then $E(\alpha_{k})=M-q_{i}=M'-p_{i'}$ where $q_{i}=\mathrm{max}(S_{\theta}\cap{\{1,\,\,2,...,\,\,M\}})$ and $p_{i'}=\mathrm{max}(S_{-\theta}\cap{\{1,\,\,2,...,\,\,M'\}})$.

Since the type of\,\,($\alpha_{k-1},\alpha_{k}$) is (\textbf{b'}), the largest value of $f$ on $\alpha_{k}$ is $\frac{1}{2}q_{i}R$ and Since the type of\,\,($\alpha_{k},\alpha_{k+1}$) is (\textbf{b}), the largest value of $f$ on $\alpha_{k}$ is $\frac{1}{2} p_{i'}R$. Therefore we have $q_{i}=p_{i'}$. This contradicts $S_{\theta}\cap{S_{-\theta}}=\{1\}$.

\item [\bf Case] (\textbf{b'}, \textbf{b'}) (or by symmetry, (\textbf{b}, \textbf{b}))

Let $E(\alpha_{k-1})=M$, then $E(\alpha_{k})=M-q_{i}<\frac{1}{2}q_{i}$. This implies that the largest approximate value decided by the positive ends of former (\textbf{b'}) is larger than that one decided by the negative ends of the later (\textbf{b'}) and so this is a contradiction.

\item [\bf Case] (\textbf{b'}, \textbf{c}) (or by symmetry, (\textbf{c'}, \textbf{b}))

Since the type of\,\,($\alpha_{k-1},\alpha_{k}$) is (\textbf{b'}),  the number of the  image of  $f$ on $\alpha_{k}$ is at most 2. On the other hand, since the type of\,\,($\alpha_{k},\alpha_{k+1}$) is (\textbf{c}),  the number of the  image of  $f$ on $\alpha_{k}$ is at least three. This is a contradiction.

\item [\bf Case] (\textbf{b'}, \textbf{c'}) (or by symmetry, (\textbf{c}, \textbf{b}))

Let $E(\alpha_{k-1})=M$. Then $E(\alpha_{k})=M-q_{i}<\frac{1}{2}q_{i}$. This implies that the largest approximate  value decided by the action of the positive ends of  (\textbf{b'}) is larger than that one decided by the action of the negative ends of  (\textbf{c'}) and so this is a contradiction.

\item [\bf Case] (\textbf{c'}, \textbf{a}) (or by symmetry, (\textbf{a'}, \textbf{c}))

Let $E(\alpha_{k-1})=M$ $E(\alpha_{k+1})=M'$, then $E(\alpha_{k})=M-q_{i}=M'-p_{i'}$ where $q_{i}=\mathrm{max}(S_{\theta}\cap{\{1,\,\,2,...,\,\,M\}})$ and $p_{i'}=\mathrm{max}(S_{-\theta}\cap{\{1,\,\,2,...,\,\,M'\}})$.

Since the type of\,\,($\alpha_{k-1},\alpha_{k}$) is (\textbf{c'}), the largest value of $f$ on $\alpha_{k}$ is $\frac{2}{3}q_{i}R$ and Since the type of\,\,($\alpha_{k},\alpha_{k+1}$) is (\textbf{a}), the largest value of $f$ on $\alpha_{k}$ is $p_{i'+1}R$. Therefore we have $\frac{2}{3}q_{i}=p_{i'+1}$.
By considering the correspondence of the actions of  the part of trivial cylinders, we have two possibilities, $\frac{1}{2}q_{i}=\frac{1}{2}(p_{i'+1}-p_{i'})$ or $\frac{1}{6}q_{i}=\frac{1}{2}(p_{i'+1}-p_{i'})$.

If $\frac{1}{2}q_{i}=\frac{1}{2}(p_{i'+1}-p_{i'})$, this contradicts  $\frac{2}{3}q_{i}=p_{i'+1}$.
If $\frac{1}{6}q_{i}=\frac{1}{2}(p_{i'+1}-p_{i'})$,  we have $\frac{1}{2}p_{i'+1}=(p_{i'+1}-p_{i'})$ since $\frac{2}{3}q_{i}=p_{i'+1}$. This contradicts Claim \ref{fre}.

Combining the above arguments, we can see that this case causes a contradiction.

\item [\bf Case] (\textbf{c'}, \textbf{a'}) (or by symmetry, (\textbf{a}, \textbf{c}))

Let $E(\alpha_{k-1})=M$. Then $E(\alpha_{k})=M-q_{i}$ $E(\alpha_{k+1})=M-q_{i}-q_{i'}$ where $q_{i}=\mathrm{max}(S_{\theta}\cap{\{1,\,\,2,...,\,\,M\}})$ and $q_{i'}=\mathrm{max}(S_{\theta}\cap{\{1,\,\,2,...,\,\,M-q_{i}\}})$.

Moreover, since $\frac{4}{3}q_{i+1}=q_{i}$, 
\begin{equation}\label{addd}
    q_{i'}<E(\alpha_{k})=M-q_{i}<\frac{1}{3}q_{i}<q_{i}
\end{equation}
and so 
\begin{equation}\label{asda}
    q_{i'+1}\leq q_{i}. 
\end{equation}

Since the type of\,\,($\alpha_{k-1},\alpha_{k}$) is (\textbf{c'}), the largest value of $f$ on $\alpha_{k}$ is $\frac{2}{3}q_{i}R$. On the other hand, Since the type of\,\,($\alpha_{k-1},\alpha_{k}$) is (\textbf{a'}), the largest value of $f$ on $\alpha_{k}$ is $(q_{i'+1}-q_{i'})R$ or $\frac{1}{2}q_{i'+1}$. This indicates that $\frac{2}{3}q_{i}=q_{i'+1}-q_{i'}$ or $\frac{2}{3}q_{i}=\frac{1}{2}q_{i'+1}$ but the later case contradicts  (\ref{asda}). So it is sufficient to consider the former case.

Suppose that $\frac{2}{3}q_{i}=q_{i'+1}-q_{i'}$.
By considering the correspondence of the actions of the part of  trivial cylinder, we have two possibilities, $\frac{1}{2}q_{i}=\frac{1}{2}q_{i'+1}$ or $\frac{1}{6}q_{i}=\frac{1}{2}q_{i'+1}$.

If $\frac{1}{2}q_{i}=\frac{1}{2}q_{i'+1}$, we can see $\frac{1}{3}q_{i}=q_{i'}$ by $\frac{2}{3}q_{i}=q_{i'+1}-q_{i'}$. But this contradict (\ref{addd}) $q_{i'}<\frac{1}{3}q_{i}$.

If $\frac{1}{6}q_{i}=\frac{1}{2}q_{i'+1}$, we can see $\frac{1}{3}q_{i}+q_{i'}=0$ by $\frac{2}{3}q_{i}=q_{i'+1}-q_{i'}$. This is a contradiction.

Combining the above argument, we can see that this case cause a contradiction.

\item [\bf Case] (\textbf{c'}, \textbf{b'}) (or by symmetry, (\textbf{b}, \textbf{c}))

Since the type of\,\,($\alpha_{k-1},\alpha_{k}$) is (\textbf{c'}),  the number of the  image of  $f$ on $\alpha_{k}$ is at least three. On the other hand, Since the type of\,\,($\alpha_{k},\alpha_{k+1}$) is (\textbf{b'}),  the number of the  image of  $f$ on $\alpha_{k}$ is at most two. This is a contradiction.

\item [\bf Case] (\textbf{c'}, \textbf{c})

Let $E(\alpha_{k-1})=M$ and $E(\alpha_{k+1})=M'$. Let $q_{i}=\mathrm{max}(S_{\theta}\cap{\{1,\,\,2,...,\,\,M\}})$ and $p_{i'}=\mathrm{max}(S_{-\theta}\cap{\{1,\,\,2,...,\,\,M'\}})$.

Since the type of\,\,($\alpha_{k-1},\alpha_{k}$) is (\textbf{c'}), the largest value of $f$ on $\alpha_{k}$ is $\frac{2}{3}q_{i}R$. On the other hand, Since the type of\,\,($\alpha_{k},\alpha_{k+1}$) is (\textbf{c}), the largest value of $f$ on $\alpha_{k}$ is $\frac{2}{3}p_{i'}R$. Therefore $q_{i}=p_{i'}$. This  contradicts $S_{\theta}\cap{S_{-\theta}}=\{1\}$.

\item [\bf Case] (\textbf{c'}, \textbf{c'}) (or by symmetry, (\textbf{c}, \textbf{c}))

Since the type of\,\,($\alpha_{k-1},\alpha_{k}$) is (\textbf{c'}),  the number of the image of  $f$ on $\alpha_{k}$ on $\alpha_{k}$ is at least three. On the other hand, Since the type of\,\,($\alpha_{k},\alpha_{k+1}$) is (\textbf{c'}),  the number of the  image of  $f$ on $\alpha_{k}$ is at most two. This is a contradiction.

\item [\bf Case] (\textbf{a}, \textbf{a}) (or by symmetry, (\textbf{a'}, \textbf{a'}))

Let $E(\alpha_{k+1})=M$. Then $E(\alpha_{k})=M-p_{i}$ where $p_{i}=\mathrm{max}(S_{\theta}\cap{\{1,\,\,2,...,\,\,M\}})$. Set $p_{i'}=\mathrm{max}(S_{\theta}\cap{\{1,\,\,2,...,\,\,M-p_{i}\}})$.
By construction, $p_{i}\leq p_{i'}$ and so $p_{i+1}\leq p_{i'+1}$.

Since the type of\,\,($\alpha_{k},\alpha_{k+1}$) is (\textbf{a}),  the largest value of $f$ on $\alpha_{k}$ is $p_{i+1}R$. On the other hand, since the type of\,\,($\alpha_{k-1},\alpha_{k}$) is (\textbf{a}),  the image of $f$ on $\alpha_{k}$ is in $\{\,\frac{1}{2}p_{i'+1}R,\,\,\frac{1}{2}(p_{i'+1}-p_{i'})R,\,\,(p_{i'+1}-p_{i'})R\,\}$.  This is obviously a contradiction.

\item [\bf Case] (\textbf{a}, \textbf{a'}) 

Let $E(\alpha_{k})=M$, $p_{i}=\mathrm{max}(S_{\theta}\cap{\{1,\,\,2,...,\,\,M\}})$, $q_{i'}=\mathrm{max}(S_{-\theta}\cap{\{1,\,\,2,...,\,\,M\}})$.

By considering the correspondence of the actions of the part of  trivial cylinder and Claim \ref{fre}, we have two possibilities, $\frac{1}{2}p_{i+1}=\frac{1}{2}(q_{i'+1}-q_{i'})$ or $\frac{1}{2}q_{i'+1}=\frac{1}{2}(p_{i+1}-p_{i})$. Here we use $S_{\theta}\cap{S_{-\theta}}=\{1\}$ implicitly.

Since the type of\,\,($\alpha_{k-1},\alpha_{k}$) is (\textbf{a}),  the image of $f$ on $\alpha_{k}$ is in $\{\,\frac{1}{2}p_{i+1}R,\,\,\frac{1}{2}(p_{i+1}-p_{i})R,\,\,(p_{i+1}-p_{i})R\,\}$. 

Suppose that $\frac{1}{2}p_{i+1}=\frac{1}{2}(q_{i'+1}-q_{i'})$.  Since the type of\,\,($\alpha_{k},\alpha_{k+1}$) is (\textbf{a'}),  the image of $f$ on $\alpha_{k}$ contains $(q_{i'+1}-q_{i'})R$. But since $\frac{1}{2}p_{i+1}=\frac{1}{2}(q_{i'+1}-q_{i'})$, then $(q_{i'+1}-q_{i'})R$ is larger than any $\{\,\frac{1}{2}p_{i+1}R,\,\,\frac{1}{2}(p_{i+1}-p_{i})R,\,\,(p_{i+1}-p_{i})R\,\}$. This is a contradiction.

In the case of $\frac{1}{2}q_{i'+1}=\frac{1}{2}(p_{i+1}-p_{i})$, we can also derive a contradiction in the same way.

\item [\bf Case] (\textbf{a'}, \textbf{a})

Since the type of\,\,($\alpha_{k-1},\alpha_{k}$) is (\textbf{a'}),
 the largest value in the image of $f$ on $\alpha_{k}$  is $ q_{i+1}R$ and also since the type of\,\,($\alpha_{k},\alpha_{k+1}$) is (\textbf{a}), we have that the largest value in the image of $f$ on $\alpha_{k}$  is $ p_{i+1}R$. This indicates  $p_{i+1}=q_{i+1}$ but this contradicts  $S_{\theta}\cap{S_{-\theta}}=\{1\}$.

 Combining the discussions so far, we complete the proof of Lemma.
\end{proof}

\begin{proof}[\bf Proof of Theorem \ref{mainmainmain}]
By Lemma \ref{las}, we have the rest possibilities of  pairs, (\textbf{b}, \textbf{a}), (\textbf{a}, \textbf{b'}), (\textbf{b}, \textbf{b'}), (\textbf{c}, \textbf{b'}), (\textbf{a'}, \textbf{b'}), (\textbf{c}, \textbf{a}), (\textbf{a}, \textbf{c'}), (\textbf{b}, \textbf{c'}), (\textbf{c}, \textbf{c'}), (\textbf{a'}, \textbf{c'}), (\textbf{b}, \textbf{a'}), (\textbf{c}, \textbf{a'}).

It is easy to check that we can not connect the above pairs more than two. This contradicts Proposition \ref{main index 2} and we complete the proof of Theorem \ref{mainmainmain}.
\end{proof}

\end{document}